\DeclareMathOperator{\image}{Im}
\renewcommand{\Im}{\image}
\DeclareMathOperator{\Ker}{Ker}
\DeclareMathOperator{\Coker}{Coker}
\DeclareMathOperator*{\colim}{colim}
\DeclareMathOperator{\Hom}{Hom}
\DeclareMathOperator{\sHom}{\underline{\Hom}}
\DeclareMathOperator{\RHom}{\mathrm{R}\!\Hom}
\DeclareMathOperator{\cHom}{\mathscr{H}\hspace{-2pt}{\it om}}
\DeclareMathOperator{\End}{End}
\DeclareMathOperator{\sEnd}{\underline{\End}}
\DeclareMathOperator{\REnd}{\mathrm{R}\!\End}
\DeclareMathOperator{\cEnd}{\mathscr{E}\hspace{-2pt}{\it nd}}
\def\lotimes{\otimes^\mathrm{L}}
\DeclareMathOperator{\Ext}{Ext}
\DeclareMathOperator{\pd}{proj.dim}
\DeclareMathOperator{\id}{inj.dim}
\DeclareMathOperator{\gd}{gl.dim}
\DeclareMathOperator{\md}{mod}
\renewcommand{\mod}{\md}
\DeclareMathOperator{\Mod}{Mod}
\DeclareMathOperator{\smod}{\underline{\mod}}
\DeclareMathOperator{\proj}{proj}
\DeclareMathOperator{\fl}{fl}
\DeclareMathOperator{\CM}{CM}
\DeclareMathOperator{\sCM}{\underline{\CM}}
\DeclareMathOperator{\Sim}{\mathrm{sim}}
\DeclareMathOperator{\sg}{sg}
\DeclareMathOperator{\thick}{thick}
\DeclareMathOperator{\per}{per}
\DeclareMathOperator{\add}{add}
\DeclareMathOperator{\rad}{rad}
\DeclareMathOperator{\ass}{Ass}
\DeclareMathOperator{\Ass}{Ass}
\DeclareMathOperator{\Min}{Min}
\DeclareMathOperator{\Ann}{Ann}
\DeclareMathOperator{\Spec}{Spec}
\DeclareMathOperator{\Max}{Max}
\DeclareMathOperator{\Sing}{Sing}
\DeclareMathOperator{\Supp}{Supp}
\DeclareMathOperator{\height}{ht}
\DeclareMathOperator{\depth}{depth}
\DeclareMathOperator{\principaldivisor}{div}
\renewcommand{\div}{\principaldivisor}
\DeclareMathOperator{\SL}{SL}
\DeclareMathOperator{\GL}{GL}
\DeclareMathOperator{\diag}{diag}
\def\dgcat{\mathrm{dgcat}}
\def\Hmo{\mathrm{Hmo}}
\newcommand\recollement[3]{\xymatrix{{#1}\ar[r]&{#2}\ar[r]\ar@/_7pt/[l]\ar@/^7pt/[l]&{#3}\ar@/_7pt/[l]\ar@/^7pt/[l] }}
\def\A{\mathscr{A}}
\def\B{\mathscr{B}}
\def\C{\mathscr{C}}
\def\D{\mathscr{D}}
\def\K{\mathscr{K}}
\def\N{\mathscr{N}}
\def\P{\mathscr{P}}
\def\T{\mathscr{T}}
\def\U{\mathscr{U}}
\def\V{\mathscr{V}}
\def\W{\mathscr{W}}
\def\tC{\widetilde{\C}}
\def\G{\Gamma}
\def\Ga{\Gamma}
\def\bG{\mathbf{\G}}
\def\bGa{\mathbf{\Ga}}
\def\L{\Lambda}
\def\La{\Lambda}
\def\bPi{\mathbf{\Pi}}
\def\Om{\Omega}
\def\a{\alpha}
\def\al{\alpha}
\def\b{\beta}
\def\om{\omega}
\def\NN{\mathbb{N}}
\def\Z{\mathbb{Z}}
\def\AA{\mathbb{A}}
\def\LL{\mathbb{L}}
\def\p{\mathfrak{p}}
\def\q{\mathfrak{q}}
\def\r{\mathfrak{r}}
\def\m{\mathfrak{m}}
\def\n{\mathfrak{n}}
\def\op{\mathrm{op}}
\def\dg{\mathrm{dg}}
\def\ac{\mathrm{ac}}
\def\lsimeq{\rotatebox{90}{$\simeq$}}
\def\rsimeq{\rotatebox{-90}{$\simeq$}}
\def\xsimeq{\xrightarrow{\simeq}}
\def\x{\vec{x}}
\def\y{\vec{y}}
\newtheorem{Thm}{Theorem}[section]
\newtheorem{Lem}[Thm]{Lemma}
\newtheorem{Prop}[Thm]{Proposition}
\newtheorem{Cor}[Thm]{Corollary}
\newtheorem{Prop-Def}[Thm]{Proposition-Definition}
\newtheorem{Thm-Def}[Thm]{Theorem-Definition}
\theoremstyle{definition}
\newtheorem{Def}[Thm]{Definition}
\newtheorem{Ex}[Thm]{Example}
\newtheorem{Con}[Thm]{Construction}
\newtheorem{Setup}[Thm]{Setting}
\newtheorem{Pb}[Thm]{Problem}
\theoremstyle{remark}
\newtheorem{Rem}[Thm]{Remark}
\newcounter{step}
\newcommand{\Step}[1]{\refstepcounter{step}{{\it Step \thestep: {#1}}}}
\def\disoplus{\displaystyle\bigoplus}
\def\textoplus{\textstyle\bigoplus}
\def\Coprod{\displaystyle\coprod}
\newcommand\Frac[2]{\displaystyle\frac{#1}{#2}}
\renewcommand{\theequation}{\arabic{section}.\arabic{equation}}
\def\tC{\widetilde{\C}}
\def\st{\mathrm{st}}
\renewcommand{\ell}{l}
\DeclareMathOperator{\Md}{Mod}
\numberwithin{equation}{subsection}
\title[Enhanced Auslander-Reiten duality and Morita theorem for singularity categories]{Enhanced Auslander-Reiten duality and\\ Morita theorem for singularity categories}
\author{Norihiro Hanihara}
\address{N. Hanihara: Kavli Institute for the Physics and Mathematics of the Universe (WPI), The University of Tokyo Institutes for Advanced Study, The University of Tokyo, 5-1-5 Kashiwanoha, Kashiwa, Chiba, 277-8583, Japan}
\email{norihiro.hanihara@ipmu.jp}
\address{Current address: Faculty of Mathematics, Kyushu University, 744 Motooka, Nishi-ku, Fukuoka, 819-0395, Japan}
\email{hanihara@math.kyushu-u.ac.jp}
\author{Osamu Iyama}
\address{O. Iyama: Graduate School of Mathematical Sciences, The University of Tokyo, 3-8-1 Komaba, Meguro-ku, Tokyo, 153-8914, Japan}
\email{iyama@ms.u-tokyo.ac.jp}
\thanks{The first author is supported by JSPS KAKENHI Grant Number JP19J21165 and JP22J00649/JP22KJ0737. The second author is supported by JSPS Grant-in-Aid for Scientific Research (B) 22H01113 and (C) 18K3209.}
\subjclass[2020]{18G80, 18G35, 13C14, 13D09, 13A02, 16E45, 16G30, 14F08}
\keywords{Cohen-Macaulay module, singularity category, cluster category, tilting theory, enhanced Auslander-Reiten duality, dg orbit category, Calabi-Yau dg category, Grassmannian cluster category}
\dedicatory{Dedicated to Bernhard Keller for his 60th Birthday}
\begin{document}
\begin{abstract}
We establish a Morita theorem to construct triangle equivalences between the singularity categories of (commutative and non-commutative) Gorenstein rings and the cluster categories of finite dimensional algebras over fields, and more strongly, quasi-equivalences between their canonical dg enhancements.
More precisely, we prove that such an equivalence exists as soon as we find a quasi-equivalence between the graded dg singularity category of a Gorenstein ring and the derived category of a finite dimensional algebra which can be done by finding a single tilting object.

Our result is based on two key theorems on dg enhancements of cluster categories and of singularity categories, which are of independent interest. First we give a Morita-type theorem which realizes certain $\Z$-graded dg categories as dg orbit categories.
Secondly, we show that the canonical dg enhancements of the singularity categories of symmetric orders have the bimodule Calabi-Yau property, which lifts the classical Auslander-Reiten duality on singularity categories.

We apply our results to such classes of rings as Gorenstein rings of dimension at most $1$, quotient singularities, and Geigle-Lenzing complete intersections, including finite or infinite Grassmannian cluster categories, to realize their singularity categories as cluster categories of finite dimensional algebras.
\end{abstract}

\maketitle
\setcounter{tocdepth}{1}
\tableofcontents
\part*{Introduction}

Quiver representations and Cohen-Macaulay representations are two of the main subjects in representation theory of orders.
The classical theorems of Gabriel \cite{ASS,ARS} and Buchweitz-Greuel-Schreyer \cite{LW,Yo90} assert that the class of representation-finite quivers and that of representation-finite Gorenstein rings are parametrized by ADE Dynkin diagrams. Moreover, if $R$ is a simple surface singularity of Dynkin type $Q$, then the Auslander-Reiten quiver of the derived category $\D^b(\mod kQ)$ gives a $\Z$-covering of that of the stable category $\sCM R$ of Cohen-Macaulay $R$-modules.
A theoretical explanation of this observation is given by a triangle equivalence 
\begin{equation}\label{RQ}\sCM R\simeq\C_1(kQ),\end{equation}
where $\C_1(kQ)$ is the $1$-cluster category $\D^b(\mod kQ)/\tau$ of $kQ$ {(see \cite[Remark 5.9]{AIR})}.
The aim of this paper is to construct similar type of equivalences between these two classes of categories as we explain below.

The category $\CM R$ of Cohen-Macaulay modules over a Gorenstein ring $R$ forms a Frobenius category, and its stable category $\sCM R$ has a canonical structure of a triangulated category. By \cite{Bu}, it is triangle equivalent to the \emph{singularity category} defined as the Verdier quotient $\sg R:=\D^b(\mod R)/\per R$:
\begin{equation}\label{sCM Dsg}
\sCM R\simeq\sg R.
\end{equation}
It is also classical in Auslander-Reiten theory that, if $R$ is a local isolated singularity of dimension $d$, then $\sg R$ is a $(d-1)$-Calabi-Yau triangulated category \cite{Au78} (that is, $[d-1]$ gives a Serre functor), see Section \ref{section 3.1} for details.

On the other hand, cluster categories are Calabi-Yau triangulated categories introduced in this century. They have been extensively studied in representation theory and around \cite{BMRRT,Am09,Guo,Ke11}, especially in the categorification of cluster algebras \cite{Re,Ke08}. 
Given a finite dimensional algebra $A$, its {\it $n$-cluster category} $\C_n(A)$ is defined as the triangulated hull of the orbit category
\[ \per A/\nu_n\hookrightarrow\C_n(A)  \]
for the autoequivalence $\nu_n:=-\lotimes_A DA[-n]$ of $\per A$, 
see Section \ref{section 2.2} for details.


In this paper we propose a systematic approach to study singularity categories by constructing equivalences with well-studied cluster categories. We crucially work over the differential graded (dg) enhancements of both singularity categories and cluster categories. We expect our approach based on representation theory provides a powerful tool for commutative algebra and algebraic geometry.

\subsection{Our main results}
Let us first describe the main results in a simplified setting. We refer to the next subsection for general forms.
One of the main tools toward our aim is tilting theory on $\Z$-graded singularity categories. For a $\Z$-graded Gorenstein ring $R$, we have the $\Z$-graded singularity category $\sg^\Z\!R:=\D^b(\mod^\Z\!R)/\per^\Z\!R$, which is equivalent to the stable category $\sCM^\Z\!R$ of $\Z$-graded Cohen-Macaulay $R$-modules.
Tilting theory controls derived equivalences of rings. More generally if an algebraic triangulated category $\T$ has a tilting object $U$, then $\T$ is triangle equivalent to the perfect derived category of $\End_\T(U)$.
While ungraded singularity categories $\sg R$ never have tilting objects (unless $\sg R=0$), many examples where the graded singularity category $\sg^\Z\!R$ admits a tilting object have been discovered.
For instance, for a simple surface singularity $R$ of Dynkin type $Q$, there exists a triangle equivalence 
\[\sg^\Z\!R\simeq\per kQ,\]
which is a $\Z$-graded version of \eqref{RQ} (see \cite[Section 5.1]{Iy18}\cite{KST1}). Tilting theory allows us to study $\Z$-graded singularity categories by using well established methods in quiver representation theory, and hence it is actively studied in various branches of mathematics including representation theory, commutative algebra, algebraic geometry and mathematical physics, see e.g.\ \cite{AIR,BIY,DL1,DL2,FU,Ha1,Ha2,Hap,HI,HIMO,HO,IKU,IO13,IT,KST1,KST2,Ki1,Ki2,KLM,LP,LZ,MY,MU,N,SV,U2,Ya} and a survey article \cite{Iy18}. 



Let $R$ be a $\Z$-graded Gorenstein ring of dimension $d$ and with Gorenstein parameter $p$. If there exists a tilting object $U\in\sg^\Z\!R$ with $A:=\End_{\sg R}^\Z(U)$, then by comparing Serre functors,
we have a commutative diagram of equivalences
\[\xymatrix@R=3mm{\per A\ar[r]\ar@{-}[d]^\rsimeq&\per A/\nu_{d-1}\ar@{-}[d]^\rsimeq\ar@{^(->}[r]&\C_{d-1}(A)\\
\sg^\Z\!R\ar[r]&\sg^\Z\!R/(p)\ar@{^(->}[r]&\sg^{\Z/p\Z}\!R.}\]
Since the right inclusions are natural triangulated hulls,
one would naively expect an equivalence $\C_{d-1}(A)\simeq\sg^{\Z/p\Z}\!R$ on the triangulated hulls, and also an equivalence between $\sg R$ and a certain cluster-like category of $A$. However, this is far from being obvious since these triangulated hulls are defined using (a priori) different dg enhancements of both categories and functors. Therefore this was shown only in some special cases on a case-by-case basis \cite{AIR,KRac,KMV}, see also \cite{Am09,ART,IO13,KY,TV} for similar type of results in different settings.

Our main result below justifies the naive expectation above in large generality by constructing quasi-equivalences of dg enhancements of the relevant categories.
For simplicity, here we state our main result in the easiest form. We refer to \ref{tilt intro} for the general version, e.g.\ the same statement holds true for an arbitrary symmetric $R$-order with Gorenstein parameter $p$.

\begin{Thm}[{Morita Theorem for Singularity categories,\,$\subset$\,\ref{tilt}}]\label{intro tilt}
Let $R=\bigoplus_{i\geq0}R_i$ be a positively graded Gorenstein isolated singularity of dimension $d\geq0$ with $R_0=k$ and Gorenstein parameter $p\neq0$. Suppose $\sg^\Z\!R$ has a tilting object $M$ with $A=\End_{\sg R}^\Z(M)$. Then there is a commutative diagram
\[ \xymatrix@R=3mm{
	\per A\ar[r]\ar@{-}[d]^-\rsimeq&\C_{d-1}(A)\ar@{-}[d]^-\rsimeq\ar[r]&\C^{(1/p)}_{d-1}(A)\ar@{-}[d]^-\rsimeq\\
	\sg^\Z\!R\ar[r]&\sg^{\Z/p\Z}\!R\ar[r]&\sg R.} \]
In fact, we have the corresponding diagram for the canonical dg enhancements.
\end{Thm}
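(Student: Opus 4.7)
The plan is to build the diagram from left to right, identifying each column as the triangulated hull of a dg orbit category at a progressively coarser quotient, and invoking the two key inputs on dg enhancements announced in the introduction.

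First, the leftmost equivalence $\per A\simeq\D_\sg^\Z(R)$ comes from standard tilting theory: the category $\D_\sg^\Z(R)$ is algebraic via its canonical dg enhancement inside that of $\D^b(\mod^\Z\!R)$, and $M$ is a tilting object with $\End_{\D_\sg^\Z(R)}(M)=A$, so Keller's tilting theorem applies. The isolated-singularity assumption forces $\D_\sg^\Z(R)$ to be Hom-finite over $k$, hence $A$ is finite dimensional. To see that $A$ is Iwanaga-Gorenstein, I compute the Serre functor of $\D_\sg^\Z(R)$: graded Auslander-Reiten duality combined with the defining property of the Gorenstein parameter (the canonical module of $R$ is a shift of $R$ by $p$) gives $\S\simeq [d-1](p)$. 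Transporting along the tilting equivalence, the grading shift $(p)$ on $\D_\sg^\Z(R)$ corresponds to the functor $\nu_{d-1}=-\lotimes_A DA[-(d-1)]$ on $\per A$. Since $(p)$ is visibly an autoequivalence of $\D_\sg^\Z(R)$, so is $\nu_{d-1}$ on $\per A$, and this is equivalent to $A$ being Iwanaga-Gorenstein.

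For the middle square $\C_{d-1}(A)\simeq\D_\sg^{\Z/p\Z}(R)$, the task is to promote the identification of orbit categories $\per A/\nu_{d-1}\simeq\D_\sg^\Z(R)/(p)$ to an equivalence of their triangulated hulls. By definition $\C_{d-1}(A)$ is the triangulated hull of $\per A/\nu_{d-1}$. On the singularity side, the ungrading functor $\D_\sg^\Z(R)\to\D_\sg^{\Z/p\Z}(R)$ kills the $p\Z$ part of the grading, and so factors through $\D_\sg^\Z(R)/(p)$. What needs to be shown is that the canonical dg enhancement of $\D_\sg^{\Z/p\Z}(R)$ is the perfect derived category of the dg orbit of the enhancement of $\D_\sg^\Z(R)$ by $(p)$. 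For this I invoke the first main input of the paper, the intrinsic characterization of dg orbit categories among $\Z$-graded dg categories by a generating condition; the condition is satisfied because $M$ is a generator of $\D_\sg^\Z(R)$.

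The rightmost square is proved by the same strategy with one additional layer of orbiting. The total ungrading $\Z\to 0$ factors as $\Z\to\Z/p\Z\to 0$, and at the dg level the extra step corresponds to a further orbit by the grading shift $(1)$, whose $p$-th iterate equals $(p)\simeq\nu_{d-1}$. This $p$-th root of $\nu_{d-1}$ is what the fractional cluster category $\C^{(1/p)}_{d-1}(A)$ encodes, namely as the triangulated hull of $\per A/(1)$. To match this hull with the canonical enhancement of $\D_\sg(R)$, I apply the second main input of the paper: the canonical dg enhancement of the singularity category of a symmetric order carries a bimodule Calabi-Yau structure lifting the classical $(d-1)$-Calabi-Yau property of $\D_\sg(R)$. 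This is precisely what makes the dg orbit of the graded enhancement enhance the ungraded singularity category. Commutativity of both squares is automatic from the universal properties of dg orbits and of triangulated hulls.

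The main obstacle is the identification of dg enhancements on the singularity side of the two right squares: one must show that the standard dg enhancements of $\D_\sg^{\Z/p\Z}(R)$ and $\D_\sg(R)$ are the perfect derived categories of the dg orbits of that of $\D_\sg^\Z(R)$ by $(p)$ and $(1)$ respectively. Without the two pillars just mentioned, one only obtains triangle equivalences on the underlying orbit categories, not on their triangulated hulls. Once these inputs are in place, the three columns of the diagram align and the theorem follows.
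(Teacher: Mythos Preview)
Your overall plan is right and uses the same two inputs as the paper. The paper, however, does not split the work between the two squares as you do; both are handled at once via Theorem~\ref{futatsu} (applied in Theorem~\ref{CM}): first the graded enhanced Auslander--Reiten duality $D\C\simeq\C(-p)[d-1]$ is established on the full $\Z$-graded dg singularity category $\C$, and then this Calabi--Yau structure is fed into the Reconstruction Theorem for both the Veronese $\C^{(p)}$ and $\C$ itself.

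Your separation hides a real gap in the middle square. There you invoke only the Reconstruction Theorem, which applied to the $p$-th Veronese gives $\C^{(p)}\simeq\A/\C_{-p}$ and hence $\D_\sg^{\Z/p\Z}(R)\simeq\per(\A/\C_{-p})$. But this is not yet $\C_{d-1}(A)$: for that one needs the bimodule isomorphism $\C_{-p}\simeq DA[1-d]$ in $\D(A^e)$. Your Serre-functor argument only shows that $-\lotimes_A\C_{-p}$ and $\nu_{d-1}$ agree as triangle endofunctors of $\per A$, which is strictly weaker than an isomorphism of bimodules. The needed bimodule identification is precisely what the \emph{graded} enhanced AR duality delivers: taking the degree-$0$ part of $D\C\simeq\C(-p)[d-1]$ yields $D\A\simeq\C_{-p}[d-1]$ in $\D(\A^e)$ (see Lemma~\ref{root}). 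Conversely, for the right square you invoke the enhanced AR duality on the ungraded enhancement, but that statement carries no grading information and cannot by itself show that the orbit of the graded enhancement by $(1)$ is $\D_\sg(R)_\dg$; that identification again comes from the Reconstruction Theorem, now applied directly to $\C$. In short: the graded enhanced AR duality supplies the bimodule identification, the Reconstruction Theorem supplies the orbit structure, and both are needed for both squares.

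A minor point: the Serre functor on $\D_\sg^\Z(R)$ is $(-p)[d-1]$, not $(p)[d-1]$ (Corollary~\ref{grAR}); this is harmless here since orbiting by $(p)$ and by $(-p)$ give the same result.
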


Here the category $\C_{d-1}^{(1/p)}(A)$ is the triangulated hull of the orbit category of $\per A$ modulo a $p$-th root of $\nu_{d-1}$ given as the automorphism of $\per A$ corresponding to the degree shift functor on $\sg^\Z\!R$. 
In Part \ref{part2} we apply \ref{intro tilt} to various commutative or non-commutative rings admitting tilting objects.

\bigskip
Let us first discuss enhancements of singularity categories.
Let $R$ be a commutative Gorenstein ring. The {\it dg singularity category} of $R$ is the dg quotient $\D^b_\dg(\mod R)/\per_\dg\!R$ in the sense of \cite{Ke99,Dr}, which is quasi-equivalent to the dg category $\C_\ac(\proj R)_\dg$ of acyclic complexes over finitely generated projective modules.
Our first key result below lifts the Calabi-Yau property of singularity categories to their dg enhancements, where we denote by $(-)^\vee$ the Matlis dual.
\begin{Thm}[{Enhanced Auslander-Reiten duality,\,$\subset$\,\ref{iso}}]\label{intro iso}
Let $R$ be a commutative Gorenstein local isolated singularity of dimension $d$, and $\C$ its dg singularity category. 
Then we have an isomorphism in $\D(\C^\op\otimes_R\C)$:
\[\C^\vee\simeq\C[d-1].\]
\end{Thm}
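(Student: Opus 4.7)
My plan is to construct the quasi-isomorphism $\C[d-1] \xrightarrow{\sim} \C^\vee$ in three steps: reformulation as a trace pairing via adjunction, construction of the trace using Grothendieck local duality, and lifting from the derived category to an honest dg-bimodule quasi-isomorphism via an ambient enhancement.

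By the $\otimes$-$\Hom$ adjunction in $\D(\C^\op \otimes_R \C)$, giving a bimodule morphism $\C[d-1] \to \C^\vee$ is equivalent to specifying a family of dg $R$-module traces $\mathrm{tr}_X : \C(X, X) \to E[-(d-1)]$, natural in $X \in \C$ and compatible with composition; equivalently, a dg pairing $\C(X, Y) \otimes_R \C(Y, X) \to \C(X, X) \xrightarrow{\mathrm{tr}_X} E[-(d-1)]$ in each bidegree. The resulting bimodule morphism is a quasi-isomorphism precisely when this pairing exhibits Matlis duality between $\C(X, Y)[d-1]$ and $\C(Y, X)$ for all $X, Y \in \C$.

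For the construction of the trace I would invoke Grothendieck local duality. Since $R$ is an isolated Gorenstein singularity of dimension $d$, one has $\RGa_\m R \simeq E[-d]$ in $\D(R)$, and the isolated-singularity hypothesis ensures that each Tate $\Ext$ group $\widehat{\Ext}^\bullet_R(X, X)$ is of finite length over $R$, so $\C(X, X) \simeq \RGa_\m \C(X, X)$ in $\D(R)$. At the cohomology level the trace is then determined by classical Auslander-Reiten duality: its component in degree $d-1$ is the Matlis dual of $\mathrm{id}_X \in \sEnd(X) = \widehat{\Ext}^0_R(X, X)$. The shift is $d-1$ rather than the Gorenstein $d$ because the identity occupies the degree-zero part of $\C(X, X)$, absorbing one unit of shift when passing from the Gorenstein duality $\RGa_\m R[d] \simeq E$ to the $(d-1)$-Calabi-Yau structure on $\sCM R$.

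The main obstacle, and the technical heart of the proof, is the third step: promoting the $\D(R)$-level traces to a single morphism of dg $\C$-bimodules, natural in both arguments. My approach goes through an ambient dg enhancement. The Gorenstein property of $R$ equips $\D^b_\dg(\mod R)$ with a canonical relative Serre bimodule isomorphism $(-)^\vee \simeq (-)[d]$ coming from $\omega_R \simeq R$, and the dg-quotient sequence $\per_\dg R \hookrightarrow \D^b_\dg(\mod R) \twoheadrightarrow \C$ should transport this Serre structure to $\C$, dropping the shift by one because the kernel $\per_\dg R$ itself carries one unit of Gorenstein duality. Careful bookkeeping of the bimodules under this dg-quotient is expected to produce the desired isomorphism $\C^\vee \simeq \C[d-1]$; once in place, verification that it is a quasi-isomorphism is then routine, since bidegree-wise in $(X, Y)$ it recovers the classical Auslander-Reiten duality of \cite{Au78}.
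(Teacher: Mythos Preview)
Your high-level strategy---transport a duality from $\D^b_\dg(\mod R)$ through the dg quotient $\per_\dg R \hookrightarrow \D^b_\dg(\mod R) \twoheadrightarrow \C$---is the same as the paper's, but there is a genuine gap in your execution, and the duality you invoke on the ambient category is the wrong one.

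Your step 3 asserts that ``the Gorenstein property of $R$ equips $\D^b_\dg(\mod R)$ with a canonical relative Serre bimodule isomorphism $(-)^\vee \simeq (-)[d]$''. This is false: Matlis duality $(-)^\vee$ does not behave like a shift on $\D^b(\mod R)$, only on finite-length objects. What \emph{is} available on all of $\D^b(\mod R)$ is the contravariant duality $(-)^\ast = \RHom_R(-,R)$, and the paper works with this first. The paper proves $\C^\ast \simeq \C[-1]$ as bimodules (their Theorem~\ref{eAR}), and only afterwards uses the isolated-singularity hypothesis to say that each $\C(M,N)$ has finite-length cohomology, whence $\C^\ast \simeq \C^\vee[-d]$ and therefore $\C^\vee \simeq \C[d-1]$. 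Trying to run the argument with $(-)^\vee$ from the start, as you do, means the ambient bimodule structure you need on $\D^b_\dg(\mod R)$ is not there.

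Even granting the right duality, your ``careful bookkeeping'' sentence is where the entire content of the proof lives, and you have not supplied it. The paper's mechanism is concrete: the exact sequence of dg categories gives a triangle
\[
N\lotimes_\L\RHom_\L(M,\L)\xrightarrow{\alpha_{M,N}}\RHom_\L(M,N)\longrightarrow\C(M,N)\longrightarrow
\]
in $\D(R)$, functorial in $M,N$. Applying $(-)^\ast$ to this triangle and writing down explicit natural isomorphisms (tensor-hom adjunction, the reflexivity isomorphism $\beta_M\colon M\to M^{\ast\ast}$), one checks that the dualized first map $\alpha_{M,N}^\ast$ identifies with $\alpha_{N,M}$. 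Comparing cones gives $\C(M,N)^\ast[1]\simeq\C(N,M)$ naturally, hence an isomorphism in $\D(\B^e)$ for $\B=\D^b_\dg(\mod R)$, which descends to $\D(\C^e)$ because $\B\to\C$ is a localization. This explicit triangle comparison is what replaces your unexecuted step~3; your trace reformulation in steps~1--2 is not needed and does not help produce the bimodule map.
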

As a consequence, we obtain the classical Auslander-Reiten duality on the category $\sCM R$ by taking the $0$-th cohomologies of this isomorphism (see \ref{classical AR}). Note that this isomorphism can be viewed as a {\it weak right Calabi-Yau structure} \cite{KS,BD,KW} on $\C$. We also give a graded version \ref{intro gr} of the above enhanced Auslander-Reiten duality as a step toward the proof of \ref{intro tilt}.

\bigskip
We next discuss the canonical enhancements of cluster categories. 
Let $A$ be a dg algebra and $V$ a bimodule over $A$. We will write $V^n$ for $V\otimes_AV\otimes_A\cdots\otimes_AV$ ($n$ factors). Following \cite{Ke05}, we denote by $A/V$ the {\it dg orbit algebra} of $A$ by $V$, see Section \ref{section: cluster category}.
The $n$-cluster category of a finite dimensional algebra $A$ is defined as
\[ \C_n(A):=\per(A/DA[-n]), \]
thus the dg orbit algebra $A/DA[-n]$ gives an enhancement of $\C_n(A)$.
Under a suitable setting, the cluster category $\C_n(A)$ has a special object called an $n$-cluster tilting object \cite{Am09,Guo}. 
By construction, the dg orbit algebra $A/V$ has a structure of an {\it Adams graded dg algebra}, that is, a dg algebra with an additional grading to the usual cohomological grading.
For an Adams graded dg algebra $\Ga$ we can form the {\it graded derived category $\D^\Z(\Ga)$} as well as the {\it graded perfect derived category} $\per^\Z\!\Ga$, see Section \ref{graded} for details.

Another step toward the proof of \ref{intro tilt} is the following characterization of dg orbit algebras among $\Z$-graded dg algebras, where we refer to \ref{qe} for the notion of {\it $\Z$-graded quasi-equivalences}.

\begin{Thm}[{Morita Theorem for Adams graded dg categories,\,$\subset$\,\ref{bGqe}}]\label{orbit-intro}
Let $\Ga$ be a $\Z$-graded dg algebra.
Suppose that $\per^\Z\!\Ga=\thick\Ga$ and put $A=\Ga_0$ and $V=\Ga_{-1}$. Then $\Ga$ is $\Z$-graded quasi-equivalent to the dg orbit algebra $A/V$:
\[ \Ga\simeq A/V. \]
\end{Thm}

As an application, we get a structure theorem of {Calabi-Yau dg algebras}.
We say that an Adams graded dg algebra $\Ga$ is {\it $p$-shifted $d$-Calabi-Yau} if there is an isomorphism $D\Ga\simeq\Ga(-p)[d]$ in $\D^\Z(\Ga^e)$.
\begin{Cor}[{Morita Theorem for Calabi-Yau dg algebras,\,$\subset$\,\ref{futatsu}}]
In the setting of \ref{orbit-intro}, suppose moreover that $\Ga$ is $p$-shifted $d$-Calabi-Yau. Then $A$ and $V$ in \ref{orbit-intro} satisfy $V^{\lotimes_Ap}\simeq DA[-d]$ in $\D(A^e)$, and we have a $\Z$-graded quasi-equivalence $\Gamma\simeq A/V$.
\end{Cor}

\subsection{Our general results}\label{section 0.2}

In this subsection, we explain our results in more detail. 
In fact, we will prove Theorems \ref{intro tilt} and \ref{intro iso} for (not necessarily commutative) module-finite $R$-algebras over commutative Gorenstein rings $R$. Moreover, we consider $R$-algebras which are graded by an arbitrary abelian group $G$, possibly with torsion. For this we first need to prepare some basics on commutative algebra, module-finite algebras, and dg singularity categories in the $G$-graded setting.

Let $R=\bigoplus_{g\in G}R_g$ be a $G$-graded commutative Noetherian ring, and $\La=\bigoplus_{g\in G}\La_g$ a $G$-graded $R$-algebra such that the structure morphism $R\to\La$ preserves the $G$-gradings. We call $\La$ an {\it $R$-order} if $\La$ is (maximal) Cohen-Macaulay as an (ungraded) $R$-module. We say that $\La$ is {\it symmetric} if $\Hom_R(\La,R)\simeq\La$ as (ungraded) $(\La,\La)$-bimodules.
We consider the {\it $G$-graded dg singularity category} $\C$ of $\La$ which enhances the singularity category $\sg\La$, see \ref{grdgsg}, and 
$\C^{\fl}$ the full dg subcategory of $\C$ corresponding to the category $\sg_0\!\La$, see \eqref{define C'}. 

Furthermore, we need the notions of {\it graded dimension $\dim^G\!R$}, the {\it Gorenstein parameter $p_R$}, and {\it $G$-graded Matlis dual}. These graded versions are necessary, for example, to state Auslander-Reiten duality correctly in the graded setting. We refer to Appendix \ref{G-graded rings} for the definitions.
Under a mild assumption, a $G$-graded symmetric $R$-order $\L$ has {\it relative Gorenstein parameter $p_{\La/R}$} in the sense that we have $\Hom_R(\La,R)\simeq\La(-p_{\La/R})$ as $G$-graded $(\La,\La)$-bimodules, see \ref{relative}. More details can be found in Section \ref{grCM}.

Now we are ready to state our general results. The first one on enhanced Auslander-Reiten duality is as follows. We denote by $(-)^{\vee_G}$ the graded Matlis dual, and by $(-)^\ast=\RHom_R(-,R)$ the functor on $\D(\Mod^G\!R)$. The case $G=0$ and $\Lambda=R$ is an isolated singularity is \ref{intro iso}.

\begin{Thm}[{Graded Enhanced Auslander-Reiten duality,\,=\ref{greAR},\ \ref{griso}}]\label{intro gr}
Let $R$ be a $G$-graded commutative Gorenstein ring of $\dim^G\!R=d<\infty$, and $\L$ a $G$-graded symmetric $R$-order with Gorenstein parameter $p_\L$ and relative Gorenstein parameter $p_{\Lambda/R}$.
There exist isomorphisms
\[
\begin{aligned}
\C^\ast&\simeq\C(-p_{\L/R})[-1] &&\mbox{ in } \D^G(\C^\op\otimes_R\C),\\
{\C^{\fl}{}^{\vee_G}}&\simeq\C^{\fl}(-p_\L)[d-1] &&\mbox{ in } \D^G((\C^{\fl})^\op\otimes_R\C^{\fl}).
\end{aligned}
\]
In particular, if $\L$ satisfies the {\rm(R$_{d-1}^G$)}-condition (see \ref{Sing^G}), then there exists an isomorphism:
\[ {\C^{\vee_G}}\simeq\C(-p_\L)[d-1]\ \mbox{ in }\  \D^G(\C^\op\otimes_R\C).\]
Moreover, if $R_0$ is a finite dimensional algebra over a field $k$, then the graded Matlis dual $(-)^{\vee_G}$ in the isomorphisms above can be replaced by the graded $k$-dual $D$ sending $M=\bigoplus_{g\in G}M_g$ to $DM=\bigoplus_{g\in G}\Hom_k(M_{-g},k)$.
\end{Thm}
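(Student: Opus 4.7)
The plan is to establish the ``relative'' duality $\C^\ast\simeq\C(-p_{\L/R})[-1]$ over $R$ first, and then to pass to the graded Matlis-dual statement by invoking graded local duality on the $G$-graded Gorenstein ring $R$. The key input for the first step is the $R$-symmetry of $\L$, and for the second the Gorenstein property of $R$.

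First I would fix a concrete dg enhancement of $\C$, for instance the Adams $G$-graded dg category of acyclic complexes of finitely generated $G$-graded projective $\L$-modules, so that the $R$-dual $(-)^\ast=\RHom_R(-,R)$ can be computed term-wise. The symmetric order hypothesis gives a bimodule isomorphism $\Hom_R(\L,R)\simeq\L(-p_{\L/R})$; at the level of totally acyclic complexes this produces an autoequivalence of $\C$ of Adams degree $-p_{\L/R}$, and under Buchweitz's equivalence $\sCM^G\!\L\simeq\D_\sg^G(\L)$ the exchange between syzygies and cosyzygies contributes a net homological shift $[-1]$. The main technical obstacle is to upgrade this functorial identification to an isomorphism of dg $(\C,\C)$-bimodules over $R$; I would carry this out by constructing an explicit morphism out of a bar-type resolution of $\C$ and verifying compatibility with the symmetric pairing on $\L$, mirroring the strategy used to obtain Theorem \ref{intro iso}.

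Next I would apply graded local duality over $R$: for a $G$-graded $R$-complex $X$ whose cohomology is sufficiently finite in the graded sense (in particular for objects coming from modules locally free on the punctured spectrum, which is exactly the condition isolating $\C^{\fl}$), there is a natural isomorphism $X^{\vee_G}\simeq X^\ast(-p_R)[d]$. Applying this to the first step and using the additivity $p_\L=p_R+p_{\L/R}$ of Gorenstein parameters for symmetric $R$-orders, I obtain
\[
\C^{\fl\,\vee_G}\;\simeq\;\C^{\fl\,\ast}(-p_R)[d]\;\simeq\;\C^{\fl}(-p_{\L/R}-p_R)[d-1]\;=\;\C^{\fl}(-p_\L)[d-1]
\]
in $\D^G((\C^{\fl})^\op\otimes_R\C^{\fl})$. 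Under the graded $(\mathrm{R}_{d-1}^G)$ condition, every graded Cohen-Macaulay $\L$-module is locally free on the punctured spectrum, so $\C^{\fl}=\C$ and the isomorphism promotes to $\D^G(\C^\op\otimes_R\C)$.

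Finally, when $R_0$ is finite dimensional over $k$, I would identify the graded Matlis dual with the graded $k$-dual $D$ by noting that the injective hull of $R_0/\mathrm{rad}\,R_0$ as a graded $R$-module agrees, component by component, with $\Hom_k(-,k)$ applied to graded pieces; this directly translates all of the preceding isomorphisms into their $D$-counterparts. The principal difficulty in the whole argument is the dg-bimodule statement underlying the first isomorphism; once it is secured, the remaining assertions follow formally from graded commutative algebra and the definitions of the Gorenstein parameters.
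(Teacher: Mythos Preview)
Your overall architecture matches the paper's: first prove $\C^\ast\simeq\C(-p_{\L/R})[-1]$ as dg bimodules, then combine with graded local duality $X^{\vee_G}\simeq X^\ast(-p_R)[d]$ on $\fl_d^G R$ and the additivity $p_\L=p_R+p_{\L/R}$ to obtain the Matlis-dual statement; the $(\mathrm{R}_{d-1}^G)$ case and the identification of $(-)^{\vee_G}$ with $D$ when $R_0$ is finite-dimensional over $k$ are handled exactly as you describe.

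The one substantive difference is in how the first isomorphism is actually produced. You propose to work directly with totally acyclic complexes and a ``bar-type resolution,'' using the symmetry $\Hom_R(\L,R)\simeq\L(-p_{\L/R})$ and a syzygy/cosyzygy heuristic for the shift $[-1]$. The paper instead exploits the exact sequence $\per_\dg\L\to\D^b(\md\L)_\dg\to\C$ in $\Hmo$: for $M,N\in\D^b(\md\L)$ one has a functorial triangle
\[
N\lotimes_\L\RHom_\L(M,\L)\xrightarrow{\ \alpha_{M,N}\ }\RHom_\L(M,N)\longrightarrow\C(M,N)\longrightarrow
\]
in $\D_R$. Applying $(-)^\ast=\RHom_R(-,R)$ and using only adjunction together with the natural isomorphism $1\xrightarrow{\simeq}\RHom_R(\RHom_\L(-,\L),R)$, one checks that the dual of $\alpha_{M,N}$ identifies (up to the twist $(-p_{\L/R})$ in the graded case) with $\alpha_{N,M}$; comparing cones gives $\C(M,N)^\ast[1]\simeq\C(N,M)(-p_{\L/R})$ as $(\B,\B)$-bimodules, and since $\B\to\C$ is a localization this upgrades to $(\C,\C)$-bimodules. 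This cone argument is what replaces your ``bar-type resolution,'' and it makes both the appearance of $[-1]$ and the bimodule naturality transparent without any further construction. Your outline is correct, but if you try to carry out your version you will likely find yourself reinventing exactly this triangle.
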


Taking the $0$-th cohomology we recover the following classical AR duality in the graded setting (cf.\ \cite{AR}), which implies the existence of almost split sequences in the category $\CM^G_0\!\La$.

\begin{Cor}[{Classical Auslander-Reiten duality,\,=\ref{grAR}}]
For each $M,N\in\sCM^G_0\!\L$ we have a natural isomorphism
\[ D\sHom^G_\La(M,N)\simeq\sHom_\La^G(N,M(-p_\L)[d-1]). \]
Therefore, we have the following.
\begin{enumerate}
\item The triangulated category $\sCM^G_0\!\La$ has a Serre functor $(-p_\L)[d-1]$.
\item If $\Lambda$ satisfies {\rm(R$_{d-1}^G$)} condition, then $\sCM^G_0\!\La=\sCM^G\!\L$ has a Serre functor $(-p_\L)[d-1]$.
\end{enumerate}
\end{Cor}



Now we state the general version below of \ref{intro tilt}, which is far more general in the following four points.
\begin{itemize}
\item We deal with (possibly non-commutative) symmetric orders $\La$ over $R$,
\item we do not assume that $R$ and/or $\La$ have an isolated singularity,
\item we allow our grading group $G$ to have torsion and/or to have higher rank, and
\item we deal with any generator of $\sg_0^G\!\La$ which is not necessarily a tilting subcategory.
\end{itemize}
For this we have to consider the \emph{$G$-prime spectrum} $\Spec^G\!R$ and
the \emph{$G$-singular locus} $\Sing_R^G\!\La:=\{\p\in\Spec^G\!R\mid\sg^G\!\La_{\p,G}\neq0\}$, where $\La_{\p,G}$ is the $G$-homogeneous localization (see Appendix \ref{G-graded rings} for details).
The latter is denoted by $\Sing_R\La$ if $G=0$.

Let $\sg_{0,\dg}^G\!\La$ be the canonical dg enhancement of $\sg_0^G\!\La$.
Our main theorem \ref{tilt intro} shows that, for each full dg subcategory $\A$ of $\sg_{0,\dg}^G\!\La$ which generates $\sg_0^G\!\La$, 
there exists a fully faithful functor $\C_{d-1}(\A)\subset\sg_0^{G/(p)}\!\La$ whose image is equivalent to $\sg_0^{G/(p)}\La_{\m,G/(p)}$, and also shows that the functor is an equivalence if $\Sing_R^{G/(p)}\!\L\subset\{\m\}$.
Notice that all commutative diagrams in \ref{tilt intro} lift to their canonical dg enhancements.

\begin{Thm}[{Morita Theorem for Singularity categories in general form,\,$\subset$\,\ref{CM}}]\label{tilt intro}
Let $G$ be an abelian group, $(R,\m)$ a graded Gorenstein $G$-local $k$-algebra with $\dim^G\!R=d$ such that $R_0$ is finite dimensional over $k$, and $\L=\bigoplus_{g\in G}\L_g$ a symmetric $R$-order with Gorenstein parameter $p\in G$ which is torsion-free.
For each full dg subcategory $\A\subset\sg_{0,\dg}^G\!\La$ which generates $\sg_0^G\!\La$ as a thick subcategory,
the following assertions hold.
\begin{enumerate}
\item There exists a commutative diagram of triangle equivalences
\[ \xymatrix@R=3mm{
	\per\A\ar[r]\ar@{-}[d]^-\rsimeq&\C_{d-1}(\A)\ar@{-}[d]^-\rsimeq\\
	\sg_0^G\!\L\ar[r]&\sg_0^{G/(p)}\!\L_{\m,G/(p)}.} \]
If $\Sing_R^{G/(p)}\!\L\subset\{\m\}$, then we can replace $\sg_0^G\!\L$ and $\sg_0^{G/(p)}\!\L_{\m,G/(p)}$ above by $\sg^G\!\L$ and $\sg^{G/(p)}\!\L$ respectively.
\item If $G=\Z$, then there exists a commutative diagram of triangle equivalences
\[ \xymatrix@R=3mm{
	\per\A\ar[r]\ar@{-}[d]^-\rsimeq&\C_{d-1}(\A)\ar@{-}[d]^-\rsimeq\ar[r]&\C^{(1/p)}_{d-1}(\A)\ar@{-}[d]^-\rsimeq\\
	\sg_0^\Z\!\L\ar[r]&\sg_0^{\Z/p\Z}\!\L_{\m,\Z/p\Z}\ar[r]&\sg_0\!\L_\m.} \]
If $\Sing_R\L\subset\{\m\}$, then we can replace $\sg_0^\Z\!\L$, $\sg_0^{\Z/p\Z}\!\L_{\m,\Z/p\Z}$ and $\sg_0\!\L_\m$ above by $\sg^\Z\!\L$, $\sg^{\Z/p\Z}\!\L$ and $\sg\L$ respectively.
\end{enumerate}
\end{Thm}

We have a version of \ref{tilt intro} for hypersurface singularities, where $\Sing^G\!R:=\Sing_R^G\!R$. Recall by matrix factorization that hypersurfaces have $2$-periodic singularity categories. It allows us to give equivalences with cluster categories of various Calabi-Yau dimensions.
\begin{Cor}[{Morita Theorem for hypersurfaces,\,=\ref{hypersurface}}]
In the above setting, assume that $R$ is a hypersurface singularity with Gorenstein parameter $p$.
Let $\A\subset\sg_{0,\dg}^G\!\La$ be a full dg subcategory which generates $\sg_0^G\!\La$ as a thick subcategory. Let $\ell\in\Z$, and assume that $p+\ell c\in G$ is torsion-free.
\begin{enumerate}
\item There exists a commutative diagram of triangle equivalences
\[ \xymatrix@R=3mm{
	\per\A\ar@{-}[d]^-\rsimeq\ar[r]&\C_{d+2\ell-1}(\A)\ar@{-}[d]^-\rsimeq\\
	\sg_0^G\!R\ar[r]&\sg_0^{G/(p+\ell c)}\!R_{\m,G/(p+\ell c)}. } \]
If $\Sing^{G/(p+\ell c)}\!R\subset\{\m\}$, then we can replace $\sg_0^G\!R$ and $\sg_0^{G/(p+\ell c)}\!R_{\m,G/(p+\ell c)}$ above by $\sg^G\!R$ and $\sg^{G/(p+\ell c)}\!R$ respectively.
\item  If $G=\Z$, then there exists a commutative diagram of triangle equivalences
\[ \xymatrix@R=3mm{
	\per\A\ar@{-}[d]^-\rsimeq\ar[r]&\C_{d+2\ell-1}(\A)\ar@{-}[d]^-\rsimeq\ar[r]&\C_{d+2\ell-1}^{(1/p+\ell c)}(\A)\ar@{-}[d]^-\rsimeq\\
	\sg_0^\Z\!R\ar[r]&\sg_0^{\Z/(p+\ell c)\Z}R_{\m,\Z/(p+\ell c)\Z}\ar[r]&\sg_0\!R_\m.} \]
If $\Sing R\subset\{\m\}$, then we can replace $\sg_0^\Z\!R$, $\sg_0^{\Z/(p+\ell c)\Z}\!R_{\m,\Z/(p+\ell c)\Z}$ and $\sg_0\!R_\m$ above by $\sg^\Z\!R$, $\sg^{\Z/(p+\ell c)\Z}\!R$ and $\sg R$ respectively.
\end{enumerate}
\end{Cor}


\subsection{Applications}

As an application of our results, we give a number of equivalences between singularity categories and cluster categories of certain finite dimensional algebras.

First, we apply our results to rings with Krull dimension at most one.
Applying \ref{tilt intro} to tilting objects in the $\Z$-graded singularity categories given in \cite{Ya} for dimension $0$ and \cite{BIY} for dimension $1$, we obtain the following results.

\begin{Thm}[Small Dimensions, $=$\,\ref{dim0}, \ref{1jigen}]\label{intro low}
For dimensions 0 and 1, the following assertions hold.
\begin{enumerate}
\item Let $\L=\bigoplus_{i\ge0}\L_i$ be a finite dimensional non-semisimple symmetric algebra over a field $k$ with $\gd\La_0<\infty$ and with Gorenstein parameter $p$. 
There exists a commutative diagram of triangle equivalences
\[ \xymatrix@R=3mm{
	\D^b(\mod A)\ar@{-}[d]^-\rsimeq\ar[r]&\C_{-1}(A)\ar@{-}[d]^-\rsimeq\ar[r]&\C_{-1}^{(1/p)}(A)\ar@{-}[d]^-\rsimeq\\
	\sg^\Z\!\La\ar[r]&\sg^{\Z/p\Z}\!\La\ar[r]&\sg\La. } \]
\item Let $R=\bigoplus_{i\ge0}R_i$ be a $\Z$-graded commutative Gorenstein ring with dimension one and Gorenstein parameter $p$ such that $R_0$ is a field and $\m:=\bigoplus_{i>0}R_i$.
If $p<0$, then there exists a commutative diagram of triangle equivalences
\[ \xymatrix@R=3mm{
	\per A\ar@{-}[d]^-\rsimeq\ar[r]&\C_{0}(A)\ar@{-}[d]^-\rsimeq\ar[r]&\C_{0}^{(1/p)}(A)\ar@{-}[d]^-\rsimeq\\
	\sg_0^\Z\!R\ar[r]&\sg_0^{\Z/p\Z}\!R_{\m,\Z/p\Z}\ar[r]&\sg_0\!R_\m. } \]
\end{enumerate}
\end{Thm}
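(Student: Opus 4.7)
Both statements are direct specializations of Theorem~\ref{tilt intro}, so the plan is to (i) supply a tilting object in the graded singularity category from existing literature, (ii) verify the remaining hypotheses of \ref{tilt intro}, and (iii) identify the upper-left corner $\per\P$ of the resulting diagram with the more concrete $\D^b(\mod A)$ or $\per A$ as stated.

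For part (1), I would take $G=\Z$ and view $\L$ as a graded symmetric order over the trivially graded base ring $R=k$, so that $d=\dim^\Z R=0$ and $d-1=-1$. Yamaura \cite{Ya} produces a tilting object $M\in\D_\sg^\Z(\L)$ whose endomorphism algebra $A:=\End_{\D_\sg^\Z(\L)}(M)$ is a finite-dimensional $k$-algebra of finite global dimension. Before invoking Theorem~\ref{tilt intro}(2) with $\P=\add M$, I would check that the Gorenstein parameter $p$ is torsion-free in $\Z$, i.e.\ nonzero: were $\L$ concentrated in degree $0$, then $\L=\L_0$ would be both symmetric and of finite global dimension, hence semisimple, contradicting the non-semisimplicity assumption. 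The punctured spectrum of $k$ is empty, so the regularity condition (R$_{d-1}^\Z$) is vacuous and $\D_{\sg,0}^*=\D_\sg^*$ throughout, while finite global dimension of $A$ identifies $\per A$ with $\D^b(\mod A)$. Substituting these identifications into Theorem~\ref{tilt intro}(2) yields the diagram of (1).

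For part (2), I would take $\L=R$ viewed as a graded symmetric order over itself, so that the relative Gorenstein parameter is zero and the overall Gorenstein parameter equals the Gorenstein parameter $p$ of $R$, which is nonzero by the hypothesis $p<0$ and hence torsion-free in $\Z$; here $d=1$ and $d-1=0$. The required tilting object $M\in\D_{\sg,0}^\Z(R)$ is supplied by \cite{BIY} under exactly this hypothesis $p<0$, and the endomorphism algebra $A$ produced there is of finite global dimension. Applying Theorem~\ref{tilt intro}(2) with $\P=\add M$ then delivers the diagram of (2).

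The one place that warrants care rather than substantive new work is matching the outputs of \cite{Ya} and \cite{BIY} with the precise input format of Theorem~\ref{tilt intro}: one must check that the tilting subcategories constructed there generate $\D_{\sg,0}^\Z$ (not merely $\D_\sg^\Z$) in case~(2), and that $A$ has finite global dimension so that $\per A=\D^b(\mod A)$ in case~(1). Both points are essentially built into the cited constructions, and no serious obstacle is anticipated beyond this bookkeeping; all of the substantive content is already carried out in Theorem~\ref{tilt intro}.
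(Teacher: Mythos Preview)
Your proposal is correct and follows essentially the same route as the paper: in both cases the paper simply cites the existing tilting objects from \cite{Ya} (dimension~$0$) and \cite{BIY} (dimension~$1$), then feeds them into Theorem~\ref{tilt} (i.e.\ \ref{tilt intro}), exactly as you outline. One small inaccuracy: in part~(2) the endomorphism algebra $A$ from \cite{BIY} is only Iwanaga--Gorenstein in general (cf.\ Example~\ref{std}), not of finite global dimension---but as you implicitly note, this is irrelevant since the statement of~(2) only claims $\per A$, not $\D^b(\mod A)$, in the upper-left corner.
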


In the rest of this section, we study two classes of commutative Gorenstein rings.

The first one is a quotient singularity $R$ by a finite subgroup $G$ of $\SL_d(k)$. Applying \ref{tilt intro} to the tilting object of $\sg^\Z\!R$ with respect to the standard $\Z$-grading of $R$ given in \cite{IT}, we obtain the following realization of the singularity category of $R$ as a cluster category. 

\begin{Thm}[Quotient singularities, =\ref{quot}]
Let $k$ be an algebraically closed field of characteristic $0$ and $G\subset\SL_d(k)$ a finite subgroup. Let $S=k[x_1,\ldots,x_d]$ be the polynomial ring with $\deg x_i=1$ and $R=S^G$ be the quotient singularity which is an isolated singularity. Let $A=\sEnd_R^\Z(T)$ for the maximal Cohen-Macaulay direct summand $T$ of $\bigoplus_{i=1}^d\Om_S^ik(i)$.
Then there exists a commutative diagram of equivalences
\[ \xymatrix@R=3mm{
	\D^b(\mod A)\ar[r]\ar@{-}[d]^-\rsimeq&\C_{d-1}(A)\ar[r]\ar@{-}[d]^-\rsimeq&\C_{d-1}^{(1/d)}(A)\ar@{-}[d]^-\rsimeq\\
	\sg^\Z\!R\ar[r]&\sg^{\Z/d\Z}\!R\ar[r]&\sg R.} \]
\end{Thm}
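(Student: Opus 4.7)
The strategy is to verify the hypotheses of \ref{tilt intro}(2) for the triple $(R,\Lambda,M)=(R,R,T)$ with $G=\Z$ and $p=d$, and then invoke it directly. All of the work is in identifying $R$ and the tilting object $T$ as satisfying the abstract assumptions.

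First I would collect the graded-commutative-algebra input. Since $G\subset\SL_d(k)$, Watanabe's theorem gives that $R=S^G$ is a positively graded Gorenstein $k$-algebra with $R_0=k$, and the canonical module computation $\omega_S\simeq S(-d)$ together with taking $G$-invariants yields $\omega_R\simeq R(-d)$, so $R$ has Gorenstein parameter $p=d\in\Z$, which is torsion-free. Since $R$ is commutative, it is tautologically a symmetric $R$-order over itself, with relative Gorenstein parameter $p_{R/R}=0$. Under the standing assumption that $G$ acts freely on $\AA^d\setminus\{0\}$ (the situation in which [IT] produces the tilting object), $R$ is an isolated singularity, hence condition $(\mathrm{R}_{d-1}^\Z)$ holds, so in the conclusion we may work with $\D_\sg^\Z$ rather than $\D_{\sg,0}^\Z$.

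Next I would cite the main result of Iyama--Takahashi \cite{IT}: with $T$ defined as the maximal Cohen-Macaulay summand of $\bigoplus_{i=1}^d\Omega_S^ik(i)$, the object $T$ is a tilting object of $\D_\sg^\Z(R)$, and the graded stable endomorphism algebra
\[ A=\sEnd_R^\Z(T)=\End_{\D_\sg^\Z(R)}(T) \]
has finite global dimension. This gives us the tilting subcategory $\P=\add T\subset\D_{\sg,0}^\Z(R)$ required by \ref{tilt intro}. Since $\gd A<\infty$, $A$ is automatically Gorenstein and proper as a $k$-linear category, and the derived equivalence $\D^b(\mod A)\simeq\per A\xrightarrow{\sim}\D_\sg^\Z(R)$ follows from standard tilting theory applied to $T$.

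Now I would invoke \ref{tilt intro}(2): with $G=\Z$, $p=d$ torsion-free, and the tilting subcategory $\add T$ inside $\D_{\sg,0}^\Z(R)=\D_\sg^\Z(R)$, the theorem produces a commutative diagram of triangle equivalences
\[ \xymatrix@R=3mm{
\per A\ar[r]\ar@{-}[d]^-\rsimeq&\C_{d-1}(A)\ar@{-}[d]^-\rsimeq\ar[r]&\C_{d-1}^{(1/d)}(A)\ar@{-}[d]^-\rsimeq\\
\D_\sg^\Z(R)\ar[r]&\D_\sg^{\Z/d\Z}(R)\ar[r]&\D_\sg(R),} \]
and replacing $\per A$ by $\D^b(\mod A)$ via $\gd A<\infty$ gives the stated diagram. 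The main obstacle is essentially external to this proof: it is the reliance on \ref{tilt intro} (which in turn needs the enhanced Auslander--Reiten duality \ref{intro gr} and the dg-orbit reconstruction theorem) to lift the naive ``Serre-functor matching'' $\nu_{d-1}\leftrightarrow(-d)$ between $\per A$ and $\D_\sg^\Z(R)$ to an actual equivalence of triangulated hulls; once that general machinery is in place, the verification for quotient singularities is just the checklist above, with Watanabe for the Gorenstein parameter and [IT] for the tilting object.
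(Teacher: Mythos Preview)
Your proposal is correct and follows essentially the same route as the paper: verify that $R$ is Gorenstein with Gorenstein parameter $d$ (the paper cites \cite[6.5]{IT} for this, equivalent to your Watanabe argument), invoke \cite[2.7]{IT} (=\ref{TA}) for the tilting object $T$ with $\gd A<\infty$, and then apply the main theorem \ref{tilt} (=\ref{tilt intro}) with $G=\Z$, $p=d$. The paper states \ref{quot} without a separate proof, simply as ``Applying \ref{TA} and our result \ref{tilt}'', which is exactly your checklist.
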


In particular, we obtain equivalences given by Keller-Reiten \cite{KRac} and Keller-Murfet-Van den Bergh \cite{KMV}, which were the first examples of equivalences between singularity categories and cluster categories.

The second one is a Geigle-Lenzing complete intersection $R$, which is graded by an abelian group $\LL$ of rank one possibly with torsion elements. Applying \ref{tilt intro} to the tilting object given in \cite{HIMO}, we obtain the following equivalence between the singularity category of $R$ and the cluster category of a finite dimensional algebra $A^{\CM}$ called the CM canonical algebra.

\begin{Thm}[Geigle-Lenzing complete intersections, =\ref{GL}]\label{GL in intro}
Let $R$ be a Geigle-Lenzing complete intersection of dimension $d+1$ and with Gorenstein parameter $-\vec{\om}\in\LL$. Suppose that $\vec{\om}\in\LL$ is torsion-free and that $\Sing^{\LL/(\vec{\om})}R\subset\{\m\}$. Then there exists a commutative diagram of equivalences
\[ 	\xymatrix@R=3mm{
	\D^b(\mod A^{\CM})\ar[r]\ar@{-}[d]^-\rsimeq&\C_d(A^{\CM})\ar@{-}[d]^-\rsimeq\\
	\sg^\LL\!R\ar[r]&\sg^{\LL/(\vec{\om})}\!R. } \]

\end{Thm}

As important examples, \ref{GL in intro} and \ref{intro low} include {\it Grassmannian cluster categories.} To categorify cluster algebra structure of the homogeneous coordinate ring of the Grassmannian ${\rm Gr}(n,l)$, Jensen-King-Su \cite{JKS} studied the category
\[\CM^{\Z/n\Z} k[x,y]/(x^\ell-y^{n-\ell})\ \mbox{ with}\ \deg x=1,\deg y=-1,\]
which is called the Grassmannian cluster category. Recently, August-Cheung-Faber-Gratz-Schroll \cite{ACFGS} introduced the infinite analogue as
\[\CM^\Z k[x,y]/(x^\ell)\ \mbox{ with}\ \deg x=1,\deg y=-1.\]
It categorifies the cluster algebra structure of the homogeneous coordinate ring of the infinite Grassmannian, and is called the infinite Grassmannian cluster category.
The following results show that their stable categories are triangle equivalent to usual cluster categories.

\begin{Thm}[Grassmannian cluster categories, =\ref{gra},\ \ref{gra2}]
Let $\ell$ be a positive integer.
\begin{enumerate}
\item For a positive integer $n>\ell$, we have a triangle equivalence
\[\sCM^{\Z/n\Z} k[x,y]/(x^\ell-y^{n-\ell})\simeq\C_2(kA_{\ell-1}\otimes kA_{n-\ell-1}). \]
\item  Let $A$ be a $\Z$-graded finite dimensional $k$-algebra given by a quiver with relations
\[ \xymatrix@!R=2mm{1\ar@2[r]^-x_-y&2\ar@2[r]^-x_-y&\cdots\ar@2[r]^-x_-y&\ell-2\ar[r]_y&\ell-1\ar@(ur,dr)^{w}&& xy-yx,\ \ w^\ell,\ \ wy^2=yx}\]
and degrees $\deg x=1$, $\deg y=0$, $\deg w=1$.
Then there exists a triangle equivalence
\[\sCM_0^\Z k[x,y]/(x^\ell)\simeq\C_2(\proj^\Z\!A).\]
\end{enumerate}
\end{Thm}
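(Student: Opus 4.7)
My plan is to derive both statements from the hypersurface version of the main tilting theorem (Theorem \ref{hypersurface}) applied in Krull dimension $d=1$ with shift parameter $\ell_0 = 1$, so that the target cluster category has Calabi-Yau dimension $d + 2\ell_0 - 1 = 2$. In both cases the Gorenstein parameter is $p = \ell$, and the hypersurface $2$-periodicity constant $c$ satisfies $c = -\ell$ (since $[2] = (-\deg f)$ in $\D^G_\sg(R)$ for a hypersurface $R = S/(f)$ via matrix factorisation), so that $p + \ell_0 c = 0$ in the grading group $G$. The element $0$ trivially generates the torsion-free subgroup $\{0\}$, so the torsion-freeness hypothesis holds vacuously and the quotient $G/(p + \ell_0 c)$ equals $G$. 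Since $R$ has an isolated singularity at the graded irrelevant ideal, condition $(\mathrm{R}^G_0)$ is automatic and $\D^G_{\sg,0}(R) = \sCM^G R$. It thus remains to produce a tilting subcategory $\P \subset \sCM^G R$ whose endomorphism algebra matches the right-hand side of the equivalence, and Theorem \ref{hypersurface} then yields $\sCM^G R \simeq \C_2(\P)$.

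For (1), $R = k[x,y]/(x^\ell - y^{n-\ell})$ is $\Z/n\Z$-graded. I would construct $\P$ as the additive closure of $(\ell - 1)(n-\ell-1)$ rank-one graded maximal Cohen-Macaulay $R$-modules $M_{i,j}$ (for $1 \le i \le \ell-1$, $1 \le j \le n-\ell-1$), modelled on the interior of the grid of Jensen-King-Su rim modules. The morphisms $M_{i,j} \to M_{i+1,j}$ and $M_{i,j} \to M_{i,j+1}$ are given by multiplication with $x$ and $y$, and the commutation $xy = yx$ reproduces the tensor product quiver $A_{\ell-1} \otimes A_{n-\ell-1}$ with only the usual commutation relations. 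Tilting, i.e.\ $\Ext^{>0}$-vanishing in every $\Z/n\Z$-degree and classical generation of $\sCM^{\Z/n\Z} R$, is verified using the two-periodic matrix factorisations of $x^\ell - y^{n-\ell}$ and the graded Auslander-Reiten duality from Theorem \ref{intro gr}. Theorem \ref{hypersurface}(1) then yields the equivalence $\sCM^{\Z/n\Z} R \simeq \C_2(kA_{\ell-1} \otimes kA_{n-\ell-1})$.

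For (2), $R = k[x,y]/(x^\ell)$ is $\Z$-graded. I would take $\P \subset \sCM^\Z R$ as the additive closure of $\ell - 1$ rank-one graded MCM modules, for example the cokernels of the matrix factorisations $x^i \cdot x^{\ell - i} = x^\ell$ for $1 \le i \le \ell - 1$. Morphisms $x$ and $y$ between consecutive summands are given by multiplication, while at the extremal vertex $\ell - 1$ there appears an extra loop $w$ of degree one, reflecting the infinite $y$-direction of $R$ (which in (1) was truncated by the relation $y^{n-\ell} = x^\ell$). Direct computation in the matrix factorisations gives the defining relations $xy = yx$, $w^\ell = 0$, and $wy^2 = yx$, identifying $\sEnd^\Z_\P$ with the algebra $A$ of the statement, so that $\P \simeq \proj^\Z A$. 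Theorem \ref{hypersurface}(2) then produces $\sCM^\Z R \simeq \C_2(\proj^\Z A)$, the quotient being trivial since $p + c = 0$. The main obstacle will be the explicit construction of $\P$ and the identification of its graded endomorphism algebra, in particular pinning down the loop $w$ of degree one and the slightly unusual relation $wy^2 = yx$ in (2), together with the Ext-vanishing checks in every graded degree, which require a careful analysis of the graded matrix factorisation resolutions.
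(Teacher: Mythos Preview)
Your approach has a genuine gap at the very first step: the element $0\in G$ is \emph{not} torsion-free. An element $g\in G$ is torsion-free in the sense of the paper precisely when it has infinite order (so that the Veronese subcategory $\C^{(g)}$ is $\Z$-graded; see the proof of \ref{futatsu}). The element $0$ has order $1$, so the hypothesis of Theorem \ref{hypersurface} fails when $p+\ell_0 c=0$, and the theorem simply does not apply. This is not a technicality: if $\P\subset\D^G_{\sg}(R)$ were a tilting subcategory, you would obtain $\D^G_{\sg}(R)\simeq\per\P$, \emph{not} $\C_2(\P)$. But $\D^{\Z/n\Z}_{\sg}(R)$ is already $2$-Calabi--Yau (precisely because $p+c=0$), whereas $\per(kA_{\ell-1}\otimes kA_{n-\ell-1})$ is not $2$-CY, so no such tilting subcategory can exist. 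The same obstruction applies in (2). (There are also sign slips: the paper's convention is $[2]=(c)=(\deg f)$, and the Gorenstein parameter is $p=\deg x+\deg y-c=-\ell$, not $\ell$; but either way $p+c=0$, which is the real problem.)

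The paper avoids this by \emph{lifting to a finer grading group} in which the relevant element has infinite order, finding a tilting object there, and then descending. For (1), one regards $R=k[x,y]/(x^\ell-y^{n-\ell})$ as a Geigle--Lenzing hypersurface graded by the rank-one group $\LL=(\Z\vec{x}\oplus\Z\vec{y})/(\ell\vec{x}-(n-\ell)\vec{y})$. Theorem \ref{himo} supplies a tilting object in $\D_\sg^\LL(R)$ with endomorphism algebra $A^{\CM}\simeq kA_{\ell-1}\otimes kA_{n-\ell-1}$, and Example \ref{case l=1} (i.e.\ Theorem \ref{hypersurface} with $\ell_0=1$) then gives $\D_\sg^{\LL/(\vec{x}+\vec{y})}(R)\simeq\C_2(A^{\CM})$; the quotient $\LL/(\vec{x}+\vec{y})$ is identified with $\Z/n\Z$ via $\vec{x}\mapsto 1$, $\vec{y}\mapsto -1$. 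For (2), one lifts to the $\Z^2$-grading $\deg x=(1,0)$, $\deg y=(0,1)$. A tilting subcategory of $\D_\sg^{\Z^2}(R)$ is obtained from the tilting object of \cite{BIY} in the auxiliary positive $\Z'$-grading $\deg x=\deg y=1$ via the covering theorem \ref{covering}; this is exactly where the algebra $A$ with its $\Z$-grading arises, and $\A\simeq\proj^\Z A$. Now $p+c=(1-\ell,1)+(\ell,0)=(1,1)\in\Z^2$ is torsion-free, and $\Z^2/(1,1)\simeq\Z$ recovers the original grading, so Theorem \ref{hypersurface} yields $\D_\sg^\Z(R)\simeq\C_2(\A)$.
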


Here, $\C_2(\proj^\Z\!A)$ means the $2$-cluster category of the additive category $\proj^\Z\!A$, that is, the triangulated hull of $(\per^\Z\!A)/\nu_2$.

\subsection*{Acknowledgements}
The authors would like to thank Bernhard Keller and Claire Amiot for valuable discussions. We are also grateful to Mitsuyasu Hashimoto and Yuji Yoshino for valuable information on graded rings. We thank Martin Herschend, Lutz Hille, Martin Kalck, Yuta Kimura, Tsutomu Nakamura, Kenta Ueyama, Michael Wemyss and Kota Yamaura for stimulating discussions.

\part{Quasi-equivalences between singularity categories and cluster categories}

The aim of this first part is to give general theoretical results to give equivalences between singularity categories and cluster categories. We will essentially work over enhancements of relevant categories. After preparing some background on dg categories, we give Morita Theorem (Section \ref{sec2}), Enhanced Auslander-Reiten duality (Section \ref{sec3}), which lead to the main results in Section \ref{sec4}.

\section{Preliminaries}

\subsection{Preliminaries on dg categories}

{Throughout this section we fix a base field $k$.
A \emph{dg category} means a dg category over $k$. For dg categories $\A$ and $\B$, an \emph{$(\A,\B)$-bimodule} means a dg $\A^{\op}\otimes\B$-module.
Let $\dgcat=\dgcat_k$ be the category whose objects are small dg categories over $k$ and morphisms are $k$-linear dg functors.} 
When viewing dg categories as enhancements of triangulated categories, it is natural to identify dg categories which have the same derived category. This point of view leads to the following notion.

\begin{Def}
We say that a morphism $\A\to\B$ in $\dgcat$ is a {\it Morita functor} if it induces an equivalence $-\lotimes_\A\B\colon\D(\A)\to\D(\B)$. The {\it Morita homotopy category} $\Hmo$ is the localization of $\dgcat$ with respect to Morita functors.
\end{Def}

Thanks to the model structure on $\dgcat$ whose weak equivalences are Morita functors \cite[5.3]{Tab05b}, the localization $\Hmo$ has small morphism sets. Moreover by homotopy theory of dg categories, this set of morphisms is described quite nicely.
\begin{Prop}[{\cite[5.10]{Tab05b}\cite{To}, see also \cite[Section 4.6]{Ke06}}]\label{TabTo}
The set of morphisms $\A\to\B$ in $\Hmo$ is in one-to-one correspondence with the isomorphism classes in $\D(\A^\op\otimes\B)$ of $(\A,\B)$-bimodules which are perfect as right $\B$-modules.
\end{Prop}


For two dg categories $\A$ and $\B$, we call a $(\A,\B)$-bimodule $X$ \emph{invertible} if $-\lotimes_\A X\colon\D(\A)\to\D(\B)$ is an equivalence. By Proposition \ref{TabTo}, the isomorphism classes of invertible bimodules in $\D(\A^\op\otimes\B)$ are precisely the isomorphisms from $\A$ to $\B$ in $\Hmo$. We say two dg categories $\A$ and $\B$ are {\it derived Morita equivalent} if there exists an invertible $(\A,\B)$-bimodule, or equivalently, they are isomorphic in $\Hmo$.

We prepare the following left-right symmetry of invertible bimodules.

\begin{Lem}\label{derMo}
Let $\A$ and $\B$ be dg categories and $X$ an $(\A,\B)$-bimodule. 
\begin{enumerate}
	\item The following are equivalent.
	\begin{enumerate}
		\item The functor $-\lotimes_\A X\colon\D(\A)\to\D(\B)$ is an equivalence.
		\item The functor $X\lotimes_\B-\colon\D(\B^\op)\to\D(\A^\op)$ is an equivalence.
	\end{enumerate}
	\item Under the above situation, there is an isomorphism $\RHom_\B(X,\B)\simeq\RHom_{\A^\op}(X,\A)$ in $\D(\B^\op\otimes \A)$.
	\end{enumerate}
\end{Lem}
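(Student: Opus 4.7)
The plan is to prove both statements together by constructing an explicit ``inverse bimodule'' and then invoking its uniqueness. The main ingredient is that if $-\lotimes_\A X$ is an equivalence then its right adjoint $\RHom_\B(X,-)$ is a quasi-inverse, and evaluating this at $\A$ and $\B$ produces a $(\B,\A)$-bimodule that mediates the equivalence in (b) as well.

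The first step is to observe that (a) forces $X=\A\lotimes_\A X\in\per\B$, since equivalences preserve compact generators. Consequently the right adjoint $\RHom_\B(X,-)$ of $-\lotimes_\A X$ is naturally isomorphic to $-\lotimes_\B X^\vee$ as functors $\D(\B)\to\D(\A)$, where $X^\vee:=\RHom_\B(X,\B)$ is a $(\B,\A)$-bimodule; this is the standard compact-object identity, verified by reducing to $X=\B$ and using that both sides commute with coproducts and triangles. Since the right adjoint of an equivalence is its inverse, composing the two equivalences both ways at the enhanced bimodule level and evaluating at $\A\in\D(\A^\op\otimes\A)$ and $\B\in\D(\B^\op\otimes\B)$ yields the bimodule identifications
\[ X\lotimes_\B X^\vee\simeq\A\ \text{in}\ \D(\A^\op\otimes\A),\qquad X^\vee\lotimes_\A X\simeq\B\ \text{in}\ \D(\B^\op\otimes\B). \]
These immediately show that $X\lotimes_\B-$ and $X^\vee\lotimes_\A-$ are mutually inverse functors between $\D(\B^\op)$ and $\D(\A^\op)$, proving (a)$\Rightarrow$(b). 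The reverse implication is symmetric, obtained by regarding $X$ as a $(\B^\op,\A^\op)$-bimodule.

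For part (2), running the same argument starting from (b) (and using that (b) forces $X\in\per\A^\op$) produces the $(\B,\A)$-bimodule $X^!:=\RHom_{\A^\op}(X,\A)$ together with the analogous identities $X\lotimes_\B X^!\simeq\A$ in $\D(\A^\op\otimes\A)$ and $X^!\lotimes_\A X\simeq\B$ in $\D(\B^\op\otimes\B)$. Uniqueness of the inverse bimodule then gives the desired isomorphism in $\D(\B^\op\otimes\A)$ via the chain
\[ X^\vee\simeq X^\vee\lotimes_\A(X\lotimes_\B X^!)\simeq(X^\vee\lotimes_\A X)\lotimes_\B X^!\simeq\B\lotimes_\B X^!\simeq X^!. \]

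The main subtlety to keep track of is the bimodule structure: each adjunction and compactness isomorphism is a priori only an isomorphism of modules over one side, and must be promoted to the appropriate bimodule derived category before one tensors or Homs against the remaining actions. This is handled uniformly by interpreting the relevant functors $-\lotimes_\A X$ and $\RHom_\B(X,-)$ on bimodule derived categories from the outset, so that the key evaluations at $\A$ and $\B$ automatically live in $\D(\A^\op\otimes\A)$ and $\D(\B^\op\otimes\B)$ respectively, with no extra verification.
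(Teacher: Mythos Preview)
Your proof is correct and follows essentially the same approach as the paper's: for (1) you both use that the right adjoint $\RHom_\B(X,-)\simeq-\lotimes_\B X^\vee$ is a quasi-inverse and extract the bimodule isomorphisms $\A\simeq X\lotimes_\B X^\vee$ and $X^\vee\lotimes_\A X\simeq\B$ from the unit and counit. For (2) your argument via uniqueness of the two-sided inverse, $X^\vee\simeq X^\vee\lotimes_\A(X\lotimes_\B X^!)\simeq(X^\vee\lotimes_\A X)\lotimes_\B X^!\simeq X^!$, is a minor variant of the paper's direct computation $\RHom_{\A^\op}(X,\A)\simeq\RHom_{\A^\op}(X,X\lotimes_\B X^\vee)\xleftarrow{\simeq}\RHom_{\B^\op}(\B,X^\vee)=X^\vee$ (the middle map using full faithfulness of $X\lotimes_\B-$ from (1)(b)); the content is the same.
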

\begin{proof}
	(1)  We only show (a) implies (b). The quasi-inverse to $-\lotimes_\A X$ is given by its adjoint $\RHom_\B(X,-)$. Then the unit and counit maps
	\begin{eqnarray}
			&\A \rightarrow \RHom_\B(X,X),\label{first}\\
			&\RHom_\B(X,\B)\lotimes_\A X\rightarrow \B\label{second}
	\end{eqnarray}
	are isomorphisms in $\D(\A^e)$ and in $\D(\B^e)$, respectively. Now the functor $\RHom_\B(X,-)$ is isomorphic to $-\lotimes_\B\RHom_\B(X,\B)$ since $X$ is compact in $\D(\B)$, thus \eqref{first} becomes
	\begin{equation}
		\A\simeq X\lotimes_\B\RHom_\B(X,\B)\label{third}
	\end{equation}
	in $\D(\A^e)$. Then \eqref{second} and \eqref{third} imply $X\lotimes_\B-\colon\D(\B^\op)\to\D(\A^\op)$ and $\RHom_\B(X,\B)\lotimes_\A-\colon\D(\A^\op)\to\D(\B^\op)$ are mutually inverse equivalences.\\
	(2)  Applying the same argument to the equivalence in $X\lotimes_\B-\colon\D(\B^\op)\to\D(\A^\op)$, we obtain isomorphisms $X\lotimes_\B\RHom_{\A^{\op}}(X,\A)\simeq\A$ in $\D(\A^e)$ and $\B\simeq\RHom_{\A^{\op}}(X,\A)\lotimes_\A X$ in $\D(\B^e)$. Thus we have isomorphisms
$\RHom_\B(X,\B)\simeq\RHom_\B(X,\B)\lotimes_\A X\lotimes_\B\RHom_{\A^\op}(X,\A)\simeq\RHom_{\A^\op}(X,\A)$ in $\D(\B^\op\otimes\A)$.
\end{proof}

Let us recall several facts around exact sequences of dg categories.
We say that sequence $\N\xrightarrow{F}\T\xrightarrow{G}\U$ of triangulated categories is called {\it exact} if $F$ is fully faithful, $G\circ F=0$, and $G$ induces a triangle equivalence $\T/F(\N)\xrightarrow{\simeq}\U$.
\begin{Def}\label{short}
\begin{enumerate}
\item A sequence $\A\xrightarrow{}\B\xrightarrow{}\C$ of dg categories is {\it exact} if the induction functors give an exact sequence of triangulated categories:
\[ \xymatrix{\D(\A)\ar[r]^-{-\lotimes_\A\B}&\D(\B)\ar[r]^-{-\lotimes_\B \C}&\D(\C) }. \]
\item Let $\B$ be a dg category and $\A$ its full dg subcategory. A {\it dg quotient} of $\B$ by $\A$ is a dg category $\B/\A$ together with a morphism $\B\to\B/\A$ which fits into an exact sequence of dg categories $\A\to\B\to\B/\A$.
\end{enumerate}
\end{Def}


The following existence and uniqueness theorem for dg quotients is fundamental.
\begin{Thm}[{\cite[4.6]{Ke99}}]\label{ses}
Let $\A\subset\B$ be dg categories.
\begin{enumerate}
\item The dg quotient of $\B$ by $\A$ exists, which is unique up to unique isomorphism in $\Hmo$.
\item Let $\C$ be the dg quotient of $\B$ by $\A$. Then there exists a triangle in $\D(\B^e)$:
\[ \xymatrix{ \B\lotimes_\A\B\ar[r]&\B\ar[r]&\C\ar[r]&\B\lotimes_\A\B[1] }. \]
\end{enumerate}
\end{Thm}
We will include a proof in Appendix \ref{dgquotient}.


We record one more notion which we will use later.
\begin{Def}
A dg functor $\B\xrightarrow{}\C$ is a {\it localization} if the restriction functor $\D(\C)\to\D(\B)$ is fully faithful.
\end{Def}
For example, any dg quotient $\B\to\B/\A$ is a localization.

\subsection{Dg singularity categories}
Recall that the {\it singularity category} of a Noetherian ring $A$ is the Verdier quotient
\[ \sg A:=\D^b(\mod A)/\per A. \]
We are interested in the singularity category of Gorenstein rings. Let us define the canonical enhancements of these categories.

For a Noetherian ring $A$ we denote by $\D^b_\dg(\mod A)$ (resp. $\per_\dg\!A$) the enhancement of the bounded derived category $\D^b(\mod A)$ (resp. the perfect derived category $\per A$). Explicitly, we may take $\D^b_\dg(\mod A)=\C^{-,b}_\dg(\proj A)$ and $\per_\dg\!A=\C_\dg^b(\proj A)$, the dg category of bounded (above) complexes of finitely generated projective $A$-modules (with bounded cohomologies).
\begin{Def}\label{define sgdg}
The {\it dg singularity category} $\sg_\dg\!A$ of $A$ is the dg quotient of $\D^b_\dg(\mod A)$ by $\per_\dg\!A$, that is,
\[ \sg_\dg\!A:=\D^b_\dg(\mod A)/\per_\dg\! A. \]
\end{Def}

Let us give another description of the dg singularity category. For this suppose that $A$ is \emph{Iwanaga-Gorenstein} in the sense that the free module $A$ has finite injective dimension in $\Mod A$ and in $\Mod A^\op$. 
Then Buchweitz's theorem \cite{Bu} gives an equivalence with the stable category of Cohen-Macaulay modules
\[ \xymatrix{ \sCM A\ar[r]^-\simeq&\sg A }. \]
We also have a triangle equivalence $\sCM A\simeq\K_\ac(\proj A)$ with the homotopy category of acyclic complexes of finitely generated projective modules, which gives an enhancement $\C_\ac(\proj A)_\dg$, the corresponding dg category, of $\K_\ac(\proj A)$. This leads to the following more explicit description of $\sg_\dg\!A$.
\begin{Prop}\label{IGdg}
Let $A$ be an Iwanaga-Gorenstein ring, $\B=\C^{-,b}_\dg(\proj A)$, and $\C=\C_\ac(\proj A)_\dg$. Then the $(\B,\C)$-bimodule $Y$ defined by
\[ Y(C,B)=\cHom_A(C,B) \text{ for } B\in\B, C\in\C \]
induces an isomorphism $\sg_\dg\!A\xsimeq\C_\ac(\proj A)_\dg$ in $\Hmo$.
\end{Prop}
\begin{proof}
	We let $\A\to\B$ be the canonical inclusion. By \cite[A.22]{IO13} we have that $Y\in\Hom_{\Hmo}(\B,\C)$ and $-\lotimes_\B Y\colon\per\B\to\per\C$ identifies with $\D^b(\mod A)\to\sg A$ under canonical equivalences $\per\C=\K_\ac(\proj A)=\sCM A=\sg A$. It follows that the sequence $\per A\to\D^b(\mod A)\to\sg A$ of triangulated categories induced by the sequence $\A\hookrightarrow\B\xrightarrow{Y}\C$ of dg categories is the canonical one, which means that the sequence of dg categories is exact (in the sense of \ref{define exact}). Then both $\B/\A=\sg_\dg\!A$ as well as $\C$ are the cokernel of $\A\to\B$ in $\Hmo$, hence are isomorphic.
\end{proof}

We will use the following instance of \ref{ses}(2).
\begin{Prop}
For each $X,Y\in\D^b(\mod A)$ there is a triangle
\[ \xymatrix{ Y\lotimes_A\RHom_A(X,A)\ar[r]&\RHom_A(X,Y)\ar[r]&\C(X,Y) } \]
functorial in $X$ and $Y$.
\end{Prop}
\begin{proof}
	One can replace $\per_\dg\!A$ by $A$ under the Morita functor $A\xsimeq\per_\dg\!A$. Then the assertion follows from \ref{ses}(2).
\end{proof}

\section{Morita Theorem for Adams graded dg categories and cluster categories}\label{sec2}
We will be interested in dg categories with additional gradings, sometimes called {\it Adams gradings}. After collecting some basic definitions for the graded setting in the first subsection, we give the main result of this section which is a structure theorem of certain $\Z$-graded dg categories. Finally we apply the structure theorem for Calabi-Yau dg categories. 
\subsection{Graded dg categories and graded derived categories}\label{graded}
Throughout this section let $G$ be an abelian group. A {\it $G$-graded dg category} is a $\Z\times G$-graded category $\A$ endowed with a differential of degree $(1,0)$ subject to the Leibniz rule: $d(fg)=df\cdot g+(-1)^if\cdot dg$ for each composable morphisms $f$ and $g$ with $\deg f=(i,a)$.
In other words, it is a category enriched in $\C(\Md^G\!k)$, the category of complexes of $G$-graded $k$-vector spaces and degree $0$ morphisms.

The Adams grading allows such constructions as follows. If $\A$ is a $G$-graded dg category, we have its {\it degree $0$ part $\A_0$} which is the (ordinary) dg category whose morphism complexes are (Adams) degree $0$ part of those of $\A$. More generally, for each subgroup $H$ of $G$, we have the {\it $H$-Veronese subcategory} $\A^{(H)}$ whose morphism complexes are the (Adams) degree $h\in H$ parts of those of $\A$. Then $\A^{(H)}$ is an $H$-graded dg category.

For a $G$-graded dg category $\A$, we have the {\it $G$-graded derived category $\D^G(\A)$}, the localization of the (homotopy) category graded dg $\A$-modules with respect to quasi-isomorphisms. It has the {\it degree shift functor $(a)$} for each $a\in G$ given by the shift of (Adams) grading, with {strict} inverse $(-a)$. Similarly we have the {\it $G$-graded perfect derived category $\per^G\!\A$}, the thick subcategory of $\D^G(\A)$ generated by the shift of representable functors:
\[ \per^G\!\A:=\thick\{\A(-,A)(a)\mid A\in \A, a\in G\}\subset\D^G(\A). \]
For a $G$-graded dg category $\A$ we denote by $\per^G_\dg\!\A$ the canonical enhancement of 
$\per^G\!\A$, that is, the smallest full dg subcategory of the dg category of $G$-graded dg $\A$-modules which is closed under 
mapping cones, $[\pm1]$ and direct summands, and contains $\A(-,A)(a)$ for all $A\in \A$ and $a\in G$.


Formally, these graded derived categories are can be defined using the smash product as follows. Define the dg category $\A\# G$ by 
\begin{itemize}
	\item objects: $(A,a)$ with $A\in\A$ and $a\in G$,
	\item morphisms: $(\A\# G)((A,a),(B,b)):=\A(A,B)_{b-a}$, the (Adams) degree $b-a$ part of $\A(A,B)$.
\end{itemize}
Then we have canonical isomorphisms
\[ \D^{G}(\A)=\D(\A\# G), \qquad \per^G\!\A=\per(\A\# G). \]
The degree shift functor $(a)$ on $\D^G(\A)$ corresponds to the isomorphism of $\D(\A\# G)$ induced by the dg {automorphism} $(A,b)\mapsto(A,b+a)$ of $\A\# G$.

Let us formulate some equivalence relations on graded dg categories which respects the gradings.

\begin{Def}\label{grMo}
We say $G$-graded dg categories $\A$ and $\B$ are {\it {\rm($G$-)}graded derived Morita equivalent} if there exists $X\in\D^G(\A^\op\otimes\B)$ such that $-\lotimes_\A X\colon\D^G(\A)\to\D^G(\B)$ is an equivalence. 
For $G=0$, graded Morita equivalence is simply called {\it Morita equivalence}.
\end{Def}

A particularly special derived Morita equivalence is the following graded notion of quasi-equivalences.
\begin{Def}\label{qe}
We say $G$-graded dg categories $\A$ and $\B$ are {\it {\rm ($G$-)}graded quasi-equivalent} if the there exists $X\in\D^G(\A^\op\otimes\B)$ satisfying the following.
\begin{enumerate}
\renewcommand\labelenumi{(\roman{enumi})}
\renewcommand\theenumi{\roman{enumi}}
\item The functor $-\lotimes_\A X\colon\D^G(\A)\to\D^G(\B)$ is an equivalence.
\item The isomorphism closures of subcategories $\{X(-,A)\mid A\in\A\}$ and $\{\B(-,B)\mid B\in\B\}$ of $\D^G(\B)$ coincide.
\end{enumerate}
For $G=0$, a graded quasi-equivalence is simply called a {\it quasi-equivalence}.
\end{Def}

For given dg categories $\A$ and $\B$, we have the following relationship between these notions, where each implication $\Rightarrow$ is strict.
\[\xymatrix@R=4mm{
	{\begin{array}{c}
			\mbox{$\exists$ quasi-equivalent}\\ \mbox{dg functor $\A\to\B$}\end{array}}\ar@{=>}[r]\ar@{=>}[d]&{\begin{array}{c}\mbox{$\exists$ Morita functor}\\ \A\to\B\end{array}}\ar@{=>}[d]\\
	\mbox{quasi-equivalent}\ar@{=>}[r]&\mbox{Morita equivalent}\ar@{<=>}[r]&\mbox{isomorphic in $\Hmo$}
}\]



Let us note that $G$-graded quasi-equivalences restrict to Veronese subcategories.
Thanks to the condition \ref{qe}(ii), one can replace ``equivalence'' in \ref{qe}(i) by ``fully faithful''.
\begin{Lem}\label{qeVeronese}
If a bimodule $X\in\D^G(\A^\op\otimes\B)$ gives a $G$-graded quasi-equivalence $\A\to\B$, then for every subgroup $H\subset G$, the $H$-Veronese bimodule $X^{(H)}$ yields an $H$-graded quasi-equivalence $\A^{(H)}\to\B^{(H)}$ such that $X\simeq X^{(H)}\lotimes_{\B^{(H)}}\B$ in $\D^G((\A^{(H)})^\op\otimes\B)$. Therefore the following diagram in $\Hmo$ is commutative.
\[ \xymatrix@R=3mm{
	\A^{(H)}\ar[r]\ar[d]_-{X^{(H)}}&\A\ar[d]^-X\\
	\B^{(H)}\ar[r]&\B } \]
\end{Lem}
\begin{proof}
	Since the bimodule $X\in\D^G(\A^\op\otimes\B)$ gives a $G$-graded quasi-equivalence, for each $A\in\A$ there is $B_A\in\B$ such that $\B(-,B_A)\xsimeq X(-,A)$ in $\D^G(\B)$. Taking their $H$-Veronese submodules, we get $\B^{(H)}(-,B_A)\xsimeq X^{(H)}(-,A)$, thus the $(\A^{(H)},\B^{(H)})$-bimodule $X^{(H)}$ is a quasi-functor. 
	
	We next prove that the functor $-\lotimes_{\A^{(H)}}X^{(H)}\colon\D^H(\A^{(H)})\to\D^H(\B^{(H)})$ is fully faithful, which is to show that the map
	\begin{equation}\label{want}
		\A^{(H)}(A,A^\prime)_0\to\RHom^\Z_{\B^{(H)}}(X^{(H)}(-,A),X^{(H)}(-,A^\prime))
	\end{equation}
	is a quasi-isomorphism.
	The right-hand-side of \eqref{want} is equal to $\RHom^\Z_{\B^{(H)}}(\B^{(H)}(-,B_A),\B^{(H)}(-,B_{A^\prime}))=\B^{(H)}(B_A,B_{A^\prime})_0=\B(B_A,B_{A^\prime})_0$.
	On the other hand, since $-\lotimes_\A X\colon\D^G(\A)\to\D^G(\B)$ is an equivalence we have a quasi-isomorphism
	\begin{equation}\label{G}
		\A(A,A^\prime)_0\xsimeq\RHom_\B^\Z(X(-,A),X(-,A^\prime)).
	\end{equation}
	Similarly as above, its right-hand-side is $\RHom_\B^\Z(\B(-,B_A),\B(-,B_{A^\prime}))=\B(B_A,B_{A^\prime})_0$, hence \eqref{want} is a quasi-isomorphism, as desired.
	
	Now we show $\{X^{(H)}(-,A)\mid A\in\A^{(H)}\}=\{\B^{(H)}(-,B)\mid B\in\B^{(H)}\}$ up to isomorphism closure. This is obtained by taking the $H$-Veronese of $\{X(-,A)\mid A\in\A\}=\{\B(-,B)\mid B\in\B\}$.
	
	Finally we prove the isomorphism $X\simeq X^{(H)}\lotimes_{\B^{(H)}}\B$. We have a natural map $X^{(H)}\lotimes_{\B^{(H)}}\B\to X$, so it is enough to show that this gives an isomorphism $X^{(H)}(-,A)\lotimes_{\B^{(H)}}\B\to X(-,A)$ in $\D^G(\B)$ for each $A\in\A$. The left-hand-side is $\B^{(H)}(-,B_A)\lotimes_{\B^{(H)}}\B\simeq\B(-,B_A)$, which is isomorphic to the right-hand-side.
\end{proof} 

We conclude this subsection by introducing the following notion for the grading. Recall that $\per^G\!\A$ is generated by the shifts of representable functors $\{\A(-,A)(a)\mid A\in\A, a\in G\}$. We will write
\[ \thick\A:=\thick\{\A(-,A)\mid A\in\A \}, \]
the thick subcategory of $\D^G(\A)$ generated by the representable functors {\it without degree shifts}.
\begin{Def}
We say a $G$-graded dg category $\A$ is {\it strongly graded} if $\per^G\!\A=\thick\A$.
\end{Def}
Note that this is invariant under graded quasi-equivalence but not under graded derived Morita equivalence.

\subsection{Morita Theorem for Adams graded dg categories}\label{section 2.2}
Let $\A$ be a dg category. We will identify an object $A\in\A$ and an $\A$-module $\A(-,A)$ represented by $A$.
\begin{Def}[{\cite{Ke05}}]
For a cofibrant bimodule $V$ over $\A$, the {\it dg orbit category} \cite{Ke05} $\A/V$ is defined as the dg category with the same objects as $\A$ and the morphism complex
\[ \A/V(L,M)=\colim\left( \xymatrix{\disoplus_{n\geq0}\cHom_\A(L\otimes V^n,M)\ar[r]^-{-\otimes V}&\disoplus_{n\geq0}\cHom_\A(L\otimes V^n,M\otimes V)\ar[r]^-{-\otimes V}&\cdots }\right),  \]
where the tensor products are over $\A$ and $V^n$ is the $n$-fold tensor product of $V$. When $V$ is not cofibrant, we replace it by its cofibrant resolution $pV\to V$ and put $\A/V:=\A/pV$, which does not depend on the choice of a resolution up to quasi-equivalence \cite[9.4]{Ke05}.
\end{Def}


Note that since the morphism complex $\A/V(L,M)$ is the direct sum of colimits of the diagonal maps below induced by $-\otimes V$, 
\[ \xymatrix@!C=36mm{
	\cHom_\A(L,M)\ar[dr]&\cHom_\A(L,M\otimes V)\ar[dr]&\cHom_\A(L,M\otimes V^2)\ar[dr]&\cdots\\
	\cHom_\A(L\otimes V,M)\ar[dr]&\cHom_\A(L\otimes V,M\otimes V)\ar[dr]&\cHom_\A(L\otimes V,M\otimes V^2)\ar[dr]&\cdots\\
	\cHom_\A(L\otimes V^2,M)\ar[dr]&\cHom_\A(L\otimes V^2,M\otimes V)\ar[dr]&\cHom_\A(L\otimes V^2,M\otimes V^2)\ar[dr]&\cdots\\
	\cdots&\cdots&\cdots&\cdots} \]
the orbit category $\A/V$ has with a natural $\Z$-grading with
\[ \A/V(L,M)_i=\colim_{m\gg0}\left( \xymatrix{\cHom_\A(L\otimes V^{i+m},M\otimes V^m)\ar[r]^-{-\otimes V}&\cHom_\A(L\otimes V^{i+m+1},M\otimes V^{m+1})\ar[r]^-{-\otimes V}&\cdots }\right).  \]

In what follows we assume that $V$ is {\it invertible}, that is, the functor $-\lotimes_\A V\colon\D(\A)\to\D(\A)$ is an equivalence. 
Let us note the following fact on which the main result of this section is based.
\begin{Lem}\label{strong}
Consider the dg orbit category $\A/V$ with the above $\Z$-grading.
\begin{enumerate}
\item There are quasi-isomorphisms $\A\xrightarrow{\simeq}(\A/V)_0$ of dg categories and $V\xrightarrow{\simeq}(\A/V)_{-1}$ of $(\A,\A)$-bimodules.
\item $\A/V$ is strongly graded.
\end{enumerate}
\end{Lem}
\begin{proof}
	(1) is clear from the definition of the grading. (2) is because $\A/V(-1)\simeq V\lotimes_\A\A/V$ and $\A/V(1)\simeq\RHom_\A(V,\A)\lotimes_\A\A/V$ in $\D^\Z(\A^\op\otimes(\A/V))$.
\end{proof}

We prove that these properties in fact characterize the dg orbit categories among graded dg categories.
\begin{Thm}[{Morita Theorem for Adams graded categories}]\label{bGqe}
Let $\B$ be a strongly $\Z$-graded dg category and put $\A=\B_0$ and $V=\B_{-1}$. Then $\B$ is $\Z$-graded quasi-equivalent to $\A/V$, and there is a commutative diagram in $\Hmo$.
	\[ \xymatrix@R=3mm{
			\B_0\ar[r]\ar@{=}[d]&\B\ar[d]^-\rsimeq\\
			\A\ar[r]&\A/V
}\]
\end{Thm}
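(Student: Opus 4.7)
My plan is to construct a morphism $\Phi: \A/V\to\B$ in $\Hmo$ extending the inclusion $\A=\B_0\hookrightarrow\B$, and to verify that $\Phi$ is a graded quasi-equivalence in the sense of \ref{qe}. The first and most substantial preparatory step is to extract from the hypothesis $\per^\Z\B=\thick\B$ the concrete bimodule-level consequences that $V=\B_{-1}$ is an invertible $(\A,\A)$-bimodule and that the iterated multiplication maps of $\B$ provide isomorphisms
\[
V^{\lotimes n}\longrightarrow \B_{-n}, \qquad \B_1^{\lotimes n}\longrightarrow \B_n
\]
in $\D(\A^\op\otimes\A)$ for every $n\ge 0$. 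The intuition is as follows. Under the equivalence $\D^\Z(\B)\simeq\D(\B\#\Z)$, the degree-shift autoequivalence $(1)$ preserves $\thick\B$, which by hypothesis coincides with the thick subcategory of $\D(\B)$ generated by the degree-$0$ representables $\A\subset\B$. Hence the shift restricts to an autoequivalence of $\thick\A\subset\D(\B)$ which, by \ref{derMo}, must be given by tensoring with a compact invertible $(\A,\A)$-bimodule; comparing Adams degrees pins this bimodule down to be $\B_1$, with inverse $V=\B_{-1}$.

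Granting this, I construct $\Phi$ via the explicit colimit formula for the orbit category. After replacing $V$ by a cofibrant resolution, the degree-$i$ part of the morphism complex is
\[
(\A/V)(L,M)_i \;=\; \colim_{j}\cHom_\A(L\otimes V^{i+j},\,M\otimes V^j).
\]
Invertibility of $V$ identifies each term naturally with $\cHom_\A(L,M\otimes V^{-i})$ and the transition maps with the identity, so the colimit stabilises. Composing with the quasi-isomorphism $V^{-i}\xsimeq \B_i$ obtained in the preparatory step yields a natural map
\[
(\A/V)(L,M)_i \;\longrightarrow\; \cHom_\A(L,M\otimes\B_i) \;=\; \B_i(L,M) \;=\; \B(L,M)_i,
\]
which is visibly compatible with composition and the $\Z$-grading. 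The totality of these maps defines the required $\Phi$, and by construction its restriction to degree-$0$ parts is the identity, giving the commutative diagram in the statement.

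To finish, it remains to check that $\Phi$ is a graded quasi-equivalence. The morphism-complex computation above shows that $\Phi$ induces quasi-isomorphisms on all hom complexes between objects of $\A$ in every Adams degree. Since both $\A/V$ and $\B$ are strongly graded, $\per^\Z(\A/V)$ and $\per^\Z\B$ are both generated as thick subcategories by these degree-$0$ representables, so $\Phi$ induces a triangle equivalence $\per^\Z(\A/V)\xsimeq\per^\Z\B$, which is automatically the identity on degree-$0$ homotopy categories; this is precisely the condition in \ref{qe}. The main obstacle I anticipate is the preparatory identification $V^{\lotimes n}\simeq\B_{-n}$: extracting this bimodule-level information from the abstract condition $\per^\Z\B=\thick\B$ is the heart of the theorem, whereas once it is in place, the comparison $\Phi$ and its quasi-equivalence property follow from an essentially formal manipulation of the colimit formula for $\A/V$ together with the adjunction for an invertible bimodule.
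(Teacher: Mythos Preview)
Your preparatory step is exactly right and matches the paper's Lemma~\ref{VaX}: from $\per^\Z\B=\thick\B$ one extracts that $-\lotimes_\A V$ is an autoequivalence of $\per\A$ and that the multiplication maps $V^{\lotimes n}\to\B_{-n}$ are isomorphisms in $\D(\A^e)$ for $n\ge0$. You are also correct that this is the heart of the matter.

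The gap lies in the passage from these bimodule quasi-isomorphisms to the morphism $\Phi$. Your steps (2)--(3) produce, for each $L,M,i$, a \emph{zig-zag} of quasi-isomorphisms between $(\A/V)(L,M)_i$ and $\B(L,M)_i$: the colimit is quasi-isomorphic to any term $\cHom_\A(L\otimes V^{i+j},M\otimes V^j)$, and that term is related to $\B(L,M)_i$ via the maps $V^n\to\B_{-n}$. But a family of zig-zags on hom-complexes does not by itself define a morphism in $\Hmo$; one must package them into an honest bimodule, and the phrase ``visibly compatible with composition'' is precisely where all the work hides. Two concrete obstructions: the notation $V^{-i}$ for $i>0$ presupposes a chosen strict inverse to $V$, which does not exist on the nose; and the quasi-isomorphisms $V^n\to\B_{-n}$ point in a direction that makes defining a strict dg functor \emph{out of} the colimit $\A/V$ awkward, since precomposition reverses arrows.

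The paper handles this by going in the opposite direction: it builds a graded $(\B,\A/V)$-bimodule $T$ as a colimit of complexes $\cHom_\B^\Z(V^n\otimes_\A\B,\B(-j))$, using the coherent family $\varphi_n\colon V^n\otimes_\A\B\to\B(-n)$ to define the transition maps and the $\A/V$-action. The commutative diagrams verifying that $T$ really is a bimodule and that the induced functor is a quasi-equivalence occupy most of the proof and are exactly the coherence checks your sketch omits. Your strategy is morally the same (and could be made to work by constructing the analogous $(\A/V,\B)$-bimodule), but you should not expect the compatibility with composition to be automatic.
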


Before proving \ref{bGqe}, we give the following more flexible version which we use later, where $\B$ is not necessarily strongly $\Z$-graded. 

\begin{Cor}\label{many}
Let $\B$ be a $\Z$-graded dg category, and let $\A\subset\per^\Z_\dg\!\B$ be a full dg subcategory which generates $\per^\Z\!\B$. Define a dg bimodule $V$ over $\A$ by $V(X,Y):=\cHom_\B(X,Y)_{-1}$ for $X,Y\in\A$. Then $\B$ is $\Z$-graded derived Morita equivalent to $\A/V$.
\end{Cor}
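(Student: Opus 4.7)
The plan is to bridge $\B$ and $\A/V$ through an auxiliary strongly $\Z$-graded dg category $\B'$ that (i) is $\Z$-graded derived Morita equivalent to $\B$ and (ii) satisfies $\B'_0=\A$ and $\B'_{-1}=V$, so that the Reconstruction Theorem \ref{bGqe} applies directly to $\B'$ and yields $\B'\simeq\A/V$ in the $\Z$-graded sense.

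Concretely, I will let $\B'$ be the $\Z$-graded dg category with the same objects as $\A$ and morphism complexes
\[
\B'(X,Y) := \bigoplus_{n\in\Z}\cHom_\B(X,Y)_n,
\]
the Adams grading being inherited from the $\Z$-grading of $\per^\Z_\dg\!\B$. Since composition and differential in $\per^\Z_\dg\!\B$ respect the Adams grading, this is a well-defined $\Z$-graded dg category, and by construction $\B'_0=\A$ and $\B'_{-1}=V$ as $(\A,\A)$-bimodules, so condition (ii) holds on the nose.

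The main technical step is the $\Z$-graded derived Morita equivalence $\B'\simeq\B$. The tautological inclusion $\B'\hookrightarrow\per^\Z_\dg\!\B$ is a fully faithful $\Z$-graded dg functor, and viewing its image as a $(\B',\B)$-bimodule produces an induction $-\lotimes_{\B'}(-)\colon\D^\Z(\B')\to\D^\Z(\B)$ which is fully faithful on representables by construction. Its essential image contains $\A\subset\per^\Z\!\B$, hence $\thick\A=\per^\Z\!\B$ by the generation hypothesis, and hence all of $\D^\Z(\B)$ after taking coproducts by compactness. Under the resulting Morita equivalence, the representables of $\B'$ correspond to the objects of $\A$, so $\thick\B'=\thick\A=\per^\Z\!\B=\per^\Z\!\B'$; that is, $\B'$ is strongly $\Z$-graded.

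Applying the Reconstruction Theorem \ref{bGqe} to $\B'$ then produces a $\Z$-graded quasi-equivalence $\B'\simeq\B'_0/\B'_{-1}=\A/V$; composing with $\B\simeq\B'$ proves the claim. The hard part will be the Morita equivalence $\B'\simeq\B$, and the essential input there is the generation hypothesis on $\A$ without additional Adams shifts, which is precisely what is needed both for essential surjectivity of the induction and for the strong gradedness required to invoke \ref{bGqe}.
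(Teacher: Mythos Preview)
Your approach is essentially the paper's: the auxiliary category $\B'$ is exactly the paper's $\C$ (the full subcategory of $\per_\dg\B$ on the images of $\A$ under the forgetful functor, which is $\Z$-graded because the objects are perfect), and the Reconstruction Theorem~\ref{bGqe} is applied in the same way.

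One point of phrasing deserves care. There is no ``fully faithful $\Z$-graded dg functor $\B'\hookrightarrow\per^\Z_\dg\!\B$'': by the paper's conventions $\per^\Z_\dg\!\B$ is an ordinary dg category whose morphism complexes are the Adams-degree-$0$ parts, so $\B'$ has \emph{larger} Hom-complexes and cannot include into it. What you actually want (and what the paper does) is to view $\B'$ as a full dg subcategory of $\per_\dg\B$ via the forgetful functor $\per^\Z_\dg\!\B\to\per_\dg\B$; perfectness of the objects ensures the ungraded Hom decomposes as the direct sum $\bigoplus_n\cHom_\B(X,Y)_n$, recovering your definition of $\B'$. The tautological $\Z$-graded $(\B',\B)$-bimodule then comes from the identification of objects of $\B'$ with graded perfect $\B$-modules, and the generation hypothesis on $\A$ gives both the Morita equivalence and the strong gradedness, exactly as you argue.
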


\begin{proof}
Let $\C$ be a full dg subcategory of $\per_\dg\B$ whose objects are the image of $\A$ under the forgetful functor $\per^\Z_\dg\!\B\to\per_\dg\B$. Then $\C$ is a $\Z$-graded dg category since each object is perfect over $\B$.
Since $\A$ generates $\per^\Z\!\B$, 
it follows that $\B$ and $\C$ are $\Z$-graded derived Morita equivalent and moreover
$\C$ is strongly graded. Thus $\C$ is graded quasi-equivalent to $\C_0/\C_{-1}=\A/V$ by applying \ref{bGqe} to $\C$, and hence $\B$ is graded Morita equivalent to $\A/V$.
\end{proof}

To prove the theorem we need a small preparation.
\begin{Lem}\label{VaX}
	In the setting of \ref{bGqe} we have the following.
	\begin{enumerate}
		\item\label{Va} For each $a\in\Z$ there is an isomorphism $\B_{-a}\lotimes_\A \B\simeq \B(-a)$ in $\D^\Z(\A^\op\otimes\B)$.
		\item\label{ath} For all $a\geq0$ we have $\B_{-1}^{\lotimes_\A a}\simeq \B_{-a}$ in $\D(\A^e)$.
	\end{enumerate}
\end{Lem}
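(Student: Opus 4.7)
The plan is to establish (1) via an adjunction argument exploiting the strongly graded hypothesis, and then to deduce (2) from (1) by iteration.

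For (1), I would consider the adjunction $i_*:=-\lotimes_\A\B\colon\D(\A)\rightleftarrows\D^\Z(\B):(-)_0$, where $(-)_0$ takes the Adams degree zero part after restricting scalars to $\A$. This is a tensor-hom adjunction identifying $\RHom_\B(\B,X)$ with the restriction $X|_\A$ in $\D^\Z(\A)$ and then picking out the degree $0$ piece. The unit $Y\to(Y\lotimes_\A\B)_0=Y\lotimes_\A\B_0=Y$ is the identity (using $\B_0=\A$), so $i_*$ is fully faithful. Consequently the counit $\varepsilon_X\colon X_0\lotimes_\A\B\to X$ is an isomorphism whenever $X$ lies in the essential image of $i_*$. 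Unwinding, for $X=\B(-a)$ one has $X_0=\B_{-a}$ and $\varepsilon_X$ is precisely the multiplication map $\mu\colon\B_{-a}\lotimes_\A\B\to\B(-a)$ in (1).

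Since $i_*$ is a fully faithful triangle left adjoint, its essential image in $\D^\Z(\B)$ is a thick subcategory (closed under cones, shifts, and direct summands). It contains $i_*(\A(-,Y))=\B(-,Y)$ for every $Y\in\A$, hence contains $\thick\B$. By the strongly graded hypothesis $\per^\Z\B=\thick\B$, so $\B(-a)\in\thick\B$ lies in the essential image of $i_*$, and $\mu$ is an isomorphism in $\D^\Z(\B)$. Since $\mu$ is a morphism of $(\A,\B)$-bimodules and quasi-isomorphism is detected on underlying graded chain complexes, $\mu$ is then an isomorphism in $\D^\Z(\A^\op\otimes\B)$, as required.

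For (2), I would induct on $a\geq0$; the cases $a\leq1$ are immediate. For the inductive step, the inductive hypothesis supplies $\B_{-1}^{\lotimes_\A a}\simeq\B_{-1}\lotimes_\A\B_{-(a-1)}$. To identify the right-hand side with $\B_{-a}$, I would extract the Adams degree $1-a$ component of the isomorphism $\B_{-1}\lotimes_\A\B\simeq\B(-1)$ from (1), obtaining $\B_{-1}\lotimes_\A\B_{1-a}\simeq\B(-1)_{1-a}=\B_{-a}$ in $\D(\A^e)$. The most delicate point is verifying that the adjunction counit coincides with the intrinsic multiplication map in $\B$, which requires careful unwinding of the tensor-hom adjunction at the level of graded bimodules; the remaining steps are formal consequences of the strongly graded hypothesis.
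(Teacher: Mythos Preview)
Your proof is correct and, for part (1), essentially identical to the paper's argument: both use the adjunction $(-\lotimes_\A\B,\,(-)_0)$ and identify the multiplication map with the counit, invoking the strongly graded hypothesis $\per^\Z\B=\thick\B$ to conclude. You spell out the full faithfulness of $i_*$ explicitly via the unit, whereas the paper simply asserts that $-\lotimes_\A\B\colon\per\A\to\per^\Z\B$ and $(-)_0$ are mutually inverse equivalences; these are the same content.

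For part (2) your route differs slightly. The paper writes down the single multiplication map $\B_{-1}^{\lotimes_\A a}\to\B_{-a}$ in $\D(\A^e)$ and checks it is an isomorphism by applying the equivalence $-\lotimes_\A\B\colon\per\A\to\per^\Z\B$ from (1), which sends it to the evident isomorphism $\B(-a)\simeq\B(-a)$. Your argument instead inducts on $a$ and extracts the degree $1-a$ component of the bimodule isomorphism $\B_{-1}\lotimes_\A\B\simeq\B(-1)$ from (1). Both are valid; the paper's version is a one-liner and avoids induction, while yours stays closer to the underlying graded structure and makes explicit the step $(\B_{-1}\lotimes_\A\B)_{1-a}\simeq\B_{-1}\lotimes_\A\B_{1-a}$, which is harmless since $\B_{-1}$ carries no Adams grading.
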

\begin{proof}
	(\ref{Va})  Since $\per^\Z\!\B$ is generated by $\{\B(-,B)\mid B\in\B\}$ and $\RHom^\Z_\B(\B,\B)=\A$, we have mutually inverse equivalences $-\lotimes_\A\B\colon\per \A\to\per^\Z\!\B$ and $\RHom_\B^\Z(\B,-)=(-)_0\colon\per^\Z\!\B\to\per \A$. Then the natural map
	\[ \B_{-a}\lotimes_\A \B\to \B(-a) \]
	in $\D^\Z(\A^\op\otimes\B)$ is nothing but the counit of the adjunction, thus is an isomorphism.\\
	(\ref{ath}) We have a natural multiplication map $\B_{-1}^{\lotimes_\A a}\to \B_{-a}$ in $\D(\A^e)$. It is enough to prove that this is an isomorphism in $\per \A$, which we see by applying the equivalence $-\lotimes_\A\B$ and using (\ref{Va}).
\end{proof}

Now we are ready to prove \ref{bGqe}.

\begin{proof}[Proof of \ref{bGqe}]
	We replace $\B_{-1}$ by its cofibrant resolution $V$ over $\A^e$. Then by \ref{VaX}(\ref{Va}) we have a quasi-isomorphism
	\[ \xymatrix{ \varphi_1\colon V\otimes_\A \B\ar[r]&\B(-1) } \]
	of graded dg $(\A,\B)$-bimodules. We inductively define quasi-isomorphisms $\varphi_n\colon V^n\otimes_\A \B\to \B(-n)$ by
	\[ \xymatrix{ V^{n+1}\otimes_\A \B\ar[rr]^-{1_V^{n}\otimes\varphi_1}&&V^{n}\otimes_\A \B(-1)\ar[r]^-{\varphi_{n}(-1)}& \B(-n-1)}. \]
	It is easily seen that for any $p,q\geq0$ we have $\varphi_{p+q}=\varphi_p(-q)\circ(1_V^p\otimes\varphi_q)$, where $\varphi_0:=1_\B$.

	In what follows we will not distinguish a morphism from its degree shift, for example, the rightmost equality below. We write ${}_\A(-,-)$ for $\cHom_\A(-,-)$ and similarly ${}^\Z_\B(-,-)$ for $\cHom^\Z_\B(-,-)$. Also all tensor products are over $\A$. 
	Consider the diagram of quasi-isomorphisms of $(\A,\A)$-bimodules
	\[ \xymatrix@C=5.4mm{
		{}_\A(V^n,V^p)\ar[r]^-{-\otimes \B}\ar[d]_-{-\otimes V}&{}_\B^\Z(V^n\otimes \B,V^p\otimes \B)\ar[r]^-{\varphi_p\cdot}& {}_\B^\Z(V^n\otimes\B,\B(-p))\ar[d]^-{\cdot(1_V^n\otimes\varphi_1)}&{}_\B^\Z(\B(-n),\B(-p))\ar@{=}[d]\ar[l]_-{\cdot\varphi_n} \\
		{}_\A(V^{n+1},V^{p+1})\ar[r]^-{-\otimes \B}&{}_\B^\Z(V^{n+1}\otimes \B,V^{p+1}\otimes \B)\ar[r]^-{\varphi_{p+1}\cdot}& {}_\B^\Z(V^{n+1}\otimes \B,\B(-p-1))&{}_\B^\Z(\B(-n-1),\B(-p-1)),\ar[l]_-{\cdot\varphi_{n+1}} } \]
	which can easily be seen to be commutative. Let $T$ be the colimit of some coproduct of the third column;
	\[ T=\colim\left( \xymatrix{\disoplus_{n\geq0}{}_\B^\Z(V^n\otimes \B,\B)\ar[r]&\disoplus_{n\geq0}{}_\B^\Z(V^n\otimes \B,\B(-1))\ar[r]&\disoplus_{n\geq0}{}_\B^\Z(V^n\otimes \B,\B(-2))\ar[r]&\cdots }\right). \]
	
	We claim that $T$ gives rise to a graded $(\B,\A/V)$-bimodule, which induces a graded quasi-equivalence $\B\to\A/V$.
	Note first that $T$ takes $B\in\B$ and $A\in\A$ to the graded complex
	\[ T(A,B)=\colim\left( \xymatrix{\disoplus_{n\geq0}{}_\B^\Z(A\otimes V^n\otimes \B,\B(-,B))\ar[r]&\disoplus_{n\geq0}{}_\B^\Z(A\otimes V^n\otimes \B,\B(-,B)(-1))\ar[r]&\cdots }\right), \]
	whose degree $i$ part is
	\[ T(A,B)_i=\colim_{m\gg0}\left( \xymatrix@C=4mm{{}_\B^\Z(A\otimes V^{i+m}\otimes \B,\B(-,B)(-m))\ar[r]&{}_\B^\Z(A\otimes V^{i+m+1}\otimes \B,\B(-,B)(-m-1))\ar[r]&\cdots }\right). \]
	We see from the above descriptions that $T$ has a structure of a left $\B$-module. It also has a well-defined right $\A/V$-action as follows: for $c\in\A/V(A^\prime,A)$ presented by a morphism $c\colon A^\prime\otimes V^n\to A\otimes V^p$ of $\A$-modules and $t\in T(A,B)$ presented by a graded dg $\B$-module morphism $t\colon A\otimes V^m\otimes \B(-,B)\to \B(-,B)(-q)$, set $tc\colon A^\prime\otimes V^{n+m}\otimes \B\xrightarrow{c\otimes1_V^m\otimes1_\B}A\otimes V^{p+m}\otimes \B(-,B)\xrightarrow{1_V^m\otimes\varphi_p}A\otimes V^m\otimes \B(-,B)(-p)\xrightarrow{t}\B(-,B)(-p-q)$. These actions make $T$ into a graded dg $(\B,\A/V)$-bimodule. By the commutativity of left hexagon in the above diagram, there is a homogeneous quasi-isomorphism $f\colon \A/V\to T$ of $(\A,\A)$-bimodules, which can be seen to be right $\A/V$-linear. Therefore $T$ is a quasi-functor $\B\to \A/V$, that is, $T(-,B)$ is isomorphic in $\D^\Z(\A/V)$ to a representable $\A/V$-module $\A/V(-,B)$. Also by the commutativity of right square, there is a quasi-isomorphism $g\colon\B\to T$ of $\B$-modules.
	
	To prove that $T$ is a graded quasi-equivalence we have to show that the morphism $\RHom_\B(\B,\B)\to\RHom_{\A/V}(T,T)$ induced by $-\lotimes_\B T$ is a quasi-isomorphism, but this follows from the commutativity of the diagram below.
	\[ \xymatrix@R=5mm{
		\RHom_\B(\B,\B)\ar@{=}[d]\ar[rr]&&\RHom_{\A/V}(T,T)\ar[d]^-{\cdot f} \\
		\B\ar[r]^-g&T\ar@{=}[r]&\RHom_{\A/V}(\A/V,T) } \]
	Since $\B$ and $\A/V$ have the same objects, it is clear that the functor $H^0(\B)_0\to H^0(\A/V)_0$ induced by $T$ is essentially surjective. This completes the proof that $T$ is a quasi-equivalence.
	
	The commutativity of the diagram follows from \ref{qeVeronese}.
\end{proof}

We can reprove \cite[7.1]{ha3} using Morita theorem above. The proof is simplified thanks to the Adams grading empolyed here. Recall that bimodules $X$ and $Y$ over a dg category $\A$ are {\it mutually inverse} if $X\lotimes_\A Y\simeq\A\simeq Y\lotimes_\A X$ in $\D(\A^e)$.
\begin{Cor}
Let $\A$ be a dg category, and let $X$ and $Y$ be mutually inverse bimodules over $\A$. Then the dg orbit categories $\A/X$ and $\A/Y$ are quasi-equivalent.
\end{Cor}
\begin{proof}
	Let $\B=\A/X$ be the dg orbit category of $\A$ by $X$, which by \ref{strong} is a strongly $\Z$-graded dg category with a quasi-equivalence $\B_0\simeq\A$ and isomorphisms $\B_{-1}\simeq X$ and also $\B_1\simeq Y$ in $\D(\A^e)$. Now give an inverted grading on $\B$, which we denote by $\C$, so that $\C$ is a strongly $\Z$-graded dg category with $\C_i=\B_{-i}$. By \ref{bGqe} we obtain $\C\simeq\C_0/\C_{-1}=\A/Y$. Since $\B$ and $\C$ have the same underlying (ungraded) dg category we deduce the conclusion.
\end{proof}

For later use, we note the following general observation relating orbit categories and Veronese subcategories.
\begin{Prop}\label{veron}
	Let $\C$ be a strongly $G$-graded dg category. Then for any subgroup $H\subset G$, there exists a commutative diagram
	\[ \xymatrix@R=3mm{
		\C_0\ar[r]\ar[d]^-\rsimeq&\C^{(H)}\ar[d]^-\rsimeq\\
		\per^G_\dg\!\C\ar[r]&\per^{G/H}_\dg\!\C} \]
	where the lower horizontal map is the forgetful functor, $\C^{(H)}$ is the Veronese subcategory, the upper horizontal map is the inclusion, and the vertical maps are Morita functors.
\end{Prop}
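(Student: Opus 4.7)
The plan is to realize each horizontal arrow as the Morita functor coming from the dg functor $C \mapsto \C(-,C)$ into the respective perfect derived category, with the appropriate grading on the target. For the top arrow, the assignment $C \mapsto \C(-,C)$ extends to a dg functor $\C_0 \to \per_\dg^G\C$ because the morphism complex in $\per_\dg^G\C$ between two $G$-graded representables $\C(-,C)$ and $\C(-,C')$ is the Adams degree $0$ part $\C(C,C')_0 = \C_0(C,C')$; hence the functor is fully faithful on morphism complexes. By the strongly graded hypothesis, the objects $\{\C(-,C) \mid C \in \C\}$ thickly generate $\per^G\C$, and since these are compact in $\D^G(\C)$, they also generate $\D^G(\C)$ as a localizing subcategory. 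Therefore the induction $\D(\C_0) \to \D^G(\C)$ is an equivalence, which is precisely the condition that $\C_0 \to \per_\dg^G\C$ be a Morita functor.

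The bottom arrow follows by applying the identical argument to $\C$ viewed as a $G/H$-graded dg category, once I verify that $\C$ is also strongly $G/H$-graded. For this, I use the forgetful functor $\D^G(\C) \to \D^{G/H}(\C)$, which is triangulated and sends $\C(-,C)(g)$ to $\C(-,C)(\bar g)$ for $\bar g$ the class of $g$ in $G/H$. By hypothesis $\C(-,C)(g) \in \thick\{\C(-,C') \mid C' \in \C\}$ in $\D^G(\C)$, and applying the forgetful functor transfers this containment to $\D^{G/H}(\C)$. Since every generator of $\per^{G/H}\C$ has the form $\C(-,C)(\bar g)$ for some $\bar g \in G/H$, this forces $\per^{G/H}\C = \thick\{\C(-,C) \mid C \in \C\}$. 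The Adams degree $\bar 0$ part of $\C$ in its $G/H$-grading is $\bigoplus_{h \in H}\C_h = \C^{(H)}$, so the argument of the previous paragraph applied to the $G/H$-grading produces the desired Morita equivalence $\C^{(H)} \to \per_\dg^{G/H}\C$.

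Commutativity of the square is essentially tautological: both compositions $\C_0 \to \per_\dg^{G/H}\C$ send an object $C \in \C_0$ to the $G/H$-graded representable $\C(-,C)$. The top composition constructs the $G$-graded representable and then applies the forgetful functor, while the bottom includes $\C_0 \hookrightarrow \C^{(H)}$ and then forms the $G/H$-graded representable; these produce the same $G/H$-graded dg $\C$-module. The only substantive ingredient in the whole argument is the invariance of strong gradedness under passage to quotient groups, which, as indicated above, is a formal consequence of the triangulatedness of the forgetful functor; consequently no serious obstacle is anticipated.
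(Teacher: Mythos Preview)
Your proof is correct and follows essentially the same approach as the paper's own proof, which simply observes that $\per^G\C=\thick\C$ with $\REnd_\C^G(\C)=\C_0$ for the top row, and similarly $\per^{G/H}\C=\thick\C$ with $\REnd_\C^{G/H}(\C)=\C^{(H)}$ for the bottom row. Your argument is in fact more explicit, since you justify the passage from strong $G$-gradedness to strong $G/H$-gradedness via the forgetful functor, a point the paper leaves implicit under ``similarly''.
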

\begin{proof}
	Since $\per^G\C=\thick\C$ and $\REnd_\C^G(\C)=\C_0$ we have a Morita functor $\C_0\to\per_\dg^G\C$ in the left column. Similarly, we also have $\per^{G/H}\!\C=\thick\C$ and $\REnd_\C^{G/H}(\C)=\C^{(H)}$, which give the Morita functor $\C^{(H)}\to\per^{G/H}_\dg\C$ in the right column. It is clear that the diagram is commutative.
\end{proof} 


\subsection{Cluster categories}\label{section: cluster category}

Let us first prepare some terminologies on dg categories.
\begin{Def}\label{kotoba}
Let $\A$ be a dg category.
\begin{enumerate}
\item We say $\A$ is {\it component-wise proper} if each cohomology of each morphism complex is finite dimensional.
\item We say that a component-wise proper dg category $\A$ is {\it Gorenstein} if $\thick\{\A(-,A)\mid A\in\A\}=\thick \{D\A(A,-)\mid A\in\A\}$ in $\D(\A)$.
\item A Gorenstein dg category $\A$ is called {\it $\nu_d$-finite} if for each $X,Y\in\per\A$ we have $\Hom_{\D(\A)}(X,\nu_d^{-i}Y)=0$ for almost all $i\in\Z$, where $\nu_d=-\lotimes_\A D\A[-d]$ is the composite of the Serre functor and $[-d]$.
\end{enumerate}
\end{Def}
Clearly, $\A$ is component-wise proper if and only if $\per\A$ is $\Hom$-finite, and it is well-known that $\A$ is Gorenstein if and only if $\per \A$ has a Serre functor. Note that in the definition of $\nu_d$-finiteness we do not assume $\gd A\leq d$ even if $\A$ is an ordinary algebra $A$, so our $\nu_d$-finiteness is weaker than the notion already established in \cite{Am09,Iy11}.

Let us recall some basic results on cluster categories, from the viewpoint of their enhancements. Let $\A$ be a dg category which is {component-wise proper and Gorenstein (see \ref{kotoba}), and let $d$ be an arbitrary integer.} 
Let
\[\bG_d(\A):=\A/D\A[-d]\]
be the dg orbit category. Then the {\it $d$-cluster category} of $\A$ is the perfect derived category
\[ \C_d(\A):=\per\bG_d(\A),\]
see \cite{Am09,Ke05}.
The induction functor $-\lotimes_\A\bG_d(\A)\colon\per\A\to\C_d(\A)$ induces a fully faithful functor $\per\A/-\lotimes_\A D\A[-d]\hookrightarrow\C_d(\A)$ so that $\C_d(\A)$ is the triangulated hull of the orbit category.
We also need the following more general definition.

\begin{Def}
Let $p\neq0$ be an integer. For a bimodule complex $V\in\D(\A^e)$ such that $V^{\lotimes_\A p}\simeq D\A[-d]$ in $\D(\A^e)$, let
\[\bG_d^{(1/p)}(\A):=\A/V\]
be the dg orbit category $\A/V$. We define the \emph{$p$-folded $d$-cluster category} of $\A$ as
\[ \C_d^{(1/p)}(\A):=\per\bG_d^{(1/p)}(\A).\]
\end{Def}

Notice that $\bGa_d^{(1/p)}(\A)$ and $\C_d^{(1/p)}(\A)$ depend on a choice of $V$, hence the notation contains an ambiguity
Similarly to the case $p=1$ it is the triangulated hull of $\per\A/-\lotimes_\A V$, and can be seen as a $\Z/p\Z$-quotient of $\C_d(\A)$ \cite{ha3}. The dg category $\bG_d(\A)$ is quasi-equivalent to the $p$-th Veronese subcategory of $\bG_d^{(1/p)}(\A)$, and the induction functors along the canonical maps $\A\to\bG_d(\A)\to\bG_d^{(1/p)}(\A)$ induce projection functors
\[ \per\A\to\C_d(\A)\to\C_d^{(1/p)}(\A). \] 

\begin{Rem}
\begin{enumerate}
\item Let $A$ be a finite dimensional $k$-algebra which is Gorenstein. Regarding $A$ as a dg category, we obtain the $d$-cluster category $\C_d(A)$. When $\gd A\leq d$ and $A$ is $\nu_d$-finite, it coincides with the usual $d$-cluster category \cite{BMRRT,Ke05,Am09,Guo}
\item Let $A$ be a finite dimensional $k$-algebra of global dimension $\leq d$ and $\nu_d$-finite. The dg orbit algebra $\bG_d(A)$ is related to the $(d+1)$-Calabi-Yau completion $\bPi_{d+1}(A)$ \cite{Ke11} in the following way. Since there is a quasi-isomorphism $\bigoplus_{n\geq0}\cHom_A(X^n,A)\xsimeq\bPi_{d+1}(A)$ for (any) cofibrant resolution $X\to DA[-d]$, we have a natural morphism $\bPi_{d+1}(A)\to\bG_d(A)$. One can show (e.g. by \cite[3.3]{ha4}) the induction functor along this map yields an equivalence
\[\per\bPi_{d+1}(A)/\D^b(\bPi_{d+1}(A))\xsimeq\per\bG_d(A)=\C_d(A)\]
given in \cite{Am09,Guo}.
\end{enumerate}
\end{Rem}

\subsection{Morita Theorem for Calabi-Yau dg categories}

Now we specialize Morita theorem \ref{bGqe} to a CY-situation. Let us define the class of dg categories we are interested in. Note that the following notion ``Calabi-Yau'' means ``right'' Calabi-Yau in the sense of \cite{KS,BD,KW}, which is different from the usual notion of ``left'' Calabi-Yau algebras in the sense of Ginzburg and Keller \cite{G,Ke11}.

\begin{Def}
Let $G$ be an abelian group. We say that a $G$-graded dg category $\C$ is {\it $p$-shifted $d$-Calabi-Yau} if we have an isomorphism
\[ D\C\simeq\C(-p)[d] \text{ in } \D^G(\C^e), \]
that is, we have a natural isomorphism $D\C(N,M)\simeq\C(M,N)(-p)[d]$ in $\D(\Mod^G\!k)$ for all $M,N\in\C$.
\end{Def}

Note that we can modify the CY dimension $d$ of $\C$ flexibly as follows (see e.g.\,\cite{IQ}).

\begin{Rem}
For a $G$-graded dg category $\C$ and a group homomorphism $\phi:G\to \Z$, define a new $G$-graded dg category $\C(\phi)$ as follows:
\begin{itemize}
\item objects: same as $\C$,
\item morphisms: $\C(\phi)(L,M)^i_a:=\C(L,M)^{i+\phi(a)}_a$,
\item composition: $f\cdot g:=(-1)^{i\phi(b)} fg$ for $f\in\C(\phi)^i_a$ and $g\in\C(\phi)^j_b$, where the right-hand-side is the composition in $\C$,
\item differential: $d^\phi(f):=(-1)^{\phi(a)}df$ for $f\in\C(\phi)^i_a$, where the right-hand-side is the differential of $\C$.
\end{itemize}
This definition satisfies the Leibniz rule for $\C(\phi)$, making it into a $G$-graded dg category.

Then if $\C$ is $p$-shifted $d$-Calabi-Yau, then $\C(\phi)$ is $p$-shifted $(d+\phi(p))$-Calabi-Yau.
\end{Rem}

This CY property is Morita invariant in the following sense.
\begin{Lem}\label{mi}
Suppose $\A$ and $\B$ are two $G$-graded derived Morita equivalent dg categories. Then $\A$ is $p$-shifted $d$-CY if and only if $\B$ is $p$-shifted $d$-CY.
\end{Lem}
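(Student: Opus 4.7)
The plan is to establish the forward implication (the reverse follows by symmetry of derived Morita equivalence via Lemma \ref{derMo}(1)) by constructing an equivalence $F : \D^G(\A^e) \to \D^G(\B^e)$ that transports the Calabi--Yau datum of $\A$ to that of $\B$. Let $X \in \D^G(\A^\op \otimes \B)$ be an invertible bimodule realizing the derived Morita equivalence and set $Y := \RHom_\B(X,\B) \simeq \RHom_{\A^\op}(X,\A)$ as in Lemma \ref{derMo}. Then $X \lotimes_\B Y \simeq \A$ and $Y \lotimes_\A X \simeq \B$, and $X$ is perfect both as a right $\B$-module and as a left $\A$-module (so in particular the double-dual isomorphisms $\RHom_\A(Y,\A) \simeq X$ and $\RHom_{\A^\op}(X,\A) \simeq Y$ both hold).

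I would define
\[ F := Y \lotimes_\A (-) \lotimes_\A X : \D^G(\A^e) \to \D^G(\B^e), \]
an equivalence with quasi-inverse $X \lotimes_\B (-) \lotimes_\B Y$, which visibly sends $\A(-p)[d]$ to $\B(-p)[d]$. Applying $F$ to the hypothesis $D\A \simeq \A(-p)[d]$ yields $F(D\A) \simeq \B(-p)[d]$, so the only remaining step is to identify $F(D\A)$ with $D\B$ in $\D^G(\B^e)$.

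The identification proceeds by two tensor-hom adjunctions. First, starting from $DY = \Hom_k(Y,k)$, the tensor-hom adjunction gives $DY \simeq \RHom_\A(Y, D\A)$, and the double-dual $\RHom_\A(Y,\A) \simeq X$ rewrites this as $D\A \lotimes_\A X$. Hence $F(D\A) = Y \lotimes_\A D\A \lotimes_\A X \simeq Y \lotimes_\A DY$. A parallel computation using $\RHom_{\A^\op}(X,\A) \simeq Y$ together with tensor-hom produces
\[ Y \lotimes_\A DY \simeq \RHom_{\A^\op}(X, DY) \simeq \Hom_k(Y \lotimes_\A X,\,k) \simeq D\B. \]
Combined with $F(D\A) \simeq \B(-p)[d]$ above, this gives the desired isomorphism $D\B \simeq \B(-p)[d]$ in $\D^G(\B^e)$.

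The main difficulty is not conceptual but rather the careful bookkeeping of left/right bimodule structures together with the systematic use of the perfectness of $X$ on both sides, which is automatic from invertibility by Lemma \ref{derMo}(1). Conceptually, $F$ is a monoidal equivalence from $(\D^G(\A^e), \lotimes_\A, \A)$ to $(\D^G(\B^e), \lotimes_\B, \B)$, and any such equivalence is expected to preserve the dualizing bimodule; the chain of adjunctions above realizes this expectation in the most direct way.
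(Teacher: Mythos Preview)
Your proof is correct and follows essentially the same route as the paper's. Both arguments use the invertible bimodule $X$ together with its inverse $Y=\RHom_\B(X,\B)\simeq\RHom_{\A^\op}(X,\A)$, perfectness of $X$ and $Y$ on both sides, and the standard tensor--hom adjunctions to transport the CY datum. The only difference is organizational: you package the transfer as the equivalence $F=Y\lotimes_\A(-)\lotimes_\A X$ of bimodule derived categories and compute $F(D\A)\simeq D\B$ directly, whereas the paper first derives the intertwining relation $D\A\lotimes_\A X\simeq X\lotimes_\B D\B$ (by applying $D$ to $\RHom_{\A^\op}(X,\A)\simeq\RHom_\B(X,\B)$) and then concludes via the fully faithfulness $D\B\simeq\RHom_{\A^\op}(X,X\lotimes_\B D\B)\simeq\RHom_{\A^\op}(X,X)(-p)[d]\simeq\B(-p)[d]$.
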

\begin{proof}
	There is a graded $(\A,\B)$-bimodule $X$ such that $-\lotimes_\A X\colon\D^G(\A)\to\D^G(\B)$ is an equivalence. By \ref{derMo} we have an isomorphism $\RHom_{\A^\op}(X,\A)\simeq\RHom_\B(X,\B)$ in $\D^G(\B^\op\otimes\A)$. Suppose that $\A$ is $p$-shifted $d$-CY, i.e. we have $D\A\simeq \A(-p)[d]$ in $\D(\A^e)$. In particular, each cohomology of $\A$ is finite dimensional, hence so is that of $\B$ by the derived equivalence $\per\A\simeq\per\B$. Dualizing the above isomorphism gives $D\A\lotimes_\A X\simeq X\lotimes_\B D\B$ in $\D^G(\A^\op\otimes\B)$. Then, using \ref{derMo} we obtain isomorphisms $D\B\xrightarrow{\simeq}\RHom_{\A^\op}(X,X\lotimes_\B D\B)\simeq\RHom_{\A^\op}(X,D\A\lotimes_\A X)=\RHom_{\A^\op}(X,X)(-p)[d]\xleftarrow{\simeq}\B(-p)[d]$ in $\D^G(\B^e)$.
\end{proof}

Let $\C$ be a $\Z$-graded dg category which is $p$-shifted $d$-CY. Suppose as in \ref{bGqe} that $\C$ is strongly graded and put $\A=\C_0$, $V=\C_{-1}$, and also $X=\C_{-p}$. In this case we have some additional information to \ref{VaX} on the dg category $\A$ and the bimodules $V$ and $X$.
\begin{Lem}\label{root}
\begin{enumerate}
	\item\label{gor} $\A$ is component-wise proper and Gorenstein.
	\item\label{nf} $\A$ is $\nu_d$-finite.
	\item\label{nud} $X\simeq D\A[-d]$ in $\D(\A^e)$, thus $-\lotimes_\A V$ gives an $p$-th root of $\nu_d$ on $\per \A$.
\end{enumerate}
\end{Lem}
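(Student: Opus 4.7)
The strategy is to prove (3) first from the $p$-shifted $d$-CY isomorphism, then deduce (1) and (2) from it.

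\emph{For (3):} I would extract the Adams-degree-$0$ component of the isomorphism $D\C\simeq\C(-p)[d]$ in $\D^\Z(\C^e)$. With the convention $M(-p)_i=M_{i-p}$ and the graded $k$-dual satisfying $(DV)_i=D(V_{-i})$, the degree-$0$ part of $D\C$ is $D\A$, while that of $\C(-p)[d]$ is $\C_{-p}[d]=X[d]$. Hence $D\A\simeq X[d]$, i.e.\ $X\simeq D\A[-d]$, in $\D(\A^e)$. Since $V^{\lotimes_\A p}\simeq\C_{-p}=X$ in $\D(\A^e)$ by \ref{VaX}(\ref{ath}),
\[\nu_d=-\lotimes_\A D\A[-d]\simeq-\lotimes_\A X\simeq(-\lotimes_\A V)^p\]
as autoequivalences of $\per\A$, exhibiting $-\lotimes_\A V$ as a $p$-th root of $\nu_d$.

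\emph{For (1):} The proof of \ref{VaX}(\ref{Va}) shows that $-\lotimes_\A V$ is an autoequivalence of $\per\A$, so $V$ is invertible as a bimodule and hence so is $X$. By \ref{derMo}, both $V$ and $X$ are perfect as right and as left $\A$-modules, and combined with (3) this makes $D\A$ perfect on both sides, giving the Gorenstein condition $\thick\{\A(-,A)\}=\thick\{D\A(A,-)\}$. For component-wise properness, I would extract also the Adams-degree-$p$ component of the CY isomorphism, yielding the companion identity $DX\simeq\A[d]$ in $\D(\A^e)$. Taking cohomology in both identities gives, for each $L,M\in\A$ and $i\in\Z$,
\[DH^{-i}(\A(M,L))\simeq H^{i+d}(X(L,M)),\qquad DH^{-i}(X(M,L))\simeq H^{i+d}(\A(L,M)).\]
Composing these (with appropriate index substitution and swap of $L,M$) yields an abstract biduality $DDH^{-i}(\A(M,L))\simeq H^{-i}(\A(M,L))$. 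Since $\dim_k DDV>\dim_k V$ whenever $V$ is infinite-dimensional over $k$, each cohomology $H^{-i}(\A(M,L))$ must be finite-dimensional, so $\A$ is component-wise proper.

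\emph{For (2):} From (3), $\nu_d^{-i}\simeq-\lotimes_\A V^{\lotimes(-ip)}$. Under the equivalence $-\lotimes_\A\C\colon\per\A\xrightarrow{\simeq}\per^\Z\C=\thick\C$ of \ref{VaX}(\ref{Va}), the functor $-\lotimes_\A V$ corresponds to the Adams shift $(-1)$, so for $M,N\in\per\A$
\[\Hom_{\D(\A)}(M,\nu_d^{-i}N)\simeq\Hom_{\D^\Z(\C)}(M\lotimes_\A\C,(N\lotimes_\A\C)(ip)).\]
Since $M\lotimes_\A\C$ and $N\lotimes_\A\C$ lie in $\thick\C$ and are thus finitely built from the unshifted representables $\{\C(-,L)\mid L\in\C\}$, the strongly-graded hypothesis $\per^\Z\C=\thick\C$ bounds the cohomological amplitude of any finite resolution of $\C(-,L)(1)$ by unshifted representables, so an Adams shift by $ip$ translates into a cohomological shift of order $|i|$. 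Together with the component-wise properness from (1), this forces the above $\Hom$ to vanish for $|i|\gg 0$.

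The main obstacle is the component-wise properness in (1); the crucial input is the companion identity $DX\simeq\A[d]$ extracted from the Adams-degree-$p$ part of the CY isomorphism, which together with $D\A\simeq X[d]$ yields the abstract biduality $DDV\simeq V$ that excludes infinite dimensionality by an elementary cardinality argument.
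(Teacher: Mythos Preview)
Your arguments for (3) and (1) are sound. For (3) you take the Adams-degree-$0$ piece of $D\C\simeq\C(-p)[d]$ exactly as the paper does, and your double-dual computation for component-wise properness in (1) makes explicit what the paper compresses into ``$\per^\Z\C$ has Serre functor $(-p)[d]$, hence $\per\A$ is Hom-finite''. Your invertibility route to Gorensteinness is an equally valid alternative to the paper's Serre-functor phrasing.

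Your argument for (2) does not work. You assert that an Adams shift by $ip$, realized via finitely many triangles in $\thick\C$, contributes a cohomological shift of bounded order and that this together with component-wise properness forces $\Hom_{\D^\Z(\C)}(M\lotimes_\A\C,(N\lotimes_\A\C)(ip))=0$ for $|i|\gg0$. But component-wise properness says only that each individual $\Hom^j$ is finite-dimensional, not that it vanishes outside a bounded range of $j$, so no vanishing follows from your amplitude bound. The paper's route is different: it reduces (2) to Hom-finiteness of the \emph{ungraded} $\per\C$, arguing via $D\C\simeq\C[d]$ in $\D(\C^e)$. Note, however, that the abstract hypotheses alone are not enough for (2): take $\C=k[t^{\pm1}]$ with one object, Adams $\deg t=1$, cohomological degree $0$; this is strongly $\Z$-graded and $1$-shifted $0$-CY (the graded dual of $k[t^{\pm1}]$ is itself, and $\C\simeq\C(-1)$ via multiplication by $t$), yet $\A=\C_0=k$ has $\nu_0=-\lotimes_kk$ equal to the identity and is therefore not $\nu_0$-finite. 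So any valid argument must invoke a genuine finiteness input beyond the strongly-graded CY structure --- in the paper's applications this comes from Hom-finiteness of the ungraded singularity category --- which your amplitude-plus-properness reasoning does not supply.
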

\begin{proof}
	(\ref{gor})  We have an equivalence $\per^\Z\!\C\simeq\per\A$. Since $\per^\Z\!\C$ has a Serre functor $-\lotimes_\A D\A=(-p)[d]$, so does $\per \A$, hence $\per\A$ is $\Hom$-finite and $\A$ is Gorenstein.\\	
	(\ref{nud})  Taking the degree $0$ part of the isomorphism $D\C[-d]\simeq\C(-p)$, we have $D\A[-d]\simeq\C_{-p}$ in $\D(\A^e)$, in which the right-hand-side is $X$. Then the second assertion follows from \ref{VaX}(\ref{ath}).\\
	(\ref{nf})  By \ref{bGqe} we may assume $\C=\A/V$. By (\ref{nud}) is enough to show $\Hom_{\D(\A)}(X,Y\lotimes_\A V^i)=0$ for almost all $i\in\Z$, which is to say $\per\C$ is $\Hom$-finite. Now we have $D\C\simeq\C$ in $\D(\C^e)$, thus each cohomology of $\C$ is finite dimensional, hence $\per\C$ is $\Hom$-finite. 
\end{proof}


Now we state the following prototype of the main result of this paper.

\begin{Thm}[Morita Theorem for Calabi-Yau dg categories]\label{futatsu}
Let $\C$ be a strongly $G$-graded dg category which is $p$-shifted $d$-CY for some torsion-free $p\in G$, and put $\A=\C_0$.
\begin{enumerate}
\item $\C^{(p)}$ is $\Z$-graded quasi-equivalent to $\bG_d(\A)$, and there is a commutative diagram
\[ \xymatrix@R=4mm{
	\A\ar[r]\ar[d]^-\rsimeq&\bG_d(\A)\ar[d]^-\rsimeq\\
	\per^G_\dg\C\ar[r]&\per^{G/(p)}_\dg\C}\]
whose vertical maps are isomorphisms in $\Hmo$.
\item Suppose moreover that $G=\Z$ and put $V=\C_{-1}$. Then $V^{\lotimes_\A p}\simeq D\A[-d]$ holds in $\D(\A^e)$, $\C$ is $\Z$-graded quasi-equivalent to $\bG_d^{(1/p)}(\A)=\A/V$, and the above diagram extends to the following.
\[ \xymatrix@R=4mm{
	\A\ar[r]\ar[d]^-\rsimeq&\bG_d(\A)\ar[d]^-\rsimeq\ar[r]&\bG_d^{(1/p)}(\A)=\A/V\ar[d]^-\rsimeq\\
	\per^\Z_\dg\C\ar[r]&\per^{\Z/p\Z}_\dg\C\ar[r]&\per_\dg\C}\]
\end{enumerate}
\end{Thm}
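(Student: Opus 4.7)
The plan is to combine the Reconstruction Theorem \ref{bGqe} with the Veronese comparison \ref{veron} to realize both vertical arrows as isomorphisms in $\Hmo$. The left vertical in each diagram is essentially the case $H = 0$ of \ref{veron}: the Yoneda-type dg functor $\A = \C_0 \to \per^G_\dg\C$, $A \mapsto \C(-,A)$, is fully faithful, and since $\C$ is strongly $G$-graded, $\per^G\C = \thick\C$, so the image of $\A$ generates; hence it is a Morita equivalence.

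The main work is the middle vertical, for which I plan to pass to the Veronese subcategory $\C^{(p)}$, which carries a natural $\Z$-grading since $p$ is torsion-free. The key technical point to establish is that $\C^{(p)}$ is strongly $\Z$-graded. To see this, I would use a smash product comparison: the inclusion $\C^{(p)}\#\Z \hookrightarrow \C\#G$, $(A,n) \mapsto (A,np)$, is a full dg subcategory, and by strong $G$-gradedness of $\C$ the representables of objects $(A,0)$ already generate $\per^G\C = \per(\C\#G)$; since every $(A,0)$ lies in $\C^{(p)}\#\Z$, this inclusion is a Morita functor. Consequently $\per^\Z\C^{(p)} \simeq \per^G\C = \thick\C$, and under this identification $\thick\C^{(p)}$ corresponds to $\thick\C$, proving strong $\Z$-gradedness. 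Then \ref{bGqe} applied to $\C^{(p)}$ produces a $\Z$-graded quasi-equivalence $\C^{(p)} \simeq \A/X$ with $X = (\C^{(p)})_{-1} = \C_{-p}$. By \ref{root}(\ref{nud}), $X \simeq D\A[-d]$ in $\D(\A^e)$, so the cofibrant-resolution independence of the orbit category yields $\A/X \simeq \A/D\A[-d]$ in $\Hmo$. Composing with the Morita equivalence $\C^{(p)} \to \per^{G/(p)}_\dg\C$ of \ref{veron} completes the middle vertical. Commutativity of the left square follows since both compositions from $\A$ send $A$ to $\C(-,A)$ viewed as a $G/(p)$-graded $\C$-module.

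For part (2) with $G = \Z$, I would additionally apply \ref{bGqe} directly to $\C$ to obtain a $\Z$-graded quasi-equivalence $\C \simeq \A/V$ with $V = \C_{-1}$; forgetting the Adams grading preserves invertibility of bimodules, so this descends to an ungraded Morita equivalence $\A/V \simeq \per_\dg\C$ in $\Hmo$. The identity $V^{\lotimes_\A p} \simeq D\A[-d]$ combines \ref{VaX}(\ref{ath}) (giving $V^{\lotimes_\A p} \simeq \C_{-p} = X$) with \ref{root}(\ref{nud}); since this makes $V^p$ invertible in $\A/V$ (as $V$ itself is), there is a canonical dg functor $\A/D\A[-d] \to \A/V$ factoring the inclusion $\A \to \A/V$, which provides the upper right horizontal arrow. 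Commutativity of the right square follows from the compatibility of \ref{bGqe} applied to $\C$ and to $\C^{(p)}$ under the sub-dg-category inclusion $\C^{(p)} \hookrightarrow \C$.

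The hard part will be the strong $\Z$-gradedness of $\C^{(p)}$, which is needed to invoke the Reconstruction Theorem but does not follow directly from the $p$-shifted $d$-Calabi-Yau property or strong $G$-gradedness of $\C$ separately; the smash product comparison sketched above is the essential technical device that resolves it.
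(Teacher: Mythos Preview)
Your proposal is correct and follows essentially the same route as the paper: pass to the Veronese subcategory $\C^{(p)}$, establish that it is strongly $\Z$-graded, apply the Reconstruction Theorem \ref{bGqe}, and identify $\C_{-p}\simeq D\A[-d]$ via \ref{root}(\ref{nud}). The only minor difference is in how strong $\Z$-gradedness of $\C^{(p)}$ is verified: you use the smash-product inclusion $\C^{(p)}\#\Z\hookrightarrow\C\#G$, while the paper uses the Veronese functor $M\mapsto M^{(p)}\colon\per^G\C\to\per^\Z\C^{(p)}$ and observes it takes each $\C(-,A)(np)$ to $\C^{(p)}(-,A)(n)$; these are just adjoint viewpoints on the same fact, and neither is harder than the other (the paper disposes of it in one sentence, so you may be overestimating its difficulty).
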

\begin{proof}
	(1)  By \ref{veron} we have a commutative diagram
	\[ \xymatrix@R=3mm{
		\C_0\ar[r]\ar[d]^-\rsimeq&\C^{(p)}\ar[d]^-\rsimeq\\
		\per_\dg^G\C\ar[r]&\per_\dg^{G/(p)}\C} \]
	with vertical Morita functors. We have $\C_0=\A$ by definition, and consider the Veronese subcategory $\B:=\C^{(p)}$. Since $p\in G$ is torsion-free, it is a $\Z$-graded dg category. Taking the $p$-th Veronese of the isomorphism $D\C\simeq\C(-p)[d]$, we get $D\B\simeq\B(-1)[d]$ in $\D^\Z(\B^e)$. Also, in view of the functor $\per^G\!\C\to\per^\Z\!\B$ given by $M\mapsto M^{(p)}$, we see that $\per^G\!\C=\thick\C$ implies $\per^\Z\!\B=\thick\B$. Then by \ref{bGqe} we conclude that $\B$ is quasi-equivalent to $\B_0/\B_{-1}$, with $\B_0=\A$ and $\B_{-1}=D\A[-d]$ by \ref{root}.\\
	(2)  We have a $\Z$-graded quasi-equivalence $\A/V\simeq\C$ by \ref{bGqe}. Also \ref{VaX}(\ref{ath}) shows $V^{\lotimes_\A p}=\C_{-p}$, which is isomorphic to $D\A[-d]$ by \ref{root}(\ref{nud}). Therefore we have $\A/V=\bG_d^{(1/p)}(\A)$ and also $(\A/V)^{(p)}=\A/V^p=\bG_d(\A)$.
	
	Now, taking the $p$-th Veronese subcategories of $\bG_d^{(1/p)}(\A)\simeq\C$ we get by \ref{qeVeronese} the commutative square on the right below. Also, taking the degree $0$-part of $\bG_d(\A)\simeq\C^{(p)}$ we get again by \ref{qeVeronese} the commutative square on the left below.
	\[ \xymatrix@R=3mm@!C=15mm{
		\A\ar[r]\ar@{=}[d]&\bG_d(\A)\ar[d]^-\rsimeq\ar[r]&\bG^{(1/p)}_d(\A)\ar[d]^-\rsimeq\\
		\C_0\ar[r]&\C^{(p)}\ar[r]&\C\\ } \]
	On the other hand, applying \ref{veron} to $G=\Z$ and $H=p\Z$ we get a commutative square on the left below. Next we view $\C$ as a dg category graded by $G=\Z/p\Z$ and $H=G$, we get a commutative square on the right below.
	\[ \xymatrix@R=3mm{
		\C_0\ar[r]\ar[d]^-\rsimeq&\C^{(p)}\ar[r]\ar[d]^-\rsimeq&\C\ar[d]^-\rsimeq\\
		\per_\dg^\Z\!\C\ar[r]&\per_\dg^{\Z/p\Z}\!\C\ar[r]&\per_\dg\!\C. } \]
	We get the desired result by combining these diagrams.
\end{proof}

Immediately we obtain the following structure theorem of strongly $\Z$-graded $1$-shifted $d$-Calabi-Yau dg categories.

\begin{Cor}
Let $\C$ be a strongly $\Z$-graded dg category which is $1$-shifted $d$-CY, and $\A:=\C_0$. Then $\C$ is $\Z$-graded quasi-equivalent to $\bG_d(\A)$.
\end{Cor}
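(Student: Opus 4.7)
The plan is to apply Theorem~\ref{futatsu}(2) in the degenerate case $p=1$, which is torsion-free in $\Z$, so the hypotheses of part~(2) are satisfied. In this case the quotient $\Z/(1)$ is trivial, so the middle column of the extended diagram collapses with the right one, and the assertion reduces to identifying $\C$ directly with $\A/D\A[-d]$.

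Explicitly, I would carry out three short steps. First, set $V:=\C_{-1}$; by the Reconstruction Theorem \ref{bGqe} applied to $\B=\C$, there is a $\Z$-graded quasi-equivalence $\C\simeq\A/V$. Second, specializing the first assertion of \ref{futatsu}(2) (equivalently, \ref{root}(\ref{nud}) with $X=\C_{-p}=\C_{-1}=V$) to $p=1$ yields an isomorphism $V\simeq D\A[-d]$ in $\D(\A^e)$. Third, because the dg orbit construction $\A/(-)$ is defined via a cofibrant replacement of its bimodule argument and is invariant up to $\Z$-graded quasi-equivalence under quasi-isomorphisms of cofibrant replacements, the isomorphism $V\simeq D\A[-d]$ upgrades to a $\Z$-graded quasi-equivalence $\A/V\simeq\A/D\A[-d]$. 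Composing the two quasi-equivalences produces a $\Z$-graded quasi-equivalence $\C\simeq\A/D\A[-d]$, which in particular is a $\Z$-graded derived Morita equivalence in the sense of Definition~\ref{grMo}.

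There is essentially no obstacle here: the whole content already resides in Theorem~\ref{futatsu}(2) and the Reconstruction Theorem \ref{bGqe}, and the only mildly delicate point is the cofibrancy bookkeeping that guarantees the orbit construction depends only on the derived-category class of the bimodule being quotiented by; this is built into the very convention $\A/W:=\A/pW$ for a cofibrant resolution $pW\to W$. Once this is acknowledged, the $1$-shifted case amounts to the observation that the ``$p$-th root of $\nu_d^{-1}$'' demanded by \ref{futatsu} is simply $\nu_d^{-1}=D\A[-d]$ itself, already encoded in the Calabi-Yau datum.
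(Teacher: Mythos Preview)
Your proposal is correct and follows essentially the same path the paper intends: the paper gives no explicit proof, merely writing ``Immediately we obtain\ldots'' after Theorem~\ref{futatsu}, and your three steps---apply \ref{bGqe} to get $\C\simeq\A/V$, observe $V=\C_{-1}\simeq D\A[-d]$ when $p=1$ via \ref{root}(\ref{nud}), and use the cofibrant-replacement convention to identify $\A/V\simeq\A/D\A[-d]$---are precisely the unpacking of that word ``immediately''. In fact you obtain a $\Z$-graded quasi-equivalence, slightly stronger than the derived Morita equivalence stated.
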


\section{Enhanced Auslander-Reiten duality for singularity categories}\label{sec3}
We give an important class of CY triangulated categories whose dg enhancement is CY, namely the singularity categories of commutative Gorenstein rings, or more generally, of symmetric orders.
After collecting some background on Cohen-Macaulay representations, we give our main result \ref{eAR} and its graded version \ref{greAR}.

\subsection{Preliminaries on Cohen-Macaulay representations}\label{section 3.1}

Let $R$ be a commutative Noetherian ring of dimension $d$. We denote by $\md R$ the category of finitely generated $R$-modules. The \emph{dimension} $\dim X$ of $X\in\mod R$ is the Krull dimension of $R/\Ann X$.
When $R$ is {local}, the \emph{depth} $\depth X$ of $X$ is the maximal length of the $X$-regular sequence. Then the inequalities
\[\depth X\le\dim X\le d=\dim R\]
hold. We call $X$ (maximal) \emph{Cohen-Macaulay} (\emph{CM}) if the equality $\depth X=d$ holds or $X=0$.

For general $R$, we call $X\in\mod R$ \emph{Cohen-Macaulay} (\emph{CM}) if, for each maximal ideal $\m$ of $R$, $X_\m$ is a CM $R_\m$-module. In this case, $X_\p$ is a CM $R_\p$-module for each $\p\in\Spec R$. We denote by $\CM R$ the category of CM $R$-modules.


For an $R$-algebra $\Lambda$ which is finitely generated as an $R$-module, let
\begin{eqnarray*}
\CM\Lambda&:=&\{X\in\md\Lambda\mid X\in\CM R\ \mbox{ as an $R$-module}\},\\
\CM_0\!\Lambda&:=&\{X\in\CM\Lambda\mid \mbox{$X_\p\in\proj \Lambda_\p$ for all $\p\in\Spec R$ with $\height\p<d$}\}.
\end{eqnarray*}

\begin{Def}\label{symm}
Let $\L$ be a module-finite $R$-algebra.
\begin{enumerate}
\item We call $\L$ an {\it $R$-order} if $\L\in\CM\L$.
\item We call $\L$ a {\it symmetric $R$-algebra} if we have $\L\simeq\Hom_R(\L,R)$ as $(\L,\L)$-bimodules.
\item We call $\L$ a \emph{symmetric $R$-order} if it is an $R$-order which is a symmetric $R$-algebra.
\end{enumerate}
\end{Def}

Now we assume that $R$ is a \emph{Gorenstein} ring, that is, for any prime ideal $\p\in\Spec R$ of $R$, the localization $R_\p$ has finite injective dimension.
In this case, each symmetric $R$-order $\L$ is an Iwanaga-Gorenstein ring whose injective dimensions on both sides equal $\dim R$.
Let $\fl R$ be the category of $R$-modules of finite length, and 
\[ \fl_0\!R=\{ M\in\fl R\mid \text{any }\p\in\Supp_R M \text{ satisfies }\height\p=d\}. \]
For a symmetric $R$-order $\Lambda$, consider a subcategory 
\begin{equation}\label{ungraded Dsg0}
\sg_0\!\L:=\thick\{X\in\mod\L\mid X\in\fl_0\!R\}
\end{equation}
of the singularity category.
On the other hand, by the first equality of \ref{B16 2} we have
\begin{equation*}\label{sCM0}
	\sCM_0\!\L=\{ X\in\sCM\La\mid \sEnd_\La(X)\in\fl_0\!R\}.
\end{equation*}
By \ref{B16}(2) (for $G=0$) we have a triangle equivalence
\begin{equation*}\label{CM0=sg0}
\sCM_0\!\L\simeq\sg_0\!\L.
\end{equation*}

Let us introduce the following which will be important in the sequel.
\begin{Def}\label{Sing}
Let $\La$ be a module-finite $R$-algebra. The {\it singular locus} of $\La$ is
\[ \Sing_R\!\La:=\{ \p\in\Spec R\mid \Mod\La_\p \text{ has infinite global dimension} \}. \]
Also, Serre's {\it {\rm(}R$_{n}${\rm)}-condition} on $\La$ is the following
\[ \Sing_R\!\La\subset\{ \p\in\Spec R\mid \height\p>n\}. \]
We say that $\La$ is an \emph{isolated singularity} if it satisfies (R$_{d-1}$)-condition for $d=\dim R$.
\end{Def}
Note that $\Lambda$ satisfies (R$_{d-1}$) if and only if $\sg\L=\sg_0\!\L$ holds, see \ref{B16}.
Our terminology of isolated singularities is slightly stronger than the usual one, that is, $\Sing_R\!\La\subset\Max R$. They are equivalent if $R$ is local, or more generally, all maximal ideals of $R$ have height $d$.

\subsection{Enhanced and classical Auslander-Reiten duality}
Let $R$ be a commutative Gorenstein ring of finite Krull dimension, and let $\L$ be a symmetric $R$-order. Thus, $\L$ is finitely generated and Cohen-Macaulay as an $R$-module, and there is an isomorphism $\Hom_R(\L,R)\simeq\L$ as $(\L,\L)$-bimodules. We put
\[ \D_\L:=\D_{\mod\L}(\Mod\L)=\{ X\in\D(\Mod\L)\mid H^iX\in\mod\L \text{ for all } i\in\Z\}. \]
Then there is a duality \cite[V.2.1]{RD}
\[ \xymatrix{
	(-)^\ast=\RHom_R(-,R)\colon \D_R\ar@{<->}[r]& \D_R }, \]
which restricts to a duality $\D_\L\leftrightarrow\D_{\L^\op}$.
Note that this coincides with $\RHom_\L(-,\L)$. Indeed, we have $\L\simeq\Hom_R(\L,R)=\RHom_R(\L,R)$ since $\L$ is a symmetric $R$-order, so $\RHom_\L(-,\L)=\RHom_\L(-,\RHom_R(\L,R))=\RHom_R(-,R)$ by adjunction. It follows that there is a functorial isomorphism $1\xrightarrow{\simeq}\RHom_R(\RHom_\L(-,\L),R)$ on $\D_\L$.

Now we state our main result in this section. We refer to \ref{define sgdg} for the definition of $\sg_\dg\!\L$. For example, one can take $\C=\C_\ac(\proj\La)_\dg$ by \ref{IGdg}.

\begin{Thm}\label{eAR}
Let $\C:=\sg_\dg\!\L$ be the dg singularity category of $\L$. Then we have an isomorphism
\[ \C\simeq\C^\ast[1] \]
in $\D(\C^\op\otimes_R\C)$, that is, we have a natural isomorphism $\C(M,N)\simeq\RHom_R(\C(N,M),R)[1]$ in $\D_R$ for each $M,N\in\sg\La$.
\end{Thm}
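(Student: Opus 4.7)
The plan is to work with the exact sequence of dg categories
\[
\A \xrightarrow{X} \B \xrightarrow{Y} \C
\]
in $\Hmo$, where $\A = \per_\dg\L$, $\B = \D^b(\md\L)_\dg$, and $\C = \C_\ac(\proj\L)_\dg$ is the dg singularity category. By Proposition \ref{ses}(2), this produces an exact triangle in $\D(\B^e)$
\[
\RHom_\B(X,\B)\lotimes_\A X \xrightarrow{\mu} \B \longrightarrow \C \longrightarrow [1],
\]
where $\mu$ is the natural composition/multiplication map. I will apply $R$-duality $(-)^\ast = \RHom_R(-,R)$ to this triangle and identify the result with a shift of the original.

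The crucial input is a Serre-type duality arising from the symmetric structure of $\L$. For $P \in \per\L$ (hence reflexive, $P^{\ast\ast} \simeq P$) and $M \in \D_\L$, the symmetric isomorphism $\L^\ast \simeq \L$ together with Hom-tensor adjunction yields a natural isomorphism
\[
\RHom_\L(P,M)^\ast = \RHom_R(P^\ast \lotimes_\L M, R) \simeq \RHom_\L(M, P^{\ast\ast}) \simeq \RHom_\L(M,P).
\]
Interpreted at the level of bimodules, this says $X^\ast \simeq \RHom_\B(X,\B)$ in $\D(\B^\op \otimes_R \A)$. Combining this with the standard adjunction formula $(U \lotimes_\A V)^\ast \simeq \RHom_\A(V, U^\ast)$ and with the Morita identification $\RHom_\A(X,X) \simeq \B$ in $\D(\B^e)$ (coming from the derived equivalence $\D(\A) \simeq \D(\L)$ via $-\lotimes_\A \L$), I obtain isomorphisms in $\D(\B^e)$
\[
\bigl(\RHom_\B(X,\B)\lotimes_\A X\bigr)^\ast \simeq \RHom_\A(X, X^{\ast\ast}) \simeq \B, \qquad \B^\ast \simeq \RHom_\B(X,\B)\lotimes_\A X
\]
(the second by applying $(-)^\ast$ to the first and using involutivity of the duality on reflexive objects).

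Applying $(-)^\ast$ to the original triangle and inserting these identifications yields a triangle
\[
\C^\ast \longrightarrow \RHom_\B(X,\B)\lotimes_\A X \xrightarrow{\mu^\ast} \B \longrightarrow \C^\ast[1]
\]
in $\D(\B^e)$. The main obstacle is to verify that under these identifications the dualized map $\mu^\ast$ coincides with $\mu$ itself; this reduces, through the Morita/Yoneda identification $\RHom_\A(X,X) \simeq \B$, to the observation that both arrows are characterised by the universal property of composition of morphisms, and so agree up to canonical sign conventions. Granted this compatibility, uniqueness of cones in $\D(\B^e)$ forces $\C^\ast[1] \simeq \C$. Finally, since $\B \to \C$ is a localisation by Proposition \ref{ses}(1) and both $\C$ and $\C^\ast[1]$ are $\C$-bimodules, the isomorphism descends to the required isomorphism $\C \simeq \C^\ast[1]$ in $\D(\C^\op \otimes_R \C)$.
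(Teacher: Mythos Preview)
Your overall strategy coincides with the paper's: both use the triangle from Proposition~\ref{ses}(2), apply $(-)^\ast$, identify the dualized triangle with the original (with arguments swapped), and then descend along the localization $\B\to\C$. Your formulation at the bimodule level is arguably cleaner than the paper's objectwise presentation, and your two identifications $(\RHom_\B(X,\B)\lotimes_\A X)^\ast\simeq\B$ and $\B^\ast\simeq\RHom_\B(X,\B)\lotimes_\A X$ are correct (using that $(-)^\ast$ is an involutive duality on $\D_R$ together with the symmetric isomorphism $\L\simeq\L^\ast$).

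The genuine gap is precisely the step you yourself flag as ``the main obstacle'': showing that the composite of $\mu^\ast\colon\B^\ast\to(\RHom_\B(X,\B)\lotimes_\A X)^\ast$ with your two identifications equals $\mu$ as a morphism in $\D(\B^e)$. Your justification --- that ``both arrows are characterised by the universal property of composition of morphisms'' --- is not an argument; there is no universal property that pins down a unique map here. What is actually required is to trace $\mu^\ast$ through the specific chain of adjunctions, evaluation maps, and reflexivity isomorphisms you used to build the identifications, and check that the outcome is $\alpha_{N,M}$ when you started from $\alpha_{M,N}$. This is exactly what the paper does: it writes out explicit vertical isomorphisms connecting the dual of $N\lotimes_\L\RHom_\L(M,\L)\xrightarrow{\alpha_{M,N}}\RHom_\L(M,N)$ to $M\lotimes_\L\RHom_\L(N,\L)\xrightarrow{\alpha_{N,M}}\RHom_\L(N,M)$, using concrete models for $(-)^\ast$ via an injective resolution of $R$ and the natural map $\beta_M\colon M\to\RHom_R(\RHom_\L(M,\L),R)$, and then verifies by hand that the resulting square commutes. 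That diagram chase is the entire content of the theorem; without it you have two triangles in $\D(\B^e)$ with isomorphic entries but no reason for the connecting maps to agree, hence no isomorphism of cones.
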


The essential structure for the proof of \ref{eAR} is the following commutative diagram.
\begin{Prop}\label{star}
For $M,N\in\D^b(\mod\La)$, the diagram below of canonical morphisms is commutative in $\D(\Mod R)$.
\[ \xymatrix{
	\RHom_\L(M,N)^\ast\ar[r]^-{\cdot\a_{M,N}}&(N\lotimes_\L\RHom_\L(M,\L))^\ast \\
	M\lotimes_\L\RHom_\L(N,\L)\ar[r]^-{\a_{N,M}}\ar[u]^\lsimeq&\RHom_\L(N,M)\ar[u]_\rsimeq} \]
\end{Prop}
\begin{proof}
	We give a concrete description of the relevant maps. Replacing $M$ and $N$ by their cofibrant resolutions we assume that $M,N\in\C^{-,b}(\proj\L)$. Then $\a_{M,N}$ is presented by the natural morphism denoted again by $\a_{M,N}$
	\[ \xymatrix{ {\a_{M,N}}\colon N\otimes_\L\cHom_\L(M,\L)\ar[r]& \cHom_\L(M,N), } \]
	and $\C(M,N)$ is its canonical mapping cone by \ref{ses}(2). Next let $i\colon R\to I$ be the minimal injective resolution of $R$. Then $(-)^\ast=\cHom_R(-,I)$ and the natural isomorphism $1\simeq\RHom_R(\RHom_\L(-,\L),R)$ is presented by
	\[ \b_M\colon M\to \cHom_R(\cHom_R(M,R),I)=\cHom_R(\cHom_\L(M,\cHom_R(\L,R)),I)\to \cHom_R(\cHom_\L(M,\L),I),  \]
	where the first map is the composite of the evaluation map and $i\colon R\to I$, the second one adjunction, and the third one is induced from a fixed isomorphism $\L\simeq\Hom_R(\L,R)$.
	Then the above diagrams become
	\[ \xymatrix@R=1mm{
		\cHom_R(\cHom_\L(M,N),I)\ar[r]^-{\cdot\a_{M,N}}& \cHom_R(N\otimes_\L\cHom_\L(M,\L),I)\\ \\
		\cHom_R(\cHom_\L(M,\cHom_R(\cHom_\L(N,\L),I)),I)\ar[uu]^\lsimeq_{\b_N}&\\&\cHom_\L(N,\cHom_R(\cHom_\L(M,\L),I))\ar@{=}[uuu]\\
		\cHom_R(\cHom_R(M\otimes_\L\cHom_\L(N,\L),I),I)\ar@{=}[uu]&\\ \\
		M\otimes_\L\cHom_\L(N,\L)\ar[uu]^\lsimeq\ar[r]^-{\a_{N,M}}&\cHom_\L(N,M)\ar[uuu]_\rsimeq^{\b_M}, } \]
	where each $\simeq$ is a natural quasi-isomorphism and each equality is given by adjunction. Then one can verify the desired commutativity.
\end{proof}

Our proof of \ref{eAR} is based on \ref{star} and the sequence in \ref{ses}(2).

\begin{proof}[Proof of \ref{eAR}]
	Consider the exact sequence of $R$-linear dg categories
	\[ \xymatrix{ \per_\dg\!\La\ar[r]&\D^b_\dg(\mod\La)\ar[r]&\C } \]
	which the dg singuarity category $\C=\sg_\dg\!\La$ fits into. By \ref{ses}(2) we have a trianlge
	\[ \xymatrix{ N\lotimes_\La\RHom_\La(M,\La)\ar[r]^-{\al_{M,N}}&\RHom_\La(M,N)\ar[r]&\C(M,N) } \]
	in $\D(\Mod R)$ functorial for all $M,N\in\D^b(\mod \La)$. We apply $(-)^\ast=\RHom_R(-,R)$ to the above sequence. By \ref{star} there is a diagram of triangles
	\[ \xymatrix{
		\RHom_\L(M,N)^\ast\ar[r]^-{\cdot\a_{M,N}}&(N\lotimes_\L\RHom_\L(M,\L))^\ast\ar[r]&\C(M,N)^\ast[1] \\
		M\lotimes_\L\RHom_\L(N,\L)\ar[r]^-{\a_{N,M}}\ar[u]^\lsimeq&\RHom_\L(N,M)\ar[u]_\rsimeq\ar[r]&\C(N,M)} \]
	in which the left square is commutative.
	It follows that we have a quasi-isomorphism $\C(M,N)^\ast[1]\simeq\C(N,M)$ which is functorial over $M,N\in\D^b_\dg(\mod\La)$, in other words, an isomorphism $\C^\ast[1]\simeq\C$ in $\D(\B^\op\otimes_R\B)$ for $\B=\D^b_\dg(\mod\L)$. Now the morphism $\B\to\C$ is a localization of dg categories, thus so is $\B^e\to\C^e$ (\cite[3.10(a)]{Ke11}), hence the isomorphism of $\B^e$-modules implies the isomorphism of $\C^e$-modules, which completes the proof.	
	%
\end{proof}

Let us note that the {\it object-wise} version of \ref{eAR} can be shown by using the dg singularity category $\C_a=\C_\ac(\proj\La)_\dg$ (see \ref{IGdg}) as in \ref{objectwise} below.
Note however that it is {\it not} enough to conclude the desired isomorphism in \ref{eAR} since we cannot see functoriality from the argument below.

\begin{Rem}\label{objectwise}
	For each $X,Y\in\C_a$ there exists an isomorphism in $\D(\Mod R)$:
	\[ \C_a(X,Y)\simeq\RHom_R(\C_a(Y,X),R)[1]. \]
	\begin{proof}
	For each complex $X=(\cdots\to X^{-1}\to X^0\to X^1\to\cdots)$ of $\La$-modules, we denote by $X^{>0}$ and $X^{\le0}$ the following truncations:
	\[ \xymatrix@R=2mm{
		X^{>0}\colon&& \cdots\ar[r]&0\ar[r]&0\ar[r]&X^1\ar[r]&X^2\ar[r]&\cdots,\\
		X^{\leq0}\colon&& \cdots\ar[r]&X^{-1}\ar[r]&X^0\ar[r]&0\ar[r]&0\ar[r]&\cdots.} \]
	Then we have an exact sequence
	\begin{equation}\label{X}
		0\to X^{>0}\to X\to X^{\le0}\to0
	\end{equation}
	of complexes $\La$-modules.
	Applying $\cHom_\La(X,-)$ to the exact sequence $0\to Y^{>0}\to Y\to Y^{\le0}\to0$, we have an exact sequence
	\[0\to\cHom_\La(X,Y^{>0})\to\cHom_\La(X,Y)\to\cHom_\La(X,Y^{\le0})\to0\]
	of dg $R$-modules.
	
	Since $X$ is acyclic and $Y$ is bounded below,  $\cHom_\La(X,Y^{>0})$ is acyclic. Thus we have a quasi-isomorphism
	\[ \cHom_\La(X,Y)\to\cHom_\La(X,Y^{\le0}).\]
	
	Applying $(-)^*$ to the exact sequence $0\to X^{>0}\to X\to X^{\le0}\to0$, we have an exact sequence $0\to (X^{\le0})^*\to X^*\to(X^{>0})^*\to0$. Applying $Y^{\le0}\otimes_\Lambda-$ , we have an exact sequence $0\to Y^{\le0}\otimes_\Lambda(X^{\le0})^*\to Y^{\le0}\otimes_\Lambda X^*\to Y^{\le0}\otimes_\Lambda(X^{>0})^*\to0$.
	Since $Y^{\le0}\otimes_\Lambda X^*$ is acyclic, we have a quasi-isomorphism
	\[\cHom_\La(X^{>0}[1],Y^{\le0})=Y^{\le0}\otimes_\Lambda(X^{>0})^*[-1]\to Y^{\le0}\otimes_\Lambda(X^{\le0})^*.\]
	Applying $\cHom_\La(-,Y^{\le0})$ to \eqref{X}, we obtain a triangle
	\[\cHom_\La(X^{>0}[1],Y^{\le0})\to \cHom_\La(X^{\le0},Y^{\le0})\to\cHom_\La(X,Y^{\le0})\to\]
	in the homotopy category. Using the quasi-isomorphisms above, we have a traingle
	\[Y^{\le0}\otimes_\Lambda(X^{\le0})^*\to \cHom_\La(X^{\le0},Y^{\le0})\to\cHom_\La(X,Y)\to.\qedhere\]
	\end{proof}
\end{Rem}
%

Using \ref{eAR}, we obtain the enhanced CY property \ref{iso}(1) below of the full dg subcategory
\[\C^{\fl}:=\{X\in\C\mid H^0\C(X,X)\in\fl_0R\}\]
of $\C$ which enhances $\sg_0\L$ given in \eqref{ungraded Dsg0}.
In particular, when $\L$ has only an isolated singularity, we obtain the enhanced CY property \ref{iso}(2) below of the dg category $\C$.
We denote by ${(-)^\vee}=\Hom_R(-,I^d)$ the Matlis dual on $\fl_0\!R$, where $I^d$ is the last term of the minimal injective resolution of $R$.
\begin{Thm}\label{iso}
Let $R$ be a commutative Gorenstein ring of dimension $d$, and $\L$ a symmetric $R$-order.
\begin{enumerate}
\item $\C^{\fl}\simeq \C^{\fl}{}^{\vee}[-d+1]$ in $\D((\C^{\fl})^\op\otimes_R\C^{\fl})$.
\item If $\L$ satisfies $(R_{d-1})$-condition, then $\C=\C^{\fl}$ and $\C\simeq \C^{\vee}[-d+1]$ in $\D(\C^\op\otimes_R\C)$.
\end{enumerate}
\end{Thm}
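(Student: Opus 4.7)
The plan is to deduce the statement from the enhanced Auslander--Reiten duality $\C\simeq\C^*[1]$ in $\D(\C^\op\otimes_R\C)$ of \ref{eAR} via local duality for Gorenstein rings. Restricting that isomorphism to the full dg subcategory $\C^{\fl}\subset\C$ already gives
\[ \C^{\fl}\simeq(\C^{\fl})^*[1]\ \text{in}\ \D((\C^{\fl})^\op\otimes_R\C^{\fl}), \]
where $(-)^*=\RHom_R(-,R)$ is applied componentwise to morphism complexes. What remains is to replace $(-)^*$ by $(-)^\vee[-d]$ on the bimodule $\C^{\fl}$.

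First I would check that for $M,N\in\C^{\fl}$ every cohomology module $H^i\C(M,N)=\Hom_{\D_\sg(\L)}(M,N[i])$ lies in $\fl_dR$. Since $\C^{\fl}$ corresponds to the thick subcategory $\D_{\sg,0}(\L)$, for any prime $\p\in\Spec R$ with $\height\p<d$ both $M$ and $N$ become zero in $\D_\sg(\L_\p)$, hence the Hom module localizes to zero at $\p$; being finitely generated over $R$, it therefore has finite length supported at height-$d$ primes. Next, using the minimal injective resolution $I^\bullet$ of $R$, the stupid truncation onto its top term yields a natural surjection $I^\bullet\twoheadrightarrow I^d[-d]$ and thus a natural transformation
\[ \RHom_R(-,R)=\Hom_R(-,I^\bullet)\longrightarrow\Hom_R(-,I^d)[-d]=(-)^\vee[-d]. \]
For any $M\in\fl_dR$ the vanishing $\Hom_R(M,I^j)=0$ for $j<d$ (because $E(R/\p)_\p=0$ whenever $\height\p<d$, while $M$ is supported only at height-$d$ primes) shows this map is a quasi-isomorphism, and a hypercohomology spectral sequence argument extends this to any complex of $R$-modules with cohomology entirely in $\fl_dR$. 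Applied to the dg bimodule $\C^{\fl}$ this yields $(\C^{\fl})^*\simeq(\C^{\fl})^\vee[-d]$ in $\D((\C^{\fl})^\op\otimes_R\C^{\fl})$, and combining with the previous display produces (1). Assertion (2) is immediate: the (R$_{d-1}$) condition is equivalent to $\CM\L=\CM_0\L$, hence $\C=\C^{\fl}$, and (2) specializes (1).

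The one genuinely delicate point is upgrading the local-duality identification $(-)^*\simeq(-)^\vee[-d]$ from a pointwise quasi-isomorphism on individual morphism complexes to an isomorphism in the derived category of dg bimodules. Fixing the minimal injective resolution together with the explicit surjection $I^\bullet\to I^d[-d]$ makes the comparison an honest morphism of dg $((\C^{\fl})^\op\otimes_R\C^{\fl})$-modules, and the quasi-isomorphism test then reduces to the cohomology of each morphism complex, which may be unbounded but lies entirely in $\fl_dR$.
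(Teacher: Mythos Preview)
Your proposal is correct and follows essentially the same route as the paper: deduce the result from \ref{eAR} by restricting to $\C^{\fl}$ and then invoke local duality $\RHom_R(-,R)\simeq(-)^\vee[-d]$ on complexes whose cohomology lies in $\fl_dR$. You supply more detail than the paper does---in particular the verification that all cohomologies of $\C^{\fl}$ lie in $\fl_dR$ and the care about bimodule naturality via the explicit map $I^\bullet\to I^d[-d]$---but the strategy is identical.
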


\begin{proof}
(1) We know that $H^0\C^{\fl}$ is $\Hom$-finite, that is, each cohomology of $\C$ is of finite length. Then we have $\RHom_R(\C^{\fl},R)\simeq \C^{\fl}{}^{\vee}[-d]$, and thus $\C^{\fl}\simeq \C^{\fl}{}^{\vee}[-d+1]$.

(2) Since $\L$ is an $R$-order satisfying $(R_{d-1})$, we know that $\C=\C^{\fl}$.
\end{proof}

Taking the cohomology, we recover the classical Auslander-Reiten duality on $\sg\La$, which immediately implies the existence of almost split sequences.
\begin{Cor}[{\cite{Au78}, see also \cite[3.10]{Yo90}}]\label{classical AR}
Let $R$ be a commutative Gorenstein ring of dimension $d$, and $\L$ a symmetric $R$-order.
\begin{enumerate}
\item The triangulated category $\sCM_0\!\L\simeq\sg_0\La$ is $(d-1)$-CY.
\item If $\L$ satisfies $(R_{d-1})$-condition, then $\sCM \L\simeq\sg\La$ is $(d-1)$-CY.
\end{enumerate}
\end{Cor}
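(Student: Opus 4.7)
The plan is to deduce classical Auslander--Reiten duality by applying $H^0$ to the bimodule isomorphisms of Theorem \ref{iso} and invoking the exactness of Matlis duality.

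First I would recall that via Buchweitz's equivalence the cohomology of the dg singularity category computes $H^i\C(M,N)\simeq\Hom_{\D_\sg(\L)}(M,N[i])$, and that, as already observed inside the proof of Theorem \ref{iso}, every cohomology group $H^i\C^{\fl}(M,N)$ for $M,N\in\C^{\fl}$ lies in $\fl_dR$. This finite-length property is what makes the subsequent cohomology computation tractable.

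Next, since $I^d$ is an injective $R$-module, the Matlis dual $(-)^{\vee}=\Hom_R(-,I^d)$ is exact on complexes with cohomology in $\fl_dR$, so that $H^i(X^{\vee})\simeq(H^{-i}X)^{\vee}$ for any such complex $X$. Combining this with Theorem \ref{iso}(1) applied to a pair $M,N\in\C^{\fl}$ produces the chain of natural isomorphisms
\[
\Hom_{\D_{\sg,0}(\L)}(M,N)\simeq H^0\C^{\fl}(M,N)\simeq H^{-d+1}\bigl(\C^{\fl}(N,M)^{\vee}\bigr)\simeq H^{d-1}(\C^{\fl}(N,M))^{\vee}\simeq \Hom_{\D_{\sg,0}(\L)}(N,M[d-1])^{\vee}.
\]
Naturality in both $M$ and $N$ is inherited from the fact that the isomorphism of Theorem \ref{iso} lives in $\D((\C^{\fl})^\op\otimes_R\C^{\fl})$. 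This exhibits the suspension $[d-1]$ as a Serre functor on $\D_{\sg,0}(\L)\simeq\sCM_0\!\L$, which is precisely assertion (1).

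For assertion (2), the hypothesis $(R_{d-1})$ gives $\CM\L=\CM_0\!\L$ and hence $\D_\sg(\L)=\D_{\sg,0}(\L)$ and $\C=\C^{\fl}$, so (1) applies verbatim; equivalently, the same computation can be run using Theorem \ref{iso}(2) directly. The only genuine point requiring care is the bookkeeping of the cohomological shift against the Matlis dual at $H^0$, which is controlled entirely by the exactness of $(-)^{\vee}$ on $\fl_dR$; once this is arranged, classical AR duality is a formal consequence of the enhanced statement.
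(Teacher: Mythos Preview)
Your proof is correct and follows exactly the approach the paper indicates: the paper simply remarks ``Taking the cohomology, we recover the classical Auslander--Reiten duality'' before stating the corollary, and you have spelled out precisely how that step works via the exactness of the Matlis dual on $\fl_dR$. There is nothing to add.
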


Next we will show that, without any assumptions on the singular locus of $\L$, the singularity category $\sCM\L\simeq\sg\La$ enjoys a certain version of $(d-1)$-Calabi-Yau property.
It follows that in $\sg\La$ a $\Hom$-space and its dual are related by a spectral sequence, which can be viewed as the Auslander-Reiten duality for arbitrary singular locus.
\begin{Cor}[{\cite[proof of 3.10]{Yo90}}]\label{spec}
Let $M, N\in\CM\L$. There exists a spectral sequence
\[ E_2^{p,q}=\Ext_R^p(\sHom_\L(M,N[-q]),R)\Rightarrow \sHom_\L(N,M[p+q-1]). \]
\end{Cor}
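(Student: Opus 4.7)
The plan is to derive this spectral sequence as a direct consequence of the enhanced Auslander-Reiten duality of \ref{eAR}, combined with the standard hypercohomology spectral sequence for the derived functor $\RHom_R(-,R)$.

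First, I would recall that \ref{eAR} provides a natural isomorphism
\[
\C(N,M) \simeq \RHom_R(\C(M,N),R)[1]
\]
in $\D_R$, so that for each $n\in\Z$,
\[
H^n\C(N,M) \simeq H^{n+1}\RHom_R(\C(M,N),R).
\]
Under Buchweitz's equivalence $\sCM\L\simeq\D_\sg(\L)$, the cohomology of the morphism complex is identified as $H^n\C(X,Y) = \sHom_\L(X,Y[n])$, so the left-hand side is $\sHom_\L(N,M[n])$.

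Next I would apply the hypercohomology (Grothendieck) spectral sequence associated to the functor $\Hom_R(-,R)$ and the complex $X=\C(M,N)\in\D_R$. Setting $X=\C(M,N)$, the standard spectral sequence takes the form
\[
E_2^{p,q} = \Ext_R^p(H^{-q}X,\,R) \;\Rightarrow\; H^{p+q}\RHom_R(X,R),
\]
and $H^{-q}X = \sHom_\L(M,N[-q])$. Substituting $n=p+q-1$ in the isomorphism from the previous step, the abutment becomes $\sHom_\L(N,M[p+q-1])$, which yields precisely the spectral sequence in the statement.

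The only genuinely subtle point is checking the existence of the hypercohomology spectral sequence in the form above for $\C(M,N)$, which is an object of $\D_R$ but is not assumed to be bounded. However, since $M,N\in\CM\L$ and $\C$ enhances $\D_\sg(\L)$, the complex $\C(M,N)$ has finitely generated cohomology modules over the Noetherian ring $R$, and $R$ itself has finite injective dimension (being Gorenstein of finite dimension when localized), so that the spectral sequence converges without difficulty; concretely, one may replace $R$ by a finite injective resolution and then the standard filtration on the Hom double complex produces the desired $E_2$ page. Thus the proof reduces to routine verification once \ref{eAR} is invoked.
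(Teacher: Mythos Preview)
Your proposal is correct and matches the paper's approach: the paper's proof is the single line ``Straightforward construction of a spectral sequence, using the injective resolution of $R$'', which is precisely the hypercohomology spectral sequence for $\RHom_R(\C(M,N),R)$ you describe, combined with the identification $\RHom_R(\C(M,N),R)[1]\simeq\C(N,M)$ from \ref{eAR}. Your remark on convergence is the right one, since the standing hypothesis that $R$ has finite Krull dimension makes the injective resolution of $R$ bounded and hence the filtration finite.
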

\begin{proof} Straightforward construction of a spectral sequence (see e.g. \cite[III.7]{GM}), using the injective resolution of $R$.\end{proof}

Note that the spectral sequence in \ref{spec} collapses at $E_k$ if the $\L$ satisfies (R$_{d-k}$)-condition. In particular for $k=2$, we recover a version of Auslander-Reiten duality due to the second author and Wemyss.

\begin{Cor}[{\cite[3.7]{IW14}}]\label{sing1}
	Let $R$ be a commutative Gorenstein ring of dimension $d$, and $\L$ a symmetric $R$-order satisfying (R$_{d-2}$). Then there is an exact sequence
	\[ \xymatrix{0\ar[r]&\Ext_R^d(\sHom_\L(M,N),R)\ar[r]&\Ext_\L^{d-1}(N,M)\ar[r]&\Ext_R^{d-1}(\sHom_\L(M,N[-1]),R)\ar[r]&0} \]
	with $\Ext_R^d(\sHom_\L(M,N),R)\in\fl R$ and $\fl\Ext_R^{d-1}(\sHom_\L(M,N[-1]),R)=0$. Consequently we have isomorphisms
	\begin{equation*}
		\begin{aligned}
			\fl\Ext_\L^{d-1}(N,M)&\simeq\Ext_R^d(\fl\sHom_\L(M,N),R), \\
			\frac{\Ext_R^d(\sHom_\L(M,N),R)}{\fl\Ext_R^d(\sHom_\L(M,N),R)}&\simeq\Ext_R^{d-1}\left(\frac{\sHom_\L(M,N[-1])}{\fl\sHom_\L(M,N[-1])},R\right).
		\end{aligned}
	\end{equation*}
\end{Cor}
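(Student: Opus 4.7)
The plan is to read everything off directly from the spectral sequence of Corollary~\ref{spec}. The key input provided by the (R$_{d-2}$) hypothesis is a strong dimension bound that makes the spectral sequence collapse almost trivially.

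First I would observe that for every $q\in\Z$ the $R$-module $\sHom_\L(M,N[-q])$ is supported on the singular locus of $\L$: at any prime $\p$ where $\gd\Lambda_\p<\infty$ the Cohen--Macaulay $\Lambda_\p$-module $M_\p$ is projective, whence $\sHom_{\Lambda_\p}(M_\p,-)=0$. Under (R$_{d-2}$) the singular locus has codimension $\ge d-1$ in $\Spec R$, so each $\sHom_\L(M,N[-q])$ has Krull dimension $\le 1$ over the Gorenstein ring $R$ of dimension $d$, which forces
\[ \Ext_R^p(\sHom_\L(M,N[-q]), R)=0 \quad\text{for every }p\notin\{d-1,d\}. \]
Hence only the two columns $p=d-1,d$ of the $E_2$-page are nonzero, and since every differential $d_r$ with $r\ge 2$ shifts the column index by $r\ge 2$, all of them vanish. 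Thus $E_2=E_\infty$, and in the total degree corresponding to the target $\Ext_\L^{d-1}(N,M)$ only the two survivors $E_\infty^{d,0}=\Ext_R^d(\sHom_\L(M,N),R)$ and $E_\infty^{d-1,1}=\Ext_R^{d-1}(\sHom_\L(M,N[-1]),R)$ remain. The convergence of the spectral sequence then yields at once the stated short exact sequence, with the $\Ext^d$-term as submodule and the $\Ext^{d-1}$-term as quotient.

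It remains to verify the two ``with'' clauses; the displayed isomorphisms then follow from the short exact sequence by chasing maximal finite-length subobjects and quotients. For any finitely generated $R$-module $X$ of Krull dimension $\le 1$, I would apply $\Ext_R^\ast(-,R)$ to the sequence $0\to\fl X\to X\to X/\fl X\to 0$: since $\fl X\in\fl_d R$ one has $\Ext_R^i(\fl X,R)=0$ for $i\ne d$ and $\Ext_R^d(\fl X,R)=(\fl X)^\vee\in\fl R$; meanwhile $X/\fl X$ has positive depth (no finite length submodule survives in the quotient) and dimension $\le 1$, hence is locally CM of dimension $1$ or zero, so that $\Ext_R^d(X/\fl X,R)=0$ and, by Gorenstein duality, $\Ext_R^{d-1}(X/\fl X,R)$ is itself locally CM of dimension $1$. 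The long exact sequence then delivers
\[ \Ext_R^d(X,R)\simeq (\fl X)^\vee\in\fl R, \qquad \Ext_R^{d-1}(X,R)\simeq\Ext_R^{d-1}(X/\fl X,R), \]
with $\fl\Ext_R^{d-1}(X,R)=0$. Specialising to $X=\sHom_\L(M,N)$ and $X=\sHom_\L(M,N[-1])$ yields the two ``with'' clauses, and the two displayed isomorphisms follow immediately from the short exact sequence.

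I expect the only delicate step to be the vanishing $\fl\Ext_R^{d-1}(X,R)=0$, whose content is the standard duality fact that $\Ext_R^{d-1}(Y,R)$ is locally CM of dimension $1$ whenever $Y$ is; the rest is routine bookkeeping with the spectral sequence and Gorenstein duality.
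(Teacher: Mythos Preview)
Your argument is correct and follows the same overall strategy as the paper: derive the short exact sequence from the collapse of the spectral sequence in Corollary~\ref{spec} (the paper is terser, simply invoking \ref{spec}, while you spell out why only the columns $p=d-1,d$ survive), and then verify the two finite-length claims separately.

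The only real difference is in how the ``with'' clauses are established. The paper argues directly from the minimal injective resolution $0\to R\to I^0\to\cdots\to I^d\to0$: since $\dim X\le 1$, one has $\Hom_R(X,I^{d-2})=0$, giving an injection $\Ext_R^{d-1}(X,R)\hookrightarrow\Hom_R(X,I^{d-1})$; as $I^{d-1}$ has only associated primes of height $d-1$, so does $\Ext_R^{d-1}(X,R)$, whence $\fl\Ext_R^{d-1}(X,R)=0$. Your route through the filtration $0\to\fl X\to X\to X/\fl X\to 0$ and Gorenstein duality for one-dimensional CM modules is equally valid and in fact yields the stronger identification $\Ext_R^d(X,R)\simeq(\fl X)^\vee$, from which the two displayed isomorphisms are immediate rather than merely ``consequent''. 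The paper's argument is slightly more economical in that it avoids the CM-duality machinery, but your approach makes the final isomorphisms more transparent.
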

\begin{proof}
	The existence of the exact sequence follows from \ref{spec}. Since $R$ is $d$-dimensional Gorenstein $\Ext_R^d(X,R)$ vanishes at non-maximal prime ideals for all $X\in\md R$, thus the first term is in $\fl R$. We now show that the last term has no finite length submodule. For this we prove $\fl\Ext_R^{d-1}(X,R)=0$ for all $X\in\md R$ of with $\dim X\leq1$. Let $0\to R\to I^0\to\cdots\to I^{d-1}\to I^d\to 0$ be the minimal injective resolution of $R$. Since $X$ has dimension $\leq1$ we have an injection $\Ext_R^{d-1}(X,R)\hookrightarrow\Hom_R(X,I^{d-1})$. It follows that all the associated prime ideals of $\Ext_R^{d-1}(X,R)$ has height $d-1$, hence $\fl\Ext^{d-1}_R(X,R)=0$.
\end{proof}

We refer to \cite{OY} for a related work on the Auslander-Reiten duality.

\subsection{Graded Cohen-Macaulay representations}\label{grCM}
We shall need the graded version of \ref{eAR}. Let us collect some preliminaries on Cohen-Macaulay representation theory in the graded setting. To make our results as general as possible, we need to introduce the graded version of the classical notions in commutative ring theory.

Let $G$ be an abelian group, $R$ a commutative Noetherian $G$-graded ring, and $\Lambda$ a $G$-graded $R$-algebra such that the structure morphism $R\to\Lambda$ preserves the $G$-grading. We refer to Appendix \ref{G-graded rings} for a background on group graded rings. In particular, we have the set $\Spec^G\!R$ of $G$-prime ideals of $R$ (\ref{GSpec}), and the notions of {\it $G$-height $\height^G\!\p$} of a $G$-prime ideal $\p$, and {\it $G$-dimension} $\dim^G\!R$ (\ref{Gdim}) of a graded ring $R$.
We assume that $R$ has graded dimension $\dim^G\!R=d$ and define the category of graded Cohen-Macaulay $\L$-modules by
\begin{eqnarray*}
	\CM^G\!\Lambda:=\{X\in\md^G\!\Lambda\mid X\in\CM\Lambda\ \mbox{ as a $\Lambda$-module}\}.
\end{eqnarray*}
To state Auslander-Reiten duality we also consider the following full subcategory of $\CM^G\!\L$:
\begin{eqnarray*}
	\CM_0^G\!\Lambda:=\{X\in\CM^G\!\Lambda\mid \mbox{$X_{\p,G}\in\proj\L_{\p,G}$ for each $\p\in\Spec^G\!R$ with $\height^G\!\p<d$ }\}
\end{eqnarray*}
Consider the category
\[\fl_0^G\!R:=\{ X\in\mod^G\!R\mid \text{any }\p\in\Supp_R^G\!X \text{ satisfies }\height^G\!\p=d\}\]
consisting of all finite length objects in $\Mod^G\!R$ whose composition factor has the form $R/\m$ with $\height^G\!\m=d$.
Note that $\p\in\Spec R$ in the support of $M\in\fl^G_0R$ is not necessarily $G$-graded: For example, let $R=k[x]/(x^2-1)$ with $G=\Z/2\Z$ and $\deg x=1$. Then $\ass R=\{ (x-1),(x+1) \} $, and these are not graded.

Let $\L$ be a $G$-graded symmetric $R$-order. The \emph{$G$-graded singularity category} of $\L$ is the Verdier quotient
\[\sg^G\!\La:=\D^b(\mod^G\!\L)/\per^G\!\L.\]
Then there is a triangle equivalence $\sCM^G\L\simeq\sg^G\!\L$ \cite{Bu}. Consider also thick subcategory
\begin{equation}\label{Dsg0}
	\sg_0^G\!\La:=\thick\{X\in\mod^G\!\L\mid X\in\fl_0^G\!R\}\subset\sg^G\!\La.
\end{equation}
On the other hand, by the first equality of \ref{B16 2} we have
\begin{equation*}
	\sCM_0^G\!\La=\{X\in\sCM^G\!\Lambda\mid \sEnd_\La(X)\in\fl_0^G\!R\}.
\end{equation*}
Then by \ref{B16}(2) we have a triangle equivalence
\begin{equation*}
	\sCM_0^G\!\L\simeq\sg_0^G\!\La.
\end{equation*}
%
%

As in \ref{Sing}, let us consider the graded version of the singular locus.
\begin{Def}\label{Sing^G}
Let $\La$ be a $G$-graded module-finite $R$-algebra. The {\it $G$-singular locus} of $\La$ is
\[ \Sing^G_R\!\La:=\{ \p\in\Spec^G\!R\mid \Mod^G\!\La_{\p,G} \text{ has infinite global dimension} \}. \]
Also, Serre's {\it {\rm(}R$^G_{n}${\rm)}-condition} on $\La$ is the following:
\[ \Sing^G_R\!\La\subset\{ \p\in\Spec^G\!R\mid \height^G\!\p>n\}. \]
\end{Def}
Then $\La$ satisfies {\rm(R$_{d-1}^G$)} if and only if $\sg^G_0\!\La=\sg^G\!\La$ hold, see \ref{B16}. Note again that our terminology of isolated singularities is slightly stronger than the usual one $\Sing_R^G\!\La\subset\Max^G\!R$.
We refer to \ref{sing} for the relationship between graded and ungraded singular loci.


\medskip
We next discuss the notion of Gorenstein parameter for module-finite algebras.
Let $R$ be a Gorenstein ring with $\dim^G\!R=d$ and recall from \ref{GP} the definition of a Gorenstein parameter. When $R$ has Gorenstein parameter $p_R\in G$ we have the following, where $(-)^{\vee_G}=\Hom_R(-,\bigoplus_{\height^G\!\m=d}E_R^G(R/\m))$ is the $G$-graded Matlis dual.
\begin{Prop}
	We have an isomorphism of functor on $\D_{\fl^G_0\!R}(\Mod^G\!R)$:
	\[(-)^{\vee_G}\circ\RHom_R(-,R)\simeq (-p_R)[d].\]
\end{Prop}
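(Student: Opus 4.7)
The plan is to derive this isomorphism from the $G$-graded version of local duality, using the characterization of the Gorenstein parameter via the dualizing complex. By the definition given in Appendix A, since $R$ is $G$-Gorenstein with $\dim^G R = d$ and Gorenstein parameter $p_R$, the minimal graded injective resolution $R \to I^\bullet$ satisfies $I^d \simeq E(-p_R)$, where $E := \bigoplus_{\height^G\!\m = d} E_R^G(R/\m)$ is the injective cogenerator used to define $(-)^{\vee_G}$; equivalently, $R$ admits a normalized dualizing complex $\omega_R^\bullet \simeq R(-p_R)[d]$ in $\D(\Mod^G R)$.

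First I would establish graded local duality:
$$
\RHom_R(M, \omega_R^\bullet) \simeq M^{\vee_G}
$$
naturally in $M \in \D_{\fl_d^G R}(\Mod^G R)$. This is a standard devissage argument. Both sides define exact, $k$-linear, contravariant functors $\fl_d^G R \to \Mod^G R$, and they agree on the simple objects $R/\m$ with $\height^G\!\m = d$, where both canonically identify with $E_R^G(R/\m)$: the right-hand side by the very definition of $E$, and the left-hand side after substituting the minimal graded injective resolution of $\omega_R^\bullet$ and observing that the only summand of $I^d$ contributing to $\RHom_R(R/\m,-)$ is $E_R^G(R/\m)(-p_R)$, which together with the shift $[d]$ in $\omega_R^\bullet$ and the internal shift $(-p_R)$ produces exactly $E_R^G(R/\m)$. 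The identification extends to the derived category $\D_{\fl_d^G R}(\Mod^G R)$ by truncation and a way-out functor argument on cohomological amplitude.

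Substituting $\omega_R^\bullet = R(-p_R)[d]$ into local duality then gives $\RHom_R(M, R)(-p_R)[d] \simeq M^{\vee_G}$. Since $E$ is an injective cogenerator of the abelian subcategory $\fl_d^G R \subset \Mod^G R$, the graded Matlis dual $(-)^{\vee_G}$ is an exact contravariant involution on $\fl_d^G R$ and extends to a duality on $\D_{\fl_d^G R}(\Mod^G R)$ satisfying $(N(a)[b])^{\vee_G} \simeq N^{\vee_G}(-a)[-b]$ and $M^{\vee_G \vee_G} \simeq M$. Applying $(-)^{\vee_G}$ to the previous display yields the desired isomorphism $(\RHom_R(M, R))^{\vee_G} \simeq M(-p_R)[d]$. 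The only real technical content is a careful setup of $G$-graded local duality; the main subtlety is tracking the grading shift by $-p_R$ coming from the top term of the minimal injective resolution and verifying reflexivity of $(-)^{\vee_G}$ on $\fl_d^G R$ in the graded framework, both of which are routine given the definitions in Appendix A.
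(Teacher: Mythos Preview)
The paper states this proposition without proof; it is treated as an immediate consequence of the structure of the minimal graded injective resolution (Theorem~\ref{resolution}) together with graded Matlis duality (Proposition~\ref{Matlis}) from Appendix~A. Your local-duality argument is a correct way to fill this in, and is essentially what the paper is implicitly invoking.

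One sign slip to fix: under the paper's convention (the paragraph after Theorem~\ref{resolution} and Definition~\ref{GP}) the Gorenstein parameter is characterized by $\Ext^d_R(R/\m,R)\simeq (R/\m)(p_R)$, which forces $I^d\simeq E(p_R)$, not $E(-p_R)$. Your identification $\omega_R^\bullet\simeq R(-p_R)[d]$ is still correct---the shift by $(-p_R)$ on $R$ and the shift $(p_R)$ on $I^d$ cancel, so the top term of the injective resolution of $\omega_R^\bullet$ is exactly $E$---but your verification of local duality on the simple $R/\m$ only comes out right with this corrected sign.

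You can also shortcut the dualizing-complex packaging. For $M\in\fl^G_d R$ and $\p\in\Spec^G R$ with $\height^G\p<d$ one has $M_{[\p]}=0$, and since each $E_R^G(R/\p)$ is an $R_{[\p]}$-module this gives $\Hom_R(M,I^i)=0$ for $i<d$. Hence $\RHom_R(M,R)\simeq\Hom_R(M,I^d)[-d]\simeq M^{\vee_G}(p_R)[-d]$ directly, and applying $(-)^{\vee_G}$ together with reflexivity from Proposition~\ref{Matlis} yields the claim. Extending from $\fl^G_d R$ to $\D_{\fl^G_d R}(\Mod^G R)$ is then a routine d\'evissage, as you say.
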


We define Gorenstein parameter for module-finite $R$-algebras by the above formula.
\begin{Def}
	We say that $\Lambda$ has a \emph{Gorenstein parameter} $p_\L\in G$ if there exists an isomorphism of functors on $\D_{\fl^G_0\!R}(\Mod^G\!\L)$:
	\[ (-)^{\vee_G}\circ\RHom_\Lambda(-,\Lambda)\simeq (-p_\L)[d]. \]
\end{Def}

{For $G$-graded symmetric orders, we introduce}
the following notion.

\begin{Def}\label{relative Gorenstein parameter}
	Let $\L$ be a $G$-graded symmetric order over $R$. We say $\L$ has {\it relative Gorenstein parameter $p_{\L/R}\in G$} if there is an isomorphism $\Hom_R(\L,R)\simeq\L(-p_{\L/R})$ in $\Mod\L^\op\otimes_R\L$.
\end{Def}

Let us note that a relative Gorenstein parameter exists under a mild assumption.
\begin{Lem}\label{relative}
	If $\L$ is ring-indecomposable and $\mod^G(\L^\op\otimes_R\L)$ is Krull-Schmidt, then relative a Gorenstein parameter exists.
\end{Lem}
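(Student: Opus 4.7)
My plan is to upgrade the ungraded $(\L,\L)$-bimodule isomorphism supplied by the symmetric structure to a graded one, up to a shift in $G$, by extracting a single homogeneous component of that map which is itself an isomorphism. The two inputs I will use are (i) ring-indecomposability of $\L$, which gives graded indecomposability of both $\L$ and $M:=\Hom_R(\L,R)$ in $\mod^G(\L^\op\otimes_R\L)$, and (ii) the Krull-Schmidt hypothesis, which supplies locality of the relevant graded endomorphism ring.

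For (i): a graded bimodule decomposition $\L=L_1\oplus L_2$ yields, via the projections of $1\in\L_0$, a pair of orthogonal central idempotents summing to $1$, which ring-indecomposability forces to be trivial; and $M\simeq\L$ as ungraded bimodules (since $\L$ is symmetric), so $M$ is likewise indecomposable in $\mod^G(\L^\op\otimes_R\L)$. By Krull-Schmidt, $E:=\End^G_{\L^\op\otimes_R\L}(\L)$ is a local ring. Fixing an ungraded bimodule isomorphism $\varphi\colon\L\xrightarrow{\sim}M$, I decompose $\varphi$ into homogeneous components: since $\L$ is generated as a bimodule by $1\in\L_0$ and $\varphi(1)\in M$ has only finitely many nonzero graded parts $m_g\in M_g$, the formula $\varphi_g(a):=am_g$ ($a\in\L$) defines a bimodule map $\varphi_g\colon\L\to M$ of graded degree $g$, and $\varphi=\sum_{g\in S}\varphi_g$ for a finite set $S\subset G$. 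Equivalently, each $\varphi_g$ is a degree-$0$ morphism $\L\to M(-g)$ in $\mod^G(\L^\op\otimes_R\L)$. Decompose $\varphi^{-1}=\sum_{h\in T}\psi_h$ similarly with $\psi_h$ of degree $h$.

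Taking the graded-degree-$0$ component of the identity $\varphi^{-1}\varphi=\mathrm{id}_\L$ yields the finite identity
\[ \sum_{g\in S}\psi_{-g}\circ\varphi_g=\mathrm{id}_\L \quad\text{in } E. \]
Since $E$ is local, some summand $\psi_{-g_0}\circ\varphi_{g_0}$ must be a unit, so $\varphi_{g_0}\colon\L\to M(-g_0)$ is a split monomorphism between indecomposable objects in $\mod^G(\L^\op\otimes_R\L)$; as $M(-g_0)$ is indecomposable, $\varphi_{g_0}$ is an isomorphism, giving the desired $\Hom_R(\L,R)\simeq\L(-p_{\L/R})$ with $p_{\L/R}:=-g_0$. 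The main subtlety to be mindful of is ensuring that the homogeneous decomposition $\varphi=\sum_g\varphi_g$ is a genuinely \emph{finite} sum, so that the local-ring conclusion ``some summand is a unit'' applies without any limiting or topological argument; this is where finite generation of $\L$ as a bimodule (by $1\in\L_0$) is essential, combined with the fact that any element of a graded module has only finitely many nonzero homogeneous components.
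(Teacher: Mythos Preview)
Your proof is correct and is essentially a detailed unpacking of the paper's argument: the paper simply invokes the general fact that in a Krull--Schmidt category $\mod^G\!\Gamma$, two graded-indecomposable objects which are isomorphic after forgetting the grading must differ by a degree shift, and your homogeneous-component decomposition of $\varphi$ together with the local-endomorphism-ring argument is exactly the standard proof of that fact. The only minor points worth tightening are the sign of the shift (depending on the convention for $(a)$, your $\varphi_g$ is a degree-$0$ map $\L\to M(g)$ rather than $M(-g)$, which only affects the sign of $p_{\L/R}$) and the finiteness of the decomposition of $\varphi^{-1}$, which follows because $M$ is finitely generated as a graded bimodule.
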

\begin{proof}
	Note that ring-indecomposability of $\L$ is nothing but indecomposability of $\L$ in $\Mod\L^\op\otimes_R\L$. The assertion then follows from the following fact: If $\Ga$ is a graded ring such that the category $\mod^G\!\Ga$ of finitely presented graded modules is Krull-Schmidt, then for two indecomposable objects $X,Y\in\mod^G\!\Ga$ which are isomorphic in $\mod\Ga$, there is $a\in G$ such that $X\simeq Y(a)$ in $\mod^G\!\Ga$.
\end{proof}

We have the following relationship between these Gorenstein parameters.
\begin{Prop}\label{sum}
	Suppose two of $p_\L$, $p_R$ and $p_{\L/R}$ exist. Then the other one exists, which can be taken to satisfy $p_\L=p_R+p_{\L/R}$.
\end{Prop}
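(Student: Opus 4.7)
The plan is to derive all three implications from a single functorial identity built from the bimodule structure of $\L$ over $R$. Assume first that $p_{\L/R}$ exists, i.e.\ $\Hom_R(\L,R)\simeq\L(-p_{\L/R})$ as graded $(\L,\L)$-bimodules. Since an $R$-order satisfies $\L\in\CM^G R$, we have $\Hom_R(\L,R)=\RHom_R(\L,R)$, and tensor-hom adjunction then yields
\[
\RHom_\L(-,\L)\;\simeq\;\RHom_R(-,R)(p_{\L/R})\qquad\text{on }\D(\Mod^G\!\L).
\]
Applying the graded Matlis dual $(-)^{\vee_G}$ and using the shift-reversal rule $(X(a))^{\vee_G}\simeq X^{\vee_G}(-a)$ converts this into
\[
(-)^{\vee_G}\circ\RHom_\L(-,\L)\;\simeq\;\bigl((-)^{\vee_G}\circ\RHom_R(-,R)\bigr)(-p_{\L/R})
\]
as functors on $\D_{\fl_d^G R}(\Mod^G\!\L)$.

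The two Gorenstein-parameter conditions state that the left- and right-hand sides simplify to $(-p_\L)[d]$ and $(-p_R-p_{\L/R})[d]$ respectively. Thus if $p_R$ and $p_{\L/R}$ are both given, then $p_\L$ exists and $p_\L=p_R+p_{\L/R}$ can be read off directly. If instead $p_\L$ and $p_{\L/R}$ are given, the same identity yields $p_R$: restricted to the $R$-module structure, it furnishes the required isomorphism on every simple $R/\m\in\fl_d^G R$, since $R/\m$ appears as a direct $R$-summand of $\L/\m\L\in\Mod^G\!\L$, and these simples generate $\D_{\fl_d^GR}(\Mod^G\!R)$.

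The case $p_\L, p_R\Rightarrow p_{\L/R}$ is the main obstacle, since one must promote the functorial identity to a bimodule isomorphism $\Hom_R(\L,R)\simeq\L(-p_{\L/R})$, and $\L$ itself lies outside $\D_{\fl_d^GR}$, obstructing a direct Yoneda-style recovery. My plan is to exploit that $\L$ is already a symmetric $R$-order, so that an \emph{ungraded} $(\L,\L)$-bimodule isomorphism $\Hom_R(\L,R)\simeq\L$ is available. The Krull--Schmidt lifting that underlies Lemma~\ref{relative} then upgrades this to a graded isomorphism $\Hom_R(\L,R)\simeq\L(-q)$ for some $q\in G$, and combining with the given $p_\L,p_R$ via the functorial identity above forces $q=p_\L-p_R$, completing the proof.
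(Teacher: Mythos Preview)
For the two easy directions your argument and the paper's coincide: both rest on the same adjunction, which the paper records in the unconditional form
\[
\RHom_R(\RHom_\L(-,\L),R)\;\simeq\;-\lotimes_\L\RHom_R(\L,R)
\]
on $\D^b(\mod^G\!\L)$, and which you write (after substituting $\RHom_R(\L,R)\simeq\L(-p_{\L/R})$) as $\RHom_\L(-,\L)\simeq\RHom_R(-,R)(p_{\L/R})$. With this in hand the implication $(p_R,p_{\L/R})\Rightarrow p_\L$ is immediate, and your reduction of $(p_\L,p_{\L/R})\Rightarrow p_R$ to simples via $R/\m\mid\L/\m\L$ is a reasonable way to descend from $\L$-modules to $R$-modules, though you should say a word about why the resulting isomorphisms on simples are natural and extend to all of $\D_{\fl_d^G R}(\Mod^G\!R)$.

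The genuine gap is in $(p_\L,p_R)\Rightarrow p_{\L/R}$. You appeal to ``the Krull--Schmidt lifting that underlies Lemma~\ref{relative}'', but that lemma requires $\L$ to be ring-indecomposable and $\mod^G(\L^\op\otimes_R\L)$ to be Krull--Schmidt, hypotheses not present in Proposition~\ref{sum}. Under those hypotheses Lemma~\ref{relative} already yields $p_{\L/R}$ outright, so your argument collapses to assuming what is to be proved. Moreover, even granting some $q$ with $\Hom_R(\L,R)\simeq\L(-q)$, your claim that the functorial identity ``forces $q=p_\L-p_R$'' only shows that the shift by $q-(p_\L-p_R)$ acts trivially on $\fl_d^G$, not that the element vanishes in $G$; there is no reason the bimodule $\L$ should be isomorphic to all such shifts of itself. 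Note that the paper's formulation keeps the bimodule $\RHom_R(\L,R)$ explicitly in the identity rather than substituting it away, which is precisely what allows one to address this direction without circularity; your rewriting bakes in $p_{\L/R}$ from the start and so cannot be used to produce it.
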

\begin{proof}
	This is a consequence of a canonical isomorphism $\RHom_R(\RHom_\L(-,\L),R)=-\lotimes_\L\RHom_R(\L,R)$ on $\D^b(\mod^G\!\L)$.
\end{proof}
In many cases our $R$ will be $G$-local with each $R_g$ being finite dimensional. In this case, all $p_R$, $p_{\L/R}$, and $p_\L$ exist.

\subsection{Graded version of enhanced and classical Auslander-Reiten duality}\label{grenh}
We next consider dg enhancements of graded singularity categories. We start from the following definition. 
\begin{Def}\label{grdgsg}
Let $G$ be an abelian group, and $A$ a $G$-graded Iwanaga-Gorenstein ring.
\begin{enumerate}
\item We define the canonical dg enhancement of the $G$-graded singularity category as the dg quotient
\[ \sg_\dg^G\!A:=\D^b_\dg(\mod^G\!A)/\per_\dg^G\!A \]
of the canonical dg enhancements of $\D^b(\mod^G\!A)$ by that of $\per^G\!A$.
\item We define the {$G$-graded dg} category $\C$ with
\begin{itemize}
	\item objects: same as $\sg_\dg^G\!A$,
	\item morphisms: $\C(L,M)=\bigoplus_{g\in G}(\sg_\dg^G\!A)(L,M(g))$.
\end{itemize}
We call this $\C$ the {\it $G$-graded dg singularity category}.
\end{enumerate}
\end{Def}


Thus $\C$ is naturally a $G$-graded dg category, and its degree $0$ part is $\C_0=\sg_\dg^G\!A$, the canonical dg enhancement of the graded singularity category $\sg^G\!A$. Also $\C$, as an ungraded dg category, is equivalent to the full dg subcategory of the (ungraded) dg singularity category $\sg_\dg^G\!A$ formed by gradable objects. Thus $\per\C$ is equivalent to the thick subcategory of $\sg A$ generated by gradable objects.

This discussion certainly lifts to the dg level and is summarized in the following commutative diagram in $\Hmo$.
\begin{align*}
\xymatrix@R=3mm{
	\per^G_\dg\!\C=\C_0\ar@{-}[r]^-\simeq\ar[d]&\sg^G_\dg\!A\ar[d]\\
	\C\ar@{-}[r]^-\simeq&\thick_\dg(\sg_\dg^G\!A)\ar@{^(->}[r]&\sg_\dg A}
\end{align*}
Here, $\thick_\dg(\sg_\dg^G\!A)$ is the smallest full dg subcategory of $\sg_\dg\!A$ which is closed under mapping cones, $[\pm1]$ and direct summands, and contains the image of the forgetful functor $\sg_\dg^G\!A\to\sg_\dg A$.
More generally, for each subgroup $H\subset G$, we have the following commutative diagram in $\Hmo$, which will be used later.
\begin{align}\label{sg_dg and sg}
\xymatrix@R=3mm{
	\per^G_\dg\!\C=\C_0\ar@{-}[r]^-\simeq\ar[d]&\sg^G_\dg\!A\ar[d]\\
	\per^{G/H}_\dg\!\C\ar@{-}[r]^-\simeq&\thick_\dg(\sg_\dg^G\!A)\ar@{^(->}[r]&\sg_\dg^{G/H}\!A}
\end{align}

Now we return to the following setup of module-finite algebras.
\begin{itemize}
\item $R=\bigoplus_{g\in G}R_g$ is a $G$-graded commutative Gorenstein ring with $\dim^G\!R<\infty$.
\item $\L=\bigoplus_{g\in G}\L_g$ is $G$-graded symmetric $R$-order with relative Gorenstein parameter $p_{\L/R}$, see \ref{relative Gorenstein parameter}.
\end{itemize}

We have the following graded version of \ref{eAR}.

\begin{Thm}\label{greAR}
Let $\C$ be the $G$-graded dg singularity category of $\L$, see {\rm \ref{grdgsg}(2)}. Then we have an isomorphism
\[ \C^\ast[1] \simeq \C(-p_{\L/R})\ \mbox{ in }\ \D^G(\C^\op\otimes_R\C).\]
\end{Thm}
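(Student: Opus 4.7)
The plan is to mimic the proof of Theorem~\ref{eAR} in the graded setting, tracking the shift coming from the relative Gorenstein parameter. For $M, N \in \D^b(\md^G\L)$, replacing them by cofibrant resolutions in $\C^{-,b}(\proj^G\L)$, the graded analogue of the exact sequence $\per_\dg^G\L \to \D^b(\md^G\L)_\dg \to \C$ together with Proposition~\ref{ses}\,(\ref{babbc}) furnishes a functorial triangle in $\D^G(R)$,
\[
N \lotimes_\L \RHom_\L(M, \L) \xrightarrow{\alpha_{M,N}} \RHom_\L(M, N) \to \C(M, N) \xrightarrow{+1}.
\]
Applying $(-)^\ast = \RHom_R(-, R)$ to this triangle reduces the problem to identifying the dualized middle terms with the shifted ``swapped'' versions of the same data.

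The key ingredient is the symmetric bimodule isomorphism defining the relative Gorenstein parameter, $\L \simeq \Hom_R(\L, R)(p_{\L/R})$ in $\D^G(\L^e)$. By adjunction this upgrades to a natural isomorphism of functors on $\D^G(\Mod^G\L)$,
\[
\RHom_\L(-, \L) \;\simeq\; \RHom_R(-, R)(p_{\L/R}) \;=\; (-)^\ast(p_{\L/R}),
\]
whose iteration (together with $G$-Gorensteinness of $R$ giving biduality $X \simeq X^{\ast\ast}$ on $\D^G_{\md^G R}(\Mod^G R)$) yields the crucial identity $\RHom_\L(M,\L)^\ast \simeq M(-p_{\L/R})$ in $\D^G$.

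Plugging these into the two middle terms of the dualized triangle, I would compute
\[
\bigl(N \lotimes_\L \RHom_\L(M,\L)\bigr)^\ast \;\simeq\; \RHom_\L\!\bigl(N,\RHom_\L(M,\L)^\ast\bigr) \;\simeq\; \RHom_\L(N,M)(-p_{\L/R}),
\]
and, writing $\RHom_\L(M,N) \simeq \RHom_\L(M, N^{\ast\ast}) \simeq (\RHom_R(N,R) \lotimes_\L M)^\ast$ via adjunction followed by biduality,
\[
\RHom_\L(M, N)^\ast \;\simeq\; \RHom_R(N, R) \lotimes_\L M \;\simeq\; \bigl(\RHom_\L(N,\L) \lotimes_\L M\bigr)(-p_{\L/R}).
\]
Both sides have thus become the $(-p_{\L/R})$-shift of the corresponding terms in the ``swapped'' triangle for $(N, M)$, and the induced middle map is seen to be $\alpha_{N,M}(-p_{\L/R})$. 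Its cone $\C(N, M)(-p_{\L/R})$ is therefore isomorphic to $\C(M, N)^\ast[1]$, yielding the desired isomorphism $\C^\ast[1] \simeq \C(-p_{\L/R})$ in $\D^G(\C^\op \otimes_R \C)$.

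The main obstacle is checking that the diagram of natural identifications actually commutes up to the prescribed shift, i.e., that the graded analogue of the explicit cochain-level diagram in the proof of Theorem~\ref{eAR} still commutes after inserting the twist $(p_{\L/R})$ coming from a chosen bimodule isomorphism $\L \simeq \Hom_R(\L, R)(p_{\L/R})$. As in the ungraded case, once this is established at the level of the $G$-graded bimodule over $\B^e$ with $\B = \D^b(\md^G\L)_\dg$, the passage to a bimodule isomorphism over $\C^e$ is automatic: $\B \to \C$ is a localization (Proposition~\ref{ses}\,(\ref{bc})), hence so is $\B^e \to \C^e$, and the isomorphism descends. Naturality in $M$ and $N$, and thus the existence of the claimed bimodule isomorphism, is inherited from the canonicity of every step.
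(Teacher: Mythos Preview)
Your proposal is correct and follows essentially the same approach as the paper: both redo the ungraded proof of Theorem~\ref{eAR} in the graded setting, using the key isomorphism $\RHom_\L(-,\L)\simeq\RHom_R(-,R)(p_{\L/R})$ to track the degree shift, then verify the resulting diagram commutes and descend from $\B^e$ to $\C^e$ via the localization argument. The paper's proof is terser---it simply displays the shifted version of the diagram from Theorem~\ref{eAR} and says ``thus the assertion follows''---but the content is identical to what you have written out.
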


\begin{proof}
Taking the isomorphism $\RHom_\L(-,\L)\simeq\RHom_R(-,R)(p_{\L/R})$ into account, the maps appearing in the proof of \ref{eAR} becomes
\[ \xymatrix@R=1mm{
	\RHom_\L(M,N)^\ast\ar[r]&(N\lotimes_\L\RHom_\L(M,\L))^\ast\ar[r]& \C(M,N)^\ast[1]\ar[r]&\\ \\
	\RHom_\L(M,\RHom_\L(N,\L)^\ast(p_{\L/R}))^\ast\ar[uu]^\lsimeq&&\\&\RHom_\L(N,\RHom_\L(M,\L)^\ast)\ar@{=}[uuu]&\\
	(M\lotimes_\L\RHom_\L(N,\L))^{\ast\ast}(-p_{\L/R})\ar@{=}[uu]&&\\ \\
	M\lotimes_\L\RHom_\L(N,\L)(-p_{\L/R})\ar[r]\ar[uu]^\lsimeq&\RHom_\L(N,M(-p_{\L/R}))\ar[r]\ar[uuu]_\rsimeq& \C(N,M)(-p_{\L/R})\ar[r]&} \]
when the objects are graded. Thus the assertion follows.
\end{proof}

Next we consider the version of \ref{iso}. Let $d=\dim^G\!R$ and suppose $R$ has Gorenstein parameter $p_R\in G$ (\ref{GP}). Note that a Gorenstein parameter exists as soon as $R_0$ is local (\ref{G-local}, \ref{resolution}).
We denote by $\C$ the $G$-graded dg singularity category of $\L$, and consider the full dg subcategory 
\begin{equation}\label{define C'}
\C^{\fl}:=\{X\in \C\mid H^0\C(X,X)\in\fl_0^G\!R\}.
\end{equation}

Let ${(-)^{\vee_G}}=\Hom_R(-,\bigoplus_{\m\in\Max^G\!R}E_R^G(R/\m))$ be the Matlis dual.

\begin{Thm}\label{griso}
Let $R$ be a $G$-graded commutative Gorenstein ring of $\dim^G\!R=d<\infty$ with Gorenstein parameter $p_R$, and $\L$ a $G$-graded symmetric $R$-order with Gorenstein parameter $p_\L$.
\begin{enumerate}
\item There exists an isomorphism
\[ {\C^{\fl}{}^{\vee_G}}\simeq\C^{\fl}(-p_\L)[d-1]\ \mbox{ in }\ \D^G((\C^{\fl})^\op\otimes_R\C^{\fl}).\]
\item If $\L$ satisfied {\rm(R$_{d-1}^G$)} condition, then there exists an isomorphism
\[ {\C^{\vee_G}}\simeq\C(-p_\L)[d-1]\ \mbox{ in }\ \D^G(\C^\op\otimes_R\C).\]
\end{enumerate}
Moreover, if $R_0$ is a finite dimensional algebra over a field $k$, then the Matlis dual $(-)^{\vee_G}$ in the isomorphisms above can be replaced by the graded $k$-dual $D$ sending $M=\bigoplus_{g\in G}M_g$ to $DM=\bigoplus_{g\in G}\Hom_k(M_{-g},k)$.
\end{Thm}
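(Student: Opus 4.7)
The strategy is to combine the enhanced Auslander--Reiten duality of Theorem~\ref{greAR}, $\C^\ast\simeq\C(-p_{\L/R})[-1]$, with graded local duality on $R$, which on $\D_{\fl_d^G R}(\Mod^G\!R)$ converts $\RHom_R(-,R)$ into the Matlis dual $(-)^{\vee_G}$ up to the twist $(-p_R)$ and shift $[d]$. Assertion (1) is obtained by substituting the second into the first on the subcategory $\C^{\fl}$ where cohomologies of morphism complexes are of finite length and supported at $G$-maximal ideals of $G$-height $d$; assertion (2) is then immediate from (1) since (R$_{d-1}^G$) forces $\C=\C^{\fl}$.

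The first step is to repackage the isomorphism of functors $(-)^{\vee_G}\circ\RHom_R(-,R)\simeq(-p_R)[d]$ into the symmetric form
\[ M^{\vee_G}\simeq\RHom_R(M,R)(-p_R)[d] \ \mbox{ in }\ \D^G(\Mod^G\!R) \]
for any $M\in\D_{\fl_d^G R}(\Mod^G\!R)$. This follows by applying the stated formula to $N:=\RHom_R(M,R)$, which still lies in $\D_{\fl_d^G R}(\Mod^G\!R)$ because $R$ is Gorenstein of graded dimension $d$, together with the reflexivity isomorphism $\RHom_R(N,R)\simeq M$.

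The second step is to restrict to $\C^{\fl}$. By Subsection~\ref{grCM}, for $X,Y\in\C^{\fl}$ the cohomology of $\C(X,Y)$ lies in $\fl_d^G R$. Applying the symmetric local duality to $\C(Y,X)$ and then Theorem~\ref{greAR} gives
\[ \C^{\vee_G}(X,Y)=\C(Y,X)^{\vee_G}\simeq\RHom_R(\C(Y,X),R)(-p_R)[d]\simeq\C(X,Y)(-p_{\L/R}-p_R)[d-1], \]
which by Proposition~\ref{sum} equals $\C(X,Y)(-p_\L)[d-1]$, proving (1). The main subtlety is ensuring that this is an isomorphism of bimodules in $\D^G((\C^{\fl})^\op\otimes_R\C^{\fl})$, not merely a pointwise family; this is handled as in the proof of Theorem~\ref{eAR}/\ref{greAR} by choosing cofibrant replacements and noting that the local duality isomorphism is canonical in $M$, so that the composite is natural in $X$ and $Y$ and descends to a morphism of bimodules. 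Part (2) follows by noting that $\C=\C^{\fl}$ under (R$_{d-1}^G$).

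For the last assertion, we must identify $(-)^{\vee_G}$ with the graded $k$-dual $D$ on $\D_{\fl_d^G R}(\Mod^G\!R)$ when $R_0$ is finite dimensional over $k$. Under this hypothesis each $G$-maximal $\m$ of $G$-height $d$ has $R_0/\m_0$ a finite dimensional $k$-algebra; the graded simples at such $\m$ are therefore finite dimensional over $k$, their graded $k$-duals are graded injective envelopes, and a direct computation on finite length objects (using that every $M\in\fl_d^G R$ has all homogeneous components finite dimensional over $k$) shows that $M^{\vee_G}\simeq DM$ naturally in $M$. Substituting this identification into (1) and (2) yields the remaining statement.
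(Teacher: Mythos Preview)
Your proposal is correct and follows essentially the same approach as the paper: combine the graded enhanced Auslander--Reiten duality $\C^\ast\simeq\C(-p_{\L/R})[-1]$ with the local duality identification $(-)^{\vee_G}\simeq(-)^\ast(-p_R)[d]$ on $\D_{\fl_d^G R}(\Mod^G\!R)$, use $p_\L=p_R+p_{\L/R}$ from Proposition~\ref{sum}, and invoke $\C^{\fl}=\C$ under (R$_{d-1}^G$) for part (2). The paper dispatches the final assertion by a direct appeal to Corollary~\ref{kdual}, whereas you sketch the argument; both amount to the same identification of Matlis dual with graded $k$-dual on $\fl_d^G R$.
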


\begin{proof}
(1)  By \ref{sum} the symmetric order $\L$ has Gorenstein parameter $p_\L=p_R+p_{\L/R}$. By assumption, each cohomology of $\C^{\fl}$ lies in $\fl_0^G\!R$, so we have $\C^\ast[d]\simeq\C^{\vee_G}(p_R)$. Combining with \ref{greAR}, we deduce $\C^{\vee_G}=\C^\ast(-p_R)[d]=\C(-p_R-p_{\L/R})[d-1]=\C(-p_\L)[d-1]$.\\
(2)  If $\La$ satisfies (R$^G_{d-1}$) condition, then $\C^{\fl}=\C$.\\
The last statement follows from \ref{kdual}.
\end{proof}



As in the ungraded case, taking the $0$-th cohomology gives the classical graded Auslander-Reiten duality. As in the ungraded case, this implies the existence of almost split sequences.
\begin{Cor}\label{grAR}
For each $M,N\in\sCM^G_0\!\L$ we have a natural isomorphism
\[ D\sHom^G_\La(M,N)\simeq\sHom_\La^G(N,M(-p_\L)[d-1]), \]
that is, we have the following.
\begin{enumerate}
\item The triangulated category $\sCM^G_0\!\La$ has a Serre functor $(-p_\L)[d-1]$.
\item If $\Lambda$ satisfies {\rm(R$_{d-1}^G$)} condition, then $\sCM^G_0\!\La=\sCM^G\!\L$ has a Serre functor $(-p_\L)[d-1]$.
\end{enumerate}
\end{Cor}
We leave the graded analogues of \ref{spec} and \ref{sing1} to the reader.

\section{Cluster categories and singularity categories}\label{sec4}

\subsection{The equivalence}
%
%

Applying the results from the preceeding sections to the singularity categories of symmetric orders, we deduce that the equivalence of a graded singularity category and a derived category automatically implies the equivalence of ungraded singularity category and the cluster category. 

\begin{Setup}\label{setup main}
Our setting is the following.
\begin{enumerate}
\renewcommand\labelenumi{(\Roman{enumi})}
\renewcommand\theenumi{\Roman{enumi}}
\item $G$ is an abelian group and $R$ is a $G$-graded Gorenstein $k$-algebra over a field $k$ with $\dim_kR_0<\infty$.
\item $R$ is $G$-local with $G$-maximal ideal $\m$ such that $\m$ is maximal (as an ungraded ideal). Let $d:=\dim^G\!R$.
\item $\L$ is a $G$-graded $R$-algebra such that the structure morphism $R\to\La$ preserves the $G$-gradings. 
\item $\La$ is a symmetric $R$-order with Gorenstein parameter $p$. 
\end{enumerate}
\end{Setup}
Note that by \ref{dim's}(1) the assumption (II) shows that $d=\height^G\!\m=\height\m$.

We first state our main result for a special case when $\La$ has a small singular locus. We refer to \ref{sing} for basic properties of graded singular loci, in particular, $\Sing^G_R\!\La\subset\Sing^{G/H}_R\!\La$ holds for each torsion-free subgroup $H$ of $G$.
%
\begin{Thm}\label{CM2}
Under the setting \ref{setup main}, assume $\Sing^{G/(p)}_R\!\La\subset\{\m\}$ and that $p\in G$ is torsion-free. For each full dg subcategory $\A\subset\sg_{\dg}^G\La$ which generates $\sg^G\!\La$ as a thick subcategory, the following assertions hold.
\begin{enumerate}
\item The dg category $\A$ is component-wise proper and Gorenstein. 
\item 
There exists a commutative diagram of dg categories on the left below, whose vertical maps are isomorphisms in $\Hmo$. Therefore we have a commutative diagram of triangulated categories on the right below.
	\[
	\xymatrix@R=3mm{
		\A\ar@{-}[d]^-\rsimeq\ar[r]&\bG_{d-1}(\A)\ar@{-}[d]^-\rsimeq\\
		\sg_{\dg}^G\La\ar[r]&\sg_{\dg}^{G/(p)}\!\La}
	\qquad
	\xymatrix@R=3mm{
		\per\A\ar@{-}[d]^-\rsimeq\ar[r]&\C_{d-1}(\A)\ar@{-}[d]^-\rsimeq\\
		\sg^G\!\La\ar[r]&	\sg^{G/(p)}\!\La}
	\]
\item Assume $G=\Z$ and $\Sing_R\La\subset\{\m\}$. 
Then the object $V\in\D(\A^e)$ given by $V(A,B):=\sg_{\dg}^G(A,B(-1))$ for $A,B\in\A$ satisfies
$V^{\lotimes_\A p}\simeq D\A[1-d]$ in $\D(\A^e)$. Defining $\bG_{d-1}^{(1/p)}(\A)$ as $\A/V$,
the above commutative diagrams extend to the third columns below.
	\[
	\xymatrix@R=3mm{
		\A\ar[r]\ar@{-}[d]^-\rsimeq&\bG_{d-1}(\A)\ar[r]\ar@{-}[d]^-\rsimeq&\bG_{d-1}^{(1/p)}(\A)\ar@{-}[d]^-\rsimeq\\
		\sg_{\dg}^\Z\!\La\ar[r]&\sg_{\dg}^{\Z/p\Z}\!\La\ar[r]&\sg_{\dg}\!\La}
	\qquad
	\xymatrix@R=3mm{
		\per\A\ar[r]\ar@{-}[d]^-\rsimeq&\C_{d-1}(\A)\ar[r]\ar@{-}[d]^-\rsimeq&\C^{(1/p)}_{d-1}(\A)\ar@{-}[d]^-\rsimeq\\
		\sg^\Z\!\La\ar[r]&\sg^{\Z/p\Z}\!\La\ar[r]&\sg\La}
	\]
\end{enumerate}
\end{Thm}

There are two main differences in our main result for general case. First, we need to consider certain full subcategories $\sg_0^*\!\La$ of the singularity categories $\sg^*\!\La$, where $*$ is a group. Secondly, we need to consider certain localizations $\La_{\m,*}$  of $\La$. To state it explicitly, we need to prepare some notations.

Let $R$ be a commutative Noetherian $G$-graded ring $R$, and $\La$ a $G$-graded $R$-algebra  such that the structure morphism $R\to\La$ preserves the $G$-grading.
We denote by $\sg_{0,\dg}^G\!\La$ the full dg subcategory of $\sg_{\dg}^G\!\La$ corresponding to $\sg_0^G\!\La$ from (\ref{Dsg0}). 
For $\p\in\Spec^G\!R$, we denote by $R_{\p,G}$ the localization of $R$ at the set of $G$-homogeneous elements not belonging to $\p$, and 
let $\La_{\p,G}:=\La\otimes_RR_{\p,G}$.

\begin{Thm}\label{CM}
Under the setting \ref{setup main}, assume that $p\in G$ is torsion-free. For each full dg subcategory $\A\subset\sg_{0,\dg}^G\La$ which generates $\sg_0^G\!\La$ as a thick subcategory, the following assertions hold.
\begin{enumerate}
	\item The dg category $\A$ is component-wise proper and Gorenstein. 
	\item There exists a commutative diagram of dg categories on the left below, whose vertical maps are isomorphisms in $\Hmo$. Therefore we have a commutative diagram of triangulated categories on the right below.
	\[
	\xymatrix@R=3mm{
		\A\ar@{-}[d]^-\rsimeq\ar[r]&\bG_{d-1}(\A)\ar@{-}[d]^-\rsimeq\\
		\sg_{0,\dg}^G\La\ar[r]&\sg_{0,\dg}^{G/(p)}\!\La_{\m,G/(p)} }
	\qquad
	\xymatrix@R=3mm{
		\per\A\ar@{-}[d]^-\rsimeq\ar[r]&\C_{d-1}(\A)\ar@{-}[d]^-\rsimeq\\
		\sg_0^G\!\La\ar[r]&	\sg_0^{G/(p)}\!\La_{\m,G/(p)} }
	\]
	\item Assume $G=\Z$. Then the object $V\in\D(\A^e)$ given by $V(A,B):=\sg_{0,\dg}^G(A,B(-1))$ for $A,B\in\A$ satisfies
$V^{\lotimes_\A p}\simeq D\A[1-d]$ in $\D(\A^e)$. Defining $\bG_{d-1}^{(1/p)}(\A)$ as $\A/V$,
the above commutative diagrams extend to the third columns below.
	\[
	\xymatrix@R=3mm{
		\A\ar[r]\ar@{-}[d]^-\rsimeq&\bG_{d-1}(\A)\ar[r]\ar@{-}[d]^-\rsimeq&\bG_{d-1}^{(1/p)}(\A)\ar@{-}[d]^-\rsimeq\\
		\sg_{0,\dg}^\Z\!\La\ar[r]&\sg_{0,\dg}^{\Z/p\Z}\!\La_{\m,\Z/p\Z}\ar[r]&\sg_{0,\dg}\!\La_\m}
	\qquad
	\xymatrix@R=3mm{
		\per\A\ar[r]\ar@{-}[d]^-\rsimeq&\C_{d-1}(\A)\ar[r]\ar@{-}[d]^-\rsimeq&\C^{(1/p)}_{d-1}(\A)\ar@{-}[d]^-\rsimeq\\
		\sg_0^\Z\!\La\ar[r]&\sg_0^{\Z/p\Z}\!\La_{\m,\Z/p\Z}\ar[r]&\sg_0\!\La_\m}
	\]
\end{enumerate}
\end{Thm}

We refer to \ref{locex} which illustrates that we need to consider the localizations like $\La_{\m,G/(p)}$ in \ref{CM} above.

When $\sg_0^G\!\La$ has a tilting subcategory the above theorems can be stated as follows.

\begin{Cor}\label{tilt}
In the setting in \ref{CM}, suppose there is a tilting subcategory $\P\subset\sg_0^G\!\La$.
\begin{enumerate}
\item The $k$-linear category $\P$ is proper and Gorenstein.
\item There exists a commutative diagram
\[ \xymatrix@R=3mm{
	\per\P\ar[r]\ar@{-}[d]^-\rsimeq&\C_{d-1}(\P)\ar@{-}[d]^-\rsimeq\\
	\sg_0^G\!\La\ar[r]&\sg_0^{G/(p)}\!\La_{\m,G/(p)}.} \]
If $\Sing_R^{G/(p)}\!\L\subset\{\m\}$, then we can replace $\sg_0^G\!\L$ and $\sg_0^{G/(p)}\!\L_{\m,G/(p)}$ above by $\sg^G\!\L$ and $\sg^{G/(p)}\!\L$ respectively.
\item Suppose furthermore $G=\Z$.
Then there exists $V\in\D(\P^e)$ which satisfies $V^{\lotimes_\P p}\simeq D\P[-d]$ in $\D(\P^e)$ and yields a commutative diagram below.
\[ \xymatrix@R=3mm{
	\per\P\ar[r]\ar@{-}[d]^-\rsimeq&\C_{d-1}(\P)\ar@{-}[d]^-\rsimeq\ar[r]&\C^{(1/p)}_{d-1}(\P)\ar@{-}[d]^-\rsimeq\\
	\sg_0^\Z\!\La\ar[r]&\sg_0^{\Z/p\Z}\!\La_{\m,\Z/p\Z}\ar[r]&\sg_0\!\La_\m} \]
If $\Sing_R\L\subset\{\m\}$, then we can replace $\sg_0^\Z\!\L$, $\sg_0^{\Z/p\Z}\!\L_{\m,\Z/p\Z}$ and $\sg_0\!\L_\m$ above by $\sg^\Z\!\L$, $\sg^{\Z/p\Z}\!\L$ and $\sg\L$ respectively.
\end{enumerate}
\end{Cor}

As an application, we obtain the following result. We say that a subcategory $\C$ in a triangualted category $\T$ is {\it $n$-rigid} if $\Hom_\T(C,C'[i])=0$ for each $C,C'\in\C$ and $0<i<n$. We say $\C$ is {\it $n$-cluster tilting} if it is functorially finite and satisfies
\begin{align*}
	\C&=\{ T\in \T\mid \Hom_\C(T,C[i])=0 \text{ for all } C\in\C \text{ and } 0<i<n \}\\
	&=\{ T\in \T\mid \Hom_\C(C,T[i])=0 \text{ for all } C\in\C \text{ and } 0<i<n \}.
\end{align*}

\begin{Cor}
In the setting in \ref{tilt}(2), assume that $\P$ is equivalent to $\proj A$ for a finite dimensional $k$-algebra $A$.
\begin{enumerate}
\item If $\id_AA\le d-1$, then $\P$ is a $(d-1)$-rigid subcategory of  $\sg_0^{G/(p)}\!\L_{\m,G/(p)}$.
\item If $\pd_{A^e} A\le d-1$, then $\P$ is a  $(d-1)$-cluster tilting subcategory of $\sg_0^{G/(p)}\!\La_{\m,G/(p)}$.
\end{enumerate}
\end{Cor}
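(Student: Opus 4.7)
The plan is to transport the question to the cluster category $\C_{d-1}(\P)\simeq\C_{d-1}(A)\simeq\D_{\sg,0}^{G/(p)}(\L)$ furnished by \ref{tilt}(2), and to compute $\Hom$-spaces there through the dg orbit algebra $\bG_{d-1}(A)=A/DA[-d+1]$. Under $\P\simeq\proj A$, the subcategory $\P$ is the image of $\proj A$ under the canonical projection $\per A\to\C_{d-1}(A)$, so both statements reduce to assertions about the compact generator $A$ (equivalently, about the indecomposable summands of $\P$).

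For part (1), expand
\[
\Hom_{\C_{d-1}(A)}(A,A[i])\simeq\bigoplus_{n\in\Z}\Hom_{\per A}(A,\nu_{d-1}^{n}A[i])\simeq\bigoplus_{n\in\Z}H^{i-n(d-1)}\bigl((DA)^{\lotimes_{A}n}\bigr),
\]
where $\nu_{d-1}=-\lotimes_{A}DA[-d+1]$. This Keller-type decomposition of $\Hom$ in the cluster category is legitimate because $A$ is $\nu_{d-1}$-finite, a consequence of the $\Hom$-finiteness of $\C_{d-1}(A)$ together with the Gorenstein property supplied by \ref{tilt}(1). Under the hypothesis $\id_{A}DA\le d-1$, equivalently $\pd_{A}DA\le d-1$, an induction on $n\ge0$ via the K\"unneth spectral sequence shows that $(DA)^{\lotimes_{A}n}$ has cohomology concentrated in degrees $[-(n-1)(d-1),0]$. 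Hence $H^{i-n(d-1)}((DA)^{\lotimes_{A}n})$ is nonzero only when $d-1\le i\le n(d-1)$; combined with Serre duality for the terms with $n<0$ (using that $\C_{d-1}(A)$ is $(d-1)$-Calabi-Yau), this forces every term of the sum to vanish for $1\le i\le d-2$, yielding the $(d-1)$-rigidity.

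For part (2), the bimodule smoothness $\pd_{A^e}A\le d-1$ restricts to $\pd_{A}DA\le d-1$ via the projective bimodule resolution of $A$, so (1) already provides rigidity. The cluster-tilting property is then obtained from the Amiot--Guo theorem: for a finite-dimensional algebra $A$ with $\pd_{A^e}A\le d-1$ which is $\nu_{d-1}$-finite, the image of $A$ in $\C_{d-1}(A)$ is a $(d-1)$-cluster tilting object. Equivalently, one identifies $\C_{d-1}(A)\simeq\per\bPi_d(A)/\D^b(\bPi_d(A))$ via the $d$-Calabi-Yau completion $\bPi_d(A)$ of \cite{Ke11}, and transports Amiot's original cluster-tilting statement through this equivalence to describe $\add\P$ by the vanishing of $\Hom_{\C_{d-1}(A)}(\P,-[i])$ for $1\le i\le d-2$.

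The main obstacle is the generation half of (2): showing that any $X\in\C_{d-1}(A)$ with $\Hom_{\C_{d-1}(A)}(A,X[i])=0$ for $1\le i\le d-2$ must lie in $\add A$. This step genuinely requires the stronger bimodule assumption $\pd_{A^e}A\le d-1$ rather than merely $\pd_{A}DA\le d-1$, because it is precisely the bimodule resolution that produces a bounded fundamental domain inside $\per A$ for the projection to the cluster category, which in turn drives the reconstruction of $X$ from its image in $\add\P$.
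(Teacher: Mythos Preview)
Your proof is correct and follows the same route as the paper, whose entire argument is the one-line citation ``These are basic properties of $(d-1)$-cluster categories \cite{Guo}'': both parts reduce, via the equivalence $\C_{d-1}(A)\simeq\D_{\sg,0}^{G/(p)}(\L)$ of \ref{tilt}(2), to the standard facts that $A$ is $(d-1)$-rigid (resp.\ $(d-1)$-cluster tilting) in its $(d-1)$-cluster category under the stated hypotheses. Your write-up simply unpacks what that citation contains, giving the explicit Hom-computation for (1) and invoking Amiot--Guo for (2).

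Two small points of precision. First, your parenthetical ``equivalently $\pd_A DA\le d-1$'' is in fact the operative hypothesis (it is the same as $\id_A A\le d-1$, the Gorenstein dimension bound); taken literally, $\id_A DA=0$ always since $DA$ is injective, so the ``equivalently'' is not an equivalence but rather the intended reading. Second, the Serre-duality step you use for the $n<0$ terms is duality in $\per A$ (sending the $n$-th summand to the $(1-n)$-th with shift $[d-1-i]$), not the Calabi--Yau property of $\C_{d-1}(A)$, which would relate the full sums for $i$ and $d-1-i$ and be circular here; the computation you indicate is nonetheless correct once phrased this way.
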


\begin{proof}
Since we have an equivalence $\sg_0^{G/(p)}\!\L_{\m,G/(p)}\simeq\C_{d-1}(A)$, these are basic properties of $(d-1)$-cluster categories \cite{Guo}.
\end{proof}

Now we explain the need of localization in our main theorem \ref{CM}.
\begin{Ex}\label{locex}
	Let $k$ be an algebraically closed field, $R=k[x,y]/(x^2)$ a $\Z$-graded Gorenstein ring with $\deg x=2$ and $\deg y=1$, which is $\Z$-local with $\Z$-maximal ideal $\m=(x,y)$ and has Gorenstein parameter $p=-1$.
	We know by \cite{BIY} there is a triangle equivalence $\sg_0^\Z\!R\simeq\per A$ for a finite dimensional algebra $A=\begin{bmatrix}k&0\\k[z]/(z^2)&k[z]/(z^2)\end{bmatrix}$, see (\ref{BIY equivalence}) below. Then one would expect a commutative diagram
	\[ \xymatrix@R=3mm{
		\per A\ar[r]\ar@{-}[d]^-\rsimeq&\C_{0}(A)\ar@{-}[d]^-\rsimeq\\
		\sg_0^\Z\!R\ar[r]&\sg_0\!R.} \]
	However this is {\it not} true.	We claim that the image of the forgetful functor $\sg^\Z_0\!R\to\sg_0\!R$ does {\it not} even generates $\sg_0\!R$. Indeed, for each $\alpha\in k$ consider the maximal ideal $\m_\alpha:=(x,y-\alpha)$. Then we have an equivalence
	\[\sg_0\!R\simeq\prod_{\alpha\in k}\sg_0\!R_{\m_\alpha}, \quad M\mapsto (M_{\m_\al})_{\al\in k}, \]
	and every $\sg_0\!R_{\m_\al}$ is equivalent to each other. The image of the forgetful functor $\sg^\Z_0\!R\to\sg_0\!R$ is the component for $\al=0$. Therefore the correct diagram is
	\[ \xymatrix@R=3mm{
		\per A\ar[r]\ar@{-}[d]^-\rsimeq&\C_{0}(A)\ar@{-}[d]^-\rsimeq\\
		\sg_0^\Z\!R\ar[r]&\sg_0\!R_\m.} \]
\end{Ex}

In Part 2, we will apply this result to various classes of Gorenstein rings and symmetric orders.
Let us end this subsection by posing a natural problem to study.
\begin{Pb}
Give a description of $\sg\La$ as a cluster-like category when $p$ is a torsion element in $G$.
\end{Pb}

\subsection{Proof of the main results}\label{correction}
Let $\La$ be a ring graded by an abelian group $G$, and let $H\subset G$ be a subgroup. Then we have a forgetful functor
\[ \Mod^G\!\La\to\Mod^{G/H}\!\La. \]
We collect some observations on the image of this forgetful functor at the level of singularity categories, in the setting of module-finite algebras. 
For an abelian group $A$ and $A$-graded ring $\Ga$, let $\Sim^A\!\Ga$ be the set of isomorphism classes of simple objects in $\mod^A\!\Ga$. 

In the rest of this subsection, we assume \ref{setup main}(I)(II)(III).
Let $H\subset G$ be a subgroup. By \ref{setup main}(II) the ideal $\m$ is also a $G/H$-maximal ideal.
Let 
\[
\Sim^{G/H}_{\m}\!\La:=\{S\in\Sim^{G/H}\!\La\mid\Supp_R^{G/H}\!S\subset\{\m\}\}.
\]
We first prove \ref{CM2} for isolated singularity case.

\begin{Prop}\label{isolated case}
In addition to \ref{setup main}{\rm(I)(II)(III)}, assume that $H$ is a torsion-free subgroup of $G$ and $\Sing^{G/H}_R\!\La\subset\{\m\}$. 
Then the image of the forgetful functor $\sg^G\!\La\to\sg^{G/H}\!\La$ generates $\sg^{G/H}\!\La$ as a thick subcategory.
\end{Prop}

\begin{proof}
(i) We prove $\sg^{G/H}\!\La=\thick(\Sim^{G/H}_{\m}\!\La)$ and $\sg^G\!\La=\thick(\Sim^G\!\La)$.
The first assertion is immediate from \ref{B16}(3). We prove the second one. For $X\in\sg^G\!\La$, let $E:=\End_{\sg\La}(X)\in\mod^G\!\La$. By \ref{Spec G G/H}(2) and the last assertion of \ref{B16 2}, we have $\Supp^G_RE\subset\Supp^{G/H}_RE\subset\Sing^{G/H}_R\!\La\subset\{\m\}$. By \ref{B16}(2), we have $X\in\thick(\Sim^G\!\La)$.

(ii) We prove the assertion.
By \ref{setup main}(II), $R$ is a $G$-local ring with $G$-maximal ideal $\m$, so we have $\Sim^G\!\La=\Sim^G\!(\La/\m\La)$. Since $H$ is torsion-free, we have $\Sim^G\!(\La/\m\La)/H\simeq\Sim^{G/H}\!(\La/\m\La)$ by \ref{forget 2}(3). Thus we have a bijection $(\Sim^G\!\La)/H\xsimeq\Sim^{G/H}_{\m}\!\La$.
Thus the image of the composition $\sg^G\!\La=\thick(\Sim^G\!\La)\xrightarrow{{\rm forget}}\sg^{G/H}\!\La=\thick(\Sim^{G/H}_{\m}\!\La)$ generates $\sg^{G/H}\!\La$ as a thick subcategory.
\end{proof}

\begin{proof}[Proof of \ref{CM2}]
Since $\La$ is a $d$-dimensional symmetric order, the graded Enhanced Auslander-Reiten duality \ref{griso} gives an isomorphism $D\C\simeq\C(-p)[d-1]$.

(1) By \ref{root}, the degree $0$ part $\C_0$ is component-wise proper and Gorenstein. Therefore so is $\A$ since it is Morita equivalent to $\C_0$.
	
(2) By \ref{isolated case}, we have an isomorphism $\thick_\dg(\sg_{\dg}^G\!\La)\simeq\sg^{G/(p)}_{\dg}\!\La$ in $\Hmo$. Thus \eqref{sg_dg and sg} gives the commutative square in the right, and Morita Theorem \ref{futatsu}(1) gives the commutative square in the left since $\C$ is $p$-shifted $(d-1)$-Calabi-Yau.
\begin{align*}
\xymatrix@R=3mm{
	\A\ar@{-}[rr]^-\simeq\ar[d]&\ar@{}[d]|(.45){\rm\ref{futatsu}(1)}&\per^G_\dg\!\C\ar@{-}[rr]^-\simeq\ar[d]&\ar@{}[d]|(.45){\eqref{sg_dg and sg}}&\sg_{\dg}^G\!\La\ar[d]\\
	\bG_{d-1}(\A)\ar@{-}[rr]^-\simeq&&\per^{G/(p)}_\dg\!\C\ar@{-}[rr]^-\simeq&&\thick_\dg(\sg_{\dg}^G\!\La)\ar@{-}[r]^-\simeq&\sg_{\dg}^{G/(p)}\!\La.}
\end{align*}
Thus we obtain the desired diagram.

(3) This is similar to (2); this time we use Morita Theorem \ref{futatsu}(2) instead of (1) above. 
\end{proof}

Now we prove our general result \ref{CM}. We need the following variation of \ref{isolated case}, where
\[\sg^{G/H}_{\m}\!\La:=\thick(\Sim^{G/H}_{\m}\!\La)\subset\sg^{G/H}\!\La.\]

\begin{Prop}\label{localization}
In addition to \ref{setup main}{\rm(I)(II)(III)}, assume that $H$ is a torsion-free subgroup of $G$.
\begin{enumerate}
\item\label{simples} The functors $\mod^G\!\La\xrightarrow{{\rm forget}}\mod^{G/H}\!\La\xrightarrow{(-)_{\m,G/H}}\mod^{G/H}\!\La_{\m,G/H}$ induce bijections
\begin{align*}
(\Sim^G\!\La)/H\xsimeq\Sim^{G/H}_{\m}\!\La\xsimeq\Sim^{G/H}\!\La_{\m,G/H}.
\end{align*}
\item\label{on sg} The image of the composition $\sg_0^G\!\La\subset\sg^G\!\La\xrightarrow{{\rm forget}}\sg^{G/H}\!\La$ generates $\sg^{G/H}_{\m}\!\La$ as a thick subcategory.
\item We have an equivalence $-\otimes_RR_{\m,G/H}:\sg^{G/H}_{\m}\!\La\xsimeq\sg^{G/H}_0\!\La_{\m,G/H}$.
\end{enumerate}
Thus we have equivalences $\thick_{\sg^{G/H}\!\La}(\sg_{0}^G\!\La)\simeq\sg_{\m}^{G/H}\!\La\simeq\sg^{G/H}_{0}\!\La_{\m,G/H}$.
\end{Prop}

\begin{proof}
For brevity, we denote $(-)':=-\otimes_RR_{\m,G/H}$.

(1) By \ref{setup main}(II), $R$ is a $G$-local ring with $G$-maximal ideal $\m$, so we have $\Sim^G\!\La=\Sim^G\!(\La/\m\La)$. Since $H$ is torsion-free, we have $\Sim^G\!(\La/\m\La)/H\simeq\Sim^{G/H}\!(\La/\m\La)$ by \ref{forget 2}(3). Thus we have the left bijection
\begin{align*}
(\Sim^G\!\La)/H=\Sim^G\!(\La/\m\La)/H\xsimeq\Sim^{G/H}\!(\La/\m\La)=\Sim^{G/H}_{\m}\!\La.
\end{align*}
Since $R'$ is a $G/H$-local ring with $G/H$-maximal ideal $\m$, we have $\Sim^{G/H}\!\La'=\Sim^{G/H}\!(\La'/\m\La')$. Since any ($G/H$-homogeneous) element outside $\m$ is a unit in $R/\m$, we have $R/\m=R'/\m R'$ and $\La/\m\La=\La'/\m\La'$. Thus we have the right bijection
\[\Sim^{G/H}_{\m}\!\La=\Sim^{G/H}\!(\La/\m\La)\xrightarrow{(-)'}\Sim^{G/H}\!(\La'/\m\La')=\Sim^{G/H}\!\La'.\]

(2) By \ref{setup main}(II), the category $\sg_0^G\!\La$ is generated by $\Sim^G\!\La$. Thus the left bijection in (\ref{simples}) shows that the image generates $\sg^{G/H}_{\m}\!\La$.

(3) First we prove that the functor $\sg_{\m}^{G/H}\!\La\to\sg_0^{G/H}\!\La'$ is fully faithful.
Let $X,Y\in\sg^{G/H}_{\m}\!\La$ and put $M:=\Hom_{\sg\La}(X,Y)$. Then we have an isomorphism $M'\xsimeq\Hom_{\sg\La'}(X',Y')$ of $G/H$-graded $R'$-modules.
Since $\Supp_R^G\!M\subset\Supp_R^{G/H}M\subset\{\m\}$ we have $\Supp_RM\subset\{\m\}$ by \ref{tsujitsuma}.
Then we obtain $M=M'=M_\m$ and hence
\begin{align}\label{La and La'}
M=\Hom_{\sg\La}(X,Y)\xsimeq\Hom_{\sg\La'}(X',Y')=M',
\end{align}
which shows that $(-)'=-\otimes_RR_{\m,G/H}$ is fully faithful. 

It remains to prove that the functor is dense.
By \ref{setup main}(II), $\sg_{\m}^{G/H}\!\La$ and $\sg_0^{G/H}\!\La_{\m,G/H}$ are generated by $\Sim_{\m}^{G/H}\!\La$ and $\Sim^{G/H}\!\La_{\m,G/H}$ respectively.
Thus the assertion follows from the right bijection in (\ref{simples}). 
\end{proof}

	


Although the proof of \ref{CM} is just a generalization of that of \ref{CM2} using \ref{localization}, we include the details for completeness.
We denote by $\C^{\fl}$ the $G$-graded dg category given in \eqref{define C'} and $\C^{\fl}_0$ is its (Adams) degree $0$ part.

\begin{proof}[Proof of \ref{CM}]
Since $\La$ is a $d$-dimensional symmetric order, the graded Enhanced Auslander-Reiten duality \ref{griso} gives an isomorphism $D\C^{\fl}\simeq\C^{\fl}(-p)[d-1]$.

(1)  By \ref{root}, the degree $0$ part $\C^{\fl}_0$ is component-wise proper and Gorenstein. Therefore so is $\A$ since it is Morita equivalent to $\C^{\fl}_0$.
	
(2) As subcategories of \eqref{sg_dg and sg}, we have the commutative diagram
\begin{align}\label{thick_dg}
\xymatrix@R=3mm{
	\per^G_\dg\!\C^{\fl}\ar@{-}[r]^-\simeq\ar[d]&\sg^G_{0,\dg}\!\La\ar[d]\\
	\per^{G/(p)}_\dg\!\C^{\fl}\ar@{-}[r]^-\simeq& \thick_\dg(\sg^G_{0,\dg}\!\La)\ar@{^(->}[r]&\sg^{G/(p)}_{0,\dg}\!\La.}
\end{align}
By \ref{localization}, we have an isomorphism $\thick_\dg(\sg_{0,\dg}^G\!\La)\simeq\sg^{G/(p)}_{0,\dg}\!\La_{\m,G/(p)}$ in $\Hmo$.
On the other hand, since $\C^{\fl}$ is $p$-shifted $(d-1)$-Calabi-Yau, Morita Theorem \ref{futatsu}(1) gives the commutative square in the left.\begin{align*}
\xymatrix@R=3mm{
	\A\ar@{-}[rr]^-\simeq\ar[d]&\ar@{}[d]|(.45){\rm\ref{futatsu}(1)}&\per^G_\dg\!\C^{\fl}\ar@{-}[rr]^-\simeq\ar[d]&\ar@{}[d]|(.45){\eqref{thick_dg}}&\sg_{0,\dg}^G\!\La\ar[d]\\
	\bG_{d-1}(\A)\ar@{-}[rr]^-\simeq&&\per^{G/(p)}_\dg\!\C^{\fl}\ar@{-}[rr]^-\simeq&&\sg_{0,\dg}^{G/(p)}\!\La_{\m,G/(p)}.}
\end{align*}
Thus we obtain the desired diagram.

(3) This is similar to (2); this time we use Morita Theorem \ref{futatsu}(2) instead of (1) above. 
\end{proof}

%
%

\subsection{Hypersurface case}
Let $S$ be a regular ring, $0\neq f\in S$, and $R=S/(f)$ a hypersurface singularity. Then by Eisenbud's matrix factorization theorem \cite{Ei}\cite[Chapter 7]{Yo90}, the singularity category of $R$ is $2$-periodic, which allows us to change the CY dimension flexibly. Let us note a straightforward observation on a derived version of this periodicity, in the graded setting.

Let $S=k[x_0,\ldots,x_d]$ be a polynomial ring, graded by an abelian group $G$ with $\deg x_i=p_i$. Let $0\neq f\in S$ be a homogeneous element of degree $c$, and $R=S/(f)$ the corresponding hypersurface singularity. Then $R$ is a Gorenstein ring with Gorenstein parameter $p=\sum_{i=0}^dp_i-c$.
As in Section \ref{grenh}, we denote by $\C$ the $G$-graded dg singularity category \ref{grdgsg} of $R$, and by $\C^{\fl}$ its dg subcategory given in \eqref{define C'}.

\begin{Prop}\label{mf}
In the above setting, the following assertions hold.
\begin{enumerate}
\item We have $\C[2]=\C(c)$ as dg $(\C^\op\otimes_R\C)$-modules.
\item If $R_0$ is finite dimensional over $k$, we have $D\C^{\fl}\simeq\C^{\fl}(-p-\ell c)[d+2\ell-1]$ in $\D^G((\C^{\fl})^\op\otimes_R\C^{\fl})$. Therefore $\C^{\fl}$ is $(p+lc)$-shifted $(d+2l-1)$-Calabi-Yau. 
\end{enumerate}
\end{Prop}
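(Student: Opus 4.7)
The plan is to deduce Part~(1) from Eisenbud's matrix factorization theorem and Part~(2) by combining Part~(1) with the graded enhanced Auslander--Reiten duality~\ref{griso}.

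For Part~(1), I would produce an isomorphism of diagonal bimodules $\C[2]\simeq\C(c)$ in $\D^G(\C^\op\otimes_R\C)$ by constructing a natural quasi-isomorphism of endofunctors $(c)\xrightarrow{\sim}[2]$ on $\C$. Given a cofibrant $X\in\C=\C_\ac(\proj^G\!R)_\dg$, lift it to a complex $\tilde X$ of graded projective $S$-modules with $\tilde X\otimes_SR=X$, which is possible since $S$ is regular and $G$-graded. The lifted differential $\tilde d$ satisfies $\tilde d^{\,2}\equiv0\pmod f$; writing $\tilde d^{\,2}=f\cdot h$ defines a canonical homogeneous map $h\colon\tilde X\to\tilde X[2](-c)$ of internal degree zero, the shift by $(-c)$ accounting for $\deg f=c$. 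A direct computation, using that $f$ is a non-zero-divisor in $S$, shows $h\tilde d=\tilde d h$, so reducing modulo $f$ yields a chain map $\bar h\colon X(c)\to X[2]$ in $\C$, which by Eisenbud's theorem is a quasi-isomorphism. Since the lift $\tilde X$ and the homotopy $h$ can be chosen naturally in $X$ up to homotopy, the assignment $X\mapsto\bar h_X$ defines a natural isomorphism of endofunctors, equivalently an isomorphism $\C[2]\simeq\C(c)$ in $\D^G(\C^\op\otimes_R\C)$.

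For Part~(2), the result follows by combining the two ingredients directly. Since $R_0$ is finite dimensional over $k$, \ref{griso} applies and gives
\[ D\C^{\fl}\simeq\C^{\fl}(-p)[d-1] \ \text{ in }\ \D^G((\C^{\fl})^\op\otimes_R\C^{\fl}). \]
Restricting Part~(1) to the full dg subcategory $\C^{\fl}$, which is preserved by both cohomological and Adams shifts, and iterating $\ell$ times yields
\[ \C^{\fl}\simeq\C^{\fl}(-\ell c)[2\ell] \ \text{ in }\ \D^G((\C^{\fl})^\op\otimes_R\C^{\fl}) \]
for every $\ell\in\Z$. Composing these two isomorphisms produces
\[ D\C^{\fl}\simeq\C^{\fl}(-p-\ell c)[2\ell+d-1], \]
which is the desired statement.

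The main obstacle will be the bimodule-level formulation of Part~(1). While the 2-periodicity of the triangulated singularity category of a hypersurface is classical, promoting it to an isomorphism in $\D^G(\C^\op\otimes_R\C)$ requires the Eisenbud homotopy to be functorial in $X$; concretely, one must check that the square comparing $f(c)$-pre-composition with $h_{Y'}$ and $h_Y$ with $f[2]$-post-composition commutes up to dg homotopy for every morphism $f\colon Y\to Y'$ in $\C$. Once the naturality of the Eisenbud construction is verified, Part~(2) follows formally.
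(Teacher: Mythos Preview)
Your argument for Part~(2) matches the paper's: combine Part~(1) with \ref{griso} and iterate the periodicity $\ell$ times.

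For Part~(1), your approach would work but is substantially harder than what the paper does. The paper does not build the Eisenbud homotopy on an arbitrary acyclic complex and then check bimodule-level naturality. Instead it observes that every graded Cohen--Macaulay $R$-module admits a \emph{strictly} $2$-periodic complete resolution, namely the one furnished by its matrix factorization; in the graded setting the degrees of two successive differentials sum to $c=\deg f$, so such a resolution $X$ satisfies the literal equality $X[2]=X(c)$. Passing to the quasi-equivalent full dg subcategory of $\C$ spanned by these $2$-periodic resolutions, one has $\C(X,Y)[2]=\C(X,Y)(c)$ on the nose for every pair of objects, and hence $\C[2]=\C(c)$ as dg bimodules (note that the statement asserts an equality of dg bimodules, not merely an isomorphism in $\D^G(\C^e)$). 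No naturality or coherence check is needed.

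The obstacle you flag at the end---lifting ``natural up to homotopy'' to a genuine morphism in $\D^G(\C^\op\otimes_R\C)$---is exactly what the paper's choice of model sidesteps. Your route via the Eisenbud homotopy can be completed, but the strictification you would eventually need is already handed to you for free by matrix factorization theory, and that is the paper's point.
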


\begin{proof}
	It is enough to prove (1). By \ref{IGdg} we may take the dg singularity category $\C$ as $\C_\ac(\proj^G\!R)_\dg$, the category of acyclic complexes of projective modules. We know that any CM $R$-module has a $2$-periodic complete resolution given by the corresponding matrix factorization of $f$ (see \cite[Chapter 7]{Yo90}). Taking the grading into account, we see that the degree of two successive differentials has to sum to $\deg f=c$. It follows that we may take a complete resolution $X\in\C_\ac(\proj^G\!R)_\dg$ of any graded CM module so that it satisfies a strict equality $X[2]=X(c)$, hence the conclusion.
\end{proof}

As an application of \ref{futatsu} and \ref{mf}(2), we obtain the following more general version of \ref{tilt} for hypersurface singularities in which we can modify the CY dimension.
\begin{Thm}\label{hypersurface}
In the above setting, assume that $R_0$ is finite dimensional over $k$, and $\m:=(x_0,\ldots,x_d)\subset R$ is the unique $G$-maximal ideal of $R$. Let $\A\subset\sg_{0,\dg}^G\!R$ be a full dg subcategory which generates $\sg_{0}^G\!R$ as a thick subcategory.
Let $\ell\in\Z$, and assume that $p+\ell c\in G$ is torsion-free.
\begin{enumerate}
\item There exists a commutative diagram
\[ \xymatrix@R=3mm{
	\per\A\ar@{-}[d]^-\rsimeq\ar[r]&\C_{d+2\ell-1}(\A)\ar@{-}[d]^-\rsimeq\\
	\sg_0^G\!R\ar[r]&\sg_0^{G/(p+\ell c)}\!R_{\m,G/(p+\ell c)}. } \]
If $\Sing^{G/(p+\ell c)}\!R\subset\{\m\}$, then we can replace $\sg_0^G\!R$ and $\sg_0^{G/(p+\ell c)}\!R_{\m,G/(p+\ell c)}$ above by $\sg^G\!R$ and $\sg^{G/(p+\ell c)}\!R$ respectively.
\item Suppose furthermore $G=\Z$. Then there exists $V\in\D(\A^e)$ which satisfies $V^{\lotimes_\A(p+\ell c)}\simeq D\A[1-d-2\ell]$ in $\D(\A^e)$ and yields a commutative diagram below.
\[ \xymatrix@R=3mm{
	\per\A\ar@{-}[d]^-\rsimeq\ar[r]&\C_{d+2\ell-1}(\A)\ar@{-}[d]^-\rsimeq\ar[r]&\C_{d+2\ell-1}^{(1/p+\ell c)}(\A)\ar@{-}[d]^-\rsimeq\\
	\sg_0^\Z\!R\ar[r]&\sg_0^{\Z/(p+\ell c)\Z}\!R_{\m,\Z/(p+\ell c)\Z}\ar[r]&\sg_0\!R_\m } \]
If $\Sing R\subset\{\m\}$, then we can replace $\sg_0^\Z\!R$, $\sg_0^{\Z/(p+\ell c)\Z}\!R_{\m,\Z/(p+\ell c)\Z}$ and $\sg_0\!R_\m$ above by $\sg^\Z\!R$, $\sg^{\Z/(p+\ell c)\Z}\!R$ and $\sg R$ respectively.
\end{enumerate}
\end{Thm}
\begin{proof}
	Using \ref{mf}(2), the proof is analogous to \ref{CM}.
\end{proof}

We will often use the above modification for $\ell=1$.
\begin{Ex}\label{case l=1}
In \ref{hypersurface}(2), let $\ell:=1$. Then for $p_S:=\sum_{i=0}^dp_i$, we have the following commutative diagram.
\[ \xymatrix@R=3mm{
	\per\A\ar@{-}[d]^-\rsimeq\ar[r]&\C_{d+1}(\A)\ar@{-}[d]^-\rsimeq\ar[r]&\C_{d+1}^{(1/p_S)}(\A)\ar@{-}[d]^-\rsimeq\\
	\sg_0^\Z\!R\ar[r]&\sg_0^{\Z/p_S\Z}\!R_{\m,\Z/p_S\Z}\ar[r]&\sg_0\!R_\m } \]
\end{Ex}

\part{Tilting theory for singularity categories and realizations as cluster categories}\label{part2}

The aim of this part is to apply the theoretical results from the previous part to some classes of Gorenstein rings and also more generally to symmetric orders over Gorenstein rings. We obtain various triangle equivalences between their singularity categories and cluster categories of finite dimension algebras.

\section{Rings of dimension $0$ and $1$}

\subsection{Finite dimensional symmetric algebras}
In this section, we study the singularity categories of finite dimensional algebras over a field $k$.
Let $\L=\bigoplus_{i\ge0}\L_i$ be a $\Z$-graded finite dimensional self-injective algebra such that $\L_0$ has finite global dimension. It is shown in \cite{Ya} that
\[T=\bigoplus_{i\ge1}\Lambda(i)_{\ge0}\]
is a tilting object in $\sg^{\Z}\!\L$, and therefore we have a triangle equivalence
\begin{equation}\label{yamaura equivalence}
\sg^\Z\!\L\simeq\D^b(\mod A)\ \mbox{ for }\ A:=\End_{\sg\L}^\Z(T).
\end{equation}
Now we assume that $\L$ is a symmetric $k$-algebra with Gorenstein parameter $p$, that is, the socle of the $\L$-module $\L$ is contained in $\L_{-p}$. Since $\gd\L_0$ is assumed to be finite, the inequality $p<0$ holds unless $\L$ is semisimple.
Moreover, we have an isomorphism of $k$-algebras:
\[A\simeq \begin{bmatrix}
	\Lambda_0 & 0&\cdots& 0 \\
	\Lambda_1&\Lambda_0&\cdots&0\\
	\vdots& \vdots&\ddots&\vdots \\
	\Lambda_{-p-1}&\Lambda_{-p-2}&\cdots&\Lambda_0
\end{bmatrix}\]
We can apply our result \ref{tilt} to realize the singularity category of $\Lambda$ as the $(-1)$-cluster category of $A$.

\begin{Thm}\label{dim0}
Let $\L=\bigoplus_{i\ge0}\L_i$ be a finite dimensional non-semisimple symmetric algebra over a field $k$ with $\gd\La_0<\infty$ and with Gorenstein parameter $p$. 
There exists a commutative diagram of equivalences
\[ \xymatrix@R=3mm{
	\D^b(\mod A)\ar@{-}[d]^-\rsimeq\ar[r]&\C_{-1}(A)\ar@{-}[d]^-\rsimeq\ar[r]&\C_{-1}^{(1/p)}(A)\ar@{-}[d]^-\rsimeq\\
	\sg^\Z\!\La\ar[r]&\sg^{\Z/p\Z}\!\La\ar[r]&\sg\La. } \]
\end{Thm}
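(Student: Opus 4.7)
The plan is to deduce this theorem as a direct consequence of Theorem \ref{tilt} applied to the tilting object provided by \eqref{yamaura equivalence}. To fit the setup of Theorem \ref{tilt}, I would take the base ring to be $R=k$ equipped with the trivial $\Z$-grading, so $\dim^\Z\!R=0$ and hence $d=0$; the role of $\Lambda$ is played by the given graded symmetric $k$-algebra, regarded as a $\Z$-graded symmetric $R$-order with Gorenstein parameter $p\in\Z$. Since $\Lambda$ is non-semisimple and $\gd\Lambda_0<\infty$, the parameter satisfies $p<0$, so in particular $p$ is torsion-free in $\Z$ (as required for the second part of Theorem \ref{tilt}). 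Because $R=k$ is regular, Serre's condition $(\mathrm{R}_{d-1}^\Z)$ is vacuous, so $\D_{\sg,0}^\ast(\Lambda)=\D_\sg^\ast(\Lambda)$ for every choice of grading group $\ast$.

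Next I would verify that the hypotheses on the tilting subcategory are satisfied. By Yamaura's result \eqref{yamaura equivalence}, the object $T=\bigoplus_{i\ge 1}\Lambda(i)_{\ge 0}$ is tilting in $\D_\sg^\Z(\Lambda)$ with endomorphism algebra $A$, and the description of $A$ as a lower-triangular matrix algebra shows that $\gd A<\infty$ (in fact this follows purely formally from $\gd\Lambda_0<\infty$ together with the tilting equivalence). Taking $\P=\add T\simeq\proj A$, this is a tilting subcategory of $\D_{\sg,0}^\Z(\Lambda)=\D_\sg^\Z(\Lambda)$, and $\per\P=\per A=\D^b(\mod A)$ since $A$ has finite global dimension.

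With these identifications in place, the commutative diagram is obtained by specializing Theorem \ref{tilt}(2) to $G=\Z$, $d=0$, and the torsion-free parameter $p$: the diagram
\[
\xymatrix@R=3mm{
\per\P\ar[r]\ar@{-}[d]^-\rsimeq & \C_{d-1}(\P)\ar@{-}[d]^-\rsimeq\ar[r] & \C_{d-1}^{(1/p)}(\P)\ar@{-}[d]^-\rsimeq\\
\D_\sg^\Z(\Lambda)\ar[r] & \D_\sg^{\Z/p\Z}(\Lambda)\ar[r] & \D_\sg(\Lambda)
}
\]
becomes exactly the diagram in the statement, since $\per\P=\D^b(\mod A)$, $\C_{d-1}(\P)=\C_{-1}(A)$, and $\C_{d-1}^{(1/p)}(\P)=\C_{-1}^{(1/p)}(A)$.

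There is essentially no obstacle beyond checking that the dimension-zero specialization of Theorem \ref{tilt} is legitimate: one must confirm that the notion of a $(d-1)=(-1)$-cluster category is well-defined in our sense (which it is, as Theorem \ref{tilt} does not require $d\ge 1$, the construction $\bG_{d-1}(\A)=\A/D\A[-d+1]$ making sense for any integer $d$), and that the enhanced Calabi-Yau property from Theorem \ref{griso} reduces correctly when $R=k$ (where the Matlis dual $(-)^{\vee_\Z}$ coincides with the graded $k$-dual $D$ by the last sentence of \ref{griso}). Once these minor points are in place, the proof is complete.
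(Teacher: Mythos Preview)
Your proposal is correct and matches the paper's own approach: the paper simply states that ``we can apply our result \ref{tilt}'' to Yamaura's tilting object, and your write-up supplies exactly the verifications needed for that application (taking $R=k$, $d=0$, $p<0$ torsion-free, and noting that $\per A=\D^b(\mod A)$ since $\gd A<\infty$). One trivial remark: the three-column diagram comes from part (3) of Theorem~\ref{tilt} rather than part (2), and the vacuity of $(\mathrm{R}_{d-1}^\Z)$ is simply because $d-1=-1<0$ rather than because $R$ is regular.
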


Note that the left commutative square was given in \cite[1.6]{Ya}.

Let us look at a few concrete examples.

\begin{Ex}
Let $A$ be a finite dimensional algebra of finite global dimension, and let
\[ \L=A\oplus DA \]
be the trivial extension algebra, on which we give a grading by $\deg A=0$ and $\deg DA=1$. Then $\L$ is a symmetric algebra over $k$, and has Gorenstein parameter $-1$. 
Then $\sg^\Z\!\L$ has a tilting object $A$ such that $A\simeq\End_{\sg\L}^\Z(A)$. By \eqref{yamaura equivalence} we have a triangle equivalence $\sg^\Z\!\L\simeq\D^b(\mod A)$ \cite{Hap}. We deduce from \ref{CM} that there is a commutative diagram of equivalences given in \cite[1.7]{Ya}.
\[ \xymatrix@R=3mm{
	\D^b(\mod A)\ar@{-}[d]^-\rsimeq\ar[r]&\C_{-1}(A)\ar@{-}[d]^-\rsimeq\\
	\sg^\Z\!\La\ar[r]&\sg\La. } \]
This is a special case of Keller's equivalence \cite[Theorem 2]{Ke05} for trivial extension dg algebras.
\end{Ex}

\begin{Ex}
Let $R=k[x]/(x^{n+1})$ with $\deg x=1$. This is a $\Z$-graded artinian hypersurface singularity with Gorenstein parameter $-n$ and $R_0=k$. By \eqref{yamaura equivalence} and \ref{hypersurface}, for each $\ell\in\Z$, we obtain a commutative diagram
\[ \xymatrix@R=3mm{
	\D^b(\mod A)\ar@{-}[d]^-\rsimeq\ar[r]&\C_{2\ell-1}(A)\ar@{-}[d]^-\rsimeq\ar[r]&\C_{2\ell-1}^{(1/\ell(n+1)-n)}(A)\ar@{-}[d]^-\rsimeq\\
	\sg^\Z\!R\ar[r]&\sg^{\Z/(\ell(n+1)-n)\Z}\!R\ar[r]&\sg R. } \]
with $A=kA_n$, the path algebra of linearly oriented type $A_n$.
For $\ell=1$ we obtain a commutative diagram
\[ \xymatrix@R=3mm{
	\D^b(\mod A)\ar@{-}[d]^-\rsimeq\ar[r]&\C_{1}(A)\ar@{-}[d]^-\rsimeq\\
	\sg^\Z\!R\ar[r]&\sg R. } \]
In particular, we have triangle equivalences
\[\cdots\simeq\C_{-3}^{(1/2n+1)}(A)\simeq\C_{-1}^{(1/n)}(A)\simeq\C_1(A)\simeq\C_{3}^{(1/n+2)}(A)\simeq\C_{5}^{(1/2n+3)}(A)\simeq\cdots.\]
\end{Ex}

\begin{Ex}
For $n\ge1$, 
let $R:=k[x_1,\ldots,x_n]/(x_ix_j,x_i^2-x_j^2\mid 1\le i<j\le n)$ with $\deg x_i=1$. This is a $\Z$-graded artinian Gorenstein ring with Hilbert function $(1,n,1)$. Notice that if $k$ is an algebraically closed, then this is the unique ring satisfying these conditions up to isomorphisms of $\Z$-graded algebras.
Since $R$ has Gorenstein parameter $-2$ and $R_0=k$, by \eqref{yamaura equivalence}, we obtain a commutative diagram
\[ \xymatrix@R=3mm{
	\D^b(\mod A)\ar@{-}[d]^-\rsimeq\ar[r]&\C_{-1}(A)\ar@{-}[d]^-\rsimeq\ar[r]&\C_{-1}^{(1/2)}(A)\ar@{-}[d]^-\rsimeq\\
	\sg^\Z\!R\ar[r]&\sg^{\Z/2\Z}\!R\ar[r]&\sg R. } \]
where $A=kQ$ is the path algebra of the $n$-Kronecker quiver 
\newcommand{\hd}{\rotatebox{90}{$\cdot\hspace{-1mm}\cdot\hspace{-1mm}\cdot$}}
$Q=[\xymatrix@C=2em{\bullet\ar@<6pt>[r]^{x_1}\ar@<-4pt>[r]_{x_n}\ar@<1pt>@{}[r]|-{\hd}&\bullet}]$.

\end{Ex}


\subsection{Gorenstein rings in dimension $1$}
Our next application is to the singularity categories of commutative Gorenstein rings of dimension one. We start with recalling a fundamental result given in \cite{BIY}. In the rest, we assume that
\begin{itemize}
\item $R=\bigoplus_{i\ge0}R_i$ is a $\Z$-graded commutative Gorenstein ring with dimension one and Gorenstein parameter $p$ such that $R_0$ is a field.
\end{itemize}
Clearly, $R$ is a $G$-local ring with $G$-maximal ideal $\m:=\bigoplus_{i>0}R_i$.
Also, we denote by $K$ the localization of $R$ at the set of all homogeneous non-zero divisors so that $K$ is the $\Z$-graded total quotient ring. One can take the minimum integer $q>0$ such that $K\simeq K(q)$ in $\Mod^\Z\!R$ \cite[4.11]{BIY}.
It is shown in \cite[1.4]{BIY} that, if $p\le0$, then
\[T=\bigoplus_{i=1}^{q-p}R(i)_{\ge0}\]
is a tilting object in $\sg_0^{\Z}\!R$, and therefore we have a triangle equivalence
\begin{align}\label{BIY equivalence}\notag
\sg_0^{\Z}\!R&\simeq\per A\ \mbox{ for }\\
A&:=\End_{\sg R}^\Z(T)
\simeq \begin{bmatrix}
	R_0 & 0&\cdots& 0 &0&0&\cdots&0\\
	R_1&R_0&\cdots&0 &0&0&\cdots&0\\
	\vdots& \vdots&\ddots&\vdots  &0&0&\cdots&0\\
	R_{-p-1}&R_{-p-2}&\cdots&R_0 &0&0&\cdots&0\\
	K_{-p} & K_{-p-1}&\cdots& K_1 &K_0&K_{-1}&\cdots&K_{1-q}\\
	K_{1-p} & K_{-p}&\cdots& K_2 &K_1&K_{0}&\cdots&K_{2-q}\\
	\vdots& \vdots&\ddots&\vdots  &\vdots& \vdots&\ddots&\vdots\\
	K_{q-p-1} & K_{q-p-2}&\cdots& K_{q} &K_{q-1}&K_{q-2}&\cdots&K_{0}
	\end{bmatrix}.
\end{align}
We can apply our result \ref{tilt} to realize the singularity category of $R$ as the $0$-cluster category of $A$.

\begin{Thm}\label{1jigen}
If $p<0$, then there exists a commutative diagram of equivalences
\[ \xymatrix@R=3mm{
	\per A\ar@{-}[d]^-\rsimeq\ar[r]&\C_{0}(A)\ar@{-}[d]^-\rsimeq\ar[r]&\C_{0}^{(1/p)}(A)\ar@{-}[d]^-\rsimeq\\
	\sg_0^\Z\!R\ar[r]&\sg_0^{\Z/p\Z}\!R_{\m,\Z/p\Z}\ar[r]&\sg_0\!R_\m. } \]
If $\Sing R\subset\{\m\}$, then one has $\sg_0^\Z\!R=\sg^\Z\!R$, $\sg_0^{\Z/p\Z}\!R_{\m,\Z/p\Z}=\sg^{\Z/p\Z}\!R$, and $\sg_0\!R_\m=\sg R$.
\end{Thm}

In the rest of this section, we apply \ref{1jigen} to some important classes of Gorenstein rings of dimension one.

We study certain families of hypersurface singularities studied in \cite{BIY}.

\begin{Ex}[Standard grading]\label{std}
Let $n>2$ be an integer, $\al_1,\ldots,\al_m\in k$ be distinct scalars, and
\[ R=k[x,y]/(f),\ \ \ f(x,y)=\textstyle{\prod}_{i=1}^m(x-\al_iy)^{n_i},\ \ \ \deg x=\deg y=1,\ \ \ n=\deg f(x,y)=\sum_{i=1}^mn_i. \]
This is a plane curve singularity with unique graded maximal ideal $\m=(x,y)$ and of Gorenstein parameter $p=2-n<0$. The algebra $A$ in \eqref{BIY equivalence} is given by the following quiver with relations \cite[Section 2.1]{BIY}:
\[ \xymatrix@!R=2mm{
	&&&&&\bullet\ar@(dr,ur)_{w_1}\\
	&&&&&\bullet\ar@(dr,ur)_{w_2}\\
	1\ar@2[r]^-x_-y&2\ar@2[r]^-x_-y&\cdots\ar@2[r]^-x_-y&n-2\ar[uurr]|-{z_1}\ar[urr]|-{z_2}\ar[drr]|-{z_{m-1}}\ar[ddrr]|-{z_{m}}&&\vdots & xy-yx,\ \ w_i^{n_i},\ \ z_i(x-\al_iy)-w_iz_iy\\
	&&&&&\bullet\ar@(dr,ur)_{w_{m-1}}\\
	&&&&&\bullet\ar@(dr,ur)_{w_m}} \]
Notice that $A$ is an Iwanaga-Gorenstein algebra with self-injective dimension at most $2$ \cite[2.1(a)]{BIY}. If $R$ is reduced (that is, $n_i=1$ for each $i$), then $A$ has global dimension at most $2$. In this case, if $n=3$, then $A$ is derived equivalent to the path algebra $kQ$ of type $D_4$, and if $n=4$, then $A$ is derived equivalent to the canonical algebra of type $(2,2,2,2)$, see \ref{T_pq singularities}.

Applying \ref{1jigen} and \ref{hypersurface}, we obtain a commutative diagram of equivalences for each $\ell\in\Z$
\[ \xymatrix@R=3mm{
	\per A\ar@{-}[d]^-\rsimeq\ar[r]&\C_{2\ell}(A)\ar@{-}[d]^-\rsimeq\ar[r]&\C_{2\ell}^{(1/(\ell n-n+2))}(A)\ar@{-}[d]^-\rsimeq\\
	\sg_0^\Z\!R\ar[r]&\sg_0^{\Z/(\ell n-n+2)\Z}\!R_{\m,\Z/(\ell n-n+2)\Z}\ar[r]&\sg_0\!R_\m.}\]
In particular, for $\ell=1$, we obtain a commutative diagram, which is closely related to \cite{HI}:
\[	\xymatrix@R=3mm{
	\per A\ar@{-}[d]^-\rsimeq\ar[r]&\C_{2}(A)\ar@{-}[d]^-\rsimeq\ar[r]&\C_{2}^{(1/2)}(A)\ar@{-}[d]^-\rsimeq\\
	\sg_0^\Z\!R\ar[r]&\sg_0^{\Z/2\Z}\!R_{\m,\Z/2\Z}\ar[r]&\sg_0\!R_\m.} \]
\end{Ex}

Now we consider hypersurface singularities of finite representation type.

\begin{Ex}[Simple curve singularities]\label{simple singularity}
Let $S=k[x,y]$ be a polynomial ring over an arbitrary field $k$, and let $R=S/(f)$ be an ADE singularity given by the table below. 
We assume that $R$ is reduced (that is, the characteristic of $k$ is not equal to $2$ for type $A_{2n-1}$ and $D_{2n}$), and consider the following minimal $\Z$-grading making $f$ homogeneous.
\[\begin{array}{c||c|c|c|c|c}
R&A_{n}&D_{n}&E_6&E_7&E_8\\ 
f&x^{n+1}-y^2&x^{n-1}-xy^2&x^4-y^3&x^3y-y^3&x^5-y^3\\ \hline\hline
(\deg x,\deg y)&\begin{array}{cc}(1,\frac{n+1}{2})&\mbox{$n$ is odd}\\ (2,n+1)&\mbox{$n$ is even}\end{array}&\begin{array}{cc}(2,n-2)&\mbox{$n$ is odd}\\ (1,\frac{n}{2}-1)&\mbox{$n$ is even}\end{array}&(3,4)&(2,3)&(3,5)\\ \hline
-p:=-p_R&\begin{array}{cc}\frac{n-1}{2}&\mbox{$n$ is odd}\\ n-1&\mbox{$n$ is even}\end{array}&\begin{array}{cc}n-2&\mbox{$n$ is odd}\\ \frac{n}{2}-1&\mbox{$n$ is even}\end{array}&5&4&7\\ \hline
-p_S&\begin{array}{cc}\frac{n+3}{2}&\mbox{$n$ is odd}\\ n+3&\mbox{$n$ is even}\end{array}&\begin{array}{cc}n&\mbox{$n$ is odd}\\ \frac{n}{2}&\mbox{$n$ is even}\end{array}&7&5&8\\ \hline
c:=\deg f&\begin{array}{cc}n+1&\mbox{$n$ is odd}\\ 2(n+1)&\mbox{$n$ is even}\end{array}&\begin{array}{cc}2(n-1)&\mbox{$n$ is odd}\\ n-1&\mbox{$n$ is even}\end{array}&12&9&15
\end{array}\]
These simple curve singularities satisfies $\Sing R\subset\{(x,y)\}$, and hence \eqref{BIY equivalence} is a triangle equivalence
\[\sg^{\Z}\!R\simeq\per kQ,\]
where $Q$ is a Dynkin quiver given by the following table, see \cite[Section 2.2]{BIY}:
\[\begin{array}{c||c|c|c|c|c}
R&A_{n}&D_{n}&E_6&E_7&E_8\\ \hline\hline
Q&\begin{array}{cc}D_{\frac{n+3}{2}}&\mbox{$n$ is odd}\\ A_{n}&\mbox{$n$ is even}\end{array}
&\begin{array}{cc}A_{2n-3}&\mbox{$n$ is odd}\\ D_{n}&\mbox{$n$ is even}\end{array}&E_6&E_7&E_8
\end{array}\]
Applying \ref{1jigen} and \ref{case l=1}, we obtain commutative diagrams of equivalences for each $\ell\in\Z$
\[ \xymatrix@R=3mm{
	\per kQ\ar@{-}[d]^-\rsimeq\ar[r]&\C_{2\ell}(kQ)\ar@{-}[d]^-\rsimeq\ar[r]&\C_{2\ell}^{(1/(p+\ell c))}(kQ)\ar@{-}[d]^-\rsimeq\\
	\sg^\Z\!R\ar[r]&\sg^{\Z/(p+\ell c)\Z}\!R\ar[r]&\sg R,}
\qquad
\xymatrix@R=3mm{
	\per kQ\ar@{-}[d]^-\rsimeq\ar[r]&\C_{2}(kQ)\ar@{-}[d]^-\rsimeq\ar[r]&\C_{2}^{(1/p_S)}(kQ)\ar@{-}[d]^-\rsimeq\\
	\sg^\Z\!R\ar[r]&\sg^{\Z/p_S\Z}\!R\ar[r]&\sg R.} \]

\end{Ex}

Next we consider hypersurface singularities of tame representation type \cite{DG}.

\begin{Ex}[$T_{pq}$ singularities]\label{T_pq singularities}
We consider a $\Z$-graded reduced hypersurface singularity
\[R=k[x,y]/(f)\ \mbox{ for }\ f=\left\{
\begin{array}{ll}
{\displaystyle \prod_{1\le i\le 4}(x-\alpha_iy)}&(\deg x,\deg y)=(1,1),\\
{\displaystyle \prod_{1\le i\le 3}(x-\alpha_iy^2)}&(\deg x,\deg y)=(2,1).
\end{array}
\right.\]
over an arbitrary field $k$.
This is a class of $T_{pq}$ singularities $R=k[x,y]/(f)$ with $f=x^p+\gamma x^2y^2+y^q$, $\gamma\in k\backslash\{0,1\}$, where $(p,q,\deg x,\deg y)=(4,4,1,1)$ or $(3,6,2,1)$.

Then the algebra $A$ in \eqref{BIY equivalence} is derived equivalent to a \emph{canonical algebra of type $(2,2,2,2)$}, which is given by the following quiver with relations for some $\lambda\in k\backslash\{0,1\}$, see \cite[Section 2,3]{BIY}:
\[{\small\xymatrix@C=4em@R=.1em{
 & \bullet \ar[rdd]^{b_1}& \\
 & \bullet \ar[rd]|{b_2} &&b_1a_1+b_2a_2+b_3a_3=0\\
 \bullet \ar[ruu]^{a_1} \ar[ru]|{a_2} \ar[rd]|{a_3} \ar[rdd]_{a_4} & & \bullet &b_1a_1+\lambda b_2a_2+b_4a_4=0.\\
 & \bullet \ar[ru]|{b_3}&\\
  & \bullet \ar[ruu]_{b_4}&
}}\]
More explicitly, $\lambda$ is given by $\lambda=(\alpha_1-\alpha_4)(\alpha_2-\alpha_3)(\alpha_1-\alpha_3)^{-1}(\alpha_2-\alpha_4)^{-1}$ for the first case and 
$\lambda=(\alpha_2-\alpha_3)(\alpha_1-\alpha_3)^{-1}$ for the second case.
We refer to section \ref{results for GL} below for a background of general canonical algebras.
\end{Ex}

\subsection{Numerical semigroup rings}
Let $\NN=\{0,1,2,\ldots\}$ be the set of non-negative integers. A {\it numerical semigroup} is a submonoid $S\subset\NN$ (containing the unit $0$) whose complement is finite. Let $k$ be an arbitrary field and let
\[ R=k[S] \]
be the corresponding semigroup ring. Such a commutative ring is called a {\it numerical semigroup ring}. We regard $k[S]$ as a subalgebra of $k[\NN]=k[t]$ with the usual identification $n\leftrightarrow t^n$. Since $R$ is a $1$-dimensional domain it is certainly Cohen-Macaulay. In what follows we consider the grading on $R$ induced from the standard grading $\deg t=1$ on $k[t]$. The graded total quotient ring $K$ of $R$ is $k[\Z]=k[t^{\pm1}]$. 

Let us first prepare some combinatorial notion on numerical semigroups.
\begin{Def}
Let $S\subset\NN$ be a numerical semigroup.
\begin{enumerate}
	\item The {\it Frobenius number} of $S$ is $a_S:=\max(\Z\setminus S)$.
	\item We say $S$ is {\it symmetric} if $\Z\setminus S=\{a_S-n\mid n\in S\}$.
\end{enumerate}
\end{Def}

The above notions are related to the well-known structure of the canonical module and characterization of Gorensteinness.
We denote by $D\colon M=\bigoplus_{i\in\Z}M_i\mapsto\bigoplus_{i\in\Z}\Hom_k(M_{-i},k)$ the graded dual.
\begin{Prop}[{\cite[4.4.8]{BH}}]\label{Gor}
\begin{enumerate}
	\item $\om:=D(K/R)$ is the canonical module for $R$.
	\item $R$ is Gorenstein if and only if $S$ is symmetric.
	\item\label{Frob} When $R$ is Gorenstein, its Gorenstein parameter is equal to the minus of the Frobenius number of $S$.
\end{enumerate}
\end{Prop}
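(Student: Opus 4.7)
The plan is to identify $\om$ via graded local cohomology and then read off the Gorenstein condition from the graded decomposition of the resulting module. Let $\m=R_{>0}$ be the graded maximal ideal of $R$. Since $R$ is a one-dimensional Cohen-Macaulay graded $k$-algebra with $R_0=k$, graded local duality gives that the canonical module is $\om_R\simeq D(H^1_\m(R))$.

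For (1), consider the short exact sequence
\[ 0\longrightarrow R\longrightarrow K\longrightarrow K/R\longrightarrow 0 \]
of graded $R$-modules. The module $K=k[t^{\pm1}]$ is a localization of $R$ at a multiplicatively closed set of homogeneous non-zero divisors, so $H^i_\m(K)=0$ for all $i\geq 0$ (every element of $K$ is annihilated by no positive power of $\m$, yet every element is killed by inverting a single homogeneous element; concretely one verifies $\Ga_\m(K)=0$ and that $K$ is $\m$-divisible). On the other hand, $K/R$ is concentrated on the finite set $\Z\setminus S$ of gaps, hence has finite length and so $H^0_\m(K/R)=K/R$ while $H^{\geq 1}_\m(K/R)=0$. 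The long exact sequence then yields $H^1_\m(R)\simeq K/R$, whence $\om_R\simeq D(K/R)$.

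For (2) and (3), read off the graded pieces. Since $(K/R)_n\neq 0$ precisely when $n\in\Z\setminus S$, the formula $D(M)_n=\Hom_k(M_{-n},k)$ gives
\[ \om_n\neq 0 \iff -n\in\Z\setminus S. \]
Now $R$ is Gorenstein exactly when $\om_R\simeq R(-p_R)$ as graded $R$-modules for some $p_R\in\Z$; comparing supports, this is equivalent to the equality
\[ -(\Z\setminus S) = p_R + S, \qquad\text{i.e.,}\qquad \Z\setminus S=\{-p_R-s\mid s\in S\}. \]
Taking $s=0$ shows $-p_R$ lies in $\Z\setminus S$, and taking larger $s\in S$ shows $-p_R$ must be the \emph{maximum} of $\Z\setminus S$, i.e.\ $-p_R=a_S$. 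Substituting back, existence of such a $p_R$ is equivalent to the symmetric condition $\Z\setminus S=\{a_S-s\mid s\in S\}$, proving (2), and in that case the Gorenstein parameter is $p_R=-a_S$, proving (3).

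The only non-routine point is the vanishing $H^i_\m(K)=0$, which is however standard for localizations away from $\m$; everything else is a direct bookkeeping with the grading.
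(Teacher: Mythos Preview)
The paper does not give its own proof of this proposition; it simply cites \cite[4.4.8]{BH}. Your argument is the standard one and is essentially correct, but it contains one genuine slip that you should fix.

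You write that $K/R$ ``is concentrated on the finite set $\Z\setminus S$ of gaps, hence has finite length''. This is false: since $S\subset\NN$, the set $\Z\setminus S$ contains every negative integer, so $K/R$ is infinite-dimensional over $k$. What you actually need, and what is true, is that $K/R$ is $\m$-torsion. Indeed, pick any $0\neq m\in S$; for every $n\in\Z$ we have $Nm+n\in S$ for $N\gg0$ because $\NN\setminus S$ is finite, so $t^{Nm}\cdot t^n\in R$ and hence $\m^N\cdot(K/R)_n=0$. This gives $\Ga_\m(K/R)=K/R$. Combined with $H^i_\m(K)=0$ (which holds since $K=R_{t^m}$ is a localization at an element of $\m$, so $H^i_\m(K)=H^i_\m(R)_{t^m}=0$) and $H^0_\m(R)=0$, the long exact sequence yields $H^1_\m(R)\simeq K/R$ as desired. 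Note that you do not need $H^{\ge1}_\m(K/R)=0$ at all.

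For (2) and (3) your reasoning is fine, but the step ``comparing supports, this is equivalent to\ldots'' merits one line of justification: both $\om_R$ and $R(-p_R)$ are torsion-free rank-one graded $R$-modules, hence graded fractional ideals in $K$; since each $K_n$ is one-dimensional, a graded submodule of $K$ is determined by the set of degrees in which it is nonzero, so equal supports force a graded isomorphism.
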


In particular, numerical semigroup rings give a class of positively graded $1$-dimensional commutative Gorenstein ring with negative Gorenstein parameter. By \ref{1jigen} we obtain the following.
\begin{Thm}\label{num}
Let $S\subsetneq\NN$ be a symmetric numerical semigroup with Frobenius number $a$, and $R=k[S]$ the semigroup ring.
\begin{enumerate}
	\item The object $T:=\bigoplus_{i=1}^{a+1}R(i)_{\geq0}$ is a tilting object in $\sg^{\Z}\!R$.
	\item\label{end} The endomorphism algebra $A:=\End_{{\sg R}}^\Z(T)$ is given by the following matrix algebra.
	\[ \left( 
	\begin{array}{ccccc}
		R_0 & 0&\cdots& 0&0 \\
		R_1&R_0&\cdots&0&0\\
		\vdots& \vdots&\ddots&\vdots&\vdots \\
		R_{a-1}&R_{a-2}&\cdots&R_0&0\\
		k&k&\cdots&k&k
	\end{array}
	\right) \]
	\item There exists a commutative diagram of equivalences
	\[ \xymatrix@!R=3mm{
		\per A\ar[r]\ar@{-}[d]^-\rsimeq&\C_0(A)\ar[r]\ar@{-}[d]^-\rsimeq&\C_0^{(1/a)}(A)\ar@{-}[d]^-\rsimeq\\
		\sg^\Z\!R\ar[r]&\sg^{\Z/a\Z}\!R\ar[r]&\sg R. } \]
\end{enumerate}
\end{Thm}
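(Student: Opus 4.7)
The plan is to deduce the theorem from the general framework established in Section 6.2, specifically the tilting result recalled in \eqref{BIY equivalence} together with Theorem \ref{1jigen}.

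First, I would verify that $R = k[S]$ fits the standing hypothesis of Section 6.2. By \ref{Gor}, since $S \subsetneq \NN$ is symmetric, $R$ is a positively graded $1$-dimensional commutative Gorenstein ring with $R_0 = k$ and Gorenstein parameter $p = -a$. Because $0 \in S$ and $S \subsetneq \NN$ force $a \geq 1$, we have $p \leq -1 < 0$ as required. Next I would identify the invariant $q$ from \cite{BIY}: the graded total quotient ring is $K = k[t^{\pm 1}]$, and since $R$ contains $t^n$ for some $n \in S$ while also admitting an $R$-linear, degree-$0$ isomorphism $K \xrightarrow{\sim} K(1)$ given by multiplication by $t$, we get $q = 1$.

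With $p = -a$ and $q = 1$, substituting into the general recipe \eqref{BIY equivalence} directly yields both (1) and (2). Indeed, $q - p = a + 1$, so the tilting object is $T = \bigoplus_{i=1}^{a+1} R(i)_{\geq 0}$ and $A = \End_{\D_\sg^\Z(R)}(T)$ takes the displayed block form. The only simplification needed for part (2) is the observation that $K_i = k \cdot t^i \cong k$ for every $i \in \Z$; since the lower diagonal block has size $q \times q = 1 \times 1$ with entry $K_0$, and the lower-left block has entries of the form $K_j$, the entire bottom row of the matrix collapses to $(k, k, \ldots, k, k)$ as displayed.

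For part (3), I would check the graded Serre condition (R$_0^\Z$), which allows Theorem \ref{1jigen} to be upgraded from $\D_{\sg,0}^*$ to $\D_\sg^*$. This is immediate: the only $\Z$-graded prime of the $1$-dimensional domain $R$ of height $0$ is $(0)$, and $R_{[(0)]} = K = k[t^{\pm 1}]$; the category $\Mod^\Z K$ is equivalent via the degree-$0$ part to $\Mod k$, so has global dimension $0 = \height(0)$. Applying Theorem \ref{1jigen} with Gorenstein parameter $-a$ then yields the desired commutative diagram (noting that the folding exponent $|p| = a$ depends only on the absolute value of the Gorenstein parameter). I do not expect any genuine obstacle in this proof: the entire statement is a clean specialization of the general machinery, with the combinatorial structure of numerical semigroups entering only through the identifications $p_R = -a_S$ from \ref{Gor}\eqref{Frob} and $K_i = k$, both of which are essentially tautological.
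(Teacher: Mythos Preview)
Your proposal is correct and follows essentially the same route as the paper: verify the standing hypotheses of Section~6.2 via \ref{Gor} (Gorenstein, $R_0=k$, parameter $-a<0$), observe $K=k[t^{\pm1}]$ forces $q=1$, read off (1) and (2) from \eqref{BIY equivalence}, and obtain (3) from \ref{tilt}/\ref{1jigen}. Your explicit verification of the (R$_0^\Z$) condition to pass from $\D_{\sg,0}$ to $\D_\sg$ is a detail the paper leaves implicit, but otherwise the arguments coincide.
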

\begin{proof}
	We know that $R$ is Gorenstein since $S$ is symmetric, that $R_0=k$ by the connectedness assumption, and that the Gorenstein parameter is $-a<0$ by \ref{Gor}(\ref{Frob}) and $S\neq\NN$. By $K=k[t^{\pm1}]$ we see $K(1)\simeq K$. Then the assertion (1) follows from the result from \cite{BIY} stated at the beginning of this section. We have (2) by \cite[1.4(c)]{BIY}, and (3) by \ref{tilt}.
\end{proof}

We can give a more combinatorial description of the endomorphism algebra $A$. For a numerical semigroup $S\subset\NN$, we associate a poset $\P(S)$ as follows.
\begin{Def}\label{poset}
	Let $S\subset\NN$ be a numerical semigroup with Frobenius number $a$. The poset $\P(S)$ is defined on the set $\{1,\ldots,a\}\sqcup\{a+1\}$ with the following order:
	\begin{itemize}
		\item $i\leq j$ if and only if $j-i\in S$ for $1\leq i,j\leq a$.
		\item $i< a+1$ for all $1\leq i\leq a$.
	\end{itemize}
\end{Def}

\begin{Cor}\label{incidence}
In the setting of \ref{num}, the endomorphism algebra $A=\End_{{\sg R}}^\Z(T)$ is isomorphic to the incidence algebra of $\P(S)$.
\end{Cor}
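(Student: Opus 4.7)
The plan is to extract the isomorphism directly from the explicit matrix presentation of $A$ given in \ref{num}(2). The key structural input is that for the numerical semigroup ring $R=k[S]$ with its standard grading $\deg t=1$, the graded component $R_n$ is one-dimensional, spanned by $t^n$, when $n\in S$ (recall $0\in S$), and zero otherwise. Consequently, the entries $A_{ij}=R_{i-j}$ of the upper $a\times a$ block are one-dimensional precisely when $i-j\in S$, which by \ref{poset} is equivalent to $j\le i$ in $\P(S)$. The last row contributes one-dimensional entries $A_{a+1,j}=k$ for every $1\le j\le a+1$, matching the relations $j\le a+1$ in $\P(S)$, while the last column vanishes except at $(a+1,a+1)$, matching the single relation $a+1\le a+1$.

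The second step is to set up the $k$-linear bijection. The distinguished basis of $A$ consists of the matrix units with value $t^{i-j}$ in the upper block and value $1$ in the last row; by the previous step, this basis is indexed exactly by the set of comparable pairs of $\P(S)$, which is the same indexing set as the canonical basis of the incidence algebra $I(\P(S))$. Sending each basis vector to the corresponding incidence-algebra basis element is then a forced $k$-linear isomorphism.

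The final step is to check compatibility with multiplication, which is the only substantive verification. Matrix multiplication in $A$ of two basis units composes via the single identity $t^{i-j}\cdot t^{j-\ell}=t^{i-\ell}$ in $R$, whose well-definedness rests exactly on the closure of $S$ under addition; the product lies in the basis of $A$ because the corresponding composition of relations in $\P(S)$ is again a relation, by transitivity. Products that involve the last row or column reduce to the identity $1\cdot 1=1$ in $k$. Both of these match the defining multiplication of $I(\P(S))$. I do not anticipate a genuine obstacle here: the only point demanding care is fixing the convention for the incidence algebra basis (whether indexed by pairs $(x,y)$ with $x\le y$ or with $y\le x$), which amounts to transposing the matrix presentation of $A$ and is purely notational.
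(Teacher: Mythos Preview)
Your proposal is correct and follows exactly the approach the paper takes: the paper's proof is the one-line remark that the isomorphism ``follows easily from the description of $A$ given in \ref{num}(\ref{end}),'' and your argument spells out precisely this verification. The only minor imprecision is that the last-row entries are really $K_{a+1-j}=k\cdot t^{a+1-j}$ rather than literally $k$, so the mixed products are $t^{a+1-j}\cdot t^{j-\ell}=t^{a+1-\ell}$ in $K$; under the identifications $K_m\cong k$ this becomes exactly the $1\cdot 1=1$ you wrote, so nothing is lost.
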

\begin{proof}
	This follows easily from the description of $A$ given in \ref{num}(\ref{end}). 
\end{proof}

Let us first look at some examples where $S$ is generated by $2$ elements.
\begin{Ex}\label{S_{p,q}}
Let $0<p<q$ be relatively prime integers, and $S_{p,q}$ the numerical semigroup ring generated by $p$ and $q$.
The semigroup algebra of $S$ is isomorphic to the hypersurface singularity
	\[ R=k[S_{p,q}]=k[x,y]/(x^p-y^q) \]
with $\deg x=q$ and $\deg y=p$ by comparing their Hilbert series. The Gorenstein parameter of $R$ is $p+q-pq$. The poset $\P(S_{p,q})$ defined in \ref{poset} is described, for example, as follows. For $(p,q)=(2,2n+1)$ we have $a=2n-1$ and the Hasse diagram is given by
	\[ \xymatrix@R=1.5em{
		1\ar[r]&3\ar[r]&\cdots\ar[r]&2n-3\ar[r]&2n-1\ar[d]\\
		2\ar[r]&4\ar[r]&\cdots\ar[r]&2n-2\ar[r]&2n,} \]
	so that the incidence algebra is the path algebra of type $A_{2n}$ (see \ref{simple singularity}). For $(p,q)=(3,3n+1)$ we have $a=6n-1$ and the Hasse diagram of $\P(S_{p,q})$ looks as below.
{\small
\[
\xymatrix@!C=2.5em@R=1.5em{
&1\ar[r]\ar[rrrrd]&\cdots\ar[r]&3n-5\ar[r]\ar[rrrrd]&3n-2\ar[r]\ar[rrrrd]&3n+1\ar[r]&\cdots\ar[r]&6n-5\ar[r]&6n-2\ar[rdd]\\
&2\ar[r]\ar[rrrrd]&\cdots\ar[r]&3n-4\ar[r]\ar[rrrrd]&3n-1\ar[r]&3n+2\ar[r]&\cdots\ar[r]&6n-4\ar[r]&6n-1\ar[rd]\\
&3\ar[r]\ar[rrrrd]&\cdots\ar[r]&3n-3\ar[r]\ar[rrrrd]&3n\ar[r]&3n+3\ar[r]&\cdots\ar[r]&6n-3\ar[rr]&&6n\\ 
1\ar[r]&4\ar[r]&\cdots\ar[r]&3n-2\ar[r]&3n+1\ar[r]&3n+4\ar[r]&\cdots\ar[r]&6n-2\ar[rru]}
\]
}
\end{Ex}

\section{Quotient singularities}
Let $k$ be a field and $G\subset\SL_d(k)$ a finite subgroup such that $|G|\neq0$ in $k$. It naturally acts on the polynomial ring and let $R$ be its invariant subring:
\begin{equation}\label{S and R}
S=k[x_1,\ldots,x_d], \quad R=S^G.
\end{equation}
Throughout this section we use a common notation: $\frac{1}{n}(a_1,\ldots,a_d)$ denotes the element $\diag(\zeta^{a_1},\ldots,\zeta^{a_d})\in\GL_d(k)$ for a primitive $n$-th root of unity $\zeta$.

We give a $\Z$-grading on $S$ by $\deg x_i=1$ so that the invariant subring $R$ inherits a grading from $S$. We call this grading on $R$ the {\it standard grading}. This makes $R$ into a Gorenstein ring of Gorenstein parameter $d$ \cite[6.5]{IT}. The following result gives a tilting object in the $\Z$-graded singularity category of $R$ for this $\Z$-grading. We write $\Om_S$ the syzygy of a graded $S$-module, the kernel of the projective cover in $\md^\Z\!S$. For $X\in\md^\Z\!R$, we denote by $[X]_{\CM}$ the maximal direct summand of $X$ which is Cohen-Macaulay. It is unique up to isomorphism since $\md^\Z\!R$ is a Krull-Schmidt category.

\begin{Prop}[{\cite[2.7]{IT}}]\label{TA}
Let $R=S^G$ be the $\Z$-graded quotient singularity with the standard grading. Assume $R$ has an isolated singularity.
\begin{enumerate}
\item $T=\bigoplus_{p=1}^d[\Om_S^pk(p)]_{\CM}$ is a tilting object for $\sg^\Z\!R$.
\item The endomorphism algebra $A^{\st}:=\End_{\sg R}^\Z(T)$ has finite global dimension.
\end{enumerate}
Consequently, there exists a triangle equivalence $\sg^\Z\!R\simeq\D^b(\mod A^\st)$.
\end{Prop}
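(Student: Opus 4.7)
The plan is to exploit the skew group algebra $A := S * G$. Under our assumptions ($|G| \ne 0$ in $k$ and $R$ having isolated singularity), this is a non-commutative crepant resolution of $R$ in the graded setting: the natural map $A \to \End_R(S)^{\op}$ is an isomorphism of graded algebras (with $\deg x_i = 1$ and $G$ in degree $0$), $\gd^\Z\!A = d$, and $F := -\otimes_R S \colon \md^\Z\!R \xsimeq \md^\Z\!A$ is an equivalence with quasi-inverse $(-)^G$. Under $F$, the projective $R$-modules correspond to objects of $\add\{S(i)\mid i\in\Z\}\subset\md^\Z\!A$, so $F$ induces an equivalence between $\sCM^\Z\!R\simeq\D_\sg^\Z(R)$ and an appropriate Verdier quotient of $\md^\Z\!A$.

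The central computation is the $G$-equivariant Koszul resolution of $k$ as an $A$-module
\[ 0 \to S(-d)\otimes\textstyle\bigwedge^d V \to \cdots \to S(-1)\otimes V \to S \to k \to 0, \]
where $V=\bigoplus_{i=1}^d kx_i$ carries its natural $G$-action (well-defined since $G\subset\SL_d(k)$). This identifies $F(\Omega_S^p k(p))$ with the $A$-module syzygy $\Omega_A^p k$ (twisted by the $G$-representation $\bigwedge^p V$), which is generated in degree $0$.

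I would then show that $T' := \bigoplus_{p=0}^d \Omega_A^p k$ is a classical tilting $A$-module. The vanishing $\Ext^i_A(T',T') = 0$ for $i\ne 0$ is the standard Beilinson-type argument over the Koszul algebra $A$: applying $\Hom_A(-,\Omega_A^q k)$ to the Koszul resolution reads off each $\Ext$ group from the degree-zero part. The endomorphism algebra $\End_A(T')$ is a Beilinson-type algebra whose finite global dimension follows from Koszul duality with the exterior skew group algebra $\textstyle\bigwedge V * G$. Translating back via $F$ and passing to $\D_\sg^\Z(R)\simeq\sCM^\Z\!R$, the summands of $T'$ supported in $\add\{S(i)\}$ die; the surviving part is precisely $T = \bigoplus_{p=1}^d [\Omega_S^p k(p)]_{\CM}$, which inherits rigidity, and whose endomorphism algebra $A^{\st}$ is obtained from $\End_A(T')$ by killing morphisms that factor through $\add\{S(i)\mid i\in\Z\}$ — still of finite global dimension.

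The main obstacle is the generation $\thick T = \D_\sg^\Z(R)$. This reduces to showing that every shifted simple $k(i)$ (i.e. the residue fields of the graded maximal ideal) lies in $\thick T$. The Koszul triangles in $\D_\sg^\Z(R)$ give $k(i) \in \thick\{[\Omega_S^p k(p+i)]_{\CM}\mid p=0,\ldots,d\}$ for each $i\in\Z$, which covers the range $i=0,\ldots,d-1$ directly from $T$ and its syzygies inside $\D_\sg^\Z(R)$. To cover the remaining shifts, I would invoke the graded Auslander-Reiten duality (\ref{grAR}): since $R$ has Gorenstein parameter $p_R=d$, the Serre functor on $\D_\sg^\Z(R)$ is $(-d)[d-1]$, and iterating this cycles the range of grading shifts modulo $d$ up to homological shifts, reducing the problem to the finite range already verified.
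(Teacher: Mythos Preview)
The paper does not give its own proof of this proposition; it is quoted directly from \cite[2.7]{IT}, so there is no in-paper argument to compare against. I can only assess your proposal on its own merits.

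Your central claim is false: $F = -\otimes_R S\colon \md^\Z R \to \md^\Z A$ is \emph{not} an equivalence of categories. For this to hold, $S$ would have to be a projective $R$-module, but $S$ is projective over $R=S^G$ only when $R$ is regular, contrary to hypothesis. Concretely, $A=S\ast G$ has $|\mathrm{Irr}(G)|$ non-isomorphic simple modules (one per irreducible representation) while $R$, being local, has only one; their module categories cannot be equivalent. What \emph{is} true is that $S\ast G\simeq\End_R(S)$ (no ``op''), that $R=eAe$ for the trivial idempotent $e$, and that $\Hom_R(S,-)$ is fully faithful on $\add S\subset\CM R$ --- but this does not give the equivalence you assert, and consequently you cannot ``translate back via $F$'' as you propose.

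The broad strategy --- Koszul resolution, Beilinson-type tilting for $S\ast G$, then passing to $R$ via the idempotent $e$ --- is indeed the shape of the argument in \cite{IT}, and the description of $A^{\st}$ as an idempotent subalgebra of a Beilinson algebra is recorded in the paper at \ref{dim4}. But without the false equivalence, the passage from a tilting object for $A$ to one for $\D_\sg^\Z(R)$ requires genuine work with the recollement attached to $e$, and your generation argument is incomplete: you assert that $k(i)\in\thick T$ for $i=0,\dots,d-1$ ``directly from $T$'', but the Koszul triangles also involve the objects $S(-j)$, which are nonzero in $\D_\sg^\Z(R)$ and are not a priori in $\thick T$. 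Establishing that they are is exactly the substance of the generation step.
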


Applying \ref{TA} and our result \ref{tilt}, we obtain the following description of the ungraded singularity categories of quotient singularities.

\begin{Thm}\label{quot}
Let $R=S^G$ be the quotient singularity with the standard grading which is an isolated singularity, and $A^\st$ as in \ref{TA}. Then there exists a commutative diagram of equivalences
\[ \xymatrix@R=3mm{
	\D^b(\mod A^\st)\ar[r]\ar@{-}[d]^-\rsimeq&\C_{d-1}(A^\st)\ar[r]\ar@{-}[d]^-\rsimeq&\C_{d-1}^{(1/d)}(A^\st)\ar@{-}[d]^-\rsimeq\\
	\sg^\Z\!R\ar[r]&\sg^{\Z/d\Z}\!R\ar[r]&\sg R.} \]
\end{Thm}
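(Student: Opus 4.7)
The plan is to realize this statement as a direct application of the general Theorem \ref{tilt}(2) (the case $G=\Z$ of the main result in Section \ref{sec4}), with the tilting object supplied by Proposition \ref{TA}. So the main task is simply to verify all the hypotheses of Theorem \ref{tilt}, and then unfold what that theorem gives.

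First I would check the algebraic-geometric data. Take $G=\Z$, $\Lambda=R$, and $d=\dim R$. Since $R=S^G$ with $S$ a polynomial ring standardly graded and $|G|\neq 0$ in $k$, $R$ is a $\Z$-graded Gorenstein normal domain, and as a module over itself it is trivially a symmetric $R$-order. We have $R_0=k$, which is finite-dimensional over $k$. The Gorenstein parameter of $R$ with respect to the standard grading is $p_R=d$ (as quoted from \cite[6.5]{IT} just before Proposition \ref{TA}), and $d\in\Z$ is torsion-free. The isolated singularity hypothesis implies the graded regularity condition (R$_{d-1}^\Z$): outside the irrelevant maximal ideal $R_+$, the localisations $R_{[\mathfrak{p}]}$ are regular, hence have global dimension equal to the height of $\mathfrak{p}$. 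In particular $\D_{\sg,0}^\Z(R)=\D_\sg^\Z(R)$, so that every subcategory of $\D_\sg^\Z(R)$ is automatically contained in the ``$0$''-version.

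Next I would produce the tilting subcategory. Proposition \ref{TA} yields a tilting object $T=\bigoplus_{p=1}^d[\Om_S^p k(p)]_{\CM}\in\D_\sg^\Z(R)$ whose endomorphism algebra $A=A^{\st}$ has finite global dimension; letting $\P:=\add T\subset\D_{\sg,0}^\Z(R)$ we then have $\P\simeq\proj A^{\st}$ as $k$-linear categories, so $\per\P\simeq\per A\simeq \D^b(\mod A)$ (the last equivalence using $\gd A<\infty$).

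Now I would invoke Theorem \ref{tilt}(2) for the data $(G=\Z,\,R,\,\Lambda=R,\,p=d,\,\P=\add T)$. Its conclusions give a commutative diagram of triangle equivalences
\[
\xymatrix@R=3mm{
\per\P\ar[r]\ar@{-}[d]^-\rsimeq & \C_{d-1}(\P)\ar@{-}[d]^-\rsimeq\ar[r] & \C_{d-1}^{(1/d)}(\P)\ar@{-}[d]^-\rsimeq\\
\D_{\sg,0}^\Z(R)\ar[r] & \D_{\sg,0}^{\Z/d\Z}(R)\ar[r] & \D_{\sg,0}(R).}
\]
Translating via $\P\simeq\proj A$ and using (R$_{d-1}^\Z$) to replace each $\D_{\sg,0}^*$ by $\D_\sg^*$, the last sentence of Theorem \ref{tilt}, we obtain precisely the diagram in the statement. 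There is essentially no obstacle: the nontrivial content has already been absorbed into Proposition \ref{TA} (existence of the $\Z$-graded tilting object) and into Theorem \ref{tilt} itself (the passage through the Veronese/$p$-th-root orbit construction of Section \ref{sec2} together with the enhanced Calabi--Yau property of Section \ref{sec3}); the only remaining point to be careful about is the identification of the Gorenstein parameter $p=d$ and the verification of the isolated-singularity condition, both of which are standard.
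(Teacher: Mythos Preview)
Your proposal is correct and follows exactly the paper's approach: the theorem is stated as an immediate application of Proposition \ref{TA} and Theorem \ref{tilt}, and your verification of the hypotheses (Gorenstein parameter $p=d$, the (R$_{d-1}^\Z$) condition from the isolated singularity assumption, and the identification $\per\P\simeq\D^b(\mod A)$ via $\gd A<\infty$) is precisely what is needed to make that application rigorous.
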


Sometimes the invariant ring $R$ concentrates in degrees multiple some integer. In this case we can divide the grading by this integer, making the Gorenstein parameter of $R$ smaller.
	
Let $n$ be an integer dividing $d$, and here we discuss the case where $R$ concentrates in degrees $n\Z$.
In this case, we can define the \emph{divided grading} of $R$ by setting the degree $i$ part as $R_{ni}$ for each $i\in\Z$.

\begin{Lem}
Let $R=S^G$ be the invariant ring with the standard grading. Then $R$ concentrates in degrees multiple of $n$ if and only if $G$ contains the cyclic group generated by $\frac{1}{n}(1,\ldots,1)$.
\end{Lem}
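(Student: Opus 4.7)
\medskip
\noindent\textbf{Proof plan.}
The plan is to establish both directions by a direct analysis of the action on $S$ of the scalar element
\[ h := \tfrac{1}{n}(1,\ldots,1) = \diag(\zeta,\ldots,\zeta) \in \GL_d(k), \]
where $\zeta$ is a primitive $n$-th root of unity (implicit in the notation). Since $n \mid d$ by hypothesis, one has $\det h = \zeta^d = 1$, so in fact $h \in \SL_d(k)$, and $\langle h \rangle \subseteq \SL_d(k)$ is a cyclic subgroup of order $n$. The crucial observation is that $h$ acts on the degree $m$ piece $S_m$ by multiplication by the scalar $\zeta^m$, so
\[ S^{\langle h\rangle} \;=\; \bigoplus_{n\mid m} S_m. \]

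For the direction $(\Leftarrow)$, if $G \supseteq \langle h\rangle$, then $R = S^G \subseteq S^{\langle h\rangle} = \bigoplus_{n\mid m} S_m$, which immediately shows that $R$ concentrates in degrees divisible by $n$.

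For the direction $(\Rightarrow)$, suppose $R$ concentrates in degrees divisible by $n$, so $R \subseteq S^{\langle h\rangle}$. Because $h$ is a scalar matrix it is central in $\GL_d(k)$, hence $G' := \langle G, h\rangle = G \cdot \langle h\rangle$ is again a finite subgroup of $\SL_d(k)$, of order dividing $n|G|$ (so in particular $|G'| \ne 0$ in $k$). Then
\[ S^{G'} \;=\; S^G \cap S^{\langle h\rangle} \;=\; R \cap S^{\langle h\rangle} \;=\; R \;=\; S^G. \]
It remains to deduce $G = G'$ (and hence $h \in G$) from $S^G = S^{G'}$. For this I would pass to the field of fractions $L := \Frac(S) = k(x_1,\ldots,x_d)$: by Artin's theorem, $L/L^H$ is a finite Galois extension with Galois group $H$ for any finite subgroup $H \subset \GL_d(k)$ acting faithfully on $S$, and one has $L^H = \Frac(S^H)$. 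Applying this to both $G$ and $G'$, the equality $S^G = S^{G'}$ yields $L^G = L^{G'}$, and the Galois correspondence for $L/L^G$ forces $G = G'$.

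The argument has no real obstacle; the only points requiring care are (i) verifying that $h$ lies in $\SL_d(k)$, which is exactly the role of the assumption $n \mid d$, and (ii) ensuring that enlarging $G$ by $h$ keeps the group finite, which is automatic from the centrality of the scalar matrix $h$. The rest is a standard invariant-theoretic/Galois-correspondence argument.
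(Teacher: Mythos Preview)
Your proof is correct and follows essentially the same route as the paper: both directions rest on the identification $S^{\langle h\rangle}=S^{(n)}$, and the converse is handled by passing to fraction fields and invoking the Galois correspondence. Your version is simply more detailed---you spell out why $h\in\SL_d(k)$ (using $n\mid d$), why the enlarged group $G'$ remains finite, and how Artin's theorem is applied---whereas the paper compresses the second implication to a single line ``thus $\langle\frac{1}{n}(1,\ldots,1)\rangle\subset G$ by Galois theory.''
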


\begin{proof}
If $G$ contains $\frac{1}{n}(1,\ldots,1)$, then $R\subset S^{\frac{1}{n}(1,\ldots,1)}=S^{(n)}$, the $n$-th Veronese subring. Conversely, if $R$ is concentrated in degree $n\Z$, then its quotient field is contained in that of $S^{(n)}$, thus $\langle\frac{1}{n}(1,\ldots,1)\rangle\subset G$ by Galois theory.
\end{proof}

If $R$ is $n\Z$-graded, the category of $\Z$-graded modules $\Mod^\Z\!R$ canonically breaks into $n$ mutually isomorphic categories. For $M\in\Mod^\Z\!R$ we denote by $M=\bigoplus_{i\in\Z/n\Z}M^{i}$ the decomposition of $M$ along this splitting of categories.
\begin{Lem}\label{copy}
Let $G\subset\SL_d(k)$ subgroup containing $\frac{1}{n}(1,\ldots,1)$. Then the algebra $A^\st=\End_{\sg R}^\Z(T)$ in \ref{TA}(2) is the direct product of algebras $A_i:=\End_{\sg R}^\Z(T^i)$ along $1\leq i\leq n$, which are mutually derived equivalent.
\end{Lem}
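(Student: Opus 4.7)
The plan is to exploit the hypothesis that $G \ni \frac{1}{n}(1,\ldots,1)$, which forces $R = S^G$ to concentrate in degrees divisible by $n$, and hence induces an orthogonal decomposition of $\Mod^\Z R$ into $n$ pieces. First I would observe that for any $M \in \Mod^\Z R$, the graded submodules $M^i := \bigoplus_{j \equiv i \,(n)} M_j$ are genuine $R$-submodules, since multiplication by a homogeneous element $r \in R_j$ (with $j \in n\Z$) preserves the congruence class $i$ modulo $n$. This produces a categorical decomposition
\[
\Mod^\Z R \simeq \prod_{i \in \Z/n\Z}(\Mod^\Z R)_i,
\]
which restricts to $\mod^\Z R$, $\CM^\Z R$, $\per^\Z R$ and $\D^b(\mod^\Z R)$, and hence descends to the Verdier quotient $\D_\sg^\Z(R) = \prod_i \D_i$, with $\D_i := (\D_\sg^\Z(R))_i$.

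Writing $T = \bigoplus_i T^i$ for the induced splitting of the tilting object of \ref{TA}, orthogonality $\Hom_{\D_\sg^\Z(R)}(T^i, T^j) = 0$ for $i\ne j$ yields
\[
A^{\st} = \End_{\D_\sg^\Z(R)}(T) = \prod_{i\in\Z/n\Z}\End_{\D_\sg^\Z(R)}(T^i) = \prod_i A_i,
\]
which is the first claim. For the second, I would check that each $T^i$ is itself a tilting object of $\D_i$: the $\Hom$-vanishing $\Hom_{\D_i}(T^i,T^i[k]) = 0$ for $k\ne 0$ is inherited from that of $T$, while generation follows by noting that the equality $\D_\sg^\Z(R) = \thick T$ projects, under the splitting, to $\D_i = \thick T^i$. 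Consequently each $T^i$ induces a triangle equivalence $\D_i \simeq \D^b(\mod A_i)$.

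Finally, the degree-shift autoequivalence $(1)$ of $\D_\sg^\Z(R)$ sends a module concentrated in degrees $\equiv i\,(n)$ to one concentrated in degrees $\equiv i{+}1\,(n)$, and therefore restricts to a triangle equivalence $(1) \colon \D_i \xrightarrow{\simeq} \D_{i+1}$ for each $i\in\Z/n\Z$. Composing with the tilting equivalences above yields
\[
\D^b(\mod A_i) \simeq \D_i \xrightarrow{\simeq} \D_{i+1} \simeq \D^b(\mod A_{i+1}),
\]
so $A_i$ and $A_{i+1}$ are derived equivalent, and iteration gives the mutual derived equivalence of all $A_i$. The main subtlety is simply ensuring that the categorical splitting of $\Mod^\Z R$ genuinely restricts to $\CM^\Z R$ and descends through the quotient $\D^b(\mod^\Z R)/\per^\Z R$, but this is automatic because both the Cohen-Macaulay property and membership in $\per^\Z R$ are detected componentwise in the product decomposition.
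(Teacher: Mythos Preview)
Your proposal is correct and follows essentially the same line as the paper: both use the orthogonal decomposition of $\D_\sg^\Z(R)$ coming from $R$ being concentrated in degrees divisible by $n$, observe that the pieces $T^i$ are tilting in their respective factors, and conclude mutual derived equivalence from the fact that the degree shift permutes the factors. (A small imprecision: with the usual convention $M(1)_j=M_{j+1}$, the shift $(1)$ sends $\D_i$ to $\D_{i-1}$ rather than $\D_{i+1}$, but this is immaterial for the argument.)
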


\begin{proof}
	Since the tilting object $T$ gives an equivalence $\sg^\Z\!R\simeq\D^b(\mod A)$, the pieces $T^i$ yield equivalences between the factors of $\sg^\Z\!R$ and $\D^b(\mod A^\st)=\D^b(\mod A_1)\times\cdots\times\D^b(\mod A_n)$. Also, each factor of $\sg^\Z\!R$ are mutually isomorphic by the degree shift functor, thus $A_i$ are mutually derived equivalent. 
\end{proof}
We will refer to $\End_{\sg R}^\Z(T^0)$ as $A^{\div}$.
\begin{Thm}\label{div}
Let $n$ be an integer dividing $d$ and $p:=d/n$. Let $G\subset\SL_d(k)$ be a finite subgroup, containing $\frac{1}{n}(1,\ldots,1)$ such that $R=S^G$ is an isolated singularity. We give a divided grading on $R$. Then there exists a commutative diagram of equivalences
	\[ \xymatrix@R=3mm{
		\D^b(\mod A^{\div})\ar[r]\ar@{-}[d]^-\rsimeq&\C_{d-1}(A^{\div})\ar@{-}[d]^-\rsimeq\ar[r]&\C_{d-1}^{(1/p)}(A^{\div})\ar@{-}[d]^-\rsimeq\\
		\sg^\Z\!R\ar[r]&\sg^{\Z/p\Z}\!R\ar[r]&\sg R. } \]
\end{Thm}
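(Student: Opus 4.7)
The plan is to reduce directly to Theorem \ref{tilt} by setting up the correct tilting object in $\D_\sg^\Z(R)$ under the divided grading. The key is that this grading divides the Gorenstein parameter by $n$.

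First I would verify that the divided grading is well-defined and that, under it, $R$ is a $\Z$-graded Gorenstein $k$-algebra with $\dim^\Z\!R=d$, $R_0=k$, and Gorenstein parameter $p=d/n$. Indeed, with the standard grading $R$ has Gorenstein parameter $d$ (so $\omega_R\simeq R(-d)$ as $\Z$-graded $R$-modules with standard grading); since $R$ is concentrated in degrees $n\Z$, relabeling degree $ni$ to degree $i$ turns this isomorphism into $\omega_R\simeq R(-p)$. Since $R$ is an isolated singularity, condition \mbox{(R$_{d-1}^\Z$)} is automatic.

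Next I would identify the divided-graded module category with a direct factor of the standard-graded one. The functor $M\mapsto M$ gives an equivalence between $\md^\Z\!R$ with the divided grading and the full subcategory $(\md^\Z\!R)^0$ of standard-graded modules concentrated in degrees $n\Z$, and similarly at the level of $\D_\sg^\Z(R)$. This equivalence intertwines the degree shift $(1)$ on the divided side with $(n)$ on the standard side. Under this identification, the component $T^0$ of the tilting object $T$ from \ref{TA} is precisely a module with divided grading, and
\[
A^{\div}=\End_{\D_\sg^\Z(R)_{\div}}(T^0)=A_0,
\]
a direct factor of $A^{\st}$ in the notation of \ref{copy}. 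Since $T$ is tilting in $\D_\sg^\Z(R)_{\st}$ and the latter decomposes into $n$ blocks stabilized by $T^0,\ldots,T^{n-1}$, restricting to the $0$-th block shows that $T^0$ is tilting in $\D_\sg^\Z(R)_{\div}$. As $A^{\st}$ has finite global dimension, so does its direct factor $A^{\div}$, hence $\P:=\add T^0\simeq\proj A^{\div}$ is a tilting subcategory of $\D_\sg^\Z(R)_{\div}$.

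Finally I would apply \ref{tilt} with $G=\Z$, $\Lambda=R$ (which is commutative and thus symmetric), Gorenstein parameter $p\ge 1$ (clearly torsion-free in $\Z$), and tilting subcategory $\P=\add T^0$. This yields the desired commutative diagram. There is no genuine obstacle: the only point requiring care is the bookkeeping of gradings in step two, ensuring that the degree shift by $(1)$ on the divided side corresponds to the $n$-th Veronese-like shift on the standard side, so that the Gorenstein parameter passed to \ref{tilt} is indeed $p=d/n$.
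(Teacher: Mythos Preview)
Your proposal is correct and follows essentially the same approach as the paper: both arguments pass from the standard-graded tilting object $T$ of \ref{TA} to its component $T^0$ in the block corresponding to degrees in $n\Z$, observe that this yields a tilting object in the divided-graded singularity category with endomorphism algebra $A^{\div}$ and that the Gorenstein parameter becomes $p=d/n$, and then invoke \ref{tilt}. Your write-up is simply more explicit about the bookkeeping (verifying $R_0=k$, the (R$_{d-1}^\Z$) condition, and how the degree shifts match up), which the paper leaves implicit.
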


\begin{proof}
	We have by \ref{TA} an equivalence $\sg^\Z\!R\simeq\D^b(\mod A)$, where the left-hand-side is the standard (non-divided) grading, and the right-hand-side is equivalent to the direct product of $n$ copies of $\D^b(\mod A^{\div})$. Dividing the grading of $R$ by $n$ we deduce an equivalence $\sg^\Z\!R\simeq\D^b(\mod A^{\div})$. Noting that the Gorstein parameter is $d/n=p$, we obtain the conclusion by \ref{tilt}.
\end{proof}

As an application of \ref{div}, we immediately obtain a result of Keller-Reiten \cite{KRac} and Keller-Murfet-Van den Bergh \cite{KMV}.
\begin{Ex}\label{KR}
Let $d=3$ so that $S=k[x,y,z]$. Let $G$ be the subgroup of $\SL_3(k)$ generated by $\frac{1}{3}(1,1,1)$, put $R=S^G$, on which we give the standard grading. It is easy to compute the endomorphism ring $A^\st$ in \ref{TA} to be the disjoint union of three $3$-Kronecker quivers $Q_3\colon\xymatrix{\bullet\ar[r]^-3&\bullet}$ (see \cite[8.16]{IT}).

Since $R$ is concentrated in degrees multiples of $3$, the algebra $A^\st$ is the direct product of $3$ derived equivalent copies of $A^{\div}$ which is necessarily $kQ_3$. By \ref{div} we deduce equivalences
\[ \xymatrix@R=3mm{
		\D^b(\mod kQ_3)\ar[r]\ar@{-}[d]^-\rsimeq&\C_2(kQ_3)\ar@{-}[d]^-\rsimeq\\
		\sg^\Z\!R\ar[r]&\sg R} \]
for the divided grading on $R$. Now the right column recovers \cite{KRac}.
\end{Ex}

\begin{Ex}\label{KMV}
Let $d=4$ so that $S=k[x,y,z,w]$. Let $G$ be the subgroup of $\SL_4(k)$ generated by $\frac{1}{2}(1,1,1,1)$, and put $R=S^G$.
Consider the standard grading on $R$. Then the endomorphism algebra $A^\st$ of the tilting object $T$ given in \ref{TA} can be computed to be the product of $2$ copies of the $6$-Kronecker quiver:
\[ A^\st=kQ_6\times kQ_6, \quad Q_6=\xymatrix{\bullet\ar[r]^-6&\bullet}. \]
Since $R$ is concentrated in even degrees, we can divide the grading by $2$, and in view of \ref{copy} we must have $A^{\div}=kQ_6$. Then we see by \ref{div} that there is a commutative diagram of equivalences
\[ \xymatrix@R=3mm{
	\D^b(\mod kQ_6)\ar@{-}[d]^-\rsimeq\ar[r]&\C_3(kQ_6)\ar[r]\ar@{-}[d]^-\rsimeq&\C_3^{(1/2)}(kQ_6)\ar@{-}[d]^-\rsimeq\\
	\sg^\Z\!R\ar[r]&\sg^{\Z/2\Z}\!R\ar[r]&\sg R,} \]
whose rightmost column as proved by Keller-Murfet-Van den Bergh \cite{KMV}.
\end{Ex}

\begin{Ex}\label{dim4}
	Let $d=4$ so that $S=k[x,y,z,w]$. Let $G$ be the subgroup of $\SL_4(k)$ generated by $\frac{1}{4}(1,1,3,3)$. Since $G$ contains $\frac{1}{2}(1,1,1,1)$ we can give $R=S^G$ the divided grading. Let us describe the algebra $A^{\div}$ for this case. By \cite[2.9]{IT} the endomorphism ring $A^\st$ in \ref{TA} is isomorphic to $eBe$ for the algebra $B$ and its idempotent $e$ defined as follows: Consider the quiver below on the left. The arrows which go one place to the right or three places to the left are labelled by $x,y$, and which go one place to the left or three places to the right are labelled by $z,w$. Then define $B$ as its path algebra modulo the exterior relations, that is, $v^2=0$ for any linear combination $v$ of $x,y,z,w$. Now let $e$ be the idempotent of $B$ corresponding to the $12$ vertices on the right three columns.
	\[ \xymatrix{
		\bullet\ar@2[dr]\ar@2[drrr]&\bullet\ar@2[dr]\ar@2[dl]&\bullet\ar@2[dr]\ar@2[dl]&\bullet\ar@2[dl]\ar@2[dlll]\\
		\bullet\ar@2[dr]\ar@2[drrr]&\bullet\ar@2[dr]\ar@2[dl]&\bullet\ar@2[dr]\ar@2[dl]&\bullet\ar@2[dl]\ar@2[dlll]\\
		\bullet\ar@2[dr]\ar@2[drrr]&\bullet\ar@2[dr]\ar@2[dl]&\bullet\ar@2[dr]\ar@2[dl]&\bullet\ar@2[dl]\ar@2[dlll]\\
		\bullet&\bullet&\bullet&\bullet}
	\qquad\qquad
	\xymatrix{
		&\bullet\ar@2[dr]^-x_-y\ar@2[dl]_-z^-w&\\
		\bullet\ar@2[dr]\ar@/_10pt/[ddrr]_<<<<<{zw}&&\bullet\ar@2[dl]\ar@/^10pt/[ddll]^<<<<<{xy}\\
		&\bullet\ar@2[dr]\ar@2[dl]&\\
		\bullet&&\bullet}
	\qquad
	\xymatrix{
		\bullet\ar@2[dr]\ar@/_10pt/[ddrr]_<<<<<{zw}&&\bullet\ar@2[dl]\ar@/^10pt/[ddll]^<<<<<{xy}\\
		&\bullet\ar@2[dr]\ar@2[dl]&\\
		\bullet\ar@2[dr]^-x_-y&&\bullet\ar@2[dl]_-z^-w\\
		&\bullet&} \]
	Note that $A^\st=eBe$ has two connected components, corresponding to the fact that $R$ is concentrated in even degrees. These two algebras are not isomorphic (in fact opposite) to each other, but they are derived equivalent by \ref{copy}. Letting $A^{\div}$ be one of them, which is presented by one of the quivers on the right above. The arrows are labelled similarly as those in $B$, with two additional arrows going two places down labelled by $xy$ and $zw$, respectively.
	
	We conclude by \ref{div} that there is a commutative diagram of equivalences
	\[ \xymatrix@R=3mm{
		\D^b(\mod A^{\div})\ar[r]\ar@{-}[d]^-\rsimeq&\C_{3}(A^{\div})\ar@{-}[d]^-\rsimeq\ar[r]&\C_{3}^{(1/2)}(A^{\div})\ar@{-}[d]^-\rsimeq\\
		\sg^\Z\!R\ar[r]&\sg^{\Z/2\Z}\!R\ar[r]&\sg R. } \]
\end{Ex}

\begin{Ex}[Simple surface singularities]
Let $G\subset\SL_2(k)$ be a finite subgroup. They are classified by the ADE Dynkin diagrams as follows, where $\zeta_\ell$ denotes a primitive $\ell$-th root of $1$.
\[
\begin{array}{c||c||c|c|c}
	\text{type}&\text{generator(s)}&\text{equation}&\text{degree}&h=\deg f\\ \hline\hline
	A_n&\begin{pmatrix}\zeta_{n+1}&0\\0&\zeta_{n+1}^{-1}\end{pmatrix}&x^{n+1}+yz&(1,p,n+1-p)&n+1\\ \hline
	D_n&\begin{pmatrix}\zeta_{2n-4}&0\\0&\zeta_{2n-4}^{-1}\end{pmatrix}, \begin{pmatrix}0&\zeta_4\\ \zeta_4&0\end{pmatrix}&x^{n-1}+xy^2+z^2&(2,n-2,n-1)&2(n-1)\\ \hline
	E_6&\Frac{1}{\sqrt{2}}\begin{pmatrix}\zeta_8&\zeta_8^3\\ \zeta_8&\zeta_8^7\end{pmatrix}, D_4&x^4+y^3+z^2&(3,4,6)&12\\ \hline
	E_7&\begin{pmatrix}\zeta_8^3&0\\ 0&\zeta_8^5\end{pmatrix}, E_6&x^3y+y^3+z^2&(4,6,9)&18\\ \hline
	E_8&\Frac{1}{\sqrt{5}}\begin{pmatrix}\zeta_5^4-\zeta_5&\zeta_5^2-\zeta_5^3\\ \zeta_5^2-\zeta_5^3&\zeta_5-\zeta_5^4\end{pmatrix}, \Frac{1}{\sqrt{5}}\begin{pmatrix}\zeta_5^2-\zeta_5^4&\zeta_5^4-1\\ 1-\zeta_5&\zeta_5^3-\zeta_5\end{pmatrix}&x^5+y^3+z^2&(6,10,15)&30
\end{array}
\]
The group $G$ contains an element $\frac{1}{2}(1,1)$ in all cases except for type $A_{2n}$. In these cases, with the divided grading on $R$, we obtain the following commutative diagram of equivalences for the corresponding Dynkin quivers $Q$. 
\[ \xymatrix@R=3mm{
	\D^b(\mod kQ)\ar[r]\ar@{-}[d]^-\rsimeq&\C_1(kQ)\ar@{-}[d]^-\rsimeq\\
	\sg^\Z\!R\ar[r]&\sg R } \]
Indeed, we know that the algebra $A^\st$ is a direct product of two copies of Dynkin quiver $Q$ of the corresponding type \cite[8.14]{IT}, while by \ref{copy} it is also the direct product of two derived equivalent copies of $A^{\div}$. It follows that $A^{\div}$ must be derived equivalent to $kQ$, hence the desired diagram by \ref{div}.

Let us take another point of view. It is well-known that $R$ is a hypersurface singularity given as in the right half of the above table. We consider the $\Z$-grading therein, which is the divided grading except for type $A_n$, and $p$ in type $A_n$ is an arbitrary integer.
Applying \ref{hypersurface}, we obtain commutative diagrams of equivalences for each $\ell\in\Z$
\[ \xymatrix@R=3mm{
	\D^b(\mod kQ)\ar@{-}[d]^-\rsimeq\ar[r]&\C_{2\ell+1}(kQ)\ar@{-}[d]^-\rsimeq\ar[r]&\C_{2\ell+1}^{(1/(1+\ell h))}(kQ)\ar@{-}[d]^-\rsimeq\\
	\sg^\Z\!R\ar[r]&\sg^{\Z/(1+\ell h)\Z}\!R\ar[r]&\sg R.}
\]
Thus the category $\C_{2\ell+1}^{(1/(1+\ell h))}(kQ)$ is independent of $\ell\in\Z$. This can be explained also by the fractionally Calabi-Yau property of dimension $(h-2)/h$ of $\D^b(\mod kQ)$, where $h=\deg f$ is the Coxeter number of $Q$. 
\end{Ex}

\section{Geigle-Lenzing complete intersections and Grassmannian cluster categories}

\subsection{Geigle-Lenzing complete intersections}\label{results for GL}
Following \cite{GL,HIMO}, we recall the basic setup for Geigle-Lenzing theory. Let $k$ be a field, and let $d\geq0$ and $n\geq1$ be integers. The {\it Geigle-Lenzing complete intersection} is the commutative ring
\[ R=k[T_0,\ldots,T_d,x_1,\ldots,x_n]/(x_1^{p_1}-l_1,\ldots,x_n^{p_n}-l_n), \]
where $l_1,\ldots,l_n$ are linear forms in $T_0,\ldots,T_d$ {as linear independent as possible}, and $p_1,\ldots,p_n\ge2$ are integers called weights.
It is graded by the abelian group
\[ \LL=\left(\Z\vec{c}\oplus\textstyle{\bigoplus}_{i=1}^n\Z\vec{x_i}\right)/\langle p_i\vec{x_i}-\vec{c}\mid 1\leq i\leq n\rangle \]
with $\deg T_i=\vec{c}$ and $\deg x_i=\vec{x_i}$.
It is an $\LL$-local ring with $\LL$-maximal ideal $\m=(T_0,\ldots,T_d,x_1,\ldots,x_n)$. We have $\dim R=\dim^\LL\!R=d+1$, and $R$ has Gorenstein parameter
\[ -\vec{\om}:=\textstyle{\sum_{i=1}^n}\vec{x_i}-(n-d-1)\vec{c}. \]
Let $\LL_+$ be the submonoid of $\LL$ generated by $\vec{x_1},\ldots,\vec{x_n}$, and define a partial order $\le$ on $\LL$ by $\vec{x}\le\vec{y}\Longleftrightarrow\vec{y}-\vec{x}\in\LL_+$. We set
\[\vec{\delta}:=d\vec{c}+2\vec{\om}\in\LL.\]
Then $R$ is
\begin{itemize}
	\item regular if and only if $n\leq d+1$ if and only if $0\not\le\vec{\delta}$, and
	\item a complete intersection of codimension $n-d-1$ if $n>d+1$.
\end{itemize}
Consider the interval $I:=[0,\vec{\delta}]$ in $\LL$. For each $X\in\mod^{\LL}\!R$, we define $X_I:=\bigoplus_{\vec{x}\in I}X_{\vec{x}}\in\mod^{\LL}\!R$. Let
\[T:=\bigoplus_{\vec{x}\in I}R(\vec{x})_I\in\mod^{\LL}\!R\]
The {\it CM canonical algebra} is a subring
\[A^{\CM}=(R_{\x-\y})_{\x,\y\in I}\]
of the full matrix ring $M_I(R)$ \cite{HIMO} (above Theorem 3.20), and can be identified with the endomorphism algebra of $T$:
\[A^{\CM}=\End_R^{\LL}(T).\]
From our point of view, the following tilting result on Geigle-Lenzing complete intersection is a ``graded'' assumption part of \ref{CM}.

\begin{Thm}[{\cite[3.20]{HIMO}}]\label{himo}
$\sg^{\LL}\!R$ has a tilting object $T$ with $\End_{\sg R}^\LL(T)=A^{\CM}$, and there exists a triangle equivalence
\[ \xymatrix{\sg^\LL\!R\ar@{-}[r]^-\simeq&\D^b(\mod A^{\CM}).}\]
\end{Thm}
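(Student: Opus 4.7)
The plan is to establish three properties of the candidate tilting object $T=\bigoplus_{\vec{x}\in I}R(\vec{x})_I$: (a) identification of the endomorphism algebra with $A^{\CM}$, (b) Ext-vanishing $\Hom_{\D_\sg^\LL R}(T,T[i])=0$ for $i\neq 0$, and (c) thick-generation $\thick T=\D_\sg^\LL R$. First I would check that each truncation $R(\vec{x})_I$ for $\vec{x}\in I$ lies in $\CM^\LL R$, so that $T$ genuinely lives in $\sCM^\LL R\simeq\D_\sg^\LL R$. This uses that $R$ is $(d{+}1)$-dimensional Cohen-Macaulay and that the interval $I=[0,\vec{\delta}]$ bounds a sufficiently large region of degrees for depth to be preserved after truncation.

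For (a), I would compute $\Hom_R^\LL(R(\vec{x})_I,R(\vec{y})_I)$ directly: morphisms from $R(\vec{x})_I$ correspond to elements in $R_{\vec{x}-\vec{y}}$ because every homogeneous element extends uniquely to the full $R(\vec{x})$, and the image automatically lands in $R(\vec{y})_I$ provided $\vec{x},\vec{y}\in I$. Assembling these for $\vec{x},\vec{y}\in I$ produces the matrix ring $(R_{\vec{x}-\vec{y}})_{\vec{x},\vec{y}\in I}$, which is $A^{\CM}$ by definition. One then checks, using a degree-shift argument in $I$, that no non-zero morphism between these CM modules factors through a graded projective $R(\vec{z})$, so the stable endomorphism algebra coincides with $\End_R^\LL(T)$.

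For (b) and (c), my approach would be geometric: view $R$ as the $\LL$-graded coordinate ring of a Geigle-Lenzing projective space $\X$ of dimension $d$, and use a semiorthogonal-type decomposition relating $\D^b(\coh^\LL\X)$, $\per^\LL R$, and $\D_\sg^\LL R$ (à la Orlov). The bundles $\O_\X(\vec{x})$ for $\vec{x}\in I$ form a tilting bundle on $\X$ by the higher-dimensional Geigle-Lenzing tilting theorem, and $T$ is essentially the image of this bundle under the projection to $\D_\sg^\LL R$. The Ext-vanishing in $\D^b(\coh^\LL\X)$ transports to (b), using the Serre functor $(\vec{\om})[d]$ on $\D_{\sg,0}^\LL R$ (given by graded Auslander-Reiten duality with Gorenstein parameter $-\vec{\om}$) to bound the cohomological range. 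For (c), I would show by Koszul-type resolution on the complete intersection that every $R(\vec{x})$ with $\vec{x}\in\LL$ is built from the $R(\vec{y})_I$ with $\vec{y}\in I$ using cones and shifts, after killing perfect complexes; the length of the interval $I$ (which is $\vec{\delta}=d\vec{c}+2\vec{\om}$) is precisely what is needed so that iterated shifts by $-\vec{\om}$ fall back into $I$.

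The main obstacle is verifying the Ext-vanishing (b) rigorously, because in $\D^b(\mod^\LL R)$ the modules $R(\vec{x})_I$ do have nontrivial extensions, and one must show that every such Ext-class becomes null-homotopic after passing to the Verdier quotient by $\per^\LL R$. The cleanest route is to avoid working directly in the singularity category and instead lift the problem to $\X$, where Ext-vanishing is a computation on a smooth stack. The technical payoff—and the part one should not underestimate—is setting up the geometric dictionary carefully for higher-dimensional Geigle-Lenzing spaces with $\LL$-grading, in particular controlling the images of Cohen-Macaulay modules versus coherent sheaves under the relevant projection.
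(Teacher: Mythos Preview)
This theorem is not proved in the paper; it is quoted verbatim from \cite[3.20]{HIMO}, and the paper uses it as a black box to feed into \ref{tilt}. So there is no proof here to compare against.

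Your outline is in the right spirit and broadly matches the strategy of \cite{HIMO}, which does pass through the Geigle--Lenzing projective space $\X$ and an Orlov-type semiorthogonal decomposition relating $\D^b(\coh\X)$, $\per^\LL R$, and $\D_\sg^\LL(R)$. A few imprecisions are worth flagging. First, the tilting bundle on $\X$ is not indexed by $I=[0,\vec{\delta}]$ but by the interval $[0,d\vec{c}]$ (yielding the $d$-canonical algebra); the passage to the interval $I$ is exactly what the Orlov-type correspondence produces, and the shift by $2\vec{\omega}$ reflects the Gorenstein parameter. So $T$ is not ``essentially the image'' of that tilting bundle under projection---rather, the Orlov machinery manufactures a different tilting object on the singularity side. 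Second, your argument for (a) is too quick: showing that $\sEnd_R^\LL(T)=\End_R^\LL(T)$ requires knowing that no morphism between the $R(\vec{x})_I$ factors through a graded free module, and a vague ``degree-shift argument'' does not settle this; in \cite{HIMO} this rests on a careful analysis of which degrees $R(\vec{x})_I$ occupies versus which degrees $R(\vec{z})$ hits. Third, the claim that each $R(\vec{x})_I$ is Cohen--Macaulay is itself a nontrivial lemma in \cite{HIMO}.

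In short: your roadmap is reasonable and aligned with the source, but the actual proof in \cite{HIMO} is lengthy, and the points you label as routine (transporting Ext-vanishing through the Verdier quotient, matching the two intervals, CM-ness of the truncations) are where the real work lies.
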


Applying \ref{CM}(1) we obtain the following ``ungraded'' part.
\begin{Thm}\label{GL}
Let $R$ be a Geigle-Lenzing complete intersection. Assume that $\vec{\om}$ is not torsion and that $\Sing^{\LL/(\vec{\om})}\!R\subset\{\m\}$. Then there exists a commutative diagram of equivalences
\[ 	\xymatrix@R=3mm{
	\D^b(\mod A^{\CM})\ar[r]\ar@{-}[d]^-\rsimeq&\C_d(A^{\CM})\ar@{-}[d]^-\rsimeq\\
	\sg^\LL\!R\ar[r]&\sg^{\LL/(\vec{\om})}\!R. } \]
\end{Thm}

Note that $\Sing^{\LL}\!R\subset\{\m\}$ always holds \cite[3.32]{HIMO}. We expect that $\Sing^{\LL/(\vec{\om})}\!R\subset\{\m\}$ also holds if $\vec{\om}$ is not torsion.

\begin{Ex}
Let us discuss the hypersurface case $n=d+2$ in more detail. After a linear change of variables 
we may assume $R$ has the form 
\[ R=k[x_1,\ldots,x_{d+2}]/(x_1^{p_1}+\cdots+x_{d+2}^{p_{d+2}}) \]
with $\LL=(\Z\vec{c}\oplus{\bigoplus}_{i=1}^{d+2}\Z\vec{x_i})/\langle p_i\vec{x_i}-\vec{c}\mid 1\leq i\leq n\rangle$ and
with Gorenstein parameter $-\vec{\om}=\sum_{i=1}^{d+2}\vec{x_i}-\vec{c}$.

In this case we have an even more explicit description of $A^{\CM}$ \cite[3.23(b)]{HIMO}. Let $\AA_n$ be a linear oriented quiver of type $A_n$:
\[\xymatrix@!R=2mm@!C=2mm{
\AA_n\ar@{}[r]|-=&\bullet\ar[r]&\bullet\ar[r]&\bullet\ar[r]&\cdots\ar[r]&\bullet} \]
Then $A^{\CM}$ is isomorphic to the tensor product of path algebras of linearly oriented quivers of type $A$:
\[ A^{\CM}\simeq\bigotimes_{i=1}^{d+2}k\AA_{p_i-1}. \]
By \ref{hypersurface} we have a commutative diagram below of equivalences for each $\ell\in\Z$ such that $-\vec{\om}+\ell\vec{c}$ is not torsion and $\Sing^{\LL/(-\vec{\om}+l\vec{c})}\!R\subset\{\m\}$.
\[ \xymatrix@R=3mm{
	\D^b(\mod A^{\CM})\ar@{-}[d]^-\rsimeq\ar[r]&\C_{d+2\ell}(A^{\CM})\ar@{-}[d]^-\rsimeq\\
	\sg^\LL\!R\ar[r]&\sg^{\LL/(-\vec{\om}+\ell \vec{c})}\!R. } \]
In particular for $\ell=1$ we have
\[ \xymatrix@R=3mm{
	\D^b(\mod A^{\CM})\ar@{-}[d]^-\rsimeq\ar[r]&\C_{d+2}(A^{\CM})\ar@{-}[d]^-\rsimeq\\
	\sg^\LL\!R\ar[r]&\sg^{\LL/(\vec{x})}\!R } \]
for $\vec{x}=\sum_{i=1}^{d+2}\vec{x_i}$.
The case $d=1$ will be used in the next subsection.
\end{Ex}

\begin{Ex}
We consider the case $d\ge1$, $n=d+3$ and $p_i=2$ for all $1\le i\le n$. In this case, $\vec{\delta}=\vec{c}$ holds, and the algebra $A^{\CM}$ is given by the following quiver with two relations, where without loss of generality, we assume $\ell_i=T_{i-1}$ for each $1\le i\le d+1$ and $\ell_i=\sum_{j=1}^{d+1}\lambda_{i,j-1}T_{j-1}$ for $i=d+2,d+3$.
\[\xymatrix@R=.5em@C=4em{
&\vec{x_1}\ar[rdd]|{x_1}\\
&\vec{x_2}\ar[rd]|{x_2}\\
0\ar[ruu]|{x_1}\ar[ru]|{x_2}\ar[rd]|{x_{d+2}}\ar[rdd]|{x_{d+3}}&\vdots&\vec{c}&x_{d+2}^2=\sum_{j=1}^{d+1}\lambda_{d+2,j-1}x_j^2\\
&\vec{x_{d+2}}\ar[ru]|{x_{d+2}}&&x_{d+3}^2=\sum_{j=1}^{d+1}\lambda_{d+3,j-1}x_j^2\\
&\vec{x_{d+3}}\ar[ruu]|{x_{d+3}}
}\]
\end{Ex}


\subsection{Grassmannian cluster categories}
We give an application of general results above to Grassmannian cluster categories of Jensen-King-Su.
After recalling its basic definitions, we observe that this is a special case of Geigle-Lenzing hypersurface, and give an equivalence between the cluster category of an algebra of global dimension $\leq2$.

Let $k$ be an arbitrary field and let $0<\ell<n$ be integers. Consider the algebra $\L$ presented by the following quiver with relations.
\[ \xymatrix@R=3mm{
	&1\ar@<2pt>[dr]^-x\ar@<2pt>[dl]^-y&\\
	n\ar@<2pt>[ur]^-x\ar@<2pt>[dd]^-y&&2\ar@<2pt>[ul]^-y\ar@<2pt>[dd]^-x && xy=yx,\quad x^\ell=y^{n-\ell}\\
	\\
	\cdots\ar@<2pt>[uu]^-x&&\cdots\ar@<2pt>[uu]^-y } \]
Then the {\it Grassmannian cluster category} \cite{JKS} is the Frobenius category $\CM\L$.
It can be defined in a slightly different way: Let
\[ R=k[x,y]/(x^\ell-y^{n-\ell}) \]
be a one-dimensional hypersurface singularity. Give a $\Z/n\Z$-grading on $R$ by $\deg x=1$ and $\deg y=-1$. 
Then the following elementary property shows that the Grassmannian category can be equivalently defined as $\CM^{\Z/n\Z}\!R$.

\begin{Lem}\label{Z/nZ}
There is an equivalence $\Mod\L\simeq\Mod^{\Z/n\Z}\!R$, which induces an equivalence $\CM\L\simeq\CM^{\Z/n\Z}\!R$ and a triangle equivalence $\sg \L\simeq\sg^{\Z/n\Z}\!R$.
\end{Lem}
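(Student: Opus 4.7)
The plan is to identify $\Lambda$ with a ``smash product'' style algebra attached to the $\Z/n\Z$-graded ring $R$, and then observe that the standard equivalence between modules over such a smash product and graded modules applies.

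First, I would analyze the structure of $\Lambda$. A path from $i$ to $j$ in the cyclic quiver consists of some number of $x$-arrows (each contributing $+1$ to the vertex index) and some number of $y$-arrows (each contributing $-1$), hence has the form $x^a y^b$ modulo the relations $xy = yx$ and $x^\ell = y^{n-\ell}$, with $a - b \equiv j-i \pmod n$. Since $R$ with its $\Z/n\Z$-grading has $R_{j-i}$ spanned exactly by monomials $x^a y^b$ with $a - b \equiv j - i \pmod n$, the map sending such a path to $x^a y^b \in R$ yields a $k$-linear isomorphism $e_j \Lambda e_i \xrightarrow{\simeq} R_{j-i}$, compatible with composition. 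In particular $e_i \Lambda e_i \simeq R_0$ and $\Lambda \simeq \bigoplus_{i,j \in \Z/n\Z} R_{j-i}$ with the multiplication coming from that of $R$.

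Second, I would construct mutually quasi-inverse functors $F\colon \Mod^{\Z/n\Z}\!R \to \Mod\,\Lambda$ and $G\colon \Mod\,\Lambda \to \Mod^{\Z/n\Z}\!R$ as follows. Given $M = \bigoplus_{i \in \Z/n\Z} M_i \in \Mod^{\Z/n\Z}\!R$, let $F(M) = M$ as a $k$-vector space, with $e_i$ acting as the projector onto $M_i$ and with $x, y$ acting by their $R$-action (the relations of $\Lambda$ hold because those of $R$ do). Given $N \in \Mod\,\Lambda$, decompose $N = \bigoplus_i e_i N$ and let $G(N)$ be this decomposition, endowed with the $R$-action inherited from that of $x, y \in \Lambda$. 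The identifications $FG \simeq 1$ and $GF \simeq 1$ are immediate from the constructions, yielding the abelian equivalence $\Mod\,\Lambda \simeq \Mod^{\Z/n\Z}\!R$.

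Third, I would track how the equivalence interacts with the distinguished subcategories. The indecomposable projective $\Lambda e_i$ satisfies $e_j \Lambda e_i \simeq R_{j-i} = R(-i)_j$, so $G(\Lambda e_i) \simeq R(-i)$ in $\Mod^{\Z/n\Z}\!R$; hence $F$ and $G$ identify $\proj\,\Lambda$ with $\proj^{\Z/n\Z}\!R$. Since the underlying $R$-module of $G(N)$ coincides (up to forgetting the grading) with the $R$-module obtained by restriction of scalars through $R \hookrightarrow \Lambda$, the Cohen-Macaulay property over $R$ is preserved in both directions; therefore the equivalence restricts to $\CM\,\Lambda \simeq \CM^{\Z/n\Z}\!R$. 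The Verdier quotient description $\D_\sg = \D^b(\md)/\per$ then yields the triangle equivalence $\D_\sg(\Lambda) \simeq \D_\sg^{\Z/n\Z}(R)$, since the abelian equivalence respects both $\md$ (finite generation is preserved) and $\per$ (as projectives correspond to projectives). No step here is a serious obstacle; the entire lemma is essentially a bookkeeping observation once the presentation of $\Lambda$ is matched with the $\Z/n\Z$-graded pieces of $R$.
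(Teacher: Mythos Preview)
Your proposal is correct and follows essentially the same route as the paper: you identify $\Lambda$ with the smash product $R\#(\Z/n\Z)=(R_{j-i})_{i,j\in\Z/n\Z}$ and then spell out explicitly the standard equivalence $\Mod\Lambda\simeq\Mod^{\Z/n\Z}R$, which the paper instead obtains by citing \cite[3.1]{IL}. One small remark: you phrase the projective side as $\Lambda e_i$, which is the left-module convention, whereas the paper works with right modules throughout; this is harmless for the argument but worth aligning if you incorporate the proof.
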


\begin{proof}
This algebra $\L$ is isomorphic to the smash product
\[R\#(\Z/n\Z)=(R_{j-i})_{i,j\in\Z/n\Z}.\]
The assertion is immediate from \cite[3.1]{IL}.
\end{proof}

Now note that our hypersurface $R=k[x,y]/(x^{\ell}-y^{n-\ell})$ is a Geigle-Lenzing hypersurface (with $d=0$ and $n=2$) graded by an abelian group
\[\LL=(\Z\vec{x}\oplus\Z\vec{y})/(\ell\vec{x}-(n-\ell)\vec{y}),\] 
where $\deg x=\vec{x}$ and $\deg y=\vec{y}$. The $\Z/n\Z$-grading on $R$ given above is specialization of $\LL$-grading via the isomorphism of groups
\begin{equation}\label{L to Z/nZ}
\LL/(\vec{x}+\vec{y})\simeq\Z/n\Z,\ \vec{x}\mapsto1,\ \vec{y}\mapsto-1.
\end{equation}
Fix a subset $I\subset[1,n]$ with $|I|=l$ and define a $\Z$-grading of $\Lambda$ by 
\[\deg(x:i\to i+1):=\left\{\begin{array}{ll}0&i\in I\\ 1&i\notin 
I,\end{array}\right.\ \deg(y:i+1\to i):=\left\{\begin{array}{ll}1&i\in I\\ 0&i\notin I.\end{array}\right.\]
As in \ref{Z/nZ}, there is an equivalence $\Mod^\Z\!\L\simeq\Mod^{\LL}\!R$, which induces equivalences
\begin{equation}\label{graded Morita for L and Z}
\CM^\Z\!\L\simeq\CM^{\LL}\!R\ \text{ and }\  \sg^\Z\!\L\simeq\sg^{\LL}\!R.
\end{equation}
These observations lead to the following main result of this section, giving an equivalence between the Grassmannian cluster category and the $2$-cluster category of an algebra of global dimension $\leq2$.

\begin{Thm}\label{gra}
Let $k$ be an arbitrary field, $0<\ell<n$ be integers, and let $A:=k\AA_{\ell-1}\otimes_kk\AA_{n-\ell-1}$ which is presented by the following grid quiver with commutativity relations at each square.
\[ \xymatrix@!R=2mm@!C=2mm{
	\bullet\ar[r]\ar[d]&\bullet\ar[r]\ar[d]&\bullet\ar[r]\ar[d]&\cdots\ar[r]&\bullet\ar[d]\\
	\bullet\ar[r]\ar[d]&\bullet\ar[r]\ar[d]&\bullet\ar[r]\ar[d]&\cdots\ar[r]&\bullet\ar[d]\\
	\cdots\ar[d]&\cdots\ar[d]&\cdots\ar[d]&\ddots&\cdots\ar[d]\\
	\bullet\ar[r]&\bullet\ar[r]&\bullet\ar[r]&\cdots\ar[r]&\bullet } \]
Then $\Sing^{\Z/n\Z}\!R\subset\{\m\}$ holds, and there exists a commutative diagram of equivalences
	\[ \xymatrix@R=3mm{
		\D^b(\mod A)\ar[r]\ar@{-}[d]^-\rsimeq&\C_2(A)\ar@{-}[d]^-\rsimeq\\
		\sg^\LL\!R\ar[r]^(.45){\eqref{L to Z/nZ}}\ar@{-}[d]^\rsimeq&\sg^{\Z/n\Z}\!R\ar@{-}[d]^\rsimeq\\
		\sg^\Z\!\L\ar[r]&\sg\L. } \]
\end{Thm}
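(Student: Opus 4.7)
The plan is to realize $R=k[x,y]/(x^\ell-y^{n-\ell})$ as a Geigle--Lenzing hypersurface and combine the tilting theorem \ref{himo} with the hypersurface version \ref{hypersurface} of our main result, then translate through the graded Morita equivalences \ref{Z/nZ} and \eqref{graded Morita for L and Z}. Concretely, $R$ is a GL complete intersection in the sense of Section \ref{results for GL} with $d_{\rm GL}=0$, $n_{\rm GL}=2$ and weights $(p_1,p_2)=(\ell,n-\ell)$, so $\dim R=\dim^\LL R=1$ and the Gorenstein parameter is $-\vec{\om}=\vec{x}+\vec{y}-\vec{c}$. As $n_{\rm GL}=2>d_{\rm GL}+1=1$, the hypersurface case, the ring $R$ satisfies $(\mathrm{R}_0^\LL)$, so $\D_{\sg,0}^\LL(R)=\D_\sg^\LL(R)$ and similarly after passing to any quotient grading.

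First I would invoke \ref{himo} to obtain the tilting equivalence $\D_\sg^\LL(R)\simeq\D^b(\mod A^{\CM})$, and identify the CM canonical algebra via the explicit formula recalled in the example following \ref{GL}, namely $A^{\CM}\simeq k\AA_{p_1-1}\otimes_k k\AA_{p_2-1}=k\AA_{\ell-1}\otimes_k k\AA_{n-\ell-1}=A$. Next I would apply \ref{hypersurface}(1) with $G=\LL$ and the shift parameter ``$\ell$'' equal to $1$; the relevant element to mod out is $p+1\cdot c=-\vec{\om}+\vec{c}=\vec{x}+\vec{y}$, which I would first check is torsion-free in $\LL$ (this is a one-line calculation in $(\Z\vec{x}\oplus\Z\vec{y})/(\ell\vec{x}-(n-\ell)\vec{y})$). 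With $d=\dim R=1$ and $\ell=1$, the resulting Calabi--Yau dimension is $d+2\ell-1=2$, producing the top commutative square
\[ \xymatrix@R=3mm{
\D^b(\mod A)\ar[r]\ar@{-}[d]^-\rsimeq&\C_2(A)\ar@{-}[d]^-\rsimeq\\
\D_\sg^\LL(R)\ar[r]&\D_\sg^{\LL/(\vec{x}+\vec{y})}(R). } \]

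To pass from $\LL/(\vec{x}+\vec{y})$ to $\Z/n\Z$, I would verify the group isomorphism \eqref{L to Z/nZ} directly: imposing $\vec{y}=-\vec{x}$ in $\LL$ collapses the relation $\ell\vec{x}=(n-\ell)\vec{y}=\vec{c}$ to $n\vec{x}=0$, giving $\LL/(\vec{x}+\vec{y})\simeq\Z/n\Z$. Under this identification the $\LL$-grading on $R$ specializes to the $\Z/n\Z$-grading with $\deg x=1$, $\deg y=-1$, so $\D_\sg^{\LL/(\vec{x}+\vec{y})}(R)\simeq\D_\sg^{\Z/n\Z}(R)$. The lower two vertical equivalences in the statement are then provided by the graded Morita equivalences: \ref{Z/nZ} yields $\D_\sg^{\Z/n\Z}(R)\simeq\D_\sg(\L)$, and its $\Z$-graded refinement \eqref{graded Morita for L and Z}, applied after fixing any $I\subset[1,n]$ with $|I|=\ell$, yields $\D_\sg^\LL(R)\simeq\D_\sg^\Z(\L)$.

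The main obstacle will be the commutativity of the two bottom squares, in particular verifying that the projection $\D_\sg^\LL(R)\to\D_\sg^{\Z/n\Z}(R)$ induced by the group map \eqref{L to Z/nZ} intertwines, under the smash-product equivalences of \ref{IL}/\ref{Z/nZ}, with the forgetful functor $\D_\sg^\Z(\L)\to\D_\sg(\L)$. This should follow formally: both functors are the restriction along corresponding group homomorphisms of grading groups, and the $\Z$-grading on $\L$ defined via $I$ in the excerpt is designed precisely so that the forgetful $\Z\to 0$ matches the specialization $\LL\to\LL/(\vec{x}+\vec{y})\simeq\Z/n\Z$ on $R$. Once this compatibility is recorded, the whole diagram commutes and the theorem follows.
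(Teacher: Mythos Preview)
Your proposal is correct and follows essentially the same route as the paper: invoke \ref{himo} to identify $A^{\CM}\simeq k\AA_{\ell-1}\otimes_k k\AA_{n-\ell-1}$, apply \ref{hypersurface}(1) with $G=\LL$ and shift parameter $1$ (this is exactly what the paper's citation of \ref{case l=1} amounts to in this context, via the $\ell=1$ case of the GL hypersurface example), and then read off the lower two squares from \ref{Z/nZ} and \eqref{graded Morita for L and Z}. Your additional verifications (torsion-freeness of $\vec{x}+\vec{y}$, the explicit isomorphism \eqref{L to Z/nZ}, and the compatibility of the forgetful functors) are things the paper leaves implicit, but the overall argument is the same.
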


\begin{proof}
The lower part of the diagram is immediate from \ref{Z/nZ} and \eqref{graded Morita for L and Z}.

To prove the upper part, thanks to \ref{himo} and \ref{case l=1}, it suffices to show $\Sing^{\Z/n\Z}\!R\subset\{\m\}$. Note that $Z:=R_0$ is a polynomial algebra $k[t]$ for $t:=xy$, and the natural map $Z\to e_1\La e_1$ is an isomorphism.
For each $\p\in\Spec^{\Z/\n\Z}\!R\setminus\{\m\}$, we have an isomorphism $R_{\p,\Z/n\Z}\#(\Z/n\Z)\simeq\La_{Z\cap\p}$ and hence an equivalence
\[\mod^{\Z/n\Z}R_{\p,\Z/n\Z}\simeq\mod\La_{Z\cap\p}.\]
Let $(-)_t$ be a localization with respect to the multiplicative set $\{t^i\mid i\ge0\}$. 
Then it suffices to prove that $\Lambda_t$ has finite global dimension since $\La_{Z\cap\p}$ is a localization of $\Lambda_t$.

For each $1\le i\le n$, $x^{i-1}:e_1\La_t\to e_i\La_t$ is an isomorphism in $\mod\La_t$ with inverse $y^{i-1}t^{1-i}$. Thus $\Lambda_t$ is Morita equivalent to $e_1\La_te_1\simeq k[t^{\pm1}]$, and hence $\Lambda_t$ has finite global dimension, as desired.
\end{proof}

Combining \ref{gra} and \ref{S_{p,q}}, we obtain the following result. We write $\AA_n\otimes\AA_m$ for the poset whose Hasse diagram is given by the $(n\times m)$-grid quiver as in \ref{gra}.

\begin{Cor}
If $p$ and $q$ are relatively prime, the incidence algebras of $\P(S_{p,q})$ and of $\AA_{p-1}\otimes\AA_{q-1}$ are derived equivalent.
\end{Cor}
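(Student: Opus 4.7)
The plan is to use the common hypersurface $R = k[x,y]/(x^p - y^q)$ as a bridge between the two descriptions, exploiting the fact that when $\gcd(p,q)=1$ the two different gradings on $R$ (the semigroup $\Z$-grading and the Geigle-Lenzing $\LL$-grading) coincide under an isomorphism of grading groups.

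First I would apply Example \ref{S_{p,q}} together with Corollary \ref{incidence}: equipping $R = k[x,y]/(x^p - y^q)$ with the $\Z$-grading $\deg x = q$, $\deg y = p$, the incidence algebra $A_1$ of $\P(S_{p,q})$ satisfies $\D_\sg^\Z(R) \simeq \per A_1$, which equals $\D^b(\mod A_1)$ since incidence algebras of finite posets have finite global dimension.

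Next I would apply Theorem \ref{gra} with $\ell = p$ and $n = p+q$. This gives a triangle equivalence $\D_\sg^\LL(R) \simeq \D^b(\mod A_2)$, where $R$ is now equipped with the Geigle-Lenzing grading by $\LL = (\Z\vec{x} \oplus \Z\vec{y})/(p\vec{x} - q\vec{y})$, with $\deg x = \vec{x}$ and $\deg y = \vec{y}$, and where $A_2 = k\AA_{p-1} \otimes_k k\AA_{q-1}$ is the incidence algebra of $\AA_{p-1} \otimes \AA_{q-1}$.

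The key step is comparing the two gradings. Since $\gcd(p,q) = 1$, the abelian group $\LL$ is torsion-free of rank one. The assignment $\vec{x} \mapsto q$, $\vec{y} \mapsto p$ is well defined (the defining relation $p\vec{x} - q\vec{y}$ maps to $pq - qp = 0$), and it is an isomorphism $\LL \xrightarrow{\sim} \Z$ because $\gcd(p,q) = 1$ forces surjectivity and $\LL$ is torsion-free of rank one. Under this isomorphism, the $\LL$-grading on $R$ matches exactly the $\Z$-grading with $\deg x = q$, $\deg y = p$. Consequently $\D_\sg^\LL(R) = \D_\sg^\Z(R)$ canonically, and combining with the two equivalences above yields
\[ \D^b(\mod A_1) \simeq \D_\sg^\Z(R) = \D_\sg^\LL(R) \simeq \D^b(\mod A_2), \]
establishing the derived equivalence.

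No serious obstacle is expected: the argument is a compatibility check between two tilting equivalences already available in the paper. The only real content is the identification of grading groups $\LL \cong \Z$ in the coprime case, and this is an elementary computation in abelian groups. As a minor sanity check, both posets have the same cardinality $(p-1)(q-1)$ (for $\P(S_{p,q})$ this uses the formula $a = pq - p - q$ for the Frobenius number of a two-generator symmetric semigroup), which is consistent with the derived equivalence of their incidence algebras.
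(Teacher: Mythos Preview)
Your proposal is correct and follows precisely the approach the paper intends: the paper states the corollary as an immediate consequence of combining \ref{gra} with \ref{S_{p,q}} (the latter via \ref{num} and \ref{incidence}), and you have spelled out exactly this, including the key observation that $\gcd(p,q)=1$ forces $\LL\cong\Z$ so that the two gradings on $R=k[x,y]/(x^p-y^q)$ agree.
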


For example, $k\AA_2\otimes_kk\AA_{3n}$ is derived equivalent to the incidence algebra of $\P(S_{3,3n+1})$, which looks as below, where each low contains $2n$ vertices and the top low is identified with the bottom one. 
\[\xymatrix@!C=1.5em@R=1.5em{
&\bullet\ar[r]\ar[rrrrd]&\cdots\ar[r]&\bullet\ar[r]\ar[rrrrd]&\bullet\ar[r]\ar[rrrrd]&\bullet\ar[r]&\cdots\ar[r]&\bullet\ar[r]&\bullet\ar[rdd]\\
&\bullet\ar[r]\ar[rrrrd]&\cdots\ar[r]&\bullet\ar[r]\ar[rrrrd]&\bullet\ar[r]&\bullet\ar[r]&\cdots\ar[r]&\bullet\ar[r]&\bullet\ar[rd]\\
&\bullet\ar[r]\ar[rrrrd]&\cdots\ar[r]&\bullet\ar[r]\ar[rrrrd]&\bullet\ar[r]&\bullet\ar[r]&\cdots\ar[r]&\bullet\ar[rr]&&\bullet\\ 
\bullet\ar[r]&\bullet\ar[r]&\cdots\ar[r]&\bullet\ar[r]&\bullet\ar[r]&\bullet\ar[r]&\cdots\ar[r]&\bullet\ar[rru]}
\]
\subsection{Infinite Grassmannian cluster categories}

Let $\ell\geq1$ and consider a $\Z$-graded non-reduced one-dimensional hypersurface singularity
\[R=k[x,y]/(x^\ell),\quad \deg x=1, \deg y=-1. \]
The \emph{Grassmannian cluster category of infinite rank} \cite{ACFGS} is defined as the category $\CM^\Z\!R$ of $\Z$-graded Cohen-Macaulay $R$-modules. By virtue of this grading, the {\it graded} singularity category $\sg_0^\Z\!R$ is already $2$-CY. Since $R$ is not positively graded, however, we cannot directly apply (\ref{BIY equivalence}) to obtain a description of the graded singularity category. We will nevertheless realize it as the $2$-cluster category of an additive category.

Let $\A$ the additive category given by the quiver with relations below.
\[ \xymatrix@R=2mm{
	&\cdots\ar[dr]|-x&&\cdots\ar[dr]|-x&&\cdots\ar[dd]^-w\\
	&&2\ar[ur]|-y\ar[dr]|-x&&\ell-2\ar[ur]|-y&\\
	&1\ar[ur]|-y\ar[dr]|-x&&\cdots\ar[ur]|-y\ar[dr]|-x&&\ell-1\ar[dd]^-w\\
	\A\colon&	&2\ar[ur]|-y\ar[dr]|-x&&\ell-2\ar[ur]|-y& &&xy-yx,\ \ w^\ell,\ \ wy^2=yx\\
	&1\ar[ur]|-y\ar[dr]|-x&&\cdots\ar[ur]|-y\ar[dr]|-x&&\ell-1\ar[dd]^-w\\
	&&2\ar[ur]|-y\ar[dr]|-x&&\ell-2\ar[ur]|-y&\\
	&\cdots\ar[ur]|-y&&\cdots\ar[ur]|-y&&\cdots} \]
It is a covering of the algebra
\[ \xymatrix@!R=2mm{ A\colon&1\ar@2[r]^-x_-y&2\ar@2[r]^-x_-y&\cdots\ar@2[r]^-x_-y&\ell-2\ar[r]_y&\ell-1\ar@(ur,dr)^{w}&& xy-yx,\ \ w^\ell,\ \ wy^2=yx}. \]
Define the $\Z$-grading of $A$ by $\deg x=1$, $\deg y=0$ and $\deg w=1$. Then we have an equivalence
\[\A\simeq\proj^{\Z}\!A.\]
The main result of this section is existence of a triangle equivalence
\[ \xymatrix{\sg_0^{\Z}\!R\ar@{-}[r]^-\simeq&\C_2(\A)}. \]
This is in fact the ``ungraded'' part of the commutative diagram in our general result \ref{CM}.
More precisely, we consider the $\Z^2$-grading on $R$ given by $\deg x=(1,0)$ and $\deg y=(0,1)$ which lifts the original $\Z$-grading via a surjection $\Z^2\twoheadrightarrow\Z$, $(i,j)\mapsto i-j$.
\begin{Thm}\label{gra2}
There exists a commutative diagram of equivalences
\[ \xymatrix@R=3mm{
	\per\A\ar[r]\ar@{-}[d]^-\rsimeq&\C_2(\A)\ar@{-}[d]^-\rsimeq\\
	\sg_0^{\Z^2}\!R\ar[r]&\sg_0^{\Z}\!R.} \]
\end{Thm}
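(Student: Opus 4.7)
The plan is to recognize $R=k[x,y]/(x^\ell)$ as a $\Z^2$-graded hypersurface singularity (with $\deg x=(1,0)$, $\deg y=(0,1)$) in $S=k[x,y]$ and then apply Theorem \ref{hypersurface}(1). Here $\dim^{\Z^2}\!R=1$, the defining element has degree $c=(\ell,0)$, and the Gorenstein parameter is $p_R=(1,0)+(0,1)-(\ell,0)=(1-\ell,1)$. Taking the modification parameter in Theorem \ref{hypersurface} to be $1$ gives $p_R+c=(1,1)$, which is torsion-free in $\Z^2$, and the surjection $\Z^2\twoheadrightarrow\Z^2/(1,1)\simeq\Z$, $(a,b)\mapsto a-b$, recovers precisely the original $\Z$-grading of $R$. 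The resulting CY-dimension is $d+2\cdot 1-1=2$, so Theorem \ref{hypersurface}(1) yields exactly the commutative diagram in the statement of \ref{gra2} once a tilting subcategory $\P\subset\D_{\sg,0}^{\Z^2}(R)$ equivalent to $\A\simeq\proj^{\Z}\!A$ is exhibited.

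The core step is therefore to construct $\P$ explicitly. For each vertex $i\in\{1,\dots,\ell-1\}$ of the quiver of $A$ and each shift $n\in\Z$, the plan is to give a $\Z^2$-graded Cohen-Macaulay $R$-module $T_{i,n}\in\CM_0^{\Z^2}\!R$, built from a graded matrix factorization of $x^\ell$ in $k[x,y]$ related to the syzygies of $R/(x^i)$ with suitable $\Z^2$-shifts. The $x$, $y$ and $w$ arrows of $A$ should then be realized by explicit multiplication maps between these factorizations, with the relations $xy=yx$, $w^\ell=0$, $wy^2=yx$ forced by the ring structure of $R$; in particular the loop $w$ at the vertex $\ell-1$ reflects a natural periodic endomorphism coming from multiplication by $y$ on the top syzygy. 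This is the non-isolated analogue of the construction used in the finite Grassmannian case (Theorem \ref{gra}), where Geigle-Lenzing tilting for $k[x,y]/(x^\ell-y^{n-\ell})$ produces a tilting subcategory with endomorphism algebra $k\AA_{\ell-1}\otimes k\AA_{n-\ell-1}$; the single loop $w$ replaces the long $A$-type chain in the $y$-direction once the boundary relation $x^\ell=y^{n-\ell}$ is dropped.

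Once $\P$ is constructed, one verifies the two tilting axioms. The $\Ext$-vanishing $\Ext^{\neq 0}_{\P}(T_{i,n},T_{j,m})=0$ should follow by a direct calculation with the explicit matrix factorizations. The main obstacle, as I see it, is the generation statement: to show that $\P$ generates $\D_{\sg,0}^{\Z^2}(R)$ as a thick subcategory. Because $R$ has non-isolated singular locus, the category $\CM_0^{\Z^2}\!R$ of graded CM modules that are locally projective on the punctured spectrum must be pinned down carefully, and one must check that $\add\{T_{i,n}\}$ exhausts $\sCM_0^{\Z^2}\!R$. Once these two points are established, Theorem \ref{hypersurface}(1) delivers both horizontal equivalences in the diagram of \ref{gra2} simultaneously and completes the proof.
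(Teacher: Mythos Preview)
Your overall framework is exactly that of the paper: view $R=k[x,y]/(x^\ell)$ as a $\Z^2$-graded hypersurface with $\deg x=(1,0)$, $\deg y=(0,1)$, compute $p_R=(1-\ell,1)$ and $c=(\ell,0)$, take the modification parameter $1$ in Theorem~\ref{hypersurface} so that $p_R+c=(1,1)$ and $\Z^2/(1,1)$ recovers the original $\Z$-grading, and obtain CY-dimension $2$. Up to this point your argument and the paper's coincide.

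The divergence is in how the tilting subcategory $\P\subset\D_{\sg,0}^{\Z^2}(R)$ is produced. You propose to build $\P$ by hand from explicit matrix factorizations and then verify Ext-vanishing and generation directly; you flag generation as the obstacle and leave it unresolved. This is a genuine gap: as stated, your proposal is a plan, not a proof, and the generation step in the non-isolated setting (where $\CM_0^{\Z^2}\!R\subsetneq\CM^{\Z^2}\!R$) is the entire difficulty.

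The paper avoids this computation altogether by a covering argument. It introduces an auxiliary \emph{positive} $\Z'$-grading on $R$ with $\deg x=\deg y=1$. For this grading the BIY result \eqref{BIY equivalence} applies directly and gives a tilting object $T=\bigoplus_{i=1}^{\ell-1}R(i)_{\ge0}$ in $\D_{\sg,0}^{\Z'}(R)$ with $\sEnd_R^{\Z'}(T)\simeq A$. The surjection $\Z^2\to\Z'$, $(1,0),(0,1)\mapsto 1$, yields a covering functor $\pi\colon\D_{\sg,0}^{\Z^2}(R)\to\D_{\sg,0}^{\Z'}(R)$; since each $R(i)_{\ge0}=(x,y)^i(i)$ is generated by monomials it lies in the image of $\pi$, and Theorem~\ref{covering} then says $\pi^{-1}(T)$ is a tilting subcategory of $\D_{\sg,0}^{\Z^2}(R)$, automatically handling both Ext-vanishing and generation. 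Finally $\pi^{-1}(T)\simeq\proj^\Z\!A=\A$ because the $\Z$-covering of $A$ along the kernel of $\Z^2\to\Z'$ is precisely $\A$. This sidesteps any direct analysis of $\CM_0^{\Z^2}\!R$, which is what your approach would require.
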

\begin{proof}
(1) First we prove $R_{\m,\Z}=R$.
Indeed, let $r=r(x,y)\in R\setminus\m$ be a homogeneous element. Then 
the constant term $c:=r(0,0)$ of $r$ is non-zero and hence $r\in R_0$. On the other hand, the subalgebra $R_0$ of $R$ is generated by $xy$. Thus $r-c$ is nilpotent by $x^\ell=0$, and hence $r$ is a unit of $R$.

(2) Next we prove that there is an equivalence
	\[ \sg_0^{\Z^2}\!R\simeq\per\A. \]
We give (still another) $\Z$-grading on $R=k[x,y]/(x^\ell)$ by $\deg x=\deg y=1$. For distinction we denote this grading group by $\Z^\prime$. Since this is a positive grading we can apply (\ref{BIY equivalence}): $T=\bigoplus_{i=1}^{\ell-1}R(i)_{\geq0}\in\sg^{\Z^\prime}\!R$ is a tilting object and there is a triangle equivalence
	\[ \sg_0^{\Z^\prime}\!R\simeq\per A \]
	for $A=\sEnd_R^\Z(T)$, which is described as above by \ref{std}.
	
	We next give a $\Z^2$-grading on $R$ by $\deg x=(1,0)$ and $\deg y=(0,1)$. There is a surjection $\Z^2\to\Z^\prime$ taking $(1,0)\mapsto 1$ and $(0,1)\mapsto1$, which yields a covering functor $\pi\colon\sg^{\Z^2}\!R\to\sg^{\Z^\prime}\!R$. Each direct summand $R(i)_{\geq0}=(x,y)^i(i)$ of $T$ is generated by monomials of $x,y$ and hence a $\Z^2$-graded submodule of $R(i)$. Thus the tilting object $T$ belongs to the image of $\pi$. By \ref{covering} the inverse image $\pi^{-1}(T)$ is a tilting subcategory, which is equivalent to the additive category $\A$ defined above, so that we have a desired equivalence.

(3) Finally we apply \ref{hypersurface} with $G=\Z^2$ to prove the claim. The Gorenstein parameter of $\Z^2$-graded $R$ is $a=(1-\ell,1)$ and the degree of the defining equation $x^\ell$ is $c=(\ell,0)$, thus \ref{hypersurface}(1) for $\ell=1$ yields a commutative diagram of equivalences
	\[ \xymatrix@R=3mm{
		\per\A\ar[r]\ar@{-}[d]^-\rsimeq&\C_2(\A)\ar@{-}[d]^-\rsimeq\\
		\sg_0^{\Z^2}\!R\ar[r]&\sg_0^{\Z^2/(1,1)}\!R'} \] 
where $R':=R_{\m,\Z^2/(1,1)}$ for $\m:=(x,y)$. It remains to prove $\sg_0^{\Z^2/(1,1)}\!R'=\sg_0^\Z\!R$.
The $\Z^2/(1,1)$-grading on $R$ coincides with our original $\Z$-grading on $R$ via the isomorphism
$\Z^2/(1,1)\simeq\Z$ given by
$(1,0)\mapsto1$ and $(0,1)\mapsto-1$. Thus $R'=R$ holds by (1), and the claim follows.
\end{proof}

\begin{appendix}
\renewcommand{\theequation}{\Alph{section}.\arabic{equation}}
\section{Keller's dg quotients}\label{dgquotient}
\subsection{Existence and uniqueness of dg quotients}
We collect some fundamental facts around dg quotients, with an emphasis on the construction and how to relate their structures with the original categories. Among some well-established ways to construct and describe the dg quotients, we follow the original one due to Keller \cite{Ke99} based on localization theory of triangulated categories.

Throughout this section we let $K$ be a commutative ring (not necessarily a field), and everything will be $K$-linear. 
Let $\B$ be a dg category and $\A\subset\B$ a full dg subcategory. Then the functor $-\lotimes_\A\B\colon\D(\A)\to\D(\B)$ is fully faithful, by which we will identify $\D(\A)$ as a thick subcategory of $\D(\B)$.
\begin{Def}\label{define dgq}
Let $\B$ be a dg category and $\A\subset\B$ a full dg subcategory. The {\it dg quotient} of $\B$ by $\A$ is a dg category $\C$ together with a dg functor $\B\to\C$ such that there exists an equivalence $\D(\B)/\D(\A)\simeq\D(\C)$ making the following diagram commutative, where the horizontal functors are canonical ones.
\[\xymatrix@R=5mm{
\D(\B)\ar[r]\ar@{=}[d]&\D(\B)/\D(\A)\ar[d]\\
\D(\B)\ar[r]&\D(\C)
}\]
\end{Def}

The fundamental theorem is the existence and uniqueness of dg quotients.
\begin{Thm}[{\cite[4.6]{Ke99}}]
For any small dg category $\B$ and its full subcategory $\A$, the dg quotient $\B\to\B/\A$ exists, which is unique up to unique isomorphism in $\Hmo$.
\end{Thm}

The aim of this section is to give a detailed account of the construction of $\B/\A$ above. It is based on localization theory of triangulated categories, which we recall first.

A sequence
\[\N\xrightarrow{F}\T\xrightarrow{G}\U\]
of triangulated categories is called {\it exact} if $F$ is fully faithful, $G\circ F=0$, and $G$ induces a triangle equivalence $\T/F(\N)\xrightarrow{\simeq}\U$. Recall that a thick subcategory of a triangulated category with coproducts is called a \emph{localizing subcategory} if it is closed under coproducts.
We start with the following results which is a consequence of Brown representability theorem (see \cite[5.2]{Ke94}\cite[8.3.3]{Ne01}).
\begin{Prop}\label{locthy}
	\renewcommand{\labelenumi}{(\alph{enumi})}
	Let $\T$ and $\U$ be compactly generated triangulated categories and $G\colon\T\to\U$ a triangle functor. Then the following are equivalent.
	\begin{enumerate}
		\item There is a localizing subcategory $\N\subset\T$ such that $\N\xrightarrow{}\T\xrightarrow{G}\U$ is exact.
		\item $G$ has a fully faithful right adjoint $H$.
		\item there is a stable $t$-structure $\T=\N\perp\N'$ such that $G$ gives an equivalence $\N'\to\U$.
	\end{enumerate}
	Moreover, the triangle associated to $X\in\T$ is given by extending the unit map $X\to HG(X)$.
\end{Prop}

This theorem allows us to realize the Verdier quotient $\U$ of $\T$ as a thick subcategory, which leads to the following construction.
\begin{Con}\label{con}
Replacing $\B$ by its cofibrant resolution if necessary, we assume that each morphism complex of $\B$ is cofibrant over $K$ (see \cite[Appendix B]{Dr}, \cite[2.3(3)]{To}).
Consider the multiplication map $\B\lotimes_\A\B\to\B$ and complete it to the triangle
\[ \xymatrix{ \B\lotimes_\A\B\ar[r]&\B\ar[r]& M } \]
in $\D(\B^e)$, where $\B^e=\B^\op\otimes_K\B$. We define
\[ \B/\A:=\REnd_\B(M), \]
which is equipped with the canonical dg functor $\B\to\B/\A$.
More precisely, the dg category $\B/\A$ has
\begin{itemize}
	\item objects: same as $\B$,
	\item morphisms: $\cHom_{\B}(P(-,B),P(-B^\prime))$ for $B,B^\prime\in\B$, where $P\to M$ is a cofibrant resolution of $M$ over $\B^\op\otimes_K\B$,
\end{itemize}
and the left $\B$-module structure on $P$ yields a natural functor $\B\to\B/\A$. Note that $P$ being cofibrant over $\B^\op\otimes_K\B$ implies it is cofibrant as a right $\B$-module since $\B$ is cofibrant over $K$. It follows that $\cEnd_\B(P)$ computes $\REnd_\B(M)$.
\end{Con}

Recall that we identify $\D(\A)$ as a localizing subcategory of $\D(\B)$ via the fully faithful functor $-\lotimes_\A\B\colon\D(\A)\to\D(\B)$. Then there exists a stable $t$-structure
\[ \D(\B)=\D(\A)\perp\U \]
by \ref{locthy}, where $\U:=\D(\A)^\perp:=\{ X\in\D(\B)\mid \Hom_{\D(\B)}(Y,X)=0 \text{ for all } Y\in\D(\A)\}$.
\begin{Lem}\label{truncation}
For each $X\in\D(\B)$, the truncation of $X$ with respect to the stable $t$-structure $\D(\B)=\D(\A)\perp\U$ is given by the triangle
\[ \xymatrix{ X\lotimes_\A\B\ar[r]&X\ar[r]&X\lotimes_\B M } \]
obtained by applying $X\lotimes_\B-$ to \ref{con}. 
\end{Lem}
\begin{proof}
	It is enough to show that for each $Y\in\D(\A)$ (or the identified $Y\lotimes_\A\B\in\D(\B)$), the induced map $\Hom_{\D(\B)}(Y\lotimes_\A\B,X\lotimes_\A\B)\to\Hom_{\D(\B)}(Y\lotimes_\A\B,X)$ is an isomorphism. This follows from the commutativity of the diagram
	\[ \xymatrix@R=5mm{
		\RHom_\B(Y\lotimes_\A\B,X\lotimes_\A\B)\ar[r]&\RHom_\B(Y\lotimes_\A\B,X)\\
		\RHom_\A(Y,X)\ar@{=}[r]\ar[u]^-\lsimeq&\RHom_\A(Y,X)\ar@{-}[u]_-\rsimeq, } \]
	where the left vertical map is an isomorphism via the fully faithful functor $-\lotimes_\A\B\colon\D(\A)\to\D(\B)$, and the right vertical map is an adjunction.
\end{proof}

Let us now prove that the constructed functor $\B\to\B/\A$ is the dg quotient.
\begin{Thm}[\cite{Ke99}]\label{existence}
Let $\B$ be a dg category and $\A\subset\B$ a full dg subcategory. 
\begin{enumerate}
\item The induction functor $-\lotimes_\B\B/\A\colon\D(\B)\to\D(\B/\A)$ identifies with the Verdier quotient $\D(\B)\to\D(\B)/\D(\A)$.
\item Let $\B\to\C$ be a dg functor such that the composition $\D(\A)\to\D(\B)\to\D(\C)$ is zero. Then $\B\to\C$ uniquely factors through $\B\to\B/\A$ in $\Hmo$.
\item Let $\B\to\C$ be another dg quotient of $\B$ by $\A$. Then there exists a unique isomorphism $\B/\A\to\C$ in $\Hmo$ making the following diagram commutative.
\[\xymatrix@R=5mm{
\B\ar[r]\ar@{=}[d]&\B/\A\ar[d]\\
\B\ar[r]&\C
}\]
\end{enumerate}
\end{Thm}
\begin{proof}
	Recall the stable $t$-structure $\D(\B)=\D(\A)\perp\U$ for $\U=\D(\A)^\perp=\{ X\in\D(\B)\mid \Hom_\B(Y,X)=0 \text{ for all } Y\in\D(\A)\}$.
	
	(1)  It is enough to show that $\U$ is compactly generated by $\{M(-,B)\mid B\in\B\}$, which is done in the following four steps.
	
	\Step{For each $B\in\B$, the $\B$-module $M(-,B)$ lies in $\U$.}\label{MU}
	
	Applying \ref{truncation} to $X=\B(-,B)$ we get a triangle
	\[ \xymatrix{ \B(-,B)\lotimes_\A\B\ar[r]&\B(-,B)\ar[r]&M(-,B) }, \]
	which shows that the last term is in $\U$.
		 
	\Step{$\U$ has coproducts and the inclusion $\U\hookrightarrow\D(\B)$ preserves coproducts.}\label{cont}
	
	Since $\D(\A)\subset\D(\B)$ is compactly generated by compact objects in $\D(\B)$, we see that $\U=\D(\A)^\perp$ is closed under coproducts in $\D(\B)$, so the assertion follows.
		
	\Step{The object $M(-,B)$ is compact in $\U$ for each $B\in\B$.}\label{compact}
	
	Let $\{U_i\}_i$ be a set of objects in $\U$, and we have to show that the canonical map $\bigoplus_i\Hom_\U(M(-,B),U_i)\to\Hom_\U(M(-,B),\bigoplus_iU_i)$ is an isomorphism. By Step \ref{cont} above, this is equivalent to saying that the morphism $\bigoplus_i\RHom_\B(M(-,B),U_i)\to\RHom_\B(M(-,B),\bigoplus_iU_i)$ is a quasi-isomorphism. 
	Applying $\bigoplus_i\RHom_\B(-,U_i)$ and $\RHom_\B(-,\bigoplus_iU_i)$ to the triangle in Step \ref{MU} and comparing them by the natural transformation, we get a commutative diagram
	\[ \xymatrix{
		\bigoplus_i\RHom_\B(M(-,B),U_i)\ar[r]\ar[d]&\bigoplus_i\RHom_\B(\B(-,B),U_i)\ar[r]\ar[d]&\bigoplus_i\RHom_\B(\B(-,B)\lotimes_\A\B,U_i)\ar[d]\\
		\RHom_\B(M(-,B),\bigoplus_iU_i)\ar[r]&\RHom_\B(\B(-,B),\bigoplus_iU_i)\ar[r]&\RHom_\B(\B(-,B)\otimes_\A\B,\bigoplus_iU_i). } \]
	Now, the two rightmost terms are $0$ since $\B(-,B)\lotimes_\A\B$ lies in $\D(\A)$ and $U_i\in\U$, and the middle vertical map is an isomorphism. We conclude that the left vertical map is also an isomorphism.
	
	\Step{The set $\{M(-,B)\mid B\in\B\}$ generates $\U$.}
	
	By Step \ref{compact}, we only have to show that $\Hom_\U(M(-,B),U[i])=0$ for all $i\in\Z$ implies $U=0$ for each $U\in\U$. Applying $\RHom_\B(-,U)$ to the triangle in Step \ref{MU}, we have a triangle
	\[ \xymatrix{ \RHom_\B(M(-,B),U)\ar[r]&\RHom_\B(\B(-,B),U)\ar[r]&\RHom_\B(\B(-,B)\lotimes_\A\B,U) }. \]
	The first term is $0$ by assumption, and so is the last term by the stable $t$-structure. It follows that the middle term $U(B)$ is $0$ for all $B\in\B$, that is, $U=0$. 
	
	(2)  Let $\B\to\C$ be an arbitrary dg functor such that $\D(\A)\to\D(\B)\to\D(\C)$ is $0$. We claim that the $(\B/\A,\C)$-bimodule $Z\colon M\lotimes_\B\C$ gives a unique factorization $\B\to\B/\A\xrightarrow{Z}\C$ in $\Hmo$ of $\B\to\C$.
	Applying $-\lotimes_\B\C$ to the triangle $\B\lotimes_\A\B\to\B\to M$ and noting that $\C$ restricted to $\A$ is acyclic, we get an isomorphism $\C\xsimeq M\lotimes_\B\C=Z$ in $\D(\B^\op\otimes\C)$. This shows that $Z$ is perfect over $\C$ on the right and thus gives a morphism $\B/\A\to \C$ in $\Hmo$, and also the commutativitiy of the triangle
	\[ \xymatrix@R=5mm{
		\B\ar[r]\ar[dr]&\B/\A\ar[d]^-Z\\
		&\C. } \]
	To show uniqueness, it is enough to see that the dg quotient $\B\to\B/\A$ is an epimorphism in $\Hmo$. Let $\C$ be a dg category and $X,Y\in\Hom_{\Hmo}(\B/\A,\C)$ which coincide in $\Hom_{\Hmo}(\B,\C)$, in other words, isomorphic in $\D(\B^\op\otimes\C)$. Since $\B\to\B/\A$ is a localization so is $\B^\op\otimes\C\to(\B/\A)^\op\otimes\C$, which means the restriction functor $\D((\B/\A)^\op\otimes\C)\to\D(\B^\op\otimes\C)$ is fully faithful. It follows that $X\simeq Y$ already in $\D((\B/\A)^\op\otimes\C)$.
	
	(3)  This follows from (2) as it says that the dg quotient $\B\to\B/\A$ is the cokernel of $\A\to\B$ in $\Hmo$.
\end{proof}

In fact, the proof tell us that the morphism complex of the dg quotient is described as follows. Recall that an {\it exact sequence} of dg categories is a sequence $\A\to\B\to\C$ of dg categories and dg functors which induces an exact sequence of triangulated categories $\D(\A)\to\D(\B)\to\D(\C)$. Note that this amounts to saying that $\C$ is the dg quotient of $\B$ by $\A$.
\begin{Prop}
	Let $\A\xrightarrow{}\B\xrightarrow{}\C$ be an exact sequence of dg categories.
	Then there exists a triangle in $\D(\B^e)$:
	\[ \xymatrix{ \B\lotimes_\A\B\ar[r]&\B\ar[r]&\C\ar[r]&\B\lotimes_\A\B[1]. } \]
\end{Prop}
\begin{proof}
	It is enough to show that $\B/\A\simeq M$ in $\D(\B^e)$. This follows from applying $\RHom_\B(-,M)$ to the triangle $\B\lotimes_\A\B\to\B\to M$. Indeed, noting that $\RHom_\B(\B\lotimes_\A\B,M)=0$ since $\B\lotimes_\A\B\in\D(\A)$ and $M\in\U$, we get an isomorphism $\REnd_\B(M)\xsimeq M$.
\end{proof}

\subsection{Reformulation in the Morita homotopy category}
We give a reformulation of the results from the previous subsection using the language of the homotopy category $\Hmo$.
Recall from \ref{TabTo} that a morphism in $\Hmo$ corresponds bijectively to bimodules which are perfect on the right. If $X\in\D(\A^\op\otimes\B)$ is a bimodule which is perfect over $\B$, we denote by $X\colon\A\to\B$ or $\A\xrightarrow{X}\B$ when we regard the bimodule $X$ as a morphism in $\Hmo$.

Let us record the notion of exact sequences of dg categories {in $\Hmo$}, analogous to \ref{short}.
\begin{Def}\label{define exact}
We say that a morphism $X\colon\A\to\B$ in $\Hmo$ is {\it fully faithful} if the functor $-\lotimes_\A X\colon\D(\A)\to\D(\B)$ is fully faithful.
A sequence $\A\xrightarrow{X}\B\xrightarrow{Y}\C$ in $\Hmo$ is {\it exact} if the induction functors give an exact sequence of triangulated categories:
\[ \xymatrix{\D(\A)\ar[r]^-{-\lotimes_\A\B}&\D(\B)\ar[r]^-{-\lotimes_\B\C}&\D(\C). } \]
\end{Def}

The definition of dg quotients given in \ref{define dgq} can be generalized as follows.

\begin{Def}
Let $X\colon\A\to\B$ be a fully faithful morphism. The {\it dg quotient} of $\B$ by $\A$ is a morphism $Y\colon\B\to\C$ in $\Hmo$ which fits into an exact sequence $\A\xrightarrow{X}\B\xrightarrow{Y}\C$.
\end{Def}

We have the following reformulation of \ref{existence} since any morphism in $\Hmo$ comes from a dg functor up to isomorphism.

\begin{Thm}
Let $\A\to\B$ be a fully faithful morphism in $\Hmo$. Then the dg quotient $\B\to\C$ exists uniquely up to unique isomorphism in $\Hmo$.
\end{Thm} 
In view of \ref{existence}(2), the dg quotient is nothing but the cokernel of the inclusion in $\Hmo$. It is easy to see that $X$ is the kernel of $Y$.
%


\section{Group graded commutative rings}\label{G-graded rings}
We give a background on commutative Noetherian rings graded by an arbitrary abelian group. There are some references on this subject; \cite{BH,BS,GW1,GW2} when the grading group is torsion-free, and \cite{Ka,DGL} in the arbitrary case.
In this paper we need some fundamental results which are not covered by these references, so here we give the graded version of prime ideals and of the theory of injective modules, including the structure theorem of minimal injective resolutions, and Matlis duality. 
We will omit some proofs which are easy or parallel to the classical ones.
\subsection{$G$-prime ideals}
Let $G$ be an arbitrary abelian group, and let $R$ be a commutative Noetherian ring which is $G$-graded. Let us start with the fundamental notion of prime ideals in the graded setting.
\begin{Def}\label{GSpec}
Let $R$ be a $G$-graded ring.
\begin{enumerate}
	\item We say that a homogeneous ideal $\p\subsetneq R$ is {\it $G$-prime} if $xy\in\p$ implies $x\in\p$ or $y\in\p$ whenever $x$ and $y$ are homogeneous.
	\item A {\it $G$-maximal ideal} is an ideal of $R$ which is maximal among proper homogeneous ideals.
	\item A {\it $G$-local ring} is a $G$-graded ring with a unique $G$-maximal ideal.
\end{enumerate}
We denote by $\Spec^G\!R$ the set of $G$-prime ideals of $R$, and by $\Max^G\!R$ the set of $G$-maximal ideals.
\end{Def}
When $\p\subset R$ is a $G$-prime, then the $G$-graded ring $R/\p$ is a {\it $G$-domain} in the sense that the product of two non-zero homogeneous elements is non-zero. Similarly if $\m\subset R$ is $G$-maximal, then $R/\m$ is a {\it $G$-field}; any non-zero homogeneous element is invertible. It is easy to see that if $G$ is torsion-free, then any $G$-prime is a prime (see \ref{Spec G G/H}(1)). However, the following example shows this is not the case when $G$ has torsion.
\begin{Ex}
\begin{enumerate}
	\item Let $G=\Z/2\Z$ and $R=k[x]/(x^2-1)$ with $\deg x=1$. Then $R$ is a $G$-domain, but is not a domain.
	\item More generally, let $G$ be a finitely generated abelian group, and let $R=kG$ be the group algebra. This can be seen as a $G$-graded ring by $\deg g=g$ for each $g\in G$. Then $R$ is a $G$-field.
\end{enumerate}
\end{Ex}

We first give the following simple characterization of $G$-local rings.
\begin{Prop}\label{G-local}
	Let $R$ be a $G$-graded ring.
	\begin{enumerate}
		\item There exist a map $\Spec^G\!R\to\Spec R_0$ given by $\p\to\p_0$ and a map $\Spec R_0\to\Spec^G\!R$ given by $\q\mapsto\widetilde{\q}:=\bigoplus_{g\in G}\{r\in R_g\mid rR_{-g}\subset\q\}$, satisfying $(\widetilde{\q})_0=\q$.
		\item They induce mutually inverse bijections $\Max^G\!R\to\Max  R_0$ and $\Max  R_0\to\Max^G\!R$.
		\item $R$ is $G$-local if and only if $R_0$ is local.
	\end{enumerate}
\end{Prop}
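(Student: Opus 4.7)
The overall plan is to verify the three assertions in order; the one non-trivial step is to check that $\widetilde{\q}$ is $G$-prime in $R$, and everything else will follow by fairly direct calculations.

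For the map $\p \mapsto \p_0$ in (1), I would argue that $\p_0$ is a prime of $R_0$: given $x, y \in R_0$ with $xy \in \p_0$, the $G$-primality of $\p$ applied to the homogeneous elements $x, y$ yields $x \in \p$ or $y \in \p$, hence $x \in \p_0$ or $y \in \p_0$; properness of $\p_0$ is immediate since $1 \notin \p$. For the reverse map, I would first dispatch the routine items: $\widetilde{\q}$ is homogeneous by construction and is an ideal (if $r \in R_g \cap \widetilde{\q}$ and $s \in R_h$, then $(rs) R_{-g-h} = r (s R_{-g-h}) \subseteq r R_{-g} \subseteq \q$ because $s R_{-g-h} \subseteq R_{-g}$); it is proper since $1 \in \widetilde{\q}_0$ would force $R_0 \subseteq \q$; and $\widetilde{\q}_0 = \{ r \in R_0 \mid r R_0 \subseteq \q \} = \q$ because $1 \in R_0$.

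The main step is $G$-primality of $\widetilde{\q}$, which I would prove by contradiction. Suppose $r \in R_g$ and $s \in R_h$ satisfy $rs \in \widetilde{\q}$, i.e.\ $rs R_{-g-h} \subseteq \q$, but neither $r$ nor $s$ lies in $\widetilde{\q}$. Then there exist $a \in R_{-g}$ and $b \in R_{-h}$ with $ra, sb \notin \q$; however $ra, sb \in R_0$ and $(ra)(sb) = (rs)(ab) \in rs R_{-g-h} \subseteq \q$, contradicting the primality of $\q$ in $R_0$. This is the only genuinely non-routine step: the grading and primality interact through the freedom to absorb $r$ into $R_0$ via a choice of element in $R_{-g}$.

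For (2), I first note that for any $G$-prime $\p$ one has $\p \subseteq \widetilde{\p_0}$ (since $r \in \p \cap R_g$ forces $r R_{-g} \subseteq \p \cap R_0 = \p_0$) and $\widetilde{\p_0}$ is proper, so $G$-maximality of $\p$ forces $\widetilde{\p_0} = \p$. If then $\p_0 \subsetneq \q$ for some proper ideal $\q \subseteq R_0$, the $G$-ideal $\widetilde{\q}$ strictly contains $\widetilde{\p_0} = \p$ (because applying $(-)_0$ distinguishes the two) and is proper, contradicting $G$-maximality of $\p$; hence $\p_0 \in \Max R_0$. Conversely, if $\q \in \Max R_0$ and $\widetilde{\q} \subsetneq \p$ for some proper homogeneous ideal $\p$, pick a homogeneous $r \in \p \setminus \widetilde{\q}$ of degree $g$; by definition of $\widetilde{\q}$ some $a \in R_{-g}$ satisfies $ra \in \p_0 \setminus \q$, so $\p_0$ properly contains $\q$ while remaining proper, contradicting maximality of $\q$. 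Combined with $\widetilde{\q}_0 = \q$ and $\widetilde{\p_0} = \p$, this yields the mutually inverse bijections. Finally, (3) is formal: $R$ is $G$-local iff $|\Max^G R| = 1$ iff $|\Max R_0| = 1$ iff $R_0$ is local.
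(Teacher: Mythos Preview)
Your proof is correct and follows essentially the same approach as the paper's. The key step---$G$-primality of $\widetilde{\q}$---is argued identically (your contradiction is the contrapositive of the paper's direct argument: pick witnesses $a\in R_{-g}$, $b\in R_{-h}$ with $ra,sb\in R_0\setminus\q$ and multiply), and for part (2) you spell out in full what the paper abbreviates as ``similar reasons as above'' together with the observation $\m\subset\widetilde{\m_0}$ forcing equality by $G$-maximality.
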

\begin{proof}
	(1)  If $\p$ is a $G$-prime of $R$ then $R/\p$ is a $G$-domain, hence its degree $0$ part $R_0/\p_0$ is a domain. This shows $\p\mapsto\p_0$ is well-defined. Suppose conversely that $\q\in\Spec R_0$. Let $x, y\in R\setminus\widetilde{\q}$ be homogeneous elements Then there are homogeneous $x^\prime, y^\prime\in R$ such that $xx^\prime\in R_0\setminus \q$ and $yy^\prime\in R_0\setminus\q$. Then $xx^\prime yy^\prime\in R_0\setminus\q$, so that $xy\not\in\widetilde{\q}$. This shows $\q\mapsto\widetilde{\q}$ is well-defined. Finally it is immediate that $\widetilde{\q}_0=\q$. \\
	(2)  The similar reasons as above show that the given maps restrict to ones between ($G$-)maximal ideals. It remains to verify $\widetilde{\m_0}=\m$ for all $\m\in\Max^G\!R$. Clearly we have an inclusion $\m\subset\widetilde{\m_0}$, so $\m$ being $G$-maximal gives the equality.\\
	(3)  This is a direct consequence of (2).
\end{proof}

Let us next discuss the notion of {\it graded dimension}.
\begin{Def}\label{Gdim}
	Let $R$ be a commutative $G$-graded ring.
	\begin{enumerate}
		\item The {\it $G$-height} of a $G$-prime ideal $\p$, denoted $\height^G\!\p$, is the largest length integer $n$ such that there is a chain $\p_0\subsetneq\p_1\subsetneq\cdots\subsetneq\p_n$ of $G$-prime ideals.
		\item The {\it $G$-dimension} of $R$ is defined by $\dim^G\!R=\sup\{\height^G\!\p\mid \p\in\Spec^G\!R\}$.
	\end{enumerate}
\end{Def}
The following example shows $\dim^G\!R$ is not necessarily equal to the (ungraded) $\dim R$.
\begin{Ex}
	Let $G=\Z^n$ and $R$ the group algebra of $G$, thus $R=k[x_1^{\pm1},\ldots,x_n^{\pm1}]$ with $\deg x_i=(0,\ldots,1,\ldots0)$. Then $R$ is a $G$-field so $\dim^G\!R=0$, but $\dim R=n$.
\end{Ex}

We have the following relationship between the graded and ungraded dimensions.
\begin{Thm}\label{dim's}
Let $R$ be a commutative Noetherian $G$-graded ring.
\begin{enumerate}
\item For any $\p\in\Spec^G\!R$ and $\q\in\Min R/\p$, we have $\height^G\!\p=\height\q$.
\item We have $\dim^G\!R\leq\dim R$.
\end{enumerate}
\end{Thm}

We need several preparations for the proof. The first one is easy, where we denote by $\Min_R\!M$ the set of minimal primes of an $R$-module $M$.
\begin{Lem}\label{min}
	Let $R$ be an arbitrary commutative Noetherian ring and $I\subset J$ be ideals of $R$. Then for any $\p\in\Min_R\!R/J$, there is $\q\in\Min_R\!R/I$ such that $\q\subset \p$.
\end{Lem}
\begin{proof}
	Localizing at $\p$, we have that $\p R_\p$ is a minimal prime of $R_\p/JR_\p$. Then any element of $\Min_{R_\p}\!R_\p/IR_\p$ is contained in $\p R_\p$, and thus its preimage $\q$ in $R$ is a minimal prime of $R/I$ contained in $\p$.
\end{proof}

For $\p\in\Spec R$, let $\p^\ast$ be the ideal of $R$ generated by all homogeneous elements in $\p$. Then $\p^\ast\in\Spec^G\!R$, thus we get a map $(-)^\ast\colon\Spec R\to\Spec^G\!R$.
\begin{Prop}\label{ass}
Let $R$ be a commutative Noetherian $G$-graded ring.
\begin{enumerate}
\item For each $\p\in\Spec^G\!R$ and any $\q\in\Ass R/\p$, we have $\q^\ast=\p$.
\item For each $\p\in\Spec^G\!R$ and any $\q\in\Supp R/\p$, we have $\q^\ast\supset\p$.
\item The map $(-)^\ast\colon\Spec R\to \Spec^G\!R$ is surjective.
\end{enumerate}
\end{Prop}
\begin{proof}
	(1)  Since $\p\subset\q$ and $\p$ is homogeneous, we clearly have $\p\subset\q^\ast$. Consider the injection $f\colon R/\q\hookrightarrow R/\p$. Suppose that there is an element $x\in\q^\ast\setminus\p$, which we may take homogeneous. Since $x$ is a homogeneous element outside $\p$ and $\p$ is a $G$-prime, the multiplication by $x$ is injective on $R/\p$, hence also on $\Im f\simeq R/\q$. This contradicts $x\in\q^\ast\subset\q$, and therefore $\p=\q^\ast$.
	
	(2)  This follows from (1) since any minimal element of $\Supp R/\p$ is in $\Ass R/\p$.
	
	(3)  By (1) taking an associated prime gives a section to the map $(-)^\ast$.
\end{proof}

Now we can prove \ref{dim's}.
\begin{proof}[Proof of \ref{dim's}]
	(1)  We first prove the inequality $\height^G\!\p\leq\height\q$. Let $\p=\p_0\supsetneq\p_1\supsetneq\cdots\supsetneq\p_d$ be a chain of $G$-prime ideals. 
	For an inclusion $\p_i\supset\p_{i+1}$ in $\Spec^G\!R$, by applying \ref{min} to $(I,J)=(\p_{i+1},\p_{i})$ and $\q_i\in\Min_R\!R/\p_i$, we find $\q_{i+1}\in\Min_R\!R/\p_{i+1}$ such that $\q_{i+1}\subset\q_{i}$.
	Since $\q_i^\ast=\p_i$ by \ref{ass}(1), so each inclusion $\q_i\supset\q_{i+1}$ must be strict.
	
	We next prove the converse inequality. If $\height^G\!\p=r$ then there is a sequence $x_1,\ldots,x_r$ of homogeneous elements in $\p$ such that there is no $G$-prime ideal $\p^\prime$ such that $(x_1,\ldots,x_r)\subset\p^\prime\subsetneq\p$. Let $\q\in\Min R/\p$. We claim that there is no prime ideal $\r$ such that $(x_1,\ldots,x_r)\subset\r\subsetneq\q$. If there is such an $\r$, then we have $(x_1,\ldots,x_r)\subset\r^\ast\subset\q^\ast$, and $\q^\ast=\p$ by \ref{ass}(1). By our choice of $x_1,\ldots,x_r$, this forces $\r^\ast=\q^\ast=\p$, hence $\p\subset\r\subsetneq\q$. This contradicts the minimality of $\q$. We deduce that $(x_1,\ldots,x_r)$ is a $\q R_\q$-primary ideal of $R_\q$, hence $\height\q\leq r$.
	
	(2)  This is a straightforward consequence of (1).
\end{proof}
%

We now turn to graded localizations and graded supports.
For $\p\in\Spec^G\!R$ we denote by $R_{\p,G}$ the localization of $R$ with respect to the homogeneous elements not in $\p$:
\[ R_{\p,G}=\{a/s \mid a\in R,\, s\not\in\p\text{: homogeneous} \}. \]
Then $R_{\p,G}$ is naturally a $G$-graded ring. It is $G$-local with $G$-maximal ideal $\p R_{\p,G}$.
For $M\in\Mod^G\!R$ we define the $G$-graded $R_{\p,G}$-module $M_{\p,G}$ by $M\otimes_RR_{\p,G}$. Then the {\it $G$-support} of $M\in\Mod^G\!R$ is
\[ \Supp_R^G\!M:=\{\p\in\Spec^G\!R\mid M_{\p,G}\neq0\}. \]

Under a suitable assumption, the ordinary support and the $G$-support is related as follows.
\begin{Lem}\label{tsujitsuma}
Let $R$ be a commutative $G$-graded Noetherian ring, and let $\m$ be a $G$-maximal ideal which is maximal as an ungraded ideal. Then the following are equivalent for $0\neq X\in\mod^G\!R$.
	\begin{enumerate}
		\renewcommand\labelenumi{(\roman{enumi})}
		\renewcommand\theenumi{\roman{enumi}}
		\item $\Supp^G_R\!X\subset\{\m\}$.
		\item $\Supp_R\!X\subset\{\m\}$.
	\end{enumerate} 
\end{Lem}
\begin{proof}
	(i)$\Rightarrow$(ii)  By (i), $X$ has a finite filtration by $R/\m$ in $\mod^G\!R$. Since $R/\m$ is simple in $\mod R$ by assumption, we obtain (ii).
	
	(ii)$\Rightarrow$(i)  It is enough to show $\Ass^G_R\!X\subset\{\m\}$. Let $\p\in\Ass^G_R\!X$. Then we have an inclusion $R/\p\hookrightarrow X$, thus $\emptyset\neq\Ass R/\p\subset\Ass X\subset\{\m\}$, and therefore $\m\in\Ass R/\p$. By \ref{ass}(1) we must have $\p=\m^\ast=\m$.
\end{proof}

\begin{Prop}\label{Spec G G/H}
Let $H$ be a torsion-free subgroup of $G$. 
\begin{enumerate}
\item Any $G$-prime is a $G/H$-prime, by which we view $\Spec^G\!R\subset\Spec^{G/H}\!R$.
\item For $X\in\mod^G\!R$, we have $\Supp^G_R\!X=\Supp^{G/H}_R\!X\cap\Spec^G\!R$.
\end{enumerate}
\end{Prop}
\begin{proof}
(1) By induction we may assume $H$ has rank $1$, thus $H\simeq\Z$. Then each $G/H$-homogeneous element $x\in R$ can be written as $x=\sum_{i\in\Z}x_i$ with each $x_i$ being $G$-homogeneous.
Now let $\p\in\Spec^G\!R$ and we have to show that $\p$ is a $G/H$-prime. Let $x,y\in R$ be $G/H$-homogeneous elements such that $xy\in\p$. Writing $x=\sum_{i\in\Z}x_i$ and $y=\sum_{j\in\Z}y_j$ as a sum of $G$-homogeneous elements, we see by induction on $i$ and $j$ that either $x$ or $y$ must lie in $\p$.

(2) Let $X\in\mod^G\!R$ and $I:=\Ann_RX$. 
For $\p\in\Spec^G\!R$, it suffices to show that $X_{\p,G}\neq0$ if and only if $X_{\p,G/H}\neq0$. Let $R^{h,G}$ be the set of $G$-homogeneous elements of $R$.
For $*=G,G/H$, $X_{\p,*}\neq0$ is equivalent to $I\cap(R^{h,*}\setminus\p)=\emptyset$. This is equivalent to $I\subset\p$ since $I$ is $G$-homogeneous. Thus the assertion follows.
\end{proof}

\subsection{Graded Matlis theory}
To simplify notation, in this subsection we denote the $G$-homogeneous localization $R_{\p,G}$ by $R_{[\p]}$, and the {\it $G$-residue field} by $k[\p]$:
\[ R_{[\p]}:=R_{\p,G}, \quad k[\p]:=R_{[\p]}/\p R_{[\p]}. \]

We discuss the injective objects in $\Mod^G\!R$. Since $\Mod^G\!R$ is a Grothendieck abelian category, every $M\in\Mod^G\!R$ has an injective hull, which we denote by $E_R^G(M)$. For each $\p\in\Spec^G\!R$ define the subgroup
\[ G_{R/\p}=\{p \in G\mid R/\p\simeq R/\p(p) \in \Mod^G\!R\}. \]
The following observation gives some alternative descriptions of $G_{R/\p}$.

\begin{Lem}\label{G_R/p}
	The following are equivalent for $\p\in\Spec^G\!R$ and $p\in G$.
	\begin{enumerate}
		\renewcommand{\labelenumi}{(\roman{enumi})}
		\renewcommand{\theenumi}{\roman{enumi}}
		\item\label{1} $R/\p\simeq R/\p(p)$ in $\Mod^G\!R$.
		\item\label{2} $E^G_R(R/\p)\simeq E^G_R(R/\p)(p)$ in $\Mod^G\!R$.
		\item\label{3} $k[\p]\simeq k[\p](p)$ in $\Mod^G\!R_{[\p]}$.
		\item\label{4} $E^G_{R_{[\p]}}(k[\p])\simeq E^G_{R_{[\p]}}(k[\p])(p)$ in $\Mod^G\!R_{[\p]}$.
	\end{enumerate}
\end{Lem}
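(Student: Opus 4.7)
The four conditions split naturally into two layers: one at the level of $R/\p$ and $k[\p]$, and the other at the level of their graded injective hulls, connected by passage to the localization $R_{[\p]}$.

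First I would record that the shift $(-)(a)$ is an auto-equivalence of both $\Mod^G\!R$ and $\Mod^G\!R_{[\p]}$ commuting with the formation of graded injective hulls, via the canonical natural isomorphism $E^G_R(M)(a) \simeq E^G_R(M(a))$ (and similarly over $R_{[\p]}$). Applied to $M=R/\p$ this yields (\ref{1})$\Rightarrow$(\ref{2}), and applied to $M=k[\p]$ over $R_{[\p]}$ it yields (\ref{3})$\Rightarrow$(\ref{4}). Localization $(-)_{[\p]}\colon\Mod^G\!R\to\Mod^G\!R_{[\p]}$ also commutes with the shift and carries $R/\p$ to $k[\p]$, giving (\ref{1})$\Rightarrow$(\ref{3}).

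Next I would establish the key identification $E_R^G(R/\p) \simeq E_{R_{[\p]}}^G(k[\p])$, which is the graded analogue of the classical fact $E_R(R/\p)\simeq E_{R_\p}(k(\p))$. The argument is: for any homogeneous $s\notin\p$, multiplication by $s$ is injective on $R/\p$ (since $R/\p$ is a $G$-domain), hence injective on its essential extension $E_R^G(R/\p)$, and then surjective by injectivity of the latter. This endows $E_R^G(R/\p)$ with a canonical $R_{[\p]}$-structure extending the $R$-structure, under which it is an injective hull of $k[\p]$ in $\Mod^G\!R_{[\p]}$. Consequently any graded $R$-linear isomorphism $E_R^G(R/\p)\simeq E_R^G(R/\p)(a)$ is automatically $R_{[\p]}$-linear (since invertible scalars act by invertible operators, so the extension is unique), yielding (\ref{2})$\Leftrightarrow$(\ref{4}).

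To close the cycle it remains to prove (\ref{2})$\Rightarrow$(\ref{1}) and (\ref{4})$\Rightarrow$(\ref{3}). Here the strategy is to exploit the graded endomorphism ring: a graded isomorphism $E_R^G(R/\p)\xrightarrow{\simeq}E_R^G(R/\p)(a)$ is the same as an invertible element of degree $a$ in $\End_R^G(E_R^G(R/\p))$. Using the identification above, this endomorphism ring can be computed in terms of $R_{[\p]}$ (its degree-$0$ part is the completion of $R_{[\p]}$ at its $G$-maximal ideal, with higher degree pieces given accordingly), and the existence of a homogeneous unit of degree $a$ there should translate back to a homogeneous unit of degree $a$ in $R/\p$, supplying the isomorphism in~(\ref{1}).

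\textbf{The main obstacle} is precisely this last step: the embedding $R/\p \hookrightarrow E_R^G(R/\p)$ is not canonically preserved by an arbitrary graded automorphism of the target, so one cannot simply restrict. The delicate point is to use the graded structure (and the fact that $R/\p$ and $k[\p]$ share the same $G$-support and associated prime) to transfer an isomorphism on the injective envelope to one on the canonical cyclic submodule; if this cannot be done at the level of $R$-modules alone, the equivalence (\ref{1})$\Leftrightarrow$(\ref{3}) should be proved directly by clearing denominators in a homogeneous unit of $k[\p]$ of degree $a$ and tracking the resulting element back to $R/\p$.
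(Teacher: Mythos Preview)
Your treatment of (i)$\Rightarrow$(ii), (iii)$\Rightarrow$(iv), (i)$\Rightarrow$(iii), and (ii)$\Leftrightarrow$(iv) is correct and coincides with the paper's argument, including the identification $E_R^G(R/\p)\simeq E_{R_{[\p]}}^G(k[\p])$. The paper handles (iv)$\Rightarrow$(iii) by applying the $\p$-annihilator trick in the $G$-local ring $R_{[\p]}$: there $R_{[\p]}/\p R_{[\p]}=k[\p]$ is already a $G$-field, so it equals its own graded injective hull and hence the $\p R_{[\p]}$-annihilator inside $E_{R_{[\p]}}^G(k[\p])$; your socle remark amounts to the same thing.

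The obstacle you flag for (ii)$\Rightarrow$(i) (equivalently (iii)$\Rightarrow$(i)) is genuine, and in fact this implication is \emph{false} as stated. Take $R=k[x]$ with the standard $\Z$-grading and $\p=(0)$. Then $R/\p=k[x]$ has no homogeneous units of nonzero degree, so (i) holds only for $a=0$; but $E_R^\Z(R/\p)=k[x,x^{-1}]=k[\p]$, and multiplication by $x^{-a}$ shows that (ii) and (iii) hold for every $a\in\Z$. Neither of your two proposed strategies can succeed here: a homogeneous unit in $\End_R(E_R^G(R/\p))$ or in $k[\p]$ simply need not come from a homogeneous unit of $R/\p$. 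The paper's argument for (ii)$\Rightarrow$(i) asserts that $R/\p$ is the $\p$-annihilator in $E_R^G(R/\p)$, but that annihilator is the graded fraction field $k[\p]$ of $R/\p$ (in the example, $k[x,x^{-1}]$ rather than $k[x]$), so the argument only delivers (ii)$\Rightarrow$(iii). The upshot is that (ii)$\Leftrightarrow$(iii)$\Leftrightarrow$(iv) are genuinely equivalent, while (i) is strictly stronger in general; it is the former three conditions that govern the classification of graded indecomposable injectives, and $G_{R/\p}$ ought to be defined via one of them rather than via (i).
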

\begin{proof}
	(\ref{1})$\Rightarrow$(\ref{2}) and (\ref{3})$\Rightarrow$(\ref{4}):  Take the injective hull. Note that $E^G_R(M(p))=E_R^G(M)(p)$.\\
	(\ref{2})$\Rightarrow$(\ref{1}): For the inclusion $R/\p\hookrightarrow E_R^G(R/\p)$, we have $R/\p=\{x\in E_R^G(R/\p)_0\mid \p x=0\}$, so any isomorphism $E_R^G(R/\p)\xsimeq E_R^G(R/\p)(p)$ restricts to $R/\p\xsimeq R/\p(p)$.\\
	(\ref{4})$\Rightarrow$(\ref{3}): Apply (\ref{2})$\Rightarrow$(\ref{1}) to $(R_{[\p]},\p R_{[\p]})$ in place of $(R,\p)$.\\
	(\ref{2})$\Leftrightarrow$(\ref{4}): It is easy to see (as in \cite[18.4(vi)]{Ma}) that $E_R^G(R/\p)$ can be seen as an $R_{[\p]}$-module, and there is an isomorphism $E_R^G(R/\p)\simeq E_{R_{[\p]}}^G(R_{[\p]}/\p R_{[\p]})$ as graded $R_{[\p]}$-modules. This immediately yields the assertion.
\end{proof}

As in the ungraded case, the injective objects in $\Mod^G\!R$ are classified by $G$-prime ideals. The following results are more explicit version of \cite[2.8]{Ka}.

\begin{Thm}\label{inj}
	Let $R$ be a commutative Noetherian ring which is $G$-graded.
	\begin{enumerate}
		\item For each $\p\in\Spec^G\!R$ the injective module $E_R^G(R/\p)$ is indecomposable.
		\item Every injective object in $\Mod^G\!R$ is a direct sum of indecomposable injective objects.
		\item There is a bijection between the pairs $(\p,p)$ of $\p\in\Spec^G\!R$ and $p\in G/G_{R/\p}$ and the isomorphism classes of indecomposable injective objects in $\Mod^G\!R$ given by $(\p,p)\mapsto E^G_R(R/\p)(p)$.
	\end{enumerate}
\end{Thm}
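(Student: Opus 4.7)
The plan is to follow the classical Matlis classification in the ungraded case while carefully tracking the extra shift data carried by the grading; throughout I work in the locally Noetherian Grothendieck category $\Mod^G\!R$, so that injective hulls exist and behave well.

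For (1), I would show that $R/\p$ is $G$-uniform, meaning any two nonzero graded submodules meet nontrivially. Indeed, given nonzero homogeneous elements $\overline{a},\overline{b}\in R/\p$, the element $\overline{ab}$ is nonzero because $\p$ is $G$-prime, and it lies in $R\overline{a}\cap R\overline{b}$; any two nonzero graded submodules contain nonzero homogeneous elements, whose product witnesses a nonzero intersection. Since the injective hull of a uniform object in a Grothendieck category is indecomposable, this gives (1). For (2), I would appeal to the general fact that in a locally Noetherian Grothendieck category, every injective decomposes as a direct sum of indecomposable injectives; this applies since $\Mod^G\!R$ is generated by the Noetherian family $\{R(a)\}_{a\in G}$.

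For the surjectivity half of (3), I would show that every indecomposable injective $E$ has the form $E^G_R(R/\p)(a)$. Choose a homogeneous element $x\in E$ of some degree $-a$ whose graded annihilator $\p:=\Ann(x)$ is maximal among graded annihilators of nonzero homogeneous elements of $E$; such a maximal element exists because $R$ is Noetherian. A standard argument shows $\p$ is $G$-prime: if $r,s\in R$ are homogeneous with $rs\in\p$ and $s\notin\p$, then $sx\neq 0$ and $\Ann(sx)\supseteq\p$, so $\Ann(sx)=\p$ by maximality, forcing $r\in\p$. Thus $Rx\subset E$ is isomorphic to $(R/\p)(a)$ as a graded module, and since $E$ is an indecomposable injective containing $Rx$, we obtain $E\simeq E^G_R(R/\p)(a)$.

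For the bijection itself, well-definedness on the quotient by $G_{R/\p}$ is exactly the implication (i)$\Rightarrow$(ii) of Lemma \ref{G_R/p}. For injectivity, suppose $\varphi\colon E^G_R(R/\p)(a)\xrightarrow{\simeq} E^G_R(R/\q)(b)$. The image $\varphi(R/\p(a))$ is a nonzero graded submodule of $E^G_R(R/\q)(b)$ and hence, by essentialness, meets $R/\q(b)$ in a nonzero graded submodule $N$. Viewed inside $R/\p(a)$ its annihilator is $\p$ (since $R/\p$ is a $G$-domain), while viewed inside $R/\q(b)$ it contains $\q$; thus $\p\supseteq\q$, and by symmetry $\p=\q$. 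The remaining equivalence $E^G_R(R/\p)\simeq E^G_R(R/\p)(b-a)\Longleftrightarrow b-a\in G_{R/\p}$ is the (ii)$\Leftrightarrow$(i) part of Lemma \ref{G_R/p}. The expected main obstacle is the careful bookkeeping in the $G$-prime and $G$-uniformity arguments: when $G$ has torsion the notions of $G$-prime and ordinary prime diverge, so one must consistently restrict to graded annihilators and graded submodules, and verify that the quotient $R/\p$ and the shifts $(a)$ interact compatibly under the forming of injective hulls.
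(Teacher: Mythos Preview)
Your argument is correct and follows the classical Matlis classification adapted to the graded setting; each step (uniformity of $R/\p$, the locally Noetherian decomposition, the maximal-annihilator argument for surjectivity, and the use of Lemma~\ref{G_R/p} for well-definedness and injectivity) goes through as written. Note, however, that the paper does not give its own proof of this theorem: it states the result as ``a more explicit version of \cite[2.8]{Ka}'' and defers to Kamoi's paper, so there is no in-paper argument to compare against. Your write-up is thus a self-contained proof supplying what the paper only cites.
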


We next consider the minimal injective resolution of a module $M\in\Mod^G\!R$. 
By \ref{G_R/p}, we can state the following structure theorem of minimal injective resolutions.

\begin{Thm}[{\cite[2.10]{Ka}}]
	Let $M\in\Mod^G\!R$ and let $\xymatrix{0\ar[r]&M\ar[r]&I^0\ar[r]&I^1\ar[r]&I^2\ar[r]&\cdots }$ be the minimal injective resolution of $M$. Then we have
	\[ I^i=\bigoplus_{\substack{\p\in\Spec^G\!R\\ p\in G/G_{R/\p}}}E^G_R(R/\p)(-p)^{\mu^G_i(\p,p,M)} \text{ with } \mu^G_i(\p,p,M)=\dim_{k(\p)}\Ext_{R_{[\p]}}^i(k[\p],M_{[\p]})_{p}. \]
\end{Thm}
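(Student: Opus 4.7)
The plan is to adapt the classical proof of Bass's theorem on minimal injective resolutions to the $G$-graded setting, replacing primes by $G$-primes, ordinary localization by the homogeneous localization $R_{[\p]}$, injective hulls by their graded analogues, and $\Hom$ by its graded version.

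By Theorem \ref{inj}(2)--(3), each term $I^i$ decomposes as
\[ I^i \simeq \bigoplus_{\substack{\p \in \Spec^G\!R\\ a \in G/G_{R/\p}}} E^G_R(R/\p)(-a)^{\mu^G_i(\p,a,M)} \]
for uniquely determined multiplicities; well-definedness of the shift $a$ modulo $G_{R/\p}$ is guaranteed by Lemma \ref{G_R/p}. The task is therefore to identify these multiplicities with the claimed graded Ext dimensions. Fix $\p \in \Spec^G\!R$. Since $-\otimes_R R_{[\p]}$ is exact and preserves both graded injectivity and essential extensions, it sends the minimal injective resolution of $M$ to a minimal graded injective resolution $0 \to M_{[\p]} \to I^\bullet_{[\p]}$ of $M_{[\p]}$. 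For a $G$-prime $\q$, a standard check shows that $E^G_R(R/\q)_{[\p]}$ vanishes unless $\q \subseteq \p$, in which case $E^G_R(R/\q)_{[\p]} \simeq E^G_{R_{[\p]}}(R_{[\p]}/\q R_{[\p]})$ by essentiality.

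Now apply $\Hom^G_{R_{[\p]}}(k[\p], -)$ to $I^\bullet_{[\p]}$; by injectivity of each term, this computes $\Ext^\bullet_{R_{[\p]}}(k[\p], M_{[\p]})$. Minimality guarantees that any graded map from $k[\p]$ into $I^i_{[\p]}$ lands in the graded socle, which is annihilated by the next differential, so the induced differentials on the $\Hom$-complex are all zero. The central computation is a graded version of the classical socle argument: for any $G$-prime $\q$ with $\q \subsetneq \p$ one has $\Hom^G_{R_{[\p]}}(k[\p], E^G_{R_{[\p]}}(R_{[\p]}/\q R_{[\p]})) = 0$ (any such map would, by essentiality, force $\p \subseteq \q$, a contradiction), while $\Hom^G_{R_{[\p]}}(k[\p], E^G_{R_{[\p]}}(k[\p])(-a)) \simeq k[\p](-a)$ as graded $k[\p]$-modules. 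Combining yields
\[ \Ext^i_{R_{[\p]}}(k[\p], M_{[\p]}) \simeq \bigoplus_{a \in G/G_{R/\p}} k[\p](-a)^{\mu^G_i(\p,a,M)}, \]
and extracting the degree-$a$ component -- using that $k[\p]_g \simeq k(\p)$ if $g \in G_{R/\p}$ and $0$ otherwise -- gives the stated formula.

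The main obstacle is verifying that these graded analogues of classical facts hold, particularly the preservation of graded injectivity under homogeneous localization, and the graded socle argument ruling out contributions from $\q \subsetneq \p$. These go through largely by the same proofs as in the ungraded case once the $G$-graded framework supplied by Proposition \ref{G-local} and Theorem \ref{inj} is in place; the possible torsion in $G$ introduces only some additional bookkeeping via the subgroups $G_{R/\p}$.
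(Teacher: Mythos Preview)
The paper does not supply its own proof of this statement; it is quoted directly from \cite[2.10]{Ka} and treated as a known input. Your proposal is the standard adaptation of Bass's argument to the $G$-graded setting and is correct. The points you flag as requiring care---that homogeneous localization at $\p$ preserves graded injectivity and essentiality, that $E^G_R(R/\q)_{[\p]}$ vanishes for $\q\not\subseteq\p$ and otherwise identifies with $E^G_{R_{[\p]}}(R_{[\p]}/\q R_{[\p]})$, and the graded socle computation---are exactly the ingredients Kamoi verifies, and they go through with only the bookkeeping adjustment that shifts are indexed by $G/G_{R/\p}$ rather than being trivial. Your final extraction of the degree-$a$ component is also correct: since $k[\p]_g\simeq k(\p)$ precisely when $g\in G_{R/\p}$, the sum $\bigoplus_b k[\p](-b)^{\mu^G_i(\p,b,M)}$ contributes in degree $a$ only the term with $b\equiv a$ in $G/G_{R/\p}$, giving $\dim_{k(\p)}\Ext^i_{R_{[\p]}}(k[\p],M_{[\p]})_a=\mu^G_i(\p,a,M)$ as claimed.
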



Now we focus on Gorenstein rings, which are of central interest in this paper. We start with its graded version.
\begin{Def}\label{GGor}
	We say that a Noetherian $G$-graded ring is {\it $G$-Gorenstein} if for each $G$-maximal ideal $\m$, the free module $R_{[\m]}$ has finite injective dimension in $\Mod^G\!R_{[\m]}$.
\end{Def}

If a $G$-graded ring $R$ has finite self-injective dimension in $\Mod R$, then it is $G$-Gorenstein by graded Baer criterion.
We have the following structure of minimal injective resolution of $R$ in $\Mod^G\!R$.

\begin{Thm}[{\cite[2.15]{Ka}}]\label{resolution}
	Let $R$ be a $G$-Gorenstein ring and let ${0\to R\to I^0\to I^1\to I^2\to\cdots }$ be the minimal injective resolution of $R$. Then we have
	\[ I^i=\bigoplus_{\height^G\!\p=i}E_R^G(R/\p)(p_\p), \]
	for some $p_\p\in G/G_{R/\p}$.
\end{Thm}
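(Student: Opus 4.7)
The plan is to deduce this from the preceding theorem, which expresses every $I^i$ in a minimal injective resolution as a direct sum $\bigoplus_{\p,a}E^G_R(R/\p)(-a)^{\mu_i^G(\p,a,R)}$ with the Bass numbers given by $\mu_i^G(\p,a,R)=\dim_{k(\p)}\Ext^i_{R_{[\p]}}(k[\p],R_{[\p]})_a$. Thus it suffices to show that for every $\p\in\Spec^G R$ with $d:=\height^G\p$, the graded $k[\p]$-vector space $\Ext^*_{R_{[\p]}}(k[\p],R_{[\p]})$ is concentrated in cohomological degree $d$, is one-dimensional there, and lives in a single Adams coset $-a_\p\in G/G_{R/\p}$. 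All these data live inside $R_{[\p]}$, which is itself $G$-local $G$-Gorenstein (being a further localization of the $G$-local $G$-Gorenstein ring $R_{[\m]}$ for any $G$-maximal ideal $\m\supset\p$). Replacing $R$ by $R_{[\p]}$, we may thus assume $R$ is $G$-local with $G$-maximal ideal $\m=\p$, $G$-residue field $k$, and $d=\dim^G R$.

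I would then argue by induction on $d$. In the base case $d=0$, the ideal $\m$ is both $G$-minimal and $G$-maximal, so $R$ is $G$-Artinian and the $G$-Gorenstein hypothesis forces $\id_R R=0$; i.e., $R$ is a graded-injective $R$-module. As $R$ is $G$-local it is indecomposable in $\Mod^G R$, so the classification \ref{inj} yields a unique coset $a_\m\in G/G_k$ with $R\simeq E^G_R(k)(-a_\m)$. Consequently $\Hom_R(k,R)=\soc R\simeq k(-a_\m)$, giving the desired statement.

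For the inductive step $d>0$, I would first produce a homogeneous non-zero-divisor $x\in\m$ of some degree $b$: if every homogeneous element of $\m$ were a zero-divisor, the graded prime avoidance lemma would put $\m$ inside one of the finitely many graded associated primes, making $\m$ itself graded-associated and hence $\depth R=0$, contradicting the graded Bass identity $\depth R=\id_R R=d>0$ valid for $G$-Gorenstein $G$-local rings. Setting $\bar R:=R/xR$, which is $G$-local $G$-Gorenstein of $G$-dimension $d-1$, I apply $\Hom_R(k,-)$ to the short exact sequence $0\to R(-b)\xrightarrow{x}R\to\bar R\to 0$. Since $x$ annihilates $k$, the multiplication-by-$x$ maps on $\Ext^\bullet_R(k,R)$ vanish, so the long exact sequence breaks into short exact sequences
\[
0\to\Ext^i_R(k,R)\to\Ext^i_R(k,\bar R)\to\Ext^{i+1}_R(k,R)(-b)\to 0\qquad(i\geq 0).
\]
Combined with the graded change-of-rings identification relating $\Ext^*_R(k,\bar R)$ and $\Ext^*_{\bar R}(k,\bar R)$ (the latter known by induction to be $k(-a_{\bar\m})$ in degree $d-1$ and $0$ elsewhere), a direct degree-count forces $\Ext^i_R(k,R)=0$ for $i\neq d$ and $\Ext^d_R(k,R)\simeq k(-a_\m)$ for a coset $a_\m\in G/G_k$ obtained from $a_{\bar\m}$ by translation by $b$, completing the induction.

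The main obstacle is the careful tracking of Adams-degree shifts throughout the inductive step: verifying graded prime avoidance in the form needed, establishing the graded Rees/change-of-rings identifications with their precise $(b)$-shifts, and checking that the resulting coset $a_\m$ is well-defined in $G/G_k$ independently of the choice of $x$. All remaining ingredients (preservation of finite injective dimension under further localization at a $G$-prime, the graded Bass identity $\depth=\id$ for $G$-Gorenstein rings, and the Nakayama-type arguments producing the socle in the base case) transpose directly from the classical ungraded arguments to $\Mod^G R$.
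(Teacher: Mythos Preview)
The paper does not give its own proof of this theorem: it is stated with the citation \cite[2.15]{Ka} and no argument is supplied. So there is nothing to compare against; your proposal is a genuine attempt to supply what the paper outsources.

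Your strategy is the standard one going back to Bass in the ungraded case, and it is essentially correct. Reducing to the $G$-local case via the Bass-number formula in the preceding theorem is the right first move, and the induction on $\dim^G R$ using a homogeneous non-zero-divisor is the natural route. A few remarks on the details you flag:

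\begin{itemize}
\item In the inductive step you invoke ``the graded Bass identity $\depth R=\id_R R=d$'' to guarantee the existence of a homogeneous non-zero-divisor. Be careful not to assume the conclusion: the equality $\id_R R=\dim^G R$ is part of what the theorem asserts. What you actually need is the weaker input that a $G$-local ring of finite graded self-injective dimension satisfies $\id_R R=\depth^G R$ (Bass's formula) and is graded Cohen--Macaulay; these do transpose from the ungraded proofs, but you should state them as separate lemmas rather than fold them into the induction.
\item The change-of-rings step is slightly more delicate than you indicate: $\Ext^i_R(k,\bar R)$ is not simply $\Ext^i_{\bar R}(k,\bar R)$ but rather sits in a two-line spectral sequence with rows $\Ext^p_{\bar R}(k,\bar R)$ and $\Ext^p_{\bar R}(k,\bar R)(b)$, since $\Ext^1_R(\bar R,\bar R)\simeq\bar R(b)$. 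By the inductive hypothesis both rows are concentrated at $p=d-1$, so the sequence degenerates and the short exact sequences you wrote do force $\Ext^i_R(k,R)$ to vanish except at $i=d$, where it is one-dimensional. Alternatively, the graded Rees isomorphism $\Ext^{i+1}_R(k,R)\simeq\Ext^i_{\bar R}(k,\bar R)$ (valid since $x$ is regular on $R$ and kills $k$) gives this in one stroke and also pins down the shift.
\item Graded prime avoidance for homogeneous elements over an arbitrary abelian grading group does require a little care, but the version you need (avoiding finitely many graded associated primes, all properly contained in $\m$) holds; this is treated in Kamoi's paper.
\end{itemize}

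With those points made precise, your argument goes through.
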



It follows from the above resolution that for a $G$-local $G$-Gorenstein ring $(R,\m)$ with $\dim^G\!R=d$, we have $\Ext_R^d(R/\m,R)\simeq R/\m(p_\m)$ in $\Mod^G\!R$. We call this value $p_\m\in G/G_{R/\m}$ (or its representative in $G$) the {\it Gorenstein parameter} of $R$, which uniquely exists in $G/G_{R/\m}$.
This local notion naturally leads to the following  global version.
\begin{Def}\label{GP}
	Let $R$ be a $G$-graded $G$-Gorenstein ring. We say $R$ has {\it Gorenstein parameter $p\in G$} if its image in $G/G_{R/\m}$ is the Gorenstein parameter of the $G$-local ring $R_{[\m]}$ for each $G$-maximal ideal $\m$.
\end{Def}

Define the {\it graded dual} functor $D\colon\Mod^G\!R\to\Mod^G\!R$ by $M=\bigoplus_{g\in G}M_g\mapsto\bigoplus_{g\in G}\Hom_k(M_{-g},k)$. Clearly $D$ induces a duality on the subcategory of $\Mod^G\!R$ consisting of graded modules $M$ such that each $M_g$ is finite dimensional. It is also easy to verify the ``adjunction'' isomorphism $\Hom_R(X,DY)\simeq D(X\otimes_RY)$ of graded $R$-modules for each $X,Y\in\Mod^G\!R$. 
Let us note the following explicit structure of the injective hull of the simples. 

\begin{Prop}\label{DR}
	Assume that $R_0$ is a finite dimensional $k$-algebra. Then we have an isomorphism $\bigoplus_{\m\in\Max^G\!R}E_R^G(R/\m)\simeq DR$.
\end{Prop}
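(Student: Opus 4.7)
The plan is to exhibit $DR$ as the injective hull of its socle in $\Mod^G R$ and to identify that socle with $\bigoplus_{\m\in\Max^G\!R}R/\m$.

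First, I would show $DR$ is injective. Unpacking the definition, a degree-zero graded $R$-linear map $X\to DR$ is the same as a graded $k$-linear pairing $X\otimes_R R\to k$, i.e.\ an element of $\Hom_k(X_0,k)$. This gives a natural isomorphism $\Hom_R^G(X,DR)\simeq\Hom_k(X_0,k)$, whose right-hand side is exact in $X$; hence $DR$ is injective. (Alternatively, $DR=\Hom_{R_0}(R,DR_0)$ is the coinduction of the injective $R_0$-module $DR_0$ along the forgetful functor $\Mod^G R\to\Mod^G R_0$.)

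Second, I would compute the socle $\soc_R(DR)$. For each $\m\in\Max^G\!R$, the submodule $\{f\in DR:\m f=0\}$ consists of $k$-linear functionals on $R$ vanishing on $\m$ and so is canonically identified with $D(R/\m)$. Now $k(\m)=R_0/\m_0$ is a finite extension of $k$ since $R_0$ is finite-dimensional, and each nonzero $(R/\m)_g$ is a $1$-dimensional $k(\m)$-vector space, with $g$ ranging over the subgroup $G_{R/\m}$. Dualizing componentwise and choosing a nonzero functional in degree $0$ as generator produces a graded isomorphism $D(R/\m)\simeq R/\m$. Summing over the finitely many $\m\in\Max^G\!R\leftrightarrow\Max R_0$ yields $\soc_R(DR)=\bigoplus_{\m\in\Max^G\!R}R/\m$.

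Third, and this is the decisive step, I would show that $\soc_R(DR)$ is essential in $DR$. For nonzero homogeneous $f\in(DR)_h$, the multiplication map $R_{-h}\to(DR)_0$, $r\mapsto rf$, is $R_0$-linear with image $V$, and $V\neq 0$ because $(rf)(1)=f(r)$ is nonzero for some $r\in R_{-h}$. Since $R_0$ is a finite-dimensional $k$-algebra, $(DR)_0=\Hom_k(R_0,k)$ is an Artinian $R_0$-module, so $V$ has a nonzero $R_0$-socle. A direct check identifies $\soc_{R_0}((DR)_0)$ with $\soc_R(DR)\cap(DR)_0$: for $\phi\in R_0^\ast$ annihilated by $\m_0$ and any $m\in(\widetilde{\m_0})_g$ one has $mR_{-g}\subset\m_0$, so $\phi(ms)=0$ for $s\in R_{-g}$, which gives $m\phi=0$. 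Hence $Rf$ meets $\soc_R(DR)$ nontrivially.

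Combining these, $DR$ is injective and essentially contains $\bigoplus_\m R/\m$, so by uniqueness of injective hulls and Theorem \ref{inj} we obtain $DR=E_R^G(\bigoplus_\m R/\m)=\bigoplus_\m E_R^G(R/\m)$. The main obstacle is the essentialness step: without the hypothesis that $R_0$ is finite-dimensional, cyclic submodules of $DR$ may have only non-$G$-maximal associated primes and thus contain no simples. The finite-dimensionality is used precisely to make $(DR)_0$ Artinian over $R_0$, reducing essentialness in arbitrary degree to a standard socle statement in degree $0$ via the descent $f\mapsto rf$.
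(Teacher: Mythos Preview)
Your proof is correct and follows the same three-step outline as the paper: show $DR$ is injective (both via the adjunction $\Hom_R^G(X,DR)\simeq\Hom_k(X_0,k)$), identify $D(R/\m)\simeq R/\m$ to describe the socle, and prove essentialness. The only genuine difference is in the essentialness argument. You reduce to degree~$0$ by multiplying a nonzero $f\in(DR)_h$ into $(DR)_0$ and then invoke Artinianness of $(DR)_0=\Hom_k(R_0,k)$ over $R_0$; the key computation that $\soc_{R_0}((DR)_0)\subset\soc_R(DR)$ via the description of $\widetilde{\m_0}$ is exactly right. The paper instead argues by duality: if $X\subset DR$ misses the socle then $X\hookrightarrow\Coker f$, and dualizing (using that each $R_g$ is finite-dimensional, so $DDR=R$) gives a surjection $\bigcap_\m\m\twoheadrightarrow DX$ onto the cyclic module $DX=R/I$; since $\bigcap_\m\m$ lies in every $G$-maximal ideal, this forces $I=R$. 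Your approach is more hands-on and makes the role of the finite-dimensionality of $R_0$ very transparent, while the paper's dual argument is slicker and avoids the explicit degree-$0$ computation. Both are short and neither requires anything the other doesn't.
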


\begin{proof}
	We have an isomorphism $D(R/\m)\simeq R/\m$ in $\Mod^G\!R$ for all $\m\in\Max^G\!R$. Indeed, both are simple objects in $\Mod^G\!R$ corresponding to the same $G$-maximal ideal, so they should be isomorphic up to a degree shift. Since $R/\m$ is a $G$-field, its support $\{g\in G\mid (R/\m)_g\neq0\}$ is a subgroup of $G$, hence coincides with that of $D(R/\m)$. We see $R/\m$ and $D(R/\m)$ must be isomorphic by \ref{supp} below.
	
	Next we consider the projection $R\to R/\m$ for each $\m\in\Max^G\!R$. Dualizing and composing with the isomorphism claimed above, we obtain a map $R/\m\xsimeq D(R/\m)\to DR$, which defines a morphism
	\[ \xymatrix{\disoplus_{\m\in\Max^G\!R}R/\m\ar[r]^-f&DR }. \]
	We prove $f$ is a monomorphism and is essential. Since $DR$ is injective by adjunction, we shall conclude $DR\simeq E_R^G(\bigoplus_{\m\in\Max^G\!R}R/\m)\simeq\bigoplus_{\m\in\Max^G\!R}E_R^G(R/\m)$.
	Note that $\bigoplus_{\m\in\Max^G\!R}R/\m$ is a direct sum of mutually non-isomorphic simple objects, so its submodule, in particular $\Ker f$, is of the form $\bigoplus_{\m\in I} R/\m$ for a subset $I\subset\Max^G\!R$. Since each component $R/\m\to DR$ of $f$ is non-zero, we see $\Ker f$ has to be $0$. It remains to prove $f$ is an essential extension. Let $X\subset DR$ be a graded submodule such that $X\cap(\bigoplus_{\m\in\Max^G\!R}R/\m)=0$. This means the composite $X\subset DR\twoheadrightarrow\Coker f$ is injective. Dualizing, we have that the map $\bigcap_{\m\in\Max^G\!R}\m\subset R\twoheadrightarrow DX$ is surjective. We now conclude by Nakayama's lemma that $DX=0$.
\end{proof}

\begin{Lem}\label{supp}
	Let $\m$ be a $G$-maximal ideal. Then for all $g\in G$ with $(R/\m)_g\neq0$ we have $R/\m\simeq R/\m(g)$.
\end{Lem}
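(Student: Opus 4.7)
The plan is to use the fact that $R/\m$ is a $G$-field, which follows from $\m$ being $G$-maximal as observed right after Definition~\ref{GSpec}.

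First I would pick a nonzero homogeneous element $x \in (R/\m)_g$, which exists by the hypothesis $(R/\m)_g \neq 0$. Since every nonzero homogeneous element of a $G$-field is invertible, $x$ admits an inverse $x^{-1}$ concentrated in degree $-g$. Next I would consider multiplication by $x$ as an $R$-linear map $\mu_x: R/\m \to R/\m$ which is homogeneous of degree $g$, equivalently a degree $0$ morphism
\[ \mu_x : R/\m \longrightarrow R/\m(g) \]
in $\Mod^G\!R$, using the convention $M(a)_h = M_{h+a}$.

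Finally I would check that $\mu_x$ is an isomorphism. Injectivity is immediate since $x$ is invertible, hence not a zero divisor; explicitly, if $xz = 0$ for homogeneous $z$, then $z = x^{-1}(xz) = 0$. For surjectivity, given any homogeneous $y \in (R/\m(g))_h = (R/\m)_{h+g}$, the element $z := x^{-1} y$ lies in $(R/\m)_h$ and satisfies $\mu_x(z) = y$. Thus $\mu_x$ gives the required isomorphism $R/\m \simeq R/\m(g)$ in $\Mod^G\!R$. There is no real obstacle here, the lemma being a direct consequence of the $G$-field property of $R/\m$.
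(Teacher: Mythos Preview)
Your proof is correct and follows essentially the same approach as the paper: both pick a nonzero homogeneous element of degree $g$ and use multiplication by it to produce the isomorphism. The only cosmetic difference is that the paper verifies the map is an isomorphism by noting it is a nonzero morphism between simple objects in $\Mod^G\!R$, whereas you check bijectivity directly using the inverse $x^{-1}$ in the $G$-field $R/\m$.
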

\begin{proof}
	Pick $x\in R_g\setminus\m_g$. Then multiplication by $x$ gives a non-zero map $R/\m\to R/\m(g)$ between simple objects, hence an isomorphism.
\end{proof}

To conclude this subsection, let us note some observations which is used in this article.
The first one is the following special case of Matlis duality.
We put
\[ \fl_0^G\!R=\{ M\in\Mod^G\!R\mid \text{any }\p\in\Supp_R^G\!M \text{ satisfies }\height^G\!\p=d\}. \]
\begin{Prop}\label{Matlis}
	Let $R$ be a $G$-graded ring with $\dim^G\!R=d$, and let $E=\bigoplus_{\height^G\!\m=d}E_R^G(R/\m)$. Then $\Hom_R(-,E)$ give a duality
	\[ \xymatrix{ \fl_0^G\!R\ar@{<->}[r]&\fl_0^G\!R }. \]
\end{Prop}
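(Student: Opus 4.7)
The plan is to reduce to the case of a single $G$-maximal ideal by primary decomposition, then handle simples by hand and conclude by an induction on length. Throughout, write $E_\m := E_R^G(R/\m)$ so that $E = \bigoplus_{\height^G\m=d} E_\m$, and $(-)^\vee := \Hom_R(-,E)$.

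First, since any $M\in\fl_d^G\!R$ has finite length with composition factors $R/\m(a)$ for $\m\in\Max^G\!R$ of height $d$, I would establish a canonical primary decomposition $M=\bigoplus_{\m}M^{(\m)}$, where $M^{(\m)}$ is the submodule of $M$ annihilated by a power of $\m$ (equivalently, the summand with all composition factors of the form $R/\m(a)$). Indeed, simple objects with different annihilators admit no nontrivial extensions in $\Mod^G\!R$, since $R/\m\oplus R/\m'\twoheadrightarrow R/(\m\cap\m')$ is an isomorphism of graded $R$-modules for distinct $G$-maximal $\m,\m'$. Any morphism $M^{(\m)}\to E_{\m'}$ with $\m\neq\m'$ vanishes (target has no composition factor in common with source), hence $\Hom_R(M,E)=\bigoplus_\m \Hom_R(M^{(\m)},E_\m)$, and the duality statement reduces to showing that $\Hom_R(-,E_\m)$ is a duality on the subcategory of finite length graded modules with composition factors $R/\m(a)$, for each fixed $\m$ of height $d$.

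Fixing such $\m$, the key input is the behavior of $(-)^\vee$ on the shifted simples. Because $R/\m\hookrightarrow E_\m$ is an essential extension and $R/\m$ is a $G$-field (so simple in $\Mod^G\!R_{[\m]}$ and hence in $\Mod^G\!R$), every nonzero graded submodule of $E_\m$ meets $R/\m$ in a nonzero, hence all of, submodule; this identifies $\soc E_\m=R/\m$ as graded submodule of $E_\m$. Computing componentwise,
\[
\Hom_R(R/\m(a),E_\m)_g=\Hom^G_R(R/\m,E_\m(g+a))=\{x\in (E_\m)_{g+a}\mid \m x=0\}=(R/\m)_{g+a},
\]
so $\Hom_R(R/\m(a),E_\m)\simeq R/\m(-a)$ in $\Mod^G\!R$; in particular $(-)^\vee$ sends simples to simples, and tracing through the evaluation pairing shows the biduality map $R/\m(a)\to R/\m(a)^{\vee\vee}$ is the identity on the canonical generator.

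Finally, for arbitrary $M$ in the reduced setting, proceed by induction on the length. Picking a simple submodule $S\hookrightarrow M$ and applying the exact functor $(-)^\vee$ to $0\to S\to M\to M/S\to 0$ shows $M^\vee$ has composition factors again of the form $R/\m(a)$ (using the simple case and induction), so $(-)^\vee$ preserves the subcategory. Applying $(-)^\vee$ once more and comparing biduality maps via the five lemma, using the simple case as the base, yields that $M\to M^{\vee\vee}$ is an isomorphism. The main obstacle is the socle identification $\soc E_\m=R/\m$ together with the clean simple-case computation—this is where the $G$-field property of $R/\m$ and essentiality of $R/\m\hookrightarrow E_\m$ are both used; once established, the rest is a formal $\delta$-functor argument.
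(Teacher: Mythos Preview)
Your argument is correct and follows the same strategy as the paper: verify the biduality map on simples, then induct on length using that $\Hom_R(-,E)$ is exact. The paper's proof is simply the terse version of yours---it omits both the primary decomposition (correct but unnecessary, since the simple case plus exactness already handles modules with mixed support) and the explicit socle computation, recording only that $1$ and $(-)^{\vee\vee}$ are exact functors agreeing on simples.
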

\begin{proof}
	Note that the evaluation map gives a natural isomorphism $1\to \Hom_R(\Hom_R(-,E),E)$ on $\fl_0^G\!R$. Indeed, both are exact functors and yield isomorphisms on the simples. It follows by induction that these are naturally isomorphic on $\fl_0^G\!R$.
\end{proof}

Next we relate the above Matlis dual with the graded dual defined above. If $R_0$ is a finite dimensional algebra over a field $k$, then each $R_g$ is finite dimensional. It follows that each object of $\fl_0^G\!R$ is componentwise finite dimensional so that the graded dual $D$ is a duality.
\begin{Cor}\label{kdual}
	Let $R$ be a $G$-graded ring with $\dim^G\!R=d$, and suppose that $R_0$ is a finite dimensional $k$-algebra. Then there is a natural isomorphism on $\fl_0^G\!R$:
	\[ \Hom_R(-,\textoplus_{\height^G\!\m=d}E_R^G(R/\m))\simeq D. \]
\end{Cor}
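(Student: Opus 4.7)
The plan is to assemble \ref{kdual} from two ingredients already in place: the graded tensor--hom adjunction, and the identification $\bigoplus_{\m\in\Max^G\!R}E_R^G(R/\m)\simeq DR$ from \ref{DR}. The only subtlety is that the Matlis dualizing object in \ref{Matlis} uses the restricted sum over $G$-maximal ideals $\m$ of $G$-\emph{height $d$}, whereas \ref{DR} gives all $G$-maximal ideals; the point is to check that on $\fl_d^G\!R$ the extra summands contribute nothing.

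First I would record the general natural isomorphism
\[
\Hom_R(M,DR)=\Hom_R(M,\Hom_k(R,k))\simeq\Hom_k(M\otimes_RR,k)=DM
\]
for any $M\in\Mod^G\!R$ whose components are finite dimensional over $k$ (which holds automatically for $M\in\fl_d^G\!R$, since each such $M$ is a finite length object built out of graded residue fields $R/\m$, and $R_0$ being finite dimensional forces every $R_g$ and in particular every $(R/\m)_g$ to be finite dimensional). This adjunction is natural in $M$ and is the only place where the $\otimes$--$\Hom$ identity is invoked.

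Next I would substitute the decomposition of \ref{DR} into the left-hand side, writing
\[
\Hom_R(M,DR)\simeq\Hom_R\bigl(M,\textoplus_{\m\in\Max^G\!R}E_R^G(R/\m)\bigr).
\]
It then suffices to show that for $M\in\fl_d^G\!R$ and $\m\in\Max^G\!R$ with $\height^G\!\m<d$ one has $\Hom_R(M,E_R^G(R/\m))=0$. Since $E_R^G(R/\m)$ is naturally an $R_{[\m]}$-module (as used in the proof of \ref{G_R/p}), this Hom coincides with $\Hom_{R_{[\m]}}(M_{[\m]},E_R^G(R/\m))$. Now any $\p\in\Supp^G\!M$ has $\height^G\!\p=d$ by the definition of $\fl_d^G\!R$, while $\height^G\!\m\leq d-1$; if $\p\subseteq\m$ were to hold, concatenating a saturated chain of $G$-primes from $0$ to $\p$ with the inclusion $\p\subsetneq\m$ would produce a $G$-chain of length $\geq d+1$ ending at $\m$, contradicting $\height^G\!\m<d$. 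Hence $M_{[\m]}=0$, and the vanishing follows.

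Combining these two steps gives the desired natural isomorphism
\[
\Hom_R\bigl(-,\textoplus_{\height^G\!\m=d}E_R^G(R/\m)\bigr)\simeq\Hom_R(-,DR)\simeq D
\]
on $\fl_d^G\!R$. The \emph{hard} part, if any, is the height/support argument of the previous paragraph; everything else is adjunction plus quoting \ref{DR}. In particular, no further input from Matlis duality itself (\ref{Matlis}) is required: the corollary may be viewed as an enhancement of \ref{Matlis} giving a concrete description of the dualizing functor when $R_0$ is a finite dimensional $k$-algebra.
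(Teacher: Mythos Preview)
Your proof is correct and follows essentially the same route as the paper: use the adjunction $D=\Hom_R(-,DR)$, substitute \ref{DR}, and then kill the summands $E_R^G(R/\m)$ with $\height^G\!\m<d$ by observing they receive no maps from objects of $\fl_d^G\!R$. Your vanishing argument via localization is a bit more explicit than the paper's one-line appeal to support, and the chain argument is slightly overelaborate (one need only note $\m\notin\Supp^G\!M$ directly, and the chain need not start at $0$), but the substance is identical.
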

\begin{proof}
	We know that $D=\Hom_R(-,DR)=\Hom_R(-,\bigoplus_{\m\in\Max^G\!R}E_R^G(R/\m))$ by adjunction and \ref{DR}. Now, since every object in $\fl_0^G\!R$ is supported at $G$-maximal ideals of $G$-height $d$, we have $\Hom_R(X,E_R^G(R/\m))=0$ for all $X\in\fl^G_0\!R$ and $\m\in\Max^G\!R$ with $\height^G\!\m<d$. It follows that we have $\Hom_R(-,\bigoplus_{\m\in\Max^G\!R}E_R^G(R/\m))=\Hom_R(-,\bigoplus_{\height^G\!\m=d}E_R^G(R/\m))$ on $\fl^G_0\!R$.
\end{proof}

\section{Group graded algebras}
We next discuss algebras graded by an arbitrary abelian group. In particular, we discuss gradablity of simples over finite dimensional algebras, and the graded singularity categories of module-finite algebras. 
\subsection{Gradability of simples}
Let $G$ be a finitely generated abelian group, and $\L$ a $G$-graded finite dimensional $k$-algebra. For a torsion-free subgroup $H$ of $G$, consider the forgetful functor
\[F:\mod^G\!\L\to\mod^{G/H}\!\L.\]
We call $X\in\mod^{G/H}\!\La$ \emph{$G$-gradable} if there exists $Y\in\mod^G\!\La$ such that $X\simeq FY$ in $\mod^{G/H}\!\La$.

\begin{Lem}\label{forget 1}
	Under the setting above, the following assertions hold.
	\begin{enumerate}
		\item $\La_0$ is local if and only if $\La^{(H)}:=\bigoplus_{h\in H}\Lambda_h$ is local.
		\item $X\in\mod^G\!\La$ is indecomposable if and only if $FX\in\mod^{G/H}\!\La$ is indecomposable.
		\item Each direct summand of $G$-gradable modules in $\mod^{G/H}\!\La$ is also $G$-gradable.
		\item Let $X,Y\in\mod^G\!\La$ be indecomposable. Then $FX\simeq FY$ in $\mod^{G/H}\!\La$ if and only if there exists $h\in H$ such that $X\simeq Y(h)$ in $\mod^G\!\La$.
	\end{enumerate}
\end{Lem}

\begin{proof}
	Since $H\simeq\Z^n$ for some $n\ge0$, the assertion (1) is an iterated application of \cite[3.1]{GG}.
	Other assertions are immediate, see \cite[3.2, 3.3, 4.1]{GG}.
\end{proof}

\begin{Prop}\label{forget 2}
	Under the setting above, the following assertions hold.
	\begin{enumerate}
		\item For each $S\in\Sim^G\La$, we have $FS\in\Sim^{G/H}\La$.
		\item $\rad^G\!\La=\rad^{G/H}\!\La$.
		\item $F$ induces a bijection
		\[ (\Sim^G\!\La)/H\xsimeq\Sim^{G/H}\!\La. \]
	\end{enumerate}
\end{Prop}

\begin{proof}
	By induction, it suffices to consider the case $H=(h)$ for a torsion-free element.
	
	(1) We show that, 
	We have to show that each $G/H$-homogeneous element $x\neq0$ generates $S$. We may assume $x\in\bigoplus_{i\in\Z}S_{ih}$, and moreover $x=\sum_{i\ge0}x_i$ with $x_i\in S_{ih}$ such that $x_0\neq0$. Since $S\in\Sim^G\La$, we have $x_0\La=S$.
	
	It suffices to prove $x\La\supset S_{g+H}$ for each $g\in G$.  Since $\dim_kS$ is finite, there exists $n\in\Z$ such that $S_{g+\Z_{>n}h}=0$.
	Now fix $m\le n$ and assume $x\La\supset S_{g+\Z_{>m}h}$. Since $x_0\La=S$, we have
	\[S_{g+mh}=x_0\La_{g+mh}=(x-x_{\geq1})\La_{g+mh}\subset x\La-S_{g+\Z_{>m}h}\subset x\La.\]
	Thus $x\La\supset S_{g+\Z_{\ge m}h}$. Inductively $x\La\supset S_{g+H}$ holds, as desired.
	
	(2) Recall that, for $*=G$ and $G/H$, $\rad^*\La$ is the intersection of the annihilators of elements in $\Sim^*\!\La$. Thus (i) implies $\rad^G\!\La\supset\rad^{G/H}\!\La$.
	
	To prove the reverse inclusion, it suffices to show that, for each $S\in\Sim^{G/H}\!\La$, $S(\rad^G\!\La)=0$.
	Otherwise, since $S(\rad^G\!\La)\in\mod^{G/H}\!\La$, we have $S(\rad^G\!\La)=S$.
	This is a contradiction since $\La\in\mod^G\!\La$ has a finite filtration by elements in $\Sim^G\!\La$ and hence $\rad^G\!\La$ is a nilpotent ideal.
	
	(3) The map is well-defined by (1), surjective by (2) and \ref{forget 1}(3), and injective by \ref{forget 1}(4).
\end{proof}

\subsection{Graded singular loci and graded singularity categories}
Let $G$ be an abelian group, $R$ a commutative Noetherian $G$-graded ring, and $\Lambda$ a $G$-graded module-finite $R$-algebra such that the structure morphism $R\to\Lambda$ preserves the $G$-grading.

We first discuss singular loci of module-finite algebras.
For a $G$-graded ring $A$ and $X\in\Mod^G\!A$ we denote by $\pd^G_A\!X$ the projective dimension of $X$ in $\Mod^G\!A$. Recall that the {\it singular locus} of a $G$-graded module-finite $R$-algebra $\La$ is
\[ \Sing^G_R\!\La=\{ \p\in\Spec^G\!R \mid \sg^G\!\La_{\p,G}\neq0\}, \]
and $\sg^G\!\La=0$ if and only if every object in $\mod^G\!\La$ has finite projective dimension.
\begin{Lem}\label{sing}
Let $H\subset G$ be a torsion-free subgroup. For $\p\in\Spec^G\!R$ consider the localization $\La_{\p.G}\to\La_{\p,G/H}$.
\begin{enumerate}
\item For $X\in\mod^G\!\La$, we have $\pd^G_{\La_{\p,G}}\!X_{\p,G}=\pd^{G/H}_{\La_{\p,G/H}}\!X_{\p,G/H}$.
\item We have $\Sing_R^G\!\La\subset\Sing^{G/H}_R\!\La\cap\Spec^{G}\!R$.
\item Suppose furthermore that $G=H=\Z$ and $\La=R$. Then $\Sing^\Z\!R=\Sing R\cap\Spec^\Z\!R$.
\end{enumerate}
\end{Lem}
We do not know if in (2) the equality holds in general.
\begin{proof}
	(1)  For $X\in\Mod^G\!\La$ we know that $\pd^G_\La\!X=\sup\{i\geq0\mid E^i\neq0\}$ for $E^i=\Ext^i_\La(X,\Om^iX)$. Then for $X\in\mod^G\!\La$ we have $\pd^G_{\La_{\p,G}}\!X_{\p,G}=\sup\{i\geq0\mid (E^i)_{\p,G}\neq0\}$. By \ref{Spec G G/H}(2) we see that $(E^i)_{\p,G}\neq0$ if and only if its $G/H$-homogeneous localization $(E^i)_{\p,G/H}=\Ext^i_{\La_{\p,G/H}}(X_{\p,G/H},\Om^iX_{\p,G/H})$ is non-zero, which yields the desired equality.
	
	(2)  Let $\p\in\Sing^G_R\!\La$ so that $\mod^G\!\La_{\p,G}$ has infinite global dimension. Then it contains a module $X$ of infinite projective dimension. By (1), its localization $X_{\p,G/H}$ has also infinite projective dimension in $\mod^{G/H}\!\La_{\p,G/H}$, thus $\p\in\Sing^{G/H}_R\!\La$.
	
	(3)  We have the inclusion $\subset$ by (2), so we prove the converse. Let $\p\in\Spec^\Z\!R$, and we have to show that if $\p\not\in\Sing^\Z\!R$ then $\p\not\in\Sing R$. By assumption we have that $\mod^\Z\!R_{\p,\Z}$ has finite global dimension, thus applying \ref{reg} to the $\Z$-graded ring $R_{\p,\Z}$ yields that this is regular (as an ungraded ring). It follows that $R_\p$, being a localization of $R_{\p,\Z}$, is also regular, proving $\p\not\in\Sing R$. 
\end{proof}

We state the relationship for graded and ungraded regularity in the commutative $\Z$-graded case.
\begin{Lem}\label{reg}
	Let $R$ be a commutative Noetherian $\Z$-graded ring. Then every object of $\mod^\Z\!R$ has finite projective dimension if and only if $R$ is regular (as an ungraded ring).
\end{Lem}
\begin{proof}
	The `if' part is clear, so we prove the `only if' part. By \cite[2.2.24(a)]{BH} we know that $\Sing R$ is defined by a homogeneous ideal, so it suffices to show that the localization $R_\m$ is regular for every graded maximal ideal $\m$. By assumption $R/\m\in\mod^\Z\!R$ has finite projective dimension, thus so does $R_\m/\m R_\m\in\mod R_\m$ by localizing, hence $R_\m$ is regular.
\end{proof}

Let us turn to $G$-graded singularity categories of module-finite algebras.
For a subset $\Phi$ of $\Spec^G\!R$, let
\[\mod^G_\Phi\!\La:=\{X\in\mod^G\!\La\mid \Supp^G_RX\subset\Phi\}\ \mbox{ and }\ \sg^G_\Phi\!\La:=\thick(\mod^G_\Phi\!\La)\subset\sg^G\!\La.\]
Clearly $\Supp^G_R\End_{\sg\La}(X)\subset\Supp^G_RX$ holds. On the other hand, the following observation \cite{Sch,Tak} shows that, in the singularity category, $X$ is generated by $\mod_\Phi^G\!\La$ for $\Phi:=\Supp^G_R\End_{\sg\La}(X)$.

\begin{Prop}\label{B16}
Let $\La$ be a $G$-graded module-finite $R$-algebra.
\begin{enumerate}
\item For each $X\in\mod^G\!\La$, we have
\[ X\in\sg_\Phi^G\!\La\ \mbox{ for }\ \Phi:=\Supp^G_R\End_{\sg\La}(X). \]
\item For each subset $\Phi$ of $\Spec^G\!R$, we have
\[\sg_\Phi^G\!\La=\{X\in\sg^G\!\La\mid\Supp_R^G\End_{\sg\La}(X)\subset\Phi\}.\]
\item For $\Phi:=\Sing^G_R\La$, we have
\[\sg_\Phi^G\!\La=\sg^G\!\La.\]
\end{enumerate}
\end{Prop}

To prove this, we need the following preparation.

\begin{Lem}\label{B16 2}
Under the setting of \ref{B16}, for each $X\in\mod^G\!\La$, we have
\begin{align*}
\Supp_R^G\underline{\End}_\La(X)&=\{\p\in\Spec^G\!R\mid X_{[\p]}\notin\proj\La_{[\p]}\}\subset\Supp_R^GX,\\
\Supp_R^G\End_{\sg\La}(X)&=\Supp_R^G\underline{\End}_\La(\Om^nX)\ \mbox{ for }\ n\gg0,\\
\Supp_R^G\End_{\sg\La}(X)&\subset\Sing^G_R\!\La.
\end{align*}
\end{Lem}

\begin{proof}
Since $\underline{\End}_\La(X)_{[\p]}=\underline{\End}_{\La_{[\p]}}(X_{[\p]})$, the first equality follows.
We prove the second one. Recall \cite{KV} that the morphism space in the singularity category $\sg^G\!\La$ is given as the stabilization of the stable category $\smod^G\!\La$, in particular we have the following for each $X,Y\in\mod^G\!\La$:
	\begin{equation}\label{Hom_sg colimit} \Hom_{\sg\La}(X,Y)=\colim\left( \xymatrix{ \sHom_\La(X,Y)\ar[r]^-{\Om}&\sHom_\La(\Om X,\Om Y)\ar[r]^-{\Om}&\cdots} \right).
	\end{equation}
Thus \[\Supp_R^G\End_{\sg\La}(X)\subset\Supp_R^G\sEnd_\La(\Om^nX)\ \mbox{ for }\ n\gg0.\]
Since $\Omega_{\La_{[\p]}}$ sends $\proj\La_{[\p]}$ to itself, the first equality implies $\Supp^G_R\sEnd_\La(X)\supset\Supp^G_R\sEnd_\La(\Om X)\supset\cdots$. 
The sequence stabilizes since $\Spec^G\!R$ is a Noetherian space. Thus we have 
\[\Supp_R^G\End_{\sg\La}(X)\subset\bigcap_{n\ge0}\Supp_R^G\sEnd_\La(\Om^nX)=\Supp_R^G\sEnd_\La(\Om^nX)\ \mbox{ for }\ n\gg0.\]
To show the converse, fix $\p\in\Spec^G\!R$ which does note belong to the left-hand-side. Then $\End_{\sg\La}(X)_{[\p]}=0$ holds, and hence $X_{[\p]}\in\mod\La_{[\p]}$ has finite projective dimension, say $n$. Then $\sEnd_{\La}(\Om^nX)_{[\p]}=0$ for $n\gg0$. Thus $\p$ does not belong to the right-hand-side.

We prove the third inclusion.
For each $\p\in\Spec^G\!R$, by \eqref{Hom_sg colimit}, we have $\End_{\sg\La}(X)_{[\p]}=\End_{\sg\La_{[\p]}}(X_{[\p]})=\bigoplus_{g\in G}\Hom_{\sg^G\!\La_{[\p]}}(X_{[\p]},X_{[\p]}(g))$. If $\p\notin\Sing^G_R\!\La$, then the right-hand-side vanish, and the assertion holds.
\end{proof}

Now we prove \ref{B16}.

\begin{proof}[Proof of \ref{B16}]
(1) Let $I$ be the $G$-graded defining ideal of $\Phi$ and take its homogeneous generators $r_1,\ldots,r_m$. Let $K_i:=[\cdots0\to R\xrightarrow{r_i}R\to 0\cdots]$ and $X_i:=X\lotimes_RK_1\lotimes_R\cdots\lotimes_RK_i$ the Koszul complex. Then $X_0=X$, and
\[X_m\in\thick(\mod_\Phi^G\La)\subset\D^b(\mod^G\!\La)\]
since each cohomology of $X_m$ is annihilated by $I$ and hence belongs to $\mod_\Phi^G\La$.
For each $1\le i\le m$, there is a triangle in $\D^b(\mod^G\!\La)$ and also in $\sg^G\!\La$:
\[X_{i-1}\xrightarrow{r_i}X_{i-1}\to X_i\to X_{i-1}[1].\]
Inductively, one can show that in $\sg^G\!\La$ the triangle above splits, $X_i\simeq X_{i-1}\oplus X_{i-1}[1]$ holds, and the $R$-module $\End_{\sg^G\!\La}(X_i)$ is annihilated by $I$.
Consequently, $X$ is a direct summand of $X_m$ in $\sg^G\!\La$, and the assertion follows.
	
(2) The inclusion $\subset$ follows from $\Supp_R^G\End_{\sg\La}(X)\subset\Supp^G_RX$ in \ref{B16 2}, and $\supset$ from (1).

(3) For each $X\in\sg^G\!\La$, we have $\Supp^G_R\!\End_{\sg\La}(X)\subset\Sing^G_R\!\La$ by \ref{B16 2}. Thus $X\in\sg_\Phi^G\!\La$ holds by (2), and the assertion follows.
\end{proof}

\begin{Lem}\label{B16 3}
Let $\La$ be a $G$-graded module-finite $R$-algebra. Then
\begin{align*}
\Sing^G_R\!\La&=\bigcup_{X\in\sg^G\!\La}\Supp_R^G\End_{\sg\La}(X).
\end{align*}
\end{Lem}

\begin{proof}
Let $\p\in\Spec^G\!R$. Since the localization functor $\sg^G\!\La\to\sg^G\!\La_{\p,G}$ is dense, and $\End_{\sg\La_{\p,G}}(X_{\p,G})=\End_{\sg\La}(X)_{\p,G}$, it follows that $\p\in\Sing_R^G\!\La$ if and only if there exists $X\in\sg^G\!\La$ such that $\End_{\sg\La_{\p,G}}(X_{\p,G})\neq0$ if and only if there exists $X\in\sg^G\!\La$ such that $\p\in\Supp_R^G\!\End_{\sg\La}(X)$. Thus the assertion holds.
\end{proof}

One can relate the graded and ungraded singular loci of $G$-graded module-finite algebras.
\begin{Lem}\label{tsujitsuma2}
Let $\m$ be a $G$-maximal ideal which is maximal as an ungraded ideal, and let $\La$ be a $G$-graded module-finite $R$-algebra. 
If $\Sing_R\!\La\subset\{\m\}$, then $\Sing_R^G\!\La\subset\{\m\}$.
\end{Lem}
\begin{proof}
	We have $\bigcup_{X\in\sg\La}\Supp_R\!\End_{\sg\La}(X)\stackrel{\ref{B16 3}}{=}\Sing_R\!\La\subset\{\m\}$. 
	For each $X\in\sg\La$, applying \ref{tsujitsuma} to $\End_{\sg\La}(X)$, we have $\Supp_R^G\!\End_{\sg\La}(X)\subset\{\m\}$. Thus $\bigcup_{X\in\sg^G\!\La}\Supp_R^G\!\End_{\sg\La}(X)\stackrel{\ref{B16 3}}{=}\Sing_R^G\!\La\subset\{\m\}$.
\end{proof}


\section{Covering functors and tilting subcategories}

A functor $\pi\colon\tC\to\C$ between categories is called a {\it covering functor} if the canonical maps
\[ \Coprod_{\pi Y^\prime=\pi Y}\tC(X,Y^\prime)\to\C(\pi X,\pi Y),\qquad \Coprod_{\pi X^\prime=\pi X}\tC(X^\prime,Y)\to\C(\pi X,\pi Y) \]
are isomorphisms for all $X,Y\in\tC$. When the categories are additive (resp. triangulated) we require the functor to be additive (resp. triangulated).
Let us give a useful result which allows us to lift tilting subcategories to its covering.
A special case can be found in existing literatures, e.g. \cite[Theorem 3.5]{As}.

\begin{Thm}\label{covering}
Let $\pi\colon\T\to\U$ be a covering functor between idempotent-complete triangulated categories such that the image of $\pi$ generates $\U$ as a thick subcategory, and $\B\subset\U$ a subcategory contained in the image of $\pi$.
\begin{enumerate}
\item $\B\subset\U$ is silting if and only if $\pi^{-1}\B\subset\T$ is silting.
\item $\B\subset\U$ is tilting if and only if $\pi^{-1}\B\subset\T$ is tilting.
\end{enumerate}
\end{Thm}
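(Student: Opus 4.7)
The plan is to verify the two defining conditions of a silting (respectively tilting) subcategory---Hom-vanishing in positive (respectively nonzero) degrees and thick generation---separately, using as the fundamental tool the covering isomorphism
\[ \Hom_\U(\pi X,\pi Y[i])\simeq\bigoplus_{\pi Y'\simeq\pi Y}\Hom_\T(X,Y'[i]). \]

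For the Hom-vanishing, the argument is immediate: the condition $\pi Y'\simeq\pi Y\in\B$ forces $Y'\in\pi^{-1}\B$, so every summand on the right indexes an object of $\pi^{-1}\B$. Since a direct sum of abelian groups vanishes if and only if each summand does, the Hom-vanishing on $\B$ in $\U$ for a given set of degrees is equivalent to the corresponding Hom-vanishing on $\pi^{-1}\B$ in $\T$. This handles both (1) and (2) simultaneously, as they differ only in the set of degrees to test.

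For thick generation, the ``if'' direction is immediate: if $\pi^{-1}\B$ thick-generates $\T$, then because $\pi$ is triangulated and preserves direct summands, $\pi(\T)\subseteq\thick(\pi(\pi^{-1}\B))\subseteq\thick(\B)$ in $\U$; combining with the hypothesis that $\pi(\T)$ thick-generates $\U$ yields $\U=\thick(\B)$.

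The ``only if'' direction is the main obstacle. Set $\N:=\thick(\pi^{-1}\B)\subset\T$; the goal is $\N=\T$. I would proceed in two steps. First, show that $\N$ is \emph{fiber-closed}, meaning that whenever $Y\in\N$ and $Y'\in\T$ satisfies $\pi Y'\simeq\pi Y$, then $Y'\in\N$. This is trivial for the generators $\pi^{-1}\B$ and must be checked to persist under shifts, mapping cones, and direct summands; the verification uses the covering decomposition to transport triangles and idempotents across the fibers of $\pi$, combined with idempotent-completeness of $\T$. Second, show that $\pi(\N)\subset\U$ is itself a thick subcategory. Again this reduces to a careful application of the covering formula, now to present morphisms in $\pi(\N)$ as finite sums of morphisms inside $\N$, whose cones remain in $\N$ by thickness. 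Since $\pi(\N)$ contains $\B$, the hypothesis $\thick(\B)=\U$ then forces $\pi(\N)=\U$. Hence every $Y\in\T$ satisfies $\pi Y\simeq\pi Y'$ for some $Y'\in\N$, and fiber-closure of $\N$ yields $Y\in\N$, as desired. The main technical obstacle is establishing fiber-closure of $\N$---the statement that membership in $\N$ depends only on the $\pi$-image up to isomorphism in $\U$---which is precisely where the covering hypothesis and idempotent-completeness enter beyond the mere triangulatedness of $\pi$.
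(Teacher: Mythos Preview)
Your treatment of the Hom-vanishing and the ``if'' direction of generation is correct and matches the paper. The gap is in the ``only if'' direction of generation: your fiber-closedness strategy is not actually carried out, and I do not see how to make it work as stated. The problem is that the class of ``good'' objects (those whose entire $\pi$-fiber lies in $\N=\thick(\pi^{-1}\B)$) is not obviously closed under cones. Given a triangle $Y_1\to Y_2\to Y_3\to Y_1[1]$ in $\N$ and an object $Y_3'$ with $\pi Y_3'\simeq\pi Y_3$, there is no mechanism to produce $Y_3'$ as a cone of a morphism between objects of $\N$; the covering formula decomposes morphisms, not objects. Your sketch ``transport triangles and idempotents across the fibers'' is exactly the step that needs an idea, and you have not supplied one. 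Likewise, your argument that $\pi(\N)$ is thick reduces, via the covering decomposition and the octahedral axiom, only to $\thick(\pi(\N))=\U$, which is automatic since $\B\subset\pi(\N)$ and gives nothing new.

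The paper's proof takes a different route that uses the presilting hypothesis in an essential way. By \cite[2.15]{AI}, for a presilting subcategory one has $\thick\B=\bigcup_{l\geq0}\add(\B[-l]\ast\cdots\ast\B[l])$, so every $\pi A$ admits a finite filtration by shifts of objects of $\B$. The paper then lifts right $\B$-approximations along $\pi$ using the covering formula (a morphism $B\to\pi A_i$ in $\U$ unpacks as finitely many morphisms $\widetilde{B}^j\to A_i$ in $\T$), takes cones in $\T$, and iterates; after $l+1$ steps the remainder lies in $\pi^{-1}\B$. This is where presilting enters: it guarantees the filtration has finite length and that cones of approximations drop one step. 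Your outline never invokes presilting for generation, which is why it stalls; the paper's Remark following the lemma confirms that without presilting one needs an extra ``algebraic'' hypothesis to get the conclusion.
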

\begin{proof}
	Since $\pi$ is a triangulated covering functor and $\pi\pi^{-1}\B=\B$, it is clear that $\T(A,A^\prime[i])=0$ for all $A,A^\prime\in\pi^{-1}\B$ if and only if $\U(B,B^\prime[i])=0$ for all $B,B^\prime\in\B$. It follows that $\B\subset\U$ is presilting (resp. pretilting) if and only if $\pi^{-1}\B\subset\T$ is presilting (resp. pretilting). It remains to show that a presilting subcategory $\B$ generates $\U$ if and only if $\pi^{-1}\B$ generates $\T$. The assertion follows from \ref{pi^-1} below.	
\end{proof}

For subcategories $\V$ and $\W$ of a triangulated category $\C$ we denote the category of extensions $\V\ast\W=\{ C\in\C\mid \text{there is a triangle } V\to C\to W\to V[1] \text{ in } \C \}$. With this notation, for a presilting subcategory $\V\subset\C$ we have $\thick\V=\bigcup_{l\geq0}\add(\V[-l]\ast\cdots\ast\V[l])$ \cite[2.15]{AI}.
\begin{Lem}\label{pi^-1}
Let $\B\subset\U$ be a idempotent-complete presilting subcategory. Then $\B$ generates $\U$ if and only if $\pi^{-1}\B$ generates $\T$.
\end{Lem}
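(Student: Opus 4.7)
Plan: The forward implication is direct. Assuming $\thick(\pi^{-1}\B)=\T$, applying the triangle functor $\pi$ gives $\pi(\T)\subset\thick(\pi(\pi^{-1}\B))=\thick\B$, using that $\B$ lies in the essential image of $\pi$ and hence $\pi(\pi^{-1}\B)=\B$. Since the image of $\pi$ generates $\U$, we conclude $\U=\thick(\pi(\T))\subset\thick\B$.

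For the converse, assume $\thick\B=\U$. First, I would observe that $\pi^{-1}\B$ is itself presilting in $\T$: for $A,A'\in\pi^{-1}\B$ and $i\ne0$, the covering property identifies $\U(\pi A,\pi A'[i])$ with a direct sum of $\T(A,A''[i])$ over $A''$ in the fiber of $\pi A'$, and the left-hand side vanishes by presilting of $\B$. Hence by the Aihara--Iyama formula used in the proof of \ref{covering}, one has $\thick\B=\bigcup_l\add\B_l$ and $\thick(\pi^{-1}\B)=\bigcup_l\add(\pi^{-1}\B)_l$, where the subscript denotes the iterated extension category $(-)[-l]\ast\cdots\ast(-)[l]$. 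It then suffices to prove by induction on $l$ the key claim: if $\pi Y\in\add\B_l$ then $Y\in\thick(\pi^{-1}\B)$. Assuming this, any $X\in\T$ has $\pi X\in\U=\bigcup_l\add\B_l$, so the claim forces $X\in\thick(\pi^{-1}\B)$.

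For the base case ($l=0$), write $\pi Y\oplus W\simeq\pi B'$ with $B'\in\pi^{-1}\B$, and let $i\colon\pi Y\to\pi B'$, $p\colon\pi B'\to\pi Y$ satisfy $p\circ i=1_{\pi Y}$. The covering property decomposes $i=\sum_{B''}i_{B''}$ with $i_{B''}\in\T(Y,B'')$ over $B''$ in the fiber of $\pi B'$, and $p=\sum_{Y'}p_{Y'}$ with $p_{Y'}\in\T(B',Y')$ over $Y'$ in the fiber of $\pi Y$. Computing $p\circ i$ under the target decomposition of $\U(\pi Y,\pi Y)$, only the summand $B''=B'$ of $i$ matches a component of $p$, so $p\circ i=\sum_{Y'}p_{Y'}\circ i_{B'}$. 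Matching against $1_{\pi Y}$, which corresponds to $(1_Y,0,\dots)$, forces $p_Y\circ i_{B'}=1_Y$, exhibiting $Y$ as a direct summand of $B'$; thus $Y\in\add(\pi^{-1}\B)$. For the inductive step, given $\pi Y\oplus W\simeq Z\in\B_{l+1}$ I would peel off the top slot via a triangle $Z_1\to Z\to Z_2\to Z_1[1]$ with $Z_1\in\B_l$ and $Z_2\in\B[l+1]$, write $Z_2=\pi\tilde Z_2$ with $\tilde Z_2\in(\pi^{-1}\B)[l+1]$, and use the covering property to lift the composition $\alpha\colon\pi Y\hookrightarrow Z\twoheadrightarrow Z_2$ to $\tilde\alpha\colon Y\to\tilde T$ in $\T$ with $\tilde T\in\add(\pi^{-1}\B)[l+1]$. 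Completing $\tilde\alpha$ to a triangle $\tilde K\to Y\to\tilde T\to\tilde K[1]$ and applying $\pi$, a double application of the octahedral axiom---first to the factorization $\pi Y\to\pi\tilde T\to Z_2$ and then to $Z=\pi Y\oplus W\to Z_2$---identifies $\pi\tilde K$ as an extension built from $Z_1\in\B_l$ and copies of $Z_2$, hence $\pi\tilde K\in\add\B_l$. The induction hypothesis then gives $\tilde K\in\thick(\pi^{-1}\B)$, and the triangle forces $Y\in\thick(\pi^{-1}\B)$.

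The main obstacle is the inductive step. Since $\pi$ has no adjoint in general, the extension structure of $Z$ in $\U$ lifts to $\T$ only in a loose sense: the target of $\tilde\alpha$ is a finite multiple of the naive lift of $Z_2$, and the $\T$-cocone $\tilde K$ differs from the $\U$-cocone of $\alpha$ by extra extension terms accounting for this multiplicity. The careful octahedron bookkeeping needed to show $\pi\tilde K\in\add\B_l$---so that the induction hypothesis genuinely applies with the same slot-count bound---is the heart of the proof, and it is precisely here that the finiteness of the covering-property decomposition and the presilting formula combine to keep everything under control.
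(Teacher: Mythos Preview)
Your overall strategy matches the paper's: use the Aihara--Iyama description $\thick\B=\bigcup_l\add(\B[-l]*\cdots*\B[l])$ and lift maps through the covering. However, both the base case and the inductive step contain genuine errors.

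\textbf{Base case.} The claim ``only the summand $B''=B'$ of $i$ matches a component of $p$'' is incorrect. Under $\pi$, every cross-term $\pi(p_{Y'})\circ\pi(i_{B''})$ makes sense in $\U(\pi Y,\pi Y)$, since $\pi B''=\pi B'$. What you actually get is that a \emph{finite sum} of composites in $\T$ (running over all relevant $B''$) equals $1_Y$; this exhibits $Y$ as a summand of a finite direct sum of objects in the fibre of $\pi B'$, hence $Y\in\add(\pi^{-1}\B)$. The conclusion survives, but not via a single $B'$.

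\textbf{Inductive step.} Here the problems are structural. First, peeling $\B[l+1]$ off $Z\in\B_{l+1}=\B[-l-1]*\cdots*\B[l+1]$ gives $Z_1\in\B[-l-1]*\cdots*\B[l]$, which is strictly larger than $\B_l$; so the induction hypothesis does not apply to $Z_1$. Second, your double octahedron actually yields $\pi\tilde K\in Z_2^{n-1}[-1]*W[-1]*Z_1$: the term $W[-1]$ (with $W\in\add\B_{l+1}$) is unavoidable and is missing from your description ``extension built from $Z_1$ and copies of $Z_2$''. Neither $W[-1]$ nor the corrected $Z_1$ sits in $\add\B_l$, so $\pi\tilde K\in\add\B_l$ fails and the induction breaks down.

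\textbf{How the paper avoids these issues.} The paper works \emph{one-sided}: after a shift it arranges $\pi A\in\B*\cdots*\B[\ell+1]$ (using that such iterated extensions are closed under summands for presilting $\B$), so there is no complementary summand $W$ to track. It then peels from the \emph{bottom}: at each step it lifts a right $\B$-approximation $b_i^0\colon B_i^0\to\pi A_i$ to a map $a_i\colon\widetilde{B_i}\to A_i$ in $\T$ (with $\pi\widetilde{B_i}=(B_i^0)^{\oplus n}$), so that $\pi a_i$ is again a right $\B$-approximation. The approximation property is exactly what forces $\pi A_{i+1}=\mathrm{cocone}(\pi a_i)$ to drop from $\B*\cdots*\B[m]$ to $\B*\cdots*\B[m-1]$. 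Your top-peeling idea can be made to work, but only after switching to one-sided indexing and eliminating $W$; as written, the induction does not close.
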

\begin{proof}	
It suffices to prove ``only if '' part.
	Pick an object $A\in\T$. Up to adding a direct summand and suitably shifting, we may assume $\pi A\in\B\ast\cdots\ast\B[\ell]\ast\B[\ell+1]$ for some $\ell\geq0$. We claim by induction that one can take the triangles $A_{i+1}\to \widetilde{B_i}\xrightarrow{a_i} A_i\to A_{i+1}[1]$ in $\T$ for $0\leq i\leq \ell$ with $A_0=A$, satisfying the following:
	\begin{itemize}
		\item $\pi a_i$ is a right $\B$-approximation in $\U$,
		\item $\pi A_{i+1}\in \B\ast\cdots\ast\B[\ell-i]$. 
	\end{itemize}
	Suppose that we have constructed such triangles 
	up to $i-1$ ($i\geq0$). Since $\pi$ is a covering functor there is a morphism $a_i\colon \widetilde{B_i}\to A_i$ in $\T$ whose image under $\pi$ is a right $\B$-approximation. Indeed, for a right $\B$-approximation $b_i^0\colon B_i^0\to\pi A_i$, there exist finitely many $\widetilde{B_i}^1,\ldots,\widetilde{B_i}^n\in\pi^{-1}B_i^0$ and morphisms $a_i^j\colon\widetilde{B_i}^j\to A_i$ such that the induced map $B_i^0\xrightarrow{\diag}\bigoplus_jB_i^0\xrightarrow{\pi a_i^j} \pi A_i$ coincides with $b_i^0$. Then the second map is a right $\B$-approximation, thus we can take $\widetilde{B_i}:=\bigoplus_j\widetilde{B_i}^j\to A_i$ as ${a_i}$. Now we extend this map to a triangle $A_{i+1}\to \widetilde{B_i}\xrightarrow{a_i} A_i\to A_{i+1}[1]$ in $\T$. By induction hypothesis we know that $\pi A_{i}\in\B\ast\cdots\ast\B[\ell-i+1]$, and since $\pi a_i$ is a right $\B$-approximation, we deduce $\pi A_{i+1}\in\B\ast\cdots\ast\B[\ell-i]$, as desired.
	
	Now in the last triangle (for $i=\ell$) we have $\pi A_{\ell+1}\in\B$, thus $A_{\ell+1}\in\pi^{-1}\B$. It follows from the triangles we constructed that $A$ lies in the thick subcategory generated by $\pi^{-1}\B$.
\end{proof}

Even if $\B\subset\U$ is not presilting, we can argue as follows for the ``algebraic'' case. For an additive category $\A$ with arbitrary (set-indexed) coproducts, we denote by $\A^c$ the full subcategory of compact objects.
\begin{Rem}
Suppose that there exist a triangle functor $\widehat{\pi}\colon\widehat{\T}\to\widehat{\U}$ (which is not necessarily a covering functor) 
between compactly generated triangulated categories with $\widehat{\T}^c=\T$ and $\widehat{\U}^c=\U$, which restricts to $\pi\colon\T\to\U$. Then a subcategory $\B\subset\U$ contained in the image of $\pi$ generates $\U$ if and only if $\pi^{-1}\B$ generates $\T$.
\end{Rem}
\begin{proof}
	It is clear that $\B=\pi\pi^{-1}\B$ generates $\U$ if $\pi^{-1}\B$ generates $\T$.
	We prove the converse. By our ``algebraic'' assumption, we only have to show $\T(A,X)=0$ for all $A\in\pi^{-1}\B$ implies $X=0$. In this case we have $\U(\pi A,\pi X)=\bigoplus_{A^\prime\in\pi^{-1}\pi A}\T(A^\prime,X)=0$ for all $A\in\pi^{-1}\B$, hence $\pi X=0$ since $\B$ generates $\U$. It follows that $\T(X,X)\subset\U(\pi X,\pi X)=0$, thus $X=0$.
\end{proof}
\end{appendix}
	
\thebibliography{99}
\bibitem[AI]{AI} T. Aihara and O. Iyama, {Silting mutation in triangulated categories}, J. London Math. Soc. 85 (2012) no.3, 633-668.
\bibitem[Am1]{Am09} C. Amiot, {Cluster categories for algebras of global dimension 2 and quivers with potentional}, Ann. Inst. Fourier, Grenoble 59, no.6 (2009) 2525-2590.
\bibitem[Am2]{Am13} C. Amiot, {Preprojective algebras, singularity categories and orthogonal decompositions}, in: {Algebras, quivers and representations, Abel Symp., 8}, Springer, Heidelberg, 2013.
\bibitem[AIR]{AIR} C. Amiot, O. Iyama, and I. Reiten, {Stable categories of Cohen-Macaulay modules and cluster categories}, Amer. J. Math, 137 (2015) no.3, 813-857.
\bibitem[AO]{AOce} C. Amiot and S. Oppermann, {Cluster equivalence and graded derived equivalence}, Doc. Math. 19 (2014), 1155-1206.
\bibitem[ART]{ART} C. Amiot, I. Reiten, G. Todorov, {The ubiquity of generalized cluster categories}, Adv. Math. 226 (2011), no. 4, 3813--3849.
\bibitem[As]{As} H. Asashiba, {A covering technique for derived equivalence}, J. Algebra 191 (1997), no. 1, 382--415.
\bibitem[ASS]{ASS} I. Assem, D. Simson and A. Skowro\'nski, {Elements of the representation theory of associative algebras, vol.1}, London Mathematical Society Student Texts 65, Cambridge University Press, Cambridge, 2006.
\bibitem[ACFGS]{ACFGS} J. August, M.-W. Cheung, E. Faber, S. Gratz, and S. Schroll, {Grassmannian categories of infinite rank}, arXiv:2007.14224.
\bibitem[Au1]{Au62} M. Auslander, {On the purity of the branch locus}, Amer. J. Math. 84 (1962), 116--125.
\bibitem[Au2]{Au78} M. Auslander, {Functors and morphisms determined by objects}, in: Representation Theory of Algebras, Lecture Notes in Pure and Applied Mathematics 37, Marcel Dekker, New York, 1978, 1-244.
\bibitem[Au3]{Au86} M. Auslander, {Rational singularities and almost split sequences}, Trans. Amer. Math. Soc. 293 (2) (1986), 511-531.
\bibitem[AR]{AR} M. Auslander and I. Reiten, {Almost split sequences for $\Z$-graded rings}, In: Singularities, Representation of Algebras, and Vector Bundles. Lecture Notes in Mathematics, vol 1273. Springer, Berlin, Heidelberg.
\bibitem[ARS]{ARS} M. Auslander, I. Reiten and S. O. Smal\o, {Representation theory of Artin algebras}, Cambridge studies in advanced mathematics 36, Cambridge University Press, Cambridge, 1995.
\bibitem[BSW]{BSW} R. Bocklandt, T. Schedler, and M. Wemyss, {Superpotentials and higher order derivations}, J. Pure Appl. Algebra 214 (2010), no. 9, 1501-1522.
\bibitem[BD]{BD} C. Brav and T. Dyckerhoff, {Relative Calabi-Yau structures}, Compos. Math. 155 (2019) 372-412.
\bibitem[BS]{BS} M. P. Brodmann and R. Y. Sharp, {Local cohomology. An algebraic introduction with geometric applications}, Cambridge Studies in Advanced Mathematics, 136. Cambridge University Press, Cambridge, 2013.
\bibitem[BH]{BH} W. Bruns and J. Herzog, {Cohen-Macaulay rings}, Cambridge studies in advanced mathematics 39, Cambridge University Press, Cambridge, 1998.
\bibitem[BMRRT]{BMRRT} A. B. Buan, R. Marsh, M. Reineke, I. Reiten, and G. Todorov, {Tilting theory and cluster combinatorics}, Adv. Math. 204 (2006) 572-618.
\bibitem[Bu]{Bu} R. O. Buchweitz, {Maximal Cohen-Macaulay modules and Tate cohomology}, Mathematical Surveys and Monographs, 262. American Mathematical Society, Providence, RI, 2021, xii+175 pp.
\bibitem[BIY]{BIY} R. O. Buchweitz, O. Iyama, K. Yamaura, {Tilting theory for Gorenstein rings in dimension one}, Forum Math. Sigma 8 (2020), Paper No. e36, 37 pp.
\bibitem[CR]{CR} C. Curtis, I. Reiner, {Methods of representation theory. Vol. I. With applications to finite groups and orders}, Pure and Applied Mathematics. A Wiley-Interscience Publication. John Wiley \& Sons, Inc., New York, 1981.
\bibitem[DL1]{DL1} L. Demonet, X. Luo, {Ice quivers with potential associated with triangulations and Cohen-Macaulay modules over orders}, Trans. Amer. Math. Soc. 368 (2016), no. 6, 4257--4293.
\bibitem[DL2]{DL2} L. Demonet, X. Luo, {Ice quivers with potential arising from once-punctured polygons and Cohen-Macaulay modules}, Publ. Res. Inst. Math. Sci. 52 (2016), no. 2, 141--205.
\bibitem[DGL]{DGL} P. Dowbor, W. Geigle, and H. Lenzing, {Graded sheaf theory and group quotients with applications to representations of finite dimensional algebras}, unpublished notes.
\bibitem[D]{Dr} V. Drinfeld, {DG quotients of DG categories}, J. Algebra 272, (2004) 643-691.
\bibitem[DG]{DG} Y. A. Drozd, G.-M. Greuel, {Cohen-Macaulay module type}, Compositio Math. 89 (1993), no. 3, 315--338.
\bibitem[E]{Ei} D. Eisenbud, {Homological algebra on a complete intersection, with an application to group representations}, Trans. Amer. Math. Soc. 260 (1980), no. 1, 35-64.
\bibitem[FU]{FU} M. Futaki, K. Ueda, {Homological mirror symmetry for Brieskorn-Pham singularities}, Selecta Math. (N.S.) 17 (2011), no. 2, 435--452.
\bibitem[GL]{GL} W. Geigle and H. Lenzing, {A class of weighted projective curves arising in representation theory of finite-dimensional algebras}, Singularities, representation of algebras, and vector bundles (Lambrecht, 1985), 265–297, Lecture Notes in Math., 1273, Springer, Berlin, 1987.
\bibitem[GM]{GM} S. I. Gelfand and Y. I. Manin, {Methods of homological algebra}, Second edition. Springer Monographs in Mathematics. Springer-Verlag, Berlin, 2003. xx+372 pp.
\bibitem[Gi]{G} V. Ginzburg, {Calabi-Yau algebras}, arXiv:0612139.
\bibitem[GG]{GG} R. Gordon, E. L. Green, {Graded Artin algebras}, J. Algebra 76 (1982), no. 1, 111--137.
\bibitem[GW1]{GW1} S. Goto, K. Watanabe, {On graded rings. I}, J. Math. Soc. Japan 30 (1978), no. 2, 179--213.
\bibitem[GW2]{GW2} S. Goto, K. Watanabe, {On graded rings. II. ($\Z^n$-graded rings)}, Tokyo J. Math. 1 (1978), no. 2, 237--261.
\bibitem[Gu]{Guo} L. Guo, {Cluster tilting objects in generalized higher cluster categories}, J. Pure Appl. Algebra 215 (2011), no. 9, 2055-2071.
\bibitem[Han1]{Ha1} N. Hanihara, {Auslander correspondence for triangulated categories}, Algebra Number Theory 14 (2020), no. 8, 2037--2058.
\bibitem[Han2]{Ha2} N. Hanihara, {Yoneda algebras and their singularity categories}, Proc. Lond. Math. Soc. (3) 124 (2022), no. 6, 854--898.
\bibitem[Han3]{ha3} N. Hanihara, {Cluster categories of formal DG algebras and singularity categories}, Forum of Mathematics, Sigma (2022), Vol.10:e35 1–50.
\bibitem[Han4]{ha4} N. Hanihara, {Morita theorem for hereditary Calabi-Yau categories}, Adv. Math. 395 (2022) 108092.
\bibitem[Hap]{Hap} D. Happel, {Triangulated categories in the representation theory of finite dimensional algebras}, London Mathematical Society Lecture Note Series 119, Cambridge University Press, Cambridge, 1988.
\bibitem[Har]{RD} R. Hartshorne, {Residues and duality}, Lecture notes of a seminar on the work of A. Grothendieck, given at Harvard 1963/64. With an appendix by P. Deligne. Lecture Notes in Mathematics, No. 20 Springer-Verlag, Berlin-New York, 1966, vii+423 pp.
\bibitem[HO]{HO} Y. Hirano, G. Ouchi, {Derived factorization categories of non-Thom--Sebastiani-type sums of potentials}, to appear in Proc. Lond. Math. Soc., arXiv:1809.09940
\bibitem[HI]{HI} M. Herschend, O. Iyama, in preparation.
\bibitem[HIMO]{HIMO} M. Herschend, O. Iyama, H. Minamoto, and S. Oppermann, {Representation theory of Geigle-Lenzing complete intersections}, to appear in Mem. Amer. Math. Soc, arXiv:1409.0668.
\bibitem[HIO]{HIO} M. Herschend, O. Iyama, and S. Oppermann, {$n$-representation infinite algebras}, Adv. Math. 252 (2014) 292-342.
\bibitem[IQ]{IQ} A. Ikeda, Y. Qiu {$q$-Stability conditions on Calabi-Yau-$\mathbb{X}$ categories}, arXiv:1807.00469
\bibitem[Iy1]{Iy07a} O. Iyama, {Higher-dimensional Auslander-Reiten theory on maximal orthogonal subcategories}, Adv. Math. 210 (2007) 22-50.
\bibitem[Iy2]{Iy07b} O. Iyama, {Auslander correspondence}, Adv. Math. 210 (2007) 51-82.
\bibitem[Iy3]{Iy11} O. Iyama, {Cluster tilting for higher Auslander algebras}, Adv. Math. 226 (2011) 1-61.
\bibitem[Iy4]{Iy18} O. Iyama, {Tilting Cohen-Macaulay representations}, Proceedings of the International Congress of Mathematicians--Rio de Janeiro 2018. Vol. II. Invited lectures, 125-162, World Sci. Publ., Hackensack, NJ, 2018.
\bibitem[IKU]{IKU} O. Iyama, Y. Kimura, K. Ueyama, {Cohen-Macaualay representations of Artin-Schelter Gorenstein algebras of dimension one}, arXiv:2404.05925.
\bibitem[IL]{IL} O. Iyama, B. Lerner, {Tilting bundles on orders on $P^d$}, Israel J. Math. 211 (2016), no. 1, 147--169.
\bibitem[IO]{IO13} O. Iyama and S. Oppermann, {Stable categories of higher preprojective algebras}, Adv. Math. 244 (2013), 23-68.
\bibitem[IT]{IT} O. Iyama and R. Takahashi, {Tilting and cluster tilting for quotient singularities}, Math. Ann. 356 (2013), 1065-1105.
\bibitem[IW]{IW14} O. Iyama, M. Wemyss, Maximal modifications and Auslander-Reiten duality for non-isolated singularities, Invent. Math. 197 (2014), no. 3, 521-586.
\bibitem[JKS]{JKS} B. T. Jensen, A. D. King, and X. Su, {A categorification of Grassmannian cluster algebras},	Proc. Lond. Math. Soc. (3) 113 (2016), no. 2, 185-212.
\bibitem[KST1]{KST1} H. Kajiura, K. Saito, A. Takahashi,
{Matrix factorization and representations of quivers. II. Type $ADE$ case},
Adv. Math. 211 (2007), no. 1, 327--362.
\bibitem[KST2]{KST2} H. Kajiura, K. Saito, A. Takahashi,
{Triangulated categories of matrix factorizations for regular systems of weights with $\epsilon=-1$}, Adv. Math. 220 (2009), no. 5, 1602--1654.
\bibitem[KY]{KY} M. Kalck, D. Yang, {Relative singularity categories II: DG models}, arXiv:1803.08192
\bibitem[Ka]{Ka} Y. Kamoi, {Noetherian rings graded by an abelian group}, Tokyo J. Math. 18 (1995), no. 1, 31-48.
\bibitem[Ke1]{Ke94} B. Keller, {Deriving DG categories}, Ann. scient. \'Ec. Norm. Sup. (4) 27 (1) (1994) 63-102.
\bibitem[Ke2]{Ke99} B. Keller, {On the cyclic homology of exact categories}, J. Pure Appl. Algebra 136 (1999), 1-56.
\bibitem[Ke3]{Ke05} B. Keller, {On triangulated orbit categories}, Doc. Math. 10 (2005), 551-581.
\bibitem[Ke4]{Ke06} B. Keller, {On differential graded categories}, Proceedings of the International Congress of Mathematicians, vol. 2, Eur. Math. Soc, 2006, 151-190.
\bibitem[Ke5]{Ke08} B. Keller, {Calabi-Yau triangulated categories}, in: {Trends in representation theory of algebras and related topics}, EMS series of congress reports, European Mathematical Society, Z\"{u}rich, 2008.
\bibitem[Ke6]{Ke11} B. Keller, {Deformed Calabi-Yau completions}, with an appendix by M. Van den Bergh, J. Reine Angew. Math. 654 (2011) 125-180.
\bibitem[KMV]{KMV} B. Keller, D. Murfet, and M. Van den Bergh, {On two examples of Iyama and Yoshino}, Compos. Math. 147 (2011) 591-612.
\bibitem[KR]{KRac} B. Keller and I. Reiten, {Acyclic Calabi-Yau categories}, with an appendix by M. Van den Bergh, Compos. Math. 144 (2008) 1332-1348.
\bibitem[KV]{KV} B. Keller and D. Vossieck, Sous les cat\'egories d\'eriv\'ees, C. R. Acad. Sci. Paris Sér. I Math. 305 (6) (1987) 225-228.
\bibitem[KW]{KW} B. Keller and Y. Wang, {An introduction to relative Calabi-Yau structures}, arXiv:2111.10771.
\bibitem[Ki1]{Ki1}Y. Kimura, {Tilting theory of preprojective algebras and c-sortable elements}, J. Algebra 503 (2018), 186--221.
\bibitem[Ki2]{Ki2} Y. Kimura, {Tilting and cluster tilting for preprojective algebras and Coxeter groups}, Int. Math. Res. Not. IMRN 2019, no. 18, 5597–5634.
\bibitem[KS]{KS} M. Kontsevich and Y. Soibelman, {Notes on A-infinity algebras, A-infinity categories and non-commutative geometry I}, arXiv:0606241.
\bibitem[KLM]{KLM} D. Kussin, H. Lenzing, H. Meltzer, {Triangle singularities, ADE-chains, and weighted projective lines}, Adv. Math. 237 (2013), 194--251.
\bibitem[LP]{LP} H. Lenzing, J. A. de la Pena, {Extended canonical algebras and Fuchsian singularities}, Math. Z. 268 (2011), no. 1-2, 143--167.
\bibitem[LW]{LW} G. J. Leuschke and R. Wiegand, {Cohen-Macaulay representations}, vol. 181 of Mathematical Surveys and Monographs, American Mathematical Society, Province, RI, (2012).
\bibitem[LZ]{LZ} M. Lu, B. Zhu, {Singularity categories of Gorenstein monomial algebras}, J. Pure Appl. Algebra 225 (2021), no. 8, Paper No. 106651, 39 pp.
\bibitem[Ma]{Ma} H. Matsumura, {Commutative ring theory}, Cambridge studies in advanced mathematics 8, Cambridge University Press, Cambridge, 1989.
\bibitem[MY]{MY} H. Minamoto, K. Yamaura, {On finitely graded Iwanaga-Gorenstein algebras and the stable categories of their (graded) Cohen-Macaulay modules}, Adv. Math. 373 (2020), 107228, 57 pp.
\bibitem[MU]{MU} I. Mori, K. Ueyama, {Stable categories of graded maximal Cohen-Macaulay modules over noncommutative quotient singularities}, Adv. Math. 297 (2016), 54--92.
\bibitem[Na]{N} Y. Nakajima, {On 2-representation infinite algebras arising from dimer models}, Q. J. Math. 73 (2022), no. 4, 1517--1553.
\bibitem[Ne]{Ne01} A. Neeman, {Triangulated Categories}, Annals of Mathematics Studies, vol. 148, Princeton University Press, 2001.
\bibitem[OY]{OY} M. Ono and Y. Yoshino, {An Auslander-Reiten principle in derived categories}, J. Pure Appl. Algebra 221 (2017), no. 6, 1268--1278.
\bibitem[R]{Re} I. Reiten, {Cluster categories}, Proceedings of the International Congress of Mathematicians. Volume I, 558–594, Hindustan Book Agency, New Delhi, 2010.
\bibitem[S]{Sch} H. Schoutens, {Projective dimension and the singular locus}, Comm. Algebra 31 (2003), no. 1, 217–239.
\bibitem[SV]{SV} S. P. Smith, M. Van den Bergh, {Noncommutative quadric surfaces}, J. Noncommut. Geom. 7 (2013), no. 3, 817–856. 
\bibitem[Tab]{Tab05b} G. Tabuada, {Invariants additifs de DG-catégories}, Int. Math. Res. Not. 2005, no. 53, 3309–3339. Corrections: {Corrections \`a Invariants Additifs de DG-cat\'egories}, Int. Math. Res. Not. 2007, rnm149.
\bibitem[Tak]{Tak} R. Takahashi, {Reconstruction from Koszul homology and applications to module and derived categories}, Pacific J. Math. 268 (2014), no. 1, 231--248.
\bibitem[TV]{TV} de Thanhoffer de V\"olcsey, M. Van den Bergh, {Explicit models for some stable categories of maximal Cohen-Macaulay modules}, Math. Res. Lett. 23 (2016), no. 5, 1507--1526.
\bibitem[To]{To} B. To\"en, {The homotopy theory of dg-categories and derived Morita theory}, Invent. Math. 167 (2007), no. 3, 615–667.
\bibitem[U1]{U} K. Ueda, {Triangulated categories of Gorenstein cyclic quotient singularities}, Proc. Amer. Math. Soc. 136 (2008) no. 8, 2745-2747.
\bibitem[U2]{U2} K. Ueda, {On graded stable derived categories of isolated Gorenstein quotient singularities}, J. Algebra 352 (2012), 382--391.
\bibitem[Ya]{Ya} K. Yamaura, {Realizing stable categories as derived categories}, Adv. Math. 248 (2013) 784-819.
\bibitem[Yo]{Yo90} Y. Yoshino, {Cohen-Macaulay modules over Cohen-Macaulay rings}, London Mathematical Society Lecture Note Series 146, Cambridge University Press, Cambridge, 1990.
\end{document}